\newcommand{\be}{\begin{equation}}
\newcommand{\ee}{\end{equation}}
\newcommand{\ba}{\begin{array}{l}}
\newcommand{\ea}{\end{array}}
\newcommand{\rmd}{{\rm d}}
\renewcommand{\AA}{\mathscr{A}}
\newcommand{\ve}{{\varepsilon}}
\renewcommand{\div}{{\mbox{div}\,}}
\def\wsc{\overset{\ast}{\rightharpoonup}}
\newcommand{\red}[1]{\textcolor{red}{#1}}
\newcommand{\green}[1]{\textcolor{green!80!blue}{#1}}
\definecolor{shadecolor}{gray}{.94}
\newenvironment{myshade}{%
  \topsep4\p@\@plus4\p@\relax%
  \MakeFramed{\advance\hsize-\width \FrameRestore}}%
 {\par\unskip\endMakeFramed}%
\definecolor{shadecolor2}{rgb}{0.94, 0.9, 0.55}
\newenvironment{myshade2}{%
  \topsep4\p@\@plus4\p@\relax%
  \MakeFramed{\advance\hsize-\width \FrameRestore}}%
 {\par\unskip\endMakeFramed}%
\definecolor{shadecolor3}{rgb}{1.0, 0.63, 0.48}
\newenvironment{myshade3}{%
  \topsep4\p@\@plus4\p@\relax%
  \MakeFramed{\advance\hsize-\width \FrameRestore}}%
 {\par\unskip\endMakeFramed}%
\newtheorem{theorem}{Theorem}[section]
\newtheorem{corollary}[theorem]{Corollary}
\newtheorem{lemma}[theorem]{Lemma}
\newtheorem{conjecture}{Conjecture}
\newtheorem{question}{Open Question}
\newtheorem{problem}{Problem}
\theoremstyle{definition}
\newtheorem{definition}[theorem]{Definition}
\newtheorem{example}[theorem]{Example}
\newtheorem{remark}[theorem]{Remark}
\def\and{\ and }
\title[Singularity formation in the Euler equation]{Singularity formation in the incompressible Euler equation in finite and infinite time}
\author[T. D. Drivas]{Theodore D. Drivas}
\address{ Department of Mathematics, Stony Brook University,
Stony Brook, NY, 11794}
\email{tdrivas@math.stonybrook.edu}
\address{ School of Mathematics, Institute for Advanced Study, 1 Einstein Dr., Princeton, NJ 08540}
\email{tdrivas@ias.edu}
\author[T. M. Elgindi]{Tarek M. Elgindi}
\address{ Mathematics Department, Duke University,
 Durham, NC 27708, USA}
\email{tarek.elgindi@duke.edu}
\begin{document}

\maketitle

\emph{Dedicated to Prof. Dennis Sullivan on the occasion of his 80th birthday.}

\vspace{10mm}

\tableofcontents

\newpage

\section{Introduction}

In this paper,  some classical and recent results on the  Euler equations governing perfect (incompressible and inviscid) fluid motion are collected and reviewed, with some small novelties scattered throughout.  The perspective and emphasis will be given through the lens of infinite dimensional dynamical systems, and various open problems are listed and discussed. We begin in \S \ref{eds} by describing the geometric viewpoint of fluid flow as geodesic motion on the group of volume preserving diffeomorphisms.  Wellposedness results are then discussed for the Eulerian velocity field belonging to various function spaces. We continue in \S \ref{2dsing} by examining 2D fluid motion with an eye towards long-time persistent behaviors.
Subsequently, \S \ref{3dsec} examines mechanisms for finite-time singularity formation and reviews recent advances on blowup of classical solutions for 3D Euler.  Section \S \ref{perspect} gives an outlook on singularity formation in 3D and some open directions. Finally, in  \S \ref{infdsec} we discuss a class of global solutions in any dimension (those with constant pressure) and use them to give an example of finite time blowup from smooth initial data for the Euler equations in ``infinite" spatial dimensions.

\section{The perfect fluid  dynamical system}\label{eds}

Let $M\subset \mathbb{R}^d$ be a bounded simply connected open domain, possibly with boundary $\partial M$ having exterior unit normal $\hat{n}$. The Euler equations governing the velocity  $u(t,x) :[0,T]\times M \to \mathbb{R}^d$ of a fluid which is perfect and confined to $M$ read
  \begin{alignat}{2}\label{eeb}
\partial_t u + u \cdot \nabla u &= -\nabla p,   &\qquad \text{in} \quad M,\\\label{incompress}
\nabla \cdot u &=0,   &\qquad \text{in} \quad M,\\
u|_{t=0} &=u_0 ,   &\qquad\text{in} \quad M,\\
u\cdot \hat{n} &=0,   &\qquad\text{on}\   \partial M \label{eef}
\end{alignat}
where  $p(t,x) :[0,T]\times M \to \mathbb{R}$ is the hydrodynamic pressure which enforces incompressibility \eqref{incompress}. See Figure \ref{fig:domain}.
We shall sometimes refer to $M$  as the \emph{fluid vessel} and we will denote its closure by $\overline{M}$.

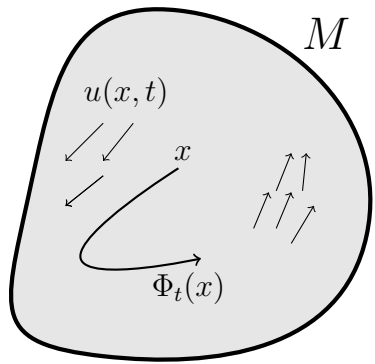
\begin{figure}[h!]
	\centering
	\begin{tikzpicture}[scale=1.0, every node/.style={transform shape}]
       \draw [name=A, ultra thick, black, fill= black!10!white] plot [smooth cycle, tension=1.2] coordinates {(0,-1.6) (3.5,0) (1,3) (-1,0.5)}; 
       \draw[thick, black, ->] plot [smooth, tension=1] coordinates {(1,0.9)  (-.3,-.3) (1.3,-.3)}; 
       	 \draw  (2.5,2.7) node[anchor=west] {\Large $M$};
	        	 \draw  (0.5,-0.7) node[anchor=west] { $\Phi_t(x)$};
		  \draw  (0.8,1.1) node[anchor=west] { $x$};
		   \draw  (-0.4,1.9) node[anchor=west] { $u(x,t)$}; 
		   \draw[thin, black, ->] (0,1.5)--(-.5,1);
		      \draw[thin, black, ->] (0.4,1.5)--(-.0,1);
		         \draw[thin, black, ->] (0,0.8)--(-.5,0.4);
		         
		               \draw[thin, black, ->] (0,0.8)--(-.5,0.4);
		         \draw[thin, black, ->] (2.3,.1)--(2.5,.58);    
		          \draw[thin, black, ->] (2.0,.1)--(2.2,.58);    %
		         \draw[thin, black, ->] (2.3,.6)--(2.5,1.1);%
		           \draw[thin, black, ->] (2.65,.6)--(2.7,1.1);   
		                \draw[thin, black, ->] (2.5,-0.1)--(2.8,.4);		       
\end{tikzpicture}
\caption{Fluid vessel, with the instantaneous velocity and a particle path until time $t$.}
	\label{fig:domain}
\end{figure}

\subsection{Principle of least action on configuration space}\label{lap}
The Euler equations have a beautiful geometric interpretation, due to V.I. Arnold.
Let $\mathscr{D}_\mu(M)$ denote the group of smooth volume-preserving  diffeomorphisms of $M$ which leave the boundary invariant.  This acts as the configuration space of the fluid, labelling particle positions.
Perfect fluid motion is governed by the ODE for $t \mapsto \Phi_t$ in the
space $\mathscr{D}_\mu(M)$:
\begin{equation}\label{ODiff} 
\begin{array}{ll}
\ddot \Phi_t(x) =-\nabla p\left(t,\Phi_t(x)\right) & \text{$(t,x)\in [0,T] \times M$,}\\
\Phi_0(x)=x &\text{$x\in M$,}\\
\Phi_t(\cdot)\in \mathscr{D}_\mu(M) &\text{$t\in [0,T]$,}
\end{array}
\end{equation}
 In these equations, the acceleration (the pressure gradient) acts in keeping with its role as a constraint to enforce incompressibility.  
The system \eqref{ODiff} can be thought of as arising from d'Alembert's principle of constrained motion, namely  $\Phi_t\in \mathscr{D}_\mu$, $\dot{\Phi}_t\in T_{\Phi_t} \mathscr{D}_\mu$ (the tangent space to $\mathscr{D}_\mu$ at $\Phi_t$ consisting of divergence-free vector fields tangent to $\partial M$) and $\ddot{\Phi}_t\in (T_{\Phi_t} \mathscr{D}_\mu)^\perp$ (the orthogonal complement to the tangent space consisting of gradient vector fields).  Said another way, the acceleration (the pressure gradient)  is orthogonal to the constraint (that $\Phi_t$ remain in $\mathscr{D}_\mu$).  The system \eqref{ODiff}  can be considered as the definition of perfect fluid motion.

 Arnold interpreted the ODE  \eqref{ODiff} for $t \mapsto \Phi_t$ as a \emph{geodesic} equation on $\mathscr{D}_\mu(M)$. 
To understand this view, fix $\gamma_1,\gamma_2\in\mathscr{D}_\mu(M)$.  Then, for any path $\gamma_\cdot :[t_1, t_2]\mapsto \mathscr{D}_\mu(M)$  satisfying $\gamma_{t_1} = \gamma_1$ and  $\gamma_{t_2} = \gamma_2$,  define the action functional
\be\label{action}
\AA[\gamma]_{t_1}^{t_2}:= \int_{t_1}^{t_2} \int_M \frac{1}{2} |\dot{\gamma}_t(x)|^2 \rmd x \rmd t.
\ee
We take variations of $\AA$ in path space as follows.  Consider a smooth one-parameter family of paths $\gamma^\ve_\cdot:[t_1,t_2]\mapsto  \mathscr{D}_\mu(M)$ for $\ve\in(-1,1)$ with fixed endpoints  $\gamma^\ve_{t_1} = \gamma_1$ and $\gamma^\ve_{t_2} = \gamma_2$ and where $\gamma^0= \gamma$.  Then we define the variation by 
\be
\delta \AA[\gamma]_{t_1}^{t_2}  := \frac{\rmd}{\rmd \ve} \AA[\gamma^\ve]_{t_1}^{t_2}\Big|_{\ve=0}.
\ee
In order to compute this object we need the variation of the path, defined by
\be\label{variation}
\delta \gamma_t(x) :=  \frac{\rmd}{\rmd \ve} \gamma^\ve_t(x)\Big|_{\ve=0}.
\ee
 Fixing $x\in M$, the variation  $\delta \gamma_\cdot(x):[t_1,t_2] \mapsto T_{\gamma_\cdot(x)} M$ defines an element of the tangent space of the manifold at $\gamma_\cdot(x)$ (formally $\delta \gamma_\cdot:[t_1,t_2] \mapsto T_{\gamma_\cdot} \mathscr{D}_\mu(M)$ defines an element of the tangent space of $ \mathscr{D}_\mu(M)$ along the path $\gamma$).   Composing with $\gamma^{-1}$, $\delta \gamma\circ \gamma^{-1} :[t_1,t_2] \mapsto T_{{\rm id}} \mathscr{D}_\mu(M)$ gives an element of the tangent space to the volume preserving diffeomorphism group at the identity.  Let $\mathfrak{X}_\mu(M)$ be the space of smooth divergence-free vector fields over $M$ which are tangent to the boundary. The tangent space $T_{{\rm id}} \mathscr{D}_\mu(M)$ can be identified with $\mathfrak{X}_\mu(M)$.  For our discussion, we require only that for any variation defined by pinned paths as above, it holds
 \be\label{eqntvec}
\delta \gamma_t( \gamma_t^{-1}(x))= v (t,x),
\ee
 for some  $v:[t_1,t_2]\mapsto \mathfrak{X}_\mu(M)$ with $v(t_1)= v(t_2)=0$, and vice versa.  The proof is elementary as everything is taken to be smooth:

\begin{myshade}
  \vspace{-1mm}
\begin{lemma}\label{tangentspace}
Fix $\gamma_\cdot:[t_1,t_2]\mapsto  \mathscr{D}_\mu(M)$. The following two statements hold
\begin{enumerate}
\item
Fix $v:[t_1,t_2]\mapsto \mathfrak{X}_\mu(M)$ with $v(t_1)= v(t_2)=0$.  There is a family $\gamma^\ve_\cdot:[t_1,t_2]\mapsto  \mathscr{D}_\mu(M)$ for $\ve\in(-1,1)$ with  $\gamma^\ve_{t_1} = \gamma_1$, $\gamma^\ve_{t_2} = \gamma_2$ and $\gamma^0= \gamma$ such that  \eqref{eqntvec} holds.
\item Let $\gamma^\ve_\cdot:[t_1,t_2]\mapsto  \mathscr{D}_\mu(M)$ for $\ve\in(-1,1)$ be  paths with  $\gamma^\ve_{t_1} = \gamma_1$, $\gamma^\ve_{t_2} = \gamma_2$ and $\gamma^0= \gamma$.   There exists $v:[t_1,t_2]\mapsto \mathfrak{X}_\mu(M)$ with $v(t_1)= v(t_2)=0$  such that  \eqref{eqntvec} holds.
\end{enumerate}
\end{lemma}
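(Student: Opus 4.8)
The plan is to prove the two assertions separately: in part (1) I would build the family of paths explicitly from the $\ve$-flow of $v$, and in part (2) I would read off the three required properties of $v$ by differentiating, at $\ve=0$, the constraints that define $\mathscr{D}_\mu(M)$. Throughout, joint smoothness in $(t,\ve,x)$ is automatic from smooth dependence of ODE solutions on time, parameters, and initial data, so I would not dwell on it.

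\emph{Part (1).} Given $v:[t_1,t_2]\to\mathfrak{X}_\mu(M)$ with $v(t_1)=v(t_2)=0$, freeze $t$ and let $\psi^\ve_t$ be the flow in the parameter $\ve$ of the field $v(t,\cdot)$, i.e. $\pa_\ve \psi^\ve_t = v(t,\psi^\ve_t)$, $\psi^0_t={\rm id}$. Since $v(t,\cdot)$ is divergence-free and tangent to $\pa M$, each $\psi^\ve_t$ preserves the volume form and leaves $\pa M$ invariant, so $\psi^\ve_t\in\mathscr{D}_\mu(M)$; completeness of the flow for $\ve\in(-1,1)$ follows from compactness of $\ov M$ together with boundary-tangency (trajectories cannot escape $\ov M$). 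Now set $\gamma^\ve_t := \psi^\ve_t\circ\gamma_t$. This lies in $\mathscr{D}_\mu(M)$ as a composition of two such maps; it equals $\gamma_t$ at $\ve=0$; and because $v$ vanishes at $t=t_1,t_2$ the flows $\psi^\ve_{t_1}$ and $\psi^\ve_{t_2}$ are the identity, so $\gamma^\ve_{t_1}=\gamma_1$ and $\gamma^\ve_{t_2}=\gamma_2$. Finally, differentiating $\gamma^\ve_t(x)=\psi^\ve_t(\gamma_t(x))$ at $\ve=0$ gives $\delta\gamma_t(x)=v(t,\gamma_t(x))$, which is exactly \eqref{eqntvec}.

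\emph{Part (2).} Given a pinned smooth family $\gamma^\ve$ with $\gamma^0=\gamma$, define $v(t,x):=\delta\gamma_t(\gamma_t^{-1}(x))$. Since $\gamma^\ve_{t_1}\equiv\gamma_1$ and $\gamma^\ve_{t_2}\equiv\gamma_2$ do not depend on $\ve$, we get $\delta\gamma_{t_1}=\delta\gamma_{t_2}=0$, hence $v(t_1)=v(t_2)=0$. For boundary-tangency, each $\gamma^\ve_t$ maps $\pa M$ to $\pa M$, so for $x\in\pa M$ the curve $\ve\mapsto\gamma^\ve_t(x)$ stays in $\pa M$ and $\delta\gamma_t(x)\in T_{\gamma_t(x)}\pa M$; composing with $\gamma_t^{-1}$ (which sends $\pa M$ to $\pa M$) shows $v(t,\cdot)$ is tangent to $\pa M$. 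For the divergence-free property, differentiate $\det D\gamma^\ve_t\equiv 1$ at $\ve=0$: using $\det D\gamma_t=1$ and the commutation of $D$ with $\pa_\ve$, one obtains $\tr\!\big((D\gamma_t)^{-1}D(\delta\gamma_t)\big)=0$. Writing $\delta\gamma_t(x)=v(t,\gamma_t(x))$ gives $D(\delta\gamma_t)(x)=(Dv)(t,\gamma_t(x))\,D\gamma_t(x)$, so by cyclicity of the trace $\tr\!\big((D\gamma_t)^{-1}D(\delta\gamma_t)\big)(x)=(\div v)(t,\gamma_t(x))$, and therefore $\div v=0$. (Equivalently, differentiate $(\gamma^\ve_t)^\ast\mu=\mu$ for the volume form $\mu$ to get $\mathcal{L}_v\mu=0$.)

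The only genuinely substantive point is the claim in part (1) that the $\ve$-flow of a divergence-free, boundary-tangent vector field remains in $\mathscr{D}_\mu(M)$ and is complete on $(-1,1)$; I expect this to be the main — though still mild — obstacle, settled by the standard facts that the flow of a divergence-free field preserves the volume form and that boundary-tangency keeps trajectories inside $\ov M$. Everything else is bookkeeping with the chain rule and differentiation under the volume constraint.
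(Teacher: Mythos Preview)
Your proof is correct and follows essentially the same approach as the paper. In part (1) the paper writes the ODE $\tfrac{d}{d\ve}\gamma^\ve_t = v(t,\gamma^\ve_t)$ with initial condition $\gamma^0_t=\gamma_t$ directly, which is identical to your composition $\psi^\ve_t\circ\gamma_t$; in part (2) the paper invokes Liouville's theorem in the converse direction (volume-preserving flow implies divergence-free generator) where you instead differentiate $\det D\gamma^\ve_t\equiv 1$ explicitly --- the same argument, with your version simply unpacking the details and also checking boundary tangency explicitly.
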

\end{myshade}

\begin{proof}
To establish the first direction, define the family $\gamma^\ve_\cdot:[t_1,t_2]\mapsto  \mathscr{D}_\mu(M)$  by
\be\label{eqnxieps}
\frac{\rmd}{\rmd \ve} \gamma^\ve_t(x) = v(t,  \gamma^\ve_t (x)), \qquad \gamma^0_t(x)=\gamma_t(x).
\ee
Since $ v\in  \mathfrak{X}_\mu(M)$, by Liouville's theorem it follows that $\det(\nabla  \gamma^\ve_t(x) )=1$ and thus  $ \gamma^\ve\in\mathscr{D}_\mu(M)$ for all $\ve$. Note that $\frac{\rmd}{\rmd \ve} \gamma^\ve_t\big|_{t_1}=\frac{\rmd}{\rmd \ve} \gamma^\ve_t\big|_{t_2}=0$ since $v$ vanishes at those times so that $\gamma^\ve_{t_1} = \gamma_1$, $\gamma^\ve_{t_2} = \gamma_2$.  According to definition \eqref{variation}, it follows \eqref{eqntvec} holds.

In the other direction, define
\be
v^\ve(t,x) := \left(\tfrac{\rmd}{\rmd \ve} \gamma^\ve_t\right)({(\gamma^\ve_t)}^{-1}(x)),
\ee
so that $\tfrac{\rmd}{\rmd \ve} \gamma^\ve_t(x)= v^\ve(t,\gamma^\ve_t(x))$.  Since $\gamma^\ve_t$ preserves volume, again by Liouville's theorem we have that  $v^\ve:[t_1,t_2] \mapsto   \mathfrak{X}_\mu(M)$ for all $\ve \in[0,1)$. Moreover $v^\ve(t_1)=v^\ve(t_2) =0$ since  $\tfrac{\rmd}{\rmd \ve} \gamma^\ve_{t_1}=\tfrac{\rmd}{\rmd \ve} \gamma^\ve_{t_2}=0$.  With $v:=v^0$, the claim follows.
\end{proof}
With this in hand, we arrive at the formal variational principle:

\begin{myshade}
  \vspace{-1mm}
\begin{theorem}[Action principle for perfect fluid motion]\label{theoremaction}
Let $\Phi_{t_1}, \Phi_{t_2}\in \mathscr{D}_\mu(M)$ be configurations of the fluid at times $t_1$ and $t_2$ with $t_1<t_2$. A trajectory $\Phi_t :[t_1, t_2]\mapsto \mathscr{D}_\mu(M)$ is an perfect fluid flow, i.e. $u:=\dot{\Phi}_t\circ \Phi_t^{-1}$ satisfies equations \eqref{eeb}--\eqref{eef}, if and only if $\Phi$ is a critical point of the action $\AA$, i.e.
\be
\delta \AA[\Phi]_{t_1}^{t_2} = 0, \qquad\text{with} \qquad  \delta \Phi_{t_1} = 0 , \ \  \delta \Phi_{t_2}=0.
\ee
\end{theorem}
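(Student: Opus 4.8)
The plan is to compute $\delta\AA[\Phi]_{t_1}^{t_2}$ directly by differentiating under the integral sign along a variation family $\Phi^\ve$ provided by Lemma~\ref{tangentspace}, and then to recognize the vanishing of this variation, for all admissible $v$, as equivalent to the Euler equations. First I would fix an arbitrary $v:[t_1,t_2]\mapsto\mathfrak{X}_\mu(M)$ with $v(t_1)=v(t_2)=0$ and invoke part (1) of Lemma~\ref{tangentspace} to obtain $\Phi^\ve$ with $\delta\Phi_t(\Phi_t^{-1}(x))=v(t,x)$; conversely, part (2) guarantees every pinned variation arises this way, so testing against all such $v$ is the same as testing against all pinned variations. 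Then I differentiate:
\be
\delta\AA[\Phi]_{t_1}^{t_2}=\int_{t_1}^{t_2}\int_M \dot\Phi_t(x)\cdot\frac{\rmd}{\rmd\ve}\dot\Phi^\ve_t(x)\Big|_{\ve=0}\,\rmd x\,\rmd t=\int_{t_1}^{t_2}\int_M \dot\Phi_t(x)\cdot\partial_t\big(\delta\Phi_t(x)\big)\,\rmd x\,\rmd t,
\ee
using the smoothness hypothesis to exchange $\rmd/\rmd\ve$ and $\partial_t$.

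Next I integrate by parts in $t$; the boundary terms vanish because $\delta\Phi_{t_1}=\delta\Phi_{t_2}=0$, leaving $-\int_{t_1}^{t_2}\int_M \ddot\Phi_t(x)\cdot\delta\Phi_t(x)\,\rmd x\,\rmd t$. Now I change variables $x\mapsto\Phi_t(x)$ in the inner integral; since $\Phi_t\in\mathscr{D}_\mu(M)$ the Jacobian is $1$, and writing $u=\dot\Phi_t\circ\Phi_t^{-1}$, $a:=\ddot\Phi_t\circ\Phi_t^{-1}=(\partial_t u+u\cdot\nabla u)$ (the material acceleration), and $v=\delta\Phi_t\circ\Phi_t^{-1}$, the variation becomes
\be
\delta\AA[\Phi]_{t_1}^{t_2}=-\int_{t_1}^{t_2}\int_M (\partial_t u+u\cdot\nabla u)(t,x)\cdot v(t,x)\,\rmd x\,\rmd t.
\ee
The critical-point condition is thus that this integral vanish for every $v(t,\cdot)\in\mathfrak{X}_\mu(M)$ vanishing at the endpoints.

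The remaining step—and the one I expect to be the main obstacle to state cleanly—is the Helmholtz--Hodge argument identifying this orthogonality condition with the Euler equations. One direction is immediate: if $\partial_t u+u\cdot\nabla u=-\nabla p$, then for $v\in\mathfrak{X}_\mu(M)$ we have $\int_M \nabla p\cdot v\,\rmd x=-\int_M p\,\nabla\cdot v\,\rmd x+\int_{\partial M}p\,(v\cdot\hat n)\,\rmd S=0$ since $\nabla\cdot v=0$ and $v\cdot\hat n=0$ on $\partial M$, so $\delta\AA=0$. For the converse, I would use that $L^2(M;\Rr^d)$ decomposes orthogonally as $\mathfrak{X}_\mu(M)\oplus\{\nabla q\}$ (the Hodge decomposition for vector fields tangent to the boundary): if $\int_M a\cdot v=0$ for all divergence-free $v$ tangent to $\partial M$ and all $t$, then $a(t,\cdot)$ lies in the orthogonal complement, hence $a(t,x)=-\nabla p(t,x)$ for some scalar $p(t,\cdot)$, which is precisely \eqref{eeb}; the constraints \eqref{incompress} and \eqref{eef} hold automatically because $\Phi_t\in\mathscr{D}_\mu(M)$ forces $u=\dot\Phi_t\circ\Phi_t^{-1}$ to be divergence-free and tangent to $\partial M$ for every $t$, and \eqref{incompress} follows by differentiating $\det(\nabla\Phi_t)=1$. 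The one subtlety worth flagging is the time-regularity of the pressure recovered from the Hodge projection and the fact that, to conclude $a=-\nabla p$ pointwise in $t$ rather than in some averaged sense, one localizes in $t$ by choosing $v(t,x)=\chi(t)w(x)$ with $\chi$ a bump function supported away from $t_1,t_2$ and $w\in\mathfrak{X}_\mu(M)$ arbitrary; since everything is smooth by assumption this causes no difficulty, but it is the point at which the ``for all pinned variations'' quantifier is actually used.
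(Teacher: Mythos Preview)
Your proposal is correct and follows essentially the same route as the paper's proof: compute the first variation, integrate by parts in $t$ using the pinned endpoints, change variables via the volume-preserving flowmap, localize in time with $v(t,x)=\chi(t)w(x)$ (the paper writes $f(t)\psi(x)$) to reduce to a pointwise-in-$t$ orthogonality against $\mathfrak{X}_\mu(M)$, and invoke the Hodge decomposition to produce the pressure. Your presentation is slightly more careful in two respects---you track the sign from the integration by parts (the paper silently drops it, harmless since $p$ is only defined up to sign) and you explicitly note that \eqref{incompress} and \eqref{eef} follow from $\Phi_t\in\mathscr{D}_\mu(M)$---but the argument is the same.
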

\end{myshade}

\begin{proof}
Compute the first variation 
\begin{align*}
\delta \AA[\Phi]_{t_1}^{t_2} &=  \int_{t_1}^{t_2} \int_M \dot{\Phi}_t(x)\cdot \delta \dot{\Phi}_t(x) \rmd x \rmd t\\
&=  \int_{t_1}^{t_2} \int_M  \ddot{\Phi}_t(x)\cdot \delta  {\Phi}_t(x) \rmd x \rmd t  \qquad \text{(using that $ \delta \Phi_{t_1} =\delta \Phi_{t_2}=0$)}.
\end{align*}
According to Lemma \ref{tangentspace},  $ \delta{\Phi}_t(\Phi_t^{-1}(x))=v(x,t)$ for some  divergence-free velocity field $v$ which satisfies $v\cdot\hat{n}|_{\partial M}=0$ and   $v(t_1)= v(t_2)=0$. 
This in hand, using that $\Phi_t^{-1}$ preserves volume and $\Phi_t(M)=M$ we write the variation of the action as
  \begin{align}\label{variationact}
\delta \AA[\Phi]_{t_1}^{t_2} &=  \int_{t_1}^{t_2} \int_M  \ddot{\Phi}_t(\Phi_t^{-1}(x))\cdot v(x,t)\rmd x \rmd t.
\end{align}
Assume first that  $\delta \AA[\Phi]_{t_1}^{t_2}=0$, namely the action is stationary on $\Phi$ under any variation. By Lemma \ref{tangentspace}(1), the object \eqref{variationact} must vanish in particular for vector fields of the form $v(x,t) = f(t) \psi(x)$ for $f\in C^\infty_0(t_1,t_2)$ and $\psi\in  \mathfrak{X}_\mu(M)$.  
Thus
  \begin{align}\label{variationact2}
0 &=  \int_{t_1}^{t_2} f(t) g(t) \rmd t, \qquad g(t):=  \int_M  \ddot{\Phi}_t(\Phi_t^{-1}(x))\cdot \psi(x)\rmd x 
\end{align}
for all $ \psi\in\mathfrak{X}_\mu(M), f\in C^\infty_0(t_1,t_2)$. Since $g$ is continuous in time, we may take
$f$ to approximate $g$ on $[t_1,t_2]$ to conclude that $g(t) =0$ for each $t\in [t_1,t_2]$ (the fundamental lemma of calculus of variations).  We deduce for each $t\in [t_1,t_2]$ that
  \begin{align}
0= \int_M  \ddot{\Phi}_t(\Phi_t^{-1}(x))\cdot \psi(x)\rmd x \rmd t, \qquad \forall \psi\in\mathfrak{X}_\mu(M).\end{align}
The arbitrariness of $\psi\in\mathfrak{X}_\mu(M)$ together with the Hodge decomposition allows us to conclude the existence of $p(t,x):  [t_1,t_2]\times M\to \mathbb{R}$ such that 
\be\label{EE}
\ddot{\Phi}_t(x) = -\nabla p(t,\Phi_t(x)), \qquad  \forall t\in [t_1,t_2].
\ee
Since $\ddot{\Phi}_t(x) = (\partial_t u + u \cdot \nabla u) ({\Phi}_t(x) ,t)$ where $u=\dot{\Phi}_t\circ \Phi_t^{-1}$, we see that \eqref{EE} implies that $u$ solves Euler.  Contrariwise, if $u$ solves the Euler equations then \eqref{EE} holds with $p$ as the pressure field so that $\delta \AA[\Phi]_{t_1}^{t_2}=0$ by \eqref{variationact} and $v\in  \mathfrak{X}_\mu(M)$.  
\end{proof}

Next we show that for short times the Euler flow minimizes the action. This fact was pointed out by Ebin and Marsden \cite[Section 9]{ebinmarsden}.  The following version is due to  Y. Brenier in \cite[Section 5]{brenier1} or \cite[Proposition 3.2]{brenier4}.

\begin{myshade}
  \vspace{-1mm}
\begin{theorem}[Perfect fluid flow minimizes the action for short times]\label{theoremgeo}
Let $u\in C^1([0,T]\times \overline{M})$, $p\in C( [0,T]; C_x^2(\overline{M}))$.   Suppose that $T>0$ is such that
\be\label{condition}
T^2\leq \pi^2/K
\ee
with $K:= \sup_{t\in[0,T]}\sup_{x\in \overline{M}} \sup_{|y|=1} {y} \cdot \nabla^2 p(x,t) \cdot y$.
If $(u,p)$ is a solution of the Euler equations \eqref{eeb}--\eqref{eef} and $\dot{\Phi}_t = u(\Phi_t,t)$ with $\Phi_0= {\rm id}$, then
\be\label{ineq}
\AA[\Phi]_{0}^{T} \leq \AA[\gamma]_{0}^{T}
\ee
among all $\gamma_\cdot:[0,T]\mapsto  \mathscr{D}_\mu(M)$ with $\gamma_{0} = {\rm id}$ and $\gamma_{T} = \Phi_{T}$. If $T^2< \pi^2/K$, equality holds if and only if $\gamma=\Phi$.
\end{theorem}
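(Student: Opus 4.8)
The plan is to follow Brenier's argument: compare an arbitrary competitor $\gamma$ with the Euler flow $\Phi$ by introducing, for each fluid label $x\in M$, the \emph{displacement} $a(t,x):=\gamma_t(x)-\Phi_t(x)\in\mathbb{R}^d$, which by the endpoint constraints satisfies $a(0,x)=a(T,x)=0$. Expanding the kinetic energy pointwise via $|\dot\gamma_t|^2=|\dot\Phi_t+\dot a|^2$ gives
\be\label{planexp}
\AA[\gamma]_0^T-\AA[\Phi]_0^T=\int_0^T\!\!\int_M\Big(\dot\Phi_t(x)\cdot\dot a(t,x)+\tfrac12|\dot a(t,x)|^2\Big)\,\rmd x\,\rmd t .
\ee
First I would integrate the cross term by parts in $t$. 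Since $u\in C^1$ and $p\in C([0,T];C^2_x)$, the Lagrangian trajectories are $C^2$ in time with $\ddot\Phi_t(x)=-\nabla p(t,\Phi_t(x))$ (the Euler equations along particle paths, cf.\ \eqref{EE}); as $a$ vanishes at $t=0,T$, the boundary terms drop and the cross term becomes $\int_0^T\!\int_M \nabla p(t,\Phi_t(x))\cdot a(t,x)\,\rmd x\,\rmd t$.

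The crucial step is to use that \emph{both} $\gamma_t$ and $\Phi_t$ are volume preserving, so that $\int_M p(t,\gamma_t(x))\,\rmd x=\int_M p(t,\Phi_t(x))\,\rmd x$ for every $t$. Writing the second-order Taylor expansion with integral remainder along the segment from $\Phi_t(x)$ to $\gamma_t(x)$,
\be\label{plantaylor}
p(t,\gamma_t(x))-p(t,\Phi_t(x))-\nabla p(t,\Phi_t(x))\cdot a=\int_0^1(1-s)\,a\cdot\nabla^2 p\big(t,\Phi_t(x)+sa\big)\cdot a\,\rmd s\ \le\ \tfrac{K}{2}|a(t,x)|^2,
\ee
the last inequality being the definition of $K$ together with $\int_0^1(1-s)\,\rmd s=\tfrac12$. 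Integrating in $x$ and using the volume-preservation identity to kill the zeroth-order difference yields, for each $t$, $\int_M \nabla p(t,\Phi_t(x))\cdot a(t,x)\,\rmd x\ge -\tfrac{K}{2}\int_M|a(t,x)|^2\,\rmd x$. Substituting this into \eqref{planexp} gives
\be\label{planafter}
\AA[\gamma]_0^T-\AA[\Phi]_0^T\ \ge\ \frac12\int_M\Big(\int_0^T|\dot a(t,x)|^2\,\rmd t-K\int_0^T|a(t,x)|^2\,\rmd t\Big)\rmd x .
\ee

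To finish, I would apply the sharp one-dimensional Poincar\'e (Wirtinger) inequality: for $t\mapsto a(t,x)$ vanishing at $t=0$ and $t=T$ one has $\int_0^T|\dot a|^2\,\rmd t\ge(\pi^2/T^2)\int_0^T|a|^2\,\rmd t$, with equality only for $a(t,x)=c(x)\sin(\pi t/T)$. Hence the bracket in \eqref{planafter} is bounded below by $(\pi^2/T^2-K)\int_0^T|a(t,x)|^2\,\rmd t$, which is $\ge0$ exactly when $T^2\le\pi^2/K$ --- this is where hypothesis \eqref{condition} enters --- giving \eqref{ineq}. If $T^2<\pi^2/K$ strictly, the coefficient $\pi^2/T^2-K$ is positive, so $\AA[\gamma]_0^T=\AA[\Phi]_0^T$ forces $\int_0^T|a(t,x)|^2\,\rmd t=0$ for a.e.\ $x$, i.e.\ $\gamma\equiv\Phi$.

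The main obstacle is the Taylor step \eqref{plantaylor}: the interpolation point $\Phi_t(x)+sa=(1-s)\Phi_t(x)+s\gamma_t(x)$ must lie in a set on which $\nabla^2p$ is controlled by $K$. This is automatic when $M$ is convex, and in general one extends $p(t,\cdot)$ to a neighborhood of $\overline M$ (or to $\mathbb{R}^d$) as a $C^2$ function without enlarging its Hessian bound --- a routine but genuinely necessary point. One also needs to fix the admissible class of competitors (e.g.\ paths $\gamma$ with $\dot\gamma\in L^2$ in time, valued in $\mathscr{D}_\mu(M)$) so that \eqref{planexp} and the integration by parts are legitimate, after which the inequality passes to the natural closure by density; the remaining ingredients --- the energy expansion, the volume-preservation cancellation, and the Wirtinger inequality --- are elementary.
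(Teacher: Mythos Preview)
Your argument is correct and follows essentially the same route as the paper: expand the action difference, integrate the cross term by parts using $\ddot\Phi_t=-\nabla p(t,\Phi_t)$, control the resulting pressure term via Taylor's theorem together with the volume-preservation identity $\int_M p(t,\gamma_t)=\int_M p(t,\Phi_t)$, and close with the sharp Poincar\'e inequality on $[0,T]$. The only cosmetic differences are that the paper writes the decomposition starting from $\AA[\Phi]$ rather than $\AA[\gamma]$ and uses the Lagrange form of the Taylor remainder (a point $Y_t(x)$ on the chord) instead of your integral remainder; your observation about the chord possibly leaving $\overline{M}$ is a genuine technical point that the paper does not address explicitly.
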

\end{myshade}

From a geometric standpoint, the pressure Hessian is the second fundamental form -- encoding how the submanifold of volume preserving diffeomorphisms $\mathscr{D}_\mu(M)$ sits inside the ambient group of all diffeomorphisms $\mathscr{D}(M)$ \cite{mis1}.

\begin{proof}
Since $(u,p)$ is an Euler solution, we have $\ddot{\Phi}_t(x) = -\nabla p(\Phi_t(x))$. Note first
\be
\AA[\Phi]_{0}^{T} =\AA[\gamma]_{0}^{T}+   \int_0^T \int_M\dot{\Phi}_t(x)\cdot(\dot{\Phi}_t(x)- \dot{\gamma}_t(x))\rmd x \rmd t-\AA[\Phi-\gamma]_{0}^{T}.
\ee
The last two terms are negative for short time. Indeed, 
by Poincar\'{e}'s inequality,\footnote{Indeed, consider any absolutely continuous curve $f:[0,T]\mapsto \mathbb{R}^d$ with $f(0)=f(T) = 0$ and with $\rmd f(t)/\rmd t \in L^2([0,T])$.  Using the Fourier series representation of $f$ together with Plancherel's theorem we find immediately that $\|f(\cdot)\|_{L^2([0,T])}^2\leq  \frac{T^2}{\pi^2} \|\frac{\rmd}{\rmd t}f(\cdot)\|_{L^2([0,T])}^2$.   }
\be\nonumber
\AA[\Phi-\gamma]_{0}^{T}=\frac{1}{2} \int_0^T \int_M |\dot{\Phi}_t(x)- \dot{\gamma}_t(x)|^2 \rmd x \rmd t \geq \frac{\pi^2}{2T^2}  \int_0^T \int_M | {\Phi}_t(x)- {\gamma}_t(x)|^2 \rmd x \rmd t. 
\ee
On the other hand, since $\Phi_{0}(x)= \gamma_{0}(x)$ and $\Phi_{T}(x)= \gamma_{T}(x)$, we have 
\begin{align} \nonumber
 \int_0^T \int_M\dot{\Phi}_t(x)\cdot(\dot{\Phi}_t(x)- \dot{\gamma}_t(x))\rmd x \rmd t&=
\int_0^T \int_M \ddot{\Phi}_t(x)\cdot( \Phi_t(x)- \gamma_t(x))\rmd x \rmd t \\
&= -\int_0^T \int_M\nabla p(\Phi_t(x))\cdot( \Phi_t(x)- \gamma_t(x))\rmd x \rmd t. \nonumber
\end{align}
Since the pressure is twice differentiable, by Taylor's theorem we have 
\begin{align*}
p(\gamma_t(x))&= p(\Phi_t(x)) +\nabla p(\Phi_t(x))\cdot( \Phi_t(x)- \gamma_t(x))\\
&\qquad + \frac{1}{2}( \Phi_t(x)- \gamma_t(x))\cdot \nabla^2 p(Y_t(x))\cdot( \Phi_t(x)- \gamma_t(x)),
\end{align*}
where $Y_t(x)$ is on the chord connecting $\gamma_t(x)$ to $\Phi_t(x)$. Upon integrating, using the fact that $\gamma_t$ and $\Phi_t$ preserve volume, we obtain
\begin{align*}\nonumber
 -\int_0^T \int_M\nabla &p(\Phi_t(x))\cdot( \Phi_t(x)- \gamma_t(x))\rmd x \rmd t \\
 &=  \frac{1}{2}\int_0^T \int_M\ ( \Phi_t(x)- \gamma_t(x))\cdot \nabla^2 p(Y_t(x))\cdot( \Phi_t(x)- \gamma_t(x))\rmd x \rmd t.
\end{align*}
It follows that 
\be
\left| \int_0^T \int_M\dot{\Phi}_t(x)\cdot(\dot{\Phi}_t(x)- \dot{\gamma}_t(x))\rmd x \rmd t\right|\leq \frac{K}{2}  \int_0^T \int_M | {\Phi}_t(x)- {\gamma}_t(x)|^2 \rmd x \rmd t.
\ee
Thus we obtain the upper bound 
\begin{align}\label{finalineq}
\AA[\Phi]_{0}^{T} \leq \AA[\gamma]_{0}^{T} + \frac{1}{2}\left(K- \frac{\pi^2}{T^2}\right)  \int_0^T \int_M | {\Phi}_t(x)- {\gamma}_t(x)|^2 \rmd x \rmd t,
\end{align}
whence for $T^2\leq \pi^2/K$ we have $\AA[\Phi]_{0}^{T} \leq \AA[\gamma]_{0}^{T}$ as claimed.  If equality holds, then from \eqref{finalineq} we deduce  $ \int_0^T \int_M | {\Phi}_t(x)- {\gamma}_t(x)|^2 \rmd x \rmd t=0$ so that $\gamma=\Phi$ as claimed.
\end{proof}

\begin{remark}[Failure to be a minimizer]\label{remfail}
The condition \eqref{condition} on the time  is sharp in the following senses.
Consider the two-dimensional example of solid body rotation, i.e. $u(x)= x^\perp$. This is an exact stationary solution of the Euler equations  having pressure $p(x)= \frac{1}{2} |x|^2$ on a disc domain. The corresponding flowmap is $\Phi_t(x) = \mathsf{R}_{ t \! \! \mod 2\pi} x$ where $\mathsf{R}_{\theta}$ denotes the (counterclockwise) rotation matrix by angle $\theta\in[0,2\pi)$ about 0.  Brenier \cite{brenier1} points out that at time $T=\pi$ (a half rotation of the disk), there fails to be a unique minimizer of the action.  Indeed, the action does not depend on whether the rotation is clockwise or counter clockwise, both of which are geodesics connecting these two states. Note that since $\nabla^2 p = I$ we have $K=1$.  Thus, at exactly this moment, $T= \pi$, the condition \eqref{condition} is violated illustrating its sharpness.   For $T>\pi$, there exists a shorter path (just rotate clockwise) and thus after this moment, the original fluid flow is not the minimum of the action any longer. We remark that solid body rotation has a \emph{cut point} at $T=\pi$ -- see \cite{DM21} for further discussion of this geometric notion.  It is also an example of \emph{isochronal} flow: one for which the Lagrangian flowmap is time periodic (see Definition \ref{isodef} below).  Geometrically, this corresponds to $\Phi$ being a \emph{closed geodesic} in $\mathscr{D}_\mu(M)$ \cite{serre93}. These flows will be discussed further in \S \ref{strifesec}.
\end{remark}

\begin{remark}[Euler as geodesic motion on $\mathscr{D}_\mu(M)$]  We now describe V.I. Arnold's geometric picture in greater detail.
Formally, one can view the space $\mathscr{D}_\mu(M)$  as an infinite-dimensional manifold with the
metric inherited from the embedding in $L^2(M;\mathbb{R}^d)$, and with tangent
space made by the divergence-free vector fields tangent to the boundary of $M$.  We can define the length of a path $\gamma_\cdot:[t_1,t_2]\mapsto  \mathscr{D}_\mu(M)$  by the expression
\be
\mathscr{L}[\gamma]_{t_1}^{t_2}:= \int_{t_1}^{t_2} \|\dot{\gamma}_t(\cdot)\|_{L^2(M)} \rmd t.
\ee
We formally define the \emph{geodesic distance} connecting two states $ \gamma_1,\gamma_2\in \mathscr{D}_\mu(M)$ by
\be
{\rm dist}_{\mathscr{D}_\mu(M)}(\gamma_0,\gamma_1) = \inf_{\substack{ \gamma_\cdot:[0,1]\mapsto  \mathscr{D}_\mu(M)\\
    \gamma(0)=\gamma_0, \ \gamma(1)=\gamma_1 }}\mathscr{L}[\gamma]_{0}^{1}.
    \ee
A geodesic curve $\Phi_\cdot:[t_1,t_2]\mapsto  \mathscr{D}_\mu(M)$  is defined to be one so that for all $t_1\in\mathbb{R}$ there exists a $\tau>0$ such that if $t_1< t_2< t_1+ \tau$ then
\be
{\rm dist}_{\mathscr{D}_\mu(M)}(\Phi(t_1) ,\Phi(t_2))  =  \mathscr{L}[\Phi]_{t_1}^{t_2}.
\ee
If additionally the parametrization by $t$ is chosen so that $\|\dot{\Phi}_t(\cdot)\|_{L^2(M)} ={\rm(const.)}$, then $\Phi$ minimizes the action  \eqref{action} among all smooth paths connecting $\Phi(t_1)$ and $\Phi(t_2)$. Indeed by Schwarz's inequality, we have $\mathscr{A}[\gamma]_{t_1}^{t_2}\geq {(\mathscr{L}[\gamma]_{t_1}^{t_2})^2}/{2(t_2-t_2)}$ with equality if and only if $\|\dot{\gamma}_t(\cdot)\|_{L^2(M)} ={\rm(const.)}$.  In general, according to Theorem \ref{theoremaction} perfect fluid motion is a critical point of both functionals  $\mathscr{A}[\gamma]_{t_1}^{t_2}$ and $\mathscr{L}[\gamma]_{t_1}^{t_2}$. It is, in fact, a geodesic according to Theorem \ref{theoremgeo}, although as we say in Remark \ref{remfail} it need not be the curve of minimal length for long times.

\begin{figure}[h!]
\centering
\includegraphics[scale=.5]{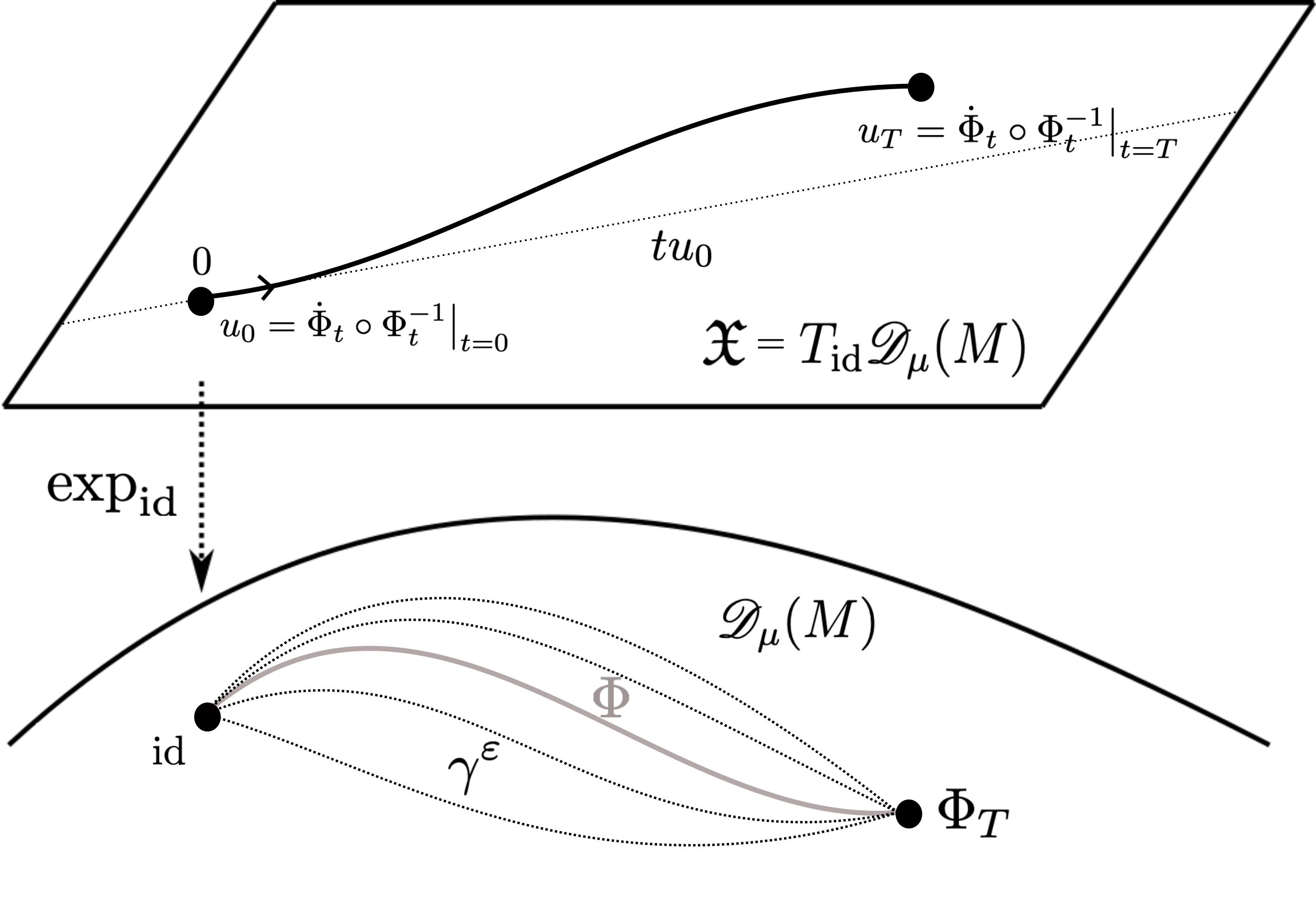}
\caption{Depiction of the geometry of fluid motion.}
\label{cubes}
\end{figure}  

The above discussion is somewhat formal in that it ignores issues of regularity required for precise definitions of the variations. To make things more precise, one may consider the group $\mathscr{D}_\mu^s(M)$ for $s>d/2+1$, which is a submanifold of all $H^s$ diffeomorphisms of $M$, see \cite{ebinmarsden}.
The $L^2$ exponential map is defined as the solution map of 
the geodesic equations: 
it maps lines through the origin in the tangent space at a given diffeomorphism
onto geodesics in the diffeomorphism group. 
More precisely, at the identity we set 
\begin{equation} \label{eq:exp} 
\mathrm{exp}_{\rm id} : T_{\rm id} \mathscr{D}_\mu^s \to \mathscr{D}_\mu^s, 
\qquad 
\mathrm{exp}_{\rm id}{tu_0} = \Phi_t, 
\quad 
t \in \mathbb{R} 
\end{equation} 
where $\Phi_t$ is the unique $L^2$ geodesic (at least for short times) starting from ${\rm id}$ with velocity $u_0$.  We note that in any spatial dimension, $\mathrm{exp}_{\rm id}$ is a local diffeomorphism near ${\rm id}$ and in two dimensions, it is defined on the whole tangent space.  The study of analytical properties of this map is the subject of the classical work of Ebin and Marsden \cite{ebinmarsden}.  This framework opened up the possibility to ask purely geometric questions about fluid motion, such as those concerning the existence of conjugate and cut points. See \cite{mis2,shnir2,DM21}. 
\end{remark}

\begin{remark}[Euler--Arnold equations]
Arnold's geometric description of the motion of a physical system as geodesic flow on a Lie group with respect to some metric extends widely. These occur either by changing the group (e.g. instead of $\mathscr{D}_\mu(M)$, consider $\mathscr{D}(M)$, $SO(n)$, the Virasoro group, etc) or changing the metric (e.g. instead of $L^2$, consider $H^1$, $\dot{H}^1$, $H^{-1}$, etc). Some of the systems described in this framework are the $n$-dimensional top \cite{MR13}, the Camassa--Holm equation \cite{SK99}, the Euler-$\alpha$ mean motion model \cite{HMR,Shk}, the Burgers equation and compressible fluids \cite{SP,KMM21}, the Hunter--Saxton equation \cite{JK,SP2}, among others.  See \cite{ak}.
One can also accommodate systems such as compressible and quantum fluids, by the inclusion of an appropriate potential
related to material properties of the system.  See the recent survey
\cite{KMM21}.
\end{remark}

\begin{remark}[Time analyticity of particle trajectories]
Shnirelman proved \cite{shn12} that $\exp_e$ defined by \eqref{eq:exp} is a real analytic map 
provided that the fluid domain is an analytic manifold.  
As a corollary Lagrangian trajectories are analytic functions of time 
at fixed labels \cite{Lich25}.  This fact was proved first by Serfati  \cite{ser} and, for streamlines of stationary states, by Nadirashvili \cite{N14}. The difference between Eulerian and Lagrangian analyticity was highlighted by Constantin-Kukuvika-Vicol \cite{ckv}, and these results have subsequently extended  by Constantin-Vicol-Wu \cite{cvw} to other systems. See also the discussion by Frisch and Zheligovsky \cite{frisch}.  It would be interesting to understand which Eulerian phase spaces have this property.  For example, are  particle trajectories  real analytic  for bounded vorticity solutions (the Yudovich phase space, see Theorem \ref{yudothm})?
\end{remark}

\begin{remark}[Two-point fluid problem]
The principle of least action suggests the following `infinite dimensional Dirichlet principle': given two isotopic configurations $\gamma_1$ and $\gamma_2\in \mathscr{D}_\mu$, construct a perfect fluid flow connecting them by identifying the shortest path between them in the diffeomorphism group (in the $L^2$ metric).  From the above discussion, if such a path exists it is automatically a perfect fluid flow.  This problem was first investigated by Shnirelman \cite{shnir1}, where he proved that if $d\geq 3$, this variational problem does not have minimizers for some pairs of configurations $\gamma_1$ and $\gamma_2$.
  In $d=2$, this is open even in the following relaxed form:

  \begin{myshade3}
  \vspace{-1mm}
\begin{question}[Shnirelman (1985), \cite{shnir1}]\label{quest1}
Let $M\subset \mathbb{R}^2$ be a domain with smooth boundary.   Does there exist a perfect fluid flow connecting any two isotopic states $\gamma_1,\gamma_2\in \mathscr{D}_\mu(M)$?
\end{question}
\end{myshade3}

See also \cite{K05}. In particular, it is not known whether the image of the space of incompressible vector fields in $ \mathscr{D}_\mu$ defined by the $L^2$ exponential map is the whole group $ \mathscr{D}_\mu$, i.e. that any such diffeomorphism could be realized as a time-1 value of some solution of the Euler-Lagrange equation. It is a question of accessibility of the entire configuration space by perfect fluid flows. If Question \ref{quest1} is answered in the affirmative, it would represent a hydrodynamical analogue of the Hopf-Rinow theorem for the group of diffeomorphisms. The difficulty is that this group is not locally compact.
 However, Misiolek,  and Preston proved that the $L^2$ exponential map is a covering space map on an open connected component 
$\mathcal{U} \subset \mathscr{D}_\mu^s$ of the identity whose $L^2$ diameter is infinite, 
cf. \cite{mispre}.  This is a consequence of  the fact that $\mathrm{exp}_{\rm id}$ is a nonlinear Fredholm map of index zero, 
see \cite{emp,shnir3}. 
An affirmative resolution of Open Question \ref{quest1} (conjectured by Shnirelman in \cite{shnir1}) would result from showing this connected component is the whole group.  We remark finally that a different but related question is that of finding minimizers (shortest paths) connecting the states $\gamma_1$ and $\gamma_2$.  The existence of conjugate and cut points along geodesics generated by simple steady solutions on $M=\mathbb{T}^2$ having streamfunction $\psi(x_1,x_2) = \sin(n x_1) \sin(m x_2)$ (Kolmogorov flows) \cite{mis2,DM21} indicate that the minimum may fail to be achieved at long times (see the numerical study \cite{L12}), as was conjectured in \cite{shnir0}.

In view of the fact that classical minimizers of the two-point problem need not always exist  \cite{shnir1}, Brenier introduced \emph{generalized flows}, which are a wider class of (stochastic) objects over which the variation problem is always solvable \cite{brenier1}. Shnirelman used this idea to show that any sufficiently long geodesic in $\mathscr{D}_\mu^s(M)$ 
will contain a local cut point if $\dim{M}=3$ (a point such that shorter paths can be chosen arbitrarily close to  the given geodesic in the manifold topology).
\end{remark}

\subsection{Cauchy wellposedness in phase space}

To understand the appropriate phase space for the Eulerian (for the velocity field rather than the flowmap) dynamics \eqref{eeb}--\eqref{eef}, a useful quantity to introduce is the vorticity  two-form  $\Omega:=\frac{1}{2}(\nabla u- \nabla u^t)$, which is the antisymmetric part of the velocity gradient tensor.   Recall that the Lie derivative $ \pounds_v$ along a vector field $v$ acting on a two-form $a$  reads 
$\pounds_v a = v\cdot \nabla a +a (\nabla v)^t +(\nabla v)a.
$
In any dimension, perfect fluid motion  \eqref{eeb}--\eqref{eef} is equivalently
 defined by the property that vorticity two-form is transported:
  \begin{alignat}{2}\label{eevb}
\partial_t\Omega + \pounds_u \Omega&=0 &\qquad \text{in} \quad  M,\\
\Omega|_{t=0} &=\Omega_0 ,   &\qquad\text{in} \quad  M,\label{eevb1}
\end{alignat}
where $u:=u_{\textsf{rot}} + u_{\textsf{H}}$ where $ u_{\textsf{H}}$ is the harmonic part\footnote{Recall the space of harmonic vector fields on a compact Riemannian manifold $M$ is isomorphic to the first cohomology group of $M$.  See e.g. \cite[Book 1, Chapter 5]{mt96}.  If  $M$ has trivial  first cohomology (e.g. $M$ is simply connected), then  there are no non-trivial harmonic vector fields, the curl is injective and the vorticity formulation \eqref{eevb}--\eqref{eevb1} with $u$ recovered by Biot-Savart is equivalent to the velocity formulation. If the first cohomology is non-trivial (e.g. $M=\mathbb{T}^d$) then the harmonic part of the solution must be kept track of separately according to equation \eqref{harmcomp}.
 } of the velocity field and  the non-harmonic part $u_{\textsf{rot}}$  is  recovered from $\Omega$ by the Biot-Savart law $K_ M$. More specifically, $u_{\textsf{rot}}=K_ M[\Omega] $ is defined as the unique  solution of the following elliptic system having trivial harmonic component\footnote{One could instead define the Biot-Savart operator to have exactly the harmonic component consistent with the Euler equations. As discussed here, this can be accomplished by solving the coupled system of \eqref{harmcomp} and \eqref{eevb}--\eqref{bsf} in tandem. However, there can be simpler prescriptions. For example, suppose $M \subset \mathbb{R}^2$ is a bounded planar domain with connected components of the boundary denoted by $\Gamma_0, \Gamma_1, \dots \Gamma_N$ with $\Gamma_0$ bordering the unbounded connected component of $\mathbb{R}^2\setminus M$.  Then the harmonic part of the velocity is fixed by demanding in addition to \eqref{bs1}--\eqref{bsf} that the circulations $\oint_{\Gamma_i}u \cdot \rmd \ell=\gamma_i$ be given.   Euler preserves these circulations by Kelvin's theorem since connected components of the boundary are invariant under the Lagrangian flow. Thus  $\gamma_i := \oint_{\Gamma_i}u_0 \cdot \rmd \ell$, closing the system without the need for an additional evolution equation.\label{harmfoot}} (see e.g. \cite{K02})
  \begin{alignat}{2}\label{bs1}
\Delta u &= 2 \nabla \cdot \Omega \qquad \  \text{in} \quad  M,\\
u \cdot \hat{n} &= 0 \qquad\qquad \ \ \text{on} \quad  \partial M,\\
\hat{\tau} \cdot (\partial_n u -   2\hat{n} \cdot \Omega) &= \hat{\tau} \cdot  \kappa \cdot u    \qquad\text{on} \quad  \partial M,\label{bsf}
\end{alignat}
for all $\{\hat{\tau}_i\}_{i=1}^{d-1}$  tangent vector fields and where $\kappa_{ij}=  \partial_i \hat{n}_j$ is the second fundamental form of the boundary. 
The solution to \eqref{bs1}--\eqref{bsf} defines   the Biot-Savart law $u=K_ M[\Omega]$.  On the other hand, if $\overline{M}$ is compact and $ \mathbb{P}_{\textsf{H}} $ denotes the orthogonal projection of $L^2$ onto the finite-dimensional space of harmonic vector fields on $M$ tangent to the boundary $\partial M$, then
 an evolution equation for the harmonic part of the velocity $u_{\textsf{H}}  = \mathbb{P}_{\textsf{H}} u $ is found by projecting the Euler equation  \eqref{eeb}--\eqref{eef} yielding
\be\label{harmcomp}
\partial_t u_{\textsf{H}} + \mathbb{P}_{\textsf{H}}[ \div  (u_{\textsf{rot}} + u_{\textsf{H}})\otimes  (u_{\textsf{rot}} + u_{\textsf{H}})] =0,
\ee
where, in deriving \eqref{harmcomp}, we used that gradient vector fields (the pressure forces) are orthogonal to harmonic vector fields which are tangent to the boundary.  Equation \eqref{harmcomp} is a closed equation for the harmonic component $u_{\textsf{H}}$ given the vorticity $\Omega$.  Thus, it can be evolved along side the vorticity evolution equation \eqref{eevb}--\eqref{eevb1} in order to recover the full Euler solution $u:=u_{\textsf{rot}} + u_{\textsf{H}}$. As such,  a solution to \eqref{eevb}--\eqref{harmcomp} generates a solution to \eqref{eeb}--\eqref{eef} and vice versa.  

\begin{remark}[Solutions with stationary vorticity]
Note that an immediate consequence of this discussion is that the space of harmonic vector fields is invariant under the Euler evolution and the dynamics restricted to this subspace becomes  finite dimensional. In fact every harmonic vector field is a stationary Euler solution as they are divergence free and satisfy  $u_{\textsf{H}} \cdot \nabla u_{\textsf{H}} = \nabla \frac{1}{2} |u_{\textsf{H}}|^2$ which follows from their vorticity being trivial.  This fact can also be used to eliminate the nonlinear term in $u_{\textsf{H}}$ from the equation \eqref{harmcomp}.  More generally, if the vorticity $\Omega$ is time independent (and thus $u_{\textsf{rot}}$),  then so is  $u_{\textsf{H}}$ provided that either $M$ has  trivial first cohomology (by Hodge theory), the dimension of the space spanned by the harmonic vector fields tangent to the boundary is unity (by energy conservation), or $M$ is a compact planar domain (by a structural cancellation).  
See \cite[Book 3, Chapter 11, Section 1]{mt96}. Outside of these settings, it would be interesting to study what kind of motion the projected dynamics \eqref{harmcomp} with steady vorticity can give in the space of harmonic vector fields.
\end{remark}

We remark that, in the case $M = \mathbb{T}^{d}$, there are $d$ independent harmonic vector fields $\{h_i\}_{i=1}^d$ which an be identified as the direction fields $h_i = e_i$.  Thus, the orthogonal projection $ \mathbb{P}_{\textsf{H}} $ is simply integration over the domain, i.e. $u_{\textsf{H}}= \mathbb{P}_{\textsf{H}} u := \sum_{i=1}^d (u, h_i)_{L^2} h_i =\int_{\mathbb{T}^d}u \rmd x$.  Since $u_{\textsf{rot}}, u_{\textsf{H}}$ are periodic, equation \eqref{harmcomp} shows that $u_{\textsf{H}}(t)=\int_{\mathbb{T}^d} u_0\rmd x$ is constant in time.  A similar remark holds on the periodic channel $M = \mathbb{T}^{d-1}\times [0,1]$ where the horizontal means are preserved.  As such, the harmonic-free part (the dynamical component) can be solved using the Euler equation written in a moving reference frame, i.e. by Galilean transformation.

Hereon, we shall denote by $S_t$ the vorticity solution map $\Omega(t) = S_t(\Omega_0)$. We now discuss the issue of local well-posedness of the Euler equation on a given function  space $X$.  We call $X$ a \emph{good phase space} for the Euler dynamics if given data in $X$, the equations admit unique solutions in $\Omega_0\in X$, at least for short time, i.e. the solution operator $S_t:X\ni \Omega_0 \mapsto \Omega(t)\in X$ is a well defined injective map for all $t$ sufficiently small.  Generally speaking,  this will be the case provided
\begin{enumerate}[label=(\alph*)]
\item $X$ is an algebra,
\item trajectories $\dot{\Phi}_t(x) = u( \Phi_t(x),t)$ are unique for all $x\in M$ provided $\nabla u \in X$,
\item $X$ is compatible with the Biot-Savart law $u=K_M[\Omega]$, i.e. $\|\nabla u\|_X\leq C \|\Omega\|_{X}$.
\end{enumerate}
Examples of such a phase space include $X = C^\alpha$ for $\alpha>0$ with $\alpha \notin \mathbb{N}$ and $X= W^{s,p}(M)$ for $s>d/p$ for $p>1$.  Generally, any space $X$ that embeds in the space of continuous functions will satisfy (a).  Particle trajectories are usually well defined at this level of regularity, e.g. (b) is satisfied. The H\"{o}lder spaces $C^k$ or Sobolev spaces  $W^{k,p}$ with $k\in \mathbb{N}$  and $p=1,\infty$ are  not good phase spaces for Euler since it is well known that the elliptic regularity estimate (c) can fail in those endpoint spaces and, as a result, the dynamics can be illposed \cite{BourgainLi,EM1,MY}.

We now recall the classical result on local wellposedness in H\"{o}lder spaces:

\begin{myshade}
  \vspace{-1mm}
\begin{theorem}[Local well-posedness for $d\geq 2$]\label{lwpthm}  Let $ M\subset\mathbb{R}^d$ be a bounded domain with smooth boundary $\partial M$.  Let $\alpha>0$ and $u_0\in C^{1,\alpha}(M)$ be divergence-free and tangent to $\partial M$.  For $T>0$ sufficiently small, there exists a unique  $u\in C^{1,\alpha} ((-T,T)\times  M)$  solving \eqref{eeb}--\eqref{eef} with vorticity $\Omega\in C^{\alpha} ((-T,T)\times  M)$.  If $[0, T)$ is the maximal interval of existence, then  
\be\label{bkm}
\int_0^T \| \Omega(t)\|_{L^\infty( M)} \rmd t = \infty.
\ee
\end{theorem}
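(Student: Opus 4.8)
The plan is to prove this in three pieces: existence, uniqueness (injectivity of $S_t$), and the Beale--Kato--Majda continuation criterion \eqref{bkm}. For \textbf{local existence}, I would run the standard Picard iteration scheme adapted to the vorticity formulation. Set $\Omega^0\equiv \Omega_0$ and define $\Omega^{n+1}$ by transporting $\Omega_0$ along the flow generated by $u^n = K_M[\Omega^n]$ (plus the harmonic part evolved via \eqref{harmcomp}, which is a benign finite-dimensional ODE once $u_{\textsf{rot}}$ is known). The key estimates are: the elliptic/Schauder bound $\|u^n\|_{C^{1,\alpha}} \leq C\|\Omega^n\|_{C^\alpha}$ (property (c), which holds precisely because $\alpha \notin \mathbb{N}$ and the boundary is smooth); the flow-map estimate showing $\Phi^n_t$ stays $C^{1,\alpha}$ with quantitative control of $\|\nabla \Phi^n_t\|_{C^\alpha}$ depending on $\int_0^t \|\nabla u^n\|_{L^\infty}$; and the transport identity, which for the vorticity two-form means $\Omega^{n+1}(t) = (\Phi^n_t)_* \Omega_0$, so $\|\Omega^{n+1}(t)\|_{C^\alpha}$ is controlled in terms of $\|\Omega_0\|_{C^\alpha}$ and the flow map regularity. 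Combining these gives a closed a priori bound on a time interval $[-T,T]$ with $T$ depending only on $\|\Omega_0\|_{C^\alpha}$, and a contraction estimate in the weaker norm $C^0$ (or $L^2$) gives convergence; interpolation upgrades this to give a limit $\Omega \in C^\alpha$, with $u\in C^{1,\alpha}$ recovered by Biot--Savart. Verifying that the limit actually solves \eqref{eevb} and that $\Omega \in C((-T,T); C^\alpha)$ with the stated joint regularity is routine bookkeeping.

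For \textbf{uniqueness}, suppose $\Omega, \tilde\Omega$ are two solutions with the same data, with associated velocities $u,\tilde u$ and flows $\Phi_t, \tilde\Phi_t$. Because $\nabla u, \nabla \tilde u \in C^\alpha \subset L^\infty$, property (b) ensures the flows are well-defined and the difference can be estimated: one writes an equation for $\Phi_t - \tilde\Phi_t$, or more cleanly works directly with the velocity difference $w = u - \tilde u$, which satisfies a transport equation with a forcing term controlled by $\|\nabla u\|_{L^\infty}\|w\|$ plus commutator terms, and closes a Grönwall estimate in $L^2$ (or $C^0$) using the boundedness of $\|\nabla u\|_{L^\infty}, \|\nabla \tilde u\|_{L^\infty}$. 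This forces $w\equiv 0$ on the existence interval, giving injectivity of $S_t$.

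For the \textbf{continuation criterion} \eqref{bkm}, the plan is the standard Beale--Kato--Majda argument. Suppose $[0,T)$ is maximal but $\int_0^T \|\Omega(t)\|_{L^\infty}\,\rmd t =: A < \infty$; I would derive an a priori bound on $\|\Omega(t)\|_{C^\alpha}$ on $[0,T)$ depending only on $A$ and $\|\Omega_0\|_{C^\alpha}$, which then allows restarting the local existence theorem past $T$, a contradiction. The mechanism: $\Omega(t) = (\Phi_t)_*\Omega_0$, so $\|\Omega(t)\|_{C^\alpha}$ is controlled by $\|\nabla \Phi_t\|_{C^0}$, $\|\nabla \Phi_t^{-1}\|_{C^0}$ and $\|\nabla \Phi_t\|_{C^\alpha}$; these in turn satisfy ODEs forced by $\|\nabla u(t)\|_{L^\infty}$ and $\|\nabla u(t)\|_{C^\alpha}$. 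The heart of the matter is the \emph{logarithmic Sobolev / Beale--Kato--Majda inequality}
\be\nonumber
\|\nabla u(t)\|_{L^\infty} \leq C\big(1 + \|\Omega(t)\|_{L^\infty}\big(1 + \log^+\|\Omega(t)\|_{C^\alpha}\big) + \|u(t)\|_{L^2}\big),
\ee
which upgrades the log-singular nature of the Biot--Savart kernel. Feeding this into the Grönwall loop — where $\log\|\Omega(t)\|_{C^\alpha}$ grows at a rate controlled by $\|\nabla u\|_{L^\infty}$ which itself is controlled by $\|\Omega\|_{L^\infty}$ times $\log\|\Omega\|_{C^\alpha}$ — yields a double-exponential bound $\|\Omega(t)\|_{C^\alpha} \leq \|\Omega_0\|_{C^\alpha}^{\exp(C\int_0^t \|\Omega\|_{L^\infty})}$, finite on $[0,T)$ by hypothesis, contradicting maximality. \textbf{The main obstacle} is establishing this log-Sobolev inequality on a bounded domain with boundary: the interior case is classical (Calderón--Zygmund plus a dyadic frequency decomposition), but near $\partial M$ one must control the boundary terms in the Biot--Savart system \eqref{bs1}--\eqref{bsf} and verify that the curvature-dependent boundary conditions do not destroy the logarithmic structure; one handles this by localizing, flattening the boundary, and using the explicit Green's function estimates for the elliptic system, keeping track that the $C^\alpha$ norm (rather than $L^\infty$) is what appears inside the logarithm. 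Everything else is a now-standard Grönwall bootstrap.
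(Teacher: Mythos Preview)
Your sketch is essentially the classical route and there are no genuine gaps in the strategy. However, you should know that the paper does \emph{not} actually prove this theorem: it simply attributes local existence to Wolibner, H\"older, and Ebin--Marsden, and the continuation criterion to Beale--Kato--Majda (with the extension to bounded domains and manifolds cited to Ferrari and Taylor). So there is no ``paper's own proof'' to compare against; what you have written is a faithful outline of the standard arguments found in those references, including the correct identification of the main technical point for \eqref{bkm} on a domain with boundary (the logarithmic estimate for $\|\nabla u\|_{L^\infty}$ in terms of $\|\Omega\|_{L^\infty}$ and $\log\|\Omega\|_{C^\alpha}$, handled via boundary-flattening and Green's function estimates as in the cited works).
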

\end{myshade}
Local existence is due  to Wolibner \cite{W33}, H\"{o}lder \cite{H33} and Ebin \& Marsden \cite{ebinmarsden}.
The continuation criterion in terms of vorticity \eqref{bkm} was proved by Beale-Kato-Majda \cite{bkm} (generalized to bounded domains and manifolds in \cite{f93,mt96}).
In two dimensions one can identify the vorticity $\Omega$ with a scalar field $\omega= \nabla^\perp \cdot u$ where $\nabla^\perp = (-\partial_2,\partial_1)$ and in three dimensions with a vector field $\omega= \nabla \times u$.  These are transported by $u$ as a scalar field and vector field respectively
  \begin{alignat}{2}\label{2dvort}
d=2: \qquad \partial_t\omega + u\cdot \nabla \omega &=0 \\ \label{3dvort}
d=3: \qquad  \partial_t\omega + u\cdot \nabla \omega &=\omega \cdot \nabla u.
\end{alignat}
Note that, in lower regularity, there has been an explosion of work in recent years related to problems in turbulence.  See the recent reviews \cite{bv,ds}.

This difference in character of the vorticity leads to extreme differences in behaviors of two and three dimensional fluids.  In two dimensions, conservation of vorticity magnitudes results in an inverse cascade where large-scale velocity structures emerge over time through a process of vortex mergers \cite{Batchelor,Kraichnan}.  In three-dimensions, the vorticity magnitude is not invariant due to stretching by the (symmetric part of the) velocity gradient tensor. This results in direct energy cascade \cite{Kolmogorov} and a fine-scale filamentary structure of the vorticity field where large values are spatiotemporally sparse but highly relevant dynamically.  See Figure \ref{vorticityfig} for a visualization of time snapshots from direct numerical simulations.

\begin{figure}[htb]\centering \label{vorticityfig}
    \includegraphics[width=.45\columnwidth]{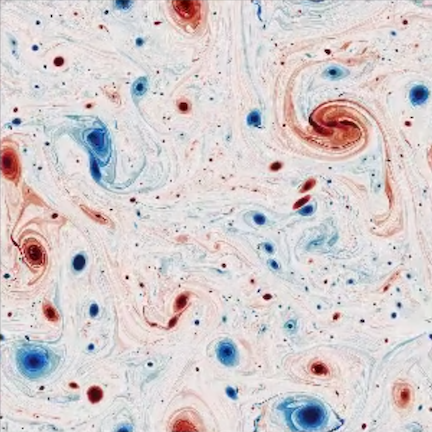} \qquad
    \includegraphics[width=.45\columnwidth]{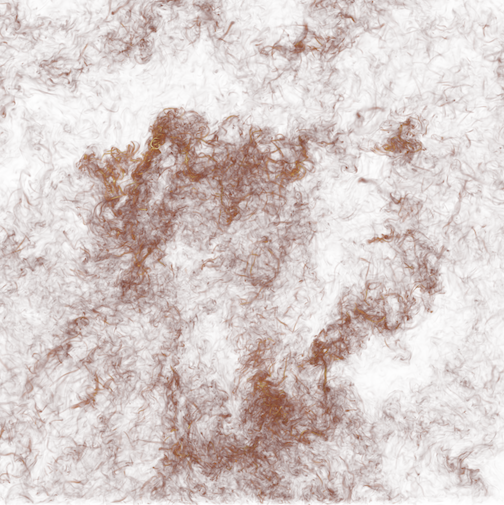} 
  \caption{ Typical instances of vorticity in two-dimensions (left panel, visualized through its signed magnitude) and three-dimensions (right panel, thresholded at large values).}
\label{figvort}
\end{figure}

In two dimensions, a consequence of the transport structure \eqref{2dvort} is that the solution has the Lagrangian representation formula 
\be\label{trans2d}
\omega(t) = \omega_0\circ \Phi_t^{-1}
\ee
where  $\dot{\Phi}_t = u(\Phi_t,t)$ is the Lagrangian flowmap labelled at $\Phi_0={\rm id}$.   A consequence of this fact together with Theorem \ref{lwpthm} is the global wellposedness in 2D \cite{W33,H33}:

\begin{myshade}
  \vspace{-1mm}
\begin{theorem}[Global well-posedness in 2D]\label{gwpthm}  Let $ M\subset\mathbb{R}^2$ be a bounded domain with smooth boundary $\partial M$.  Let $\alpha>0$  and $u_0\in C^{1,\alpha}( M)$ be divergence-free and tangent to $\partial M$.  Then there exists a unique  $u\in C^{1,\alpha} ((-\infty,\infty)\times  M)$  solving \eqref{eeb}--\eqref{eef} with vorticity $\omega\in C^{\alpha} ((-\infty,\infty)\times  M)$. Moreover, there exists a constant $c:=c(M)$ so that the following bound holds for all $t\in \mathbb{R}$
\be\label{dexp}
\frac{\| \omega(t)\|_{C^{\alpha}( M)}}{\|\omega_0\|_{L^\infty(M)} }  \leq \left(\frac{\| \omega_0\|_{C^{\alpha}(M)}}{\|\omega_0\|_{L^\infty( M)} }\right)^{\exp\left({c\|\omega_0\|_{L^\infty( M)}  |t|/\alpha}\right)}.
\ee
\end{theorem}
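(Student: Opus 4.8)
The plan is to extract both assertions from the single structural fact, special to two dimensions, that vorticity is merely transported. On the maximal interval of existence furnished by Theorem \ref{lwpthm} the velocity $u$ is Lipschitz in $x$, so the Lagrangian flow $\dot\Phi_t=u(\Phi_t,t)$, $\Phi_0=\mathrm{id}$, is a well-defined, measure-preserving diffeomorphism of $M$, and the scalar transport equation \eqref{2dvort} yields the representation \eqref{trans2d}, $\omega(t)=\omega_0\circ\Phi_t^{-1}$. Since composition with the measure-preserving bijection $\Phi_t^{-1}$ leaves every $L^p(M)$ norm unchanged, $\|\omega(t)\|_{L^\infty(M)}\equiv\|\omega_0\|_{L^\infty(M)}$; hence $\int_0^T\|\omega(t)\|_{L^\infty(M)}\,\rmd t=T\|\omega_0\|_{L^\infty(M)}<\infty$ for every finite $T$, so the Beale--Kato--Majda criterion \eqref{bkm} precludes finite-time blowup, and the same argument applied to the time-reversed solution $u(-t,\cdot)$ gives a solution on all of $\mathbb{R}$.

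For the growth bound I would set $m:=\|\omega_0\|_{L^\infty(M)}$ (constant in time) and close a differential inequality for $y(t):=\log\bigl(e+\|\omega(t)\|_{C^\alpha(M)}/m\bigr)$ from two ingredients. First, a Lagrangian bound: from \eqref{trans2d} one has $[\omega(t)]_{C^\alpha}\le[\omega_0]_{C^\alpha}\,\|\nabla\Phi_t^{-1}\|_{L^\infty}^{\alpha}$, and since $\nabla\Phi_t^{\pm1}$ solves the linear matrix ODE $\tfrac{\rmd}{\rmd t}\nabla\Phi_t=(\nabla u\circ\Phi_t)\,\nabla\Phi_t$, Gr\"onwall gives $\|\nabla\Phi_t^{-1}\|_{L^\infty}\le\exp\bigl(\int_0^t\|\nabla u(s)\|_{L^\infty}\,\rmd s\bigr)$; together these force $y(t)\le y(0)+\alpha\!\int_0^t\|\nabla u(s)\|_{L^\infty}\,\rmd s$. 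Second, an elliptic logarithmic estimate for the Biot--Savart law \eqref{bs1}--\eqref{bsf},
\[
\|\nabla u\|_{L^\infty(M)}\ \le\ \frac{c(M)}{\alpha}\,\|\omega\|_{L^\infty(M)}\,\log\!\Bigl(e+\frac{\|\omega\|_{C^\alpha(M)}}{\|\omega\|_{L^\infty(M)}}\Bigr).
\]
Substituting the second bound into the first and running Gr\"onwall on the resulting differential inequality for $I(t):=\int_0^t\|\nabla u\|_{L^\infty}$ — at which point the factors $\alpha$ and $\alpha^{-1}$ cancel — yields $y(t)\le y(0)\,e^{c(M)m|t|}$, which for $\alpha\in(0,1)$ we may weaken to exponent $c(M)m|t|/\alpha$; unwinding the logarithm gives a bound of the form \eqref{dexp}. (One takes $\alpha\in(0,1)$; higher H\"older regularity follows by propagating derivatives of $\omega$ along the flow and is handled similarly.)

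The only step that requires genuine work is the logarithmic elliptic estimate, and I expect it to be the main obstacle. On $\mathbb{R}^2$ it is classical: $\nabla u$ is a principal-value singular integral of $\omega$ against a kernel homogeneous of degree $-2$ with vanishing mean on circles, plus a bounded multiple of $\omega$; splitting at radius $r$, the near part is controlled via the H\"older modulus and the kernel's cancellation by $\lesssim\alpha^{-1}[\omega]_{C^\alpha}r^\alpha$ and the far part by $\lesssim\|\omega\|_{L^\infty}\log(\operatorname{diam}(M)/r)$, after which one optimizes in $r$. On a bounded simply connected domain, writing $u=\nabla^{\perp}\psi$ with $\Delta\psi=\omega$ and $\psi|_{\partial M}=0$, one decomposes the Green's function as $\tfrac1{2\pi}\log|x-y|$ plus a smooth domain-dependent correction; the correction contributes only $\lesssim_M\|\omega\|_{L^\infty}$ to $\|\nabla u\|_{L^\infty}$ and is absorbed into $c(M)$, while the singular part is treated exactly as in the whole-plane case. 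Carefully tracking the $\alpha$-dependence and the dependence on the geometry of $\partial M$ in this estimate is where the real effort lies; the remainder is the short ODE argument above.
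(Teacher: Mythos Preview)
Your argument is correct and is the standard route to both claims. The paper does not actually prove this theorem: it states global well-posedness as an immediate consequence of the transport formula \eqref{trans2d} together with the local theory and BKM criterion of Theorem~\ref{lwpthm} (citing Wolibner and H\"older), and simply asserts the double-exponential bound \eqref{dexp} without derivation. Your first paragraph matches the paper's one-line justification exactly; your second and third paragraphs supply the standard derivation of \eqref{dexp} --- the Lagrangian H\"older estimate combined with the logarithmic Kato-type bound on $\nabla u$ --- which the paper omits. As you note, the $\alpha$ and $\alpha^{-1}$ factors cancel in the Gr\"onwall step, so your argument in fact yields the slightly sharper exponent $c(M)m|t|$ rather than the $c(M)m|t|/\alpha$ stated in \eqref{dexp}.
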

\end{myshade}
We remark that the time dependence in \eqref{dexp} is known to be sharp due to work of Kiselev-Šverák, see 
Theorem \ref{KSthm} herein.

Thus, the 2D Euler equations  \eqref{eeb}--\eqref{eef} form an infinite dimensional dynamical system  $S_t: C^{\alpha}\to C^{\alpha}$ for all time $t\in \mathbb{R}$ on the space of $C^{\alpha}$ vorticity fields. 
We are interested in the long time behavior of this dynamical system.  The formula \eqref{trans2d} shows that the vorticity at any later time is just some area preserving rearrangement of its initial conditions.  As discussed in \S \ref{lap}, any perfect fluid flow can be thought of as a path in the group of such transformations $\mathscr{D}_\mu(M)$ (the configuration space of the flow).  As such, the diffeomorphism could conceivably become very complex: filamenting, stretching, kneading and folding the initial vorticity isolines as time progresses. This action causes norms which measure regularity of the vorticity to grow over time.  There are some limitations;    the bound \eqref{dexp} shows that the orbit $\omega(t)= S_t(\omega_0)$ may exit the phase space $C^{\alpha}$ at infinite time at a rate at most double exponential.  There is great interest in finding examples of such exits with various rates -- each rate corresponds to some physical process of generation of small scales.  We will discuss some examples in detail along with some speculations about generic behavior in  \S \ref{2dsing}.  

The strongest known control uniform in time on the vorticity is in $L^\infty$.  Using this fact, in the celebrated paper \cite{Y63}, Yudovich proved that the 2D Euler equations are globally wellposed on this space $S_t: L^\infty\to L^\infty$ for all $t\in \mathbb{R}$:

\begin{myshade}
  \vspace{-1mm}
\begin{theorem}[Global bounded vorticity solutions]\label{yudothm}  Let $ M\subset\mathbb{R}^2$ be a bounded domain with smooth boundary $\partial M$.  Let $\omega_0\in L^\infty( M)$.  There exists a unique weak Euler solution of class $\omega\in L^\infty((-\infty,\infty)\times  M)$ with the following properties
\begin{enumerate}[label=(\alph*)]
\item $\omega(t)$ takes the form \eqref{trans2d} for a unique H\"{o}lder continuous flow of homeomorphisms $\Phi_t\in C^{\alpha(t)}(M)$ for $C:=C(M)$ and $\alpha(t):=\exp(- C |t| \|\omega_0\|_{L^\infty})$. Consequently, $\|\omega(t)\|_{L^\infty(M)} =\|\omega_0\|_{L^\infty(M)}  $ for all $t\in \mathbb{R}$.
\item  $\omega \in C(\mathbb{R}; L^p(M))\cap C_w(\mathbb{R};L^\infty(M))$ for all $p\in [1,\infty)$, where $C_w(\mathbb{R};L^\infty(M))$ denotes the space of functions which are continuous in time with values in the weak-$*$ topology of $L^\infty(M)$.  
\item The solution map $S_t$ is  continuous in the weak-$*$ topology of $L^\infty(M)$, i.e. if $\omega_0^n \wsc\omega_0$ in $L^\infty(M)$, then $S_t(\omega_0^n) \wsc S_t(\omega_0)$ in $L^\infty(M)$ for all $t\in \mathbb{R}$.
\end{enumerate}
\end{theorem}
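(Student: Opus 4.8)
The plan is to follow Yudovich's original argument \cite{Y63}, organized into four parts: two a priori estimates that power the whole bounded-vorticity theory; construction of a solution by mollification and compactness; uniqueness via an Osgood-type differential inequality; and finally the continuity and stability properties (b)--(c). In two dimensions the Biot--Savart law is $u = K_M[\omega] = \nabla^\perp \psi$ with $\Delta \psi = \omega$ and $\psi$ suitably prescribed on $\partial M$ (the harmonic/circulation part fixed as in the Biot--Savart discussion above), so $K_M(x,y) = \nabla_x^\perp G_M(x,y)$ with $G_M$ the Green function of $M$. The standard Green-function bounds $|K_M(x,y)| \lesssim |x-y|^{-1}$, $|\nabla_x K_M(x,y)| \lesssim |x-y|^{-2}$ yield the two facts used repeatedly: (i) the \emph{log-Lipschitz} estimate $|u(x) - u(x')| \le C\|\omega\|_{L^\infty}|x-x'|\bigl(1 + \log^{+}\tfrac1{|x-x'|}\bigr)$ together with $\|u\|_{L^\infty} \lesssim \|\omega\|_{L^\infty}$; and (ii) the elliptic estimate $\|\nabla u\|_{L^p(M)} \le C\,p\,\|\omega\|_{L^\infty(M)}$ with the constant growing \emph{linearly} in $p$ as $p \to \infty$. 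Transport of $\omega$ along the flow gives $\|\omega(t)\|_{L^\infty} = \|\omega_0\|_{L^\infty}$, and feeding (i) into $\tfrac{d}{dt}|\Phi_t(x) - \Phi_t(x')| \le C\|\omega_0\|_{L^\infty}\sigma\bigl(|\Phi_t(x) - \Phi_t(x')|\bigr)$ with $\sigma(r) = r(1 + \log^{+}\tfrac1r)$, Osgood's lemma produces precisely the H\"older modulus with exponent $\alpha(t) = \exp(-C|t|\|\omega_0\|_{L^\infty})$ claimed in (a).

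For existence I would mollify $\omega_0$ to smooth divergence-free $\omega_0^\varepsilon \to \omega_0$ in every $L^p$, $p < \infty$, with $\|\omega_0^\varepsilon\|_{L^\infty} \le \|\omega_0\|_{L^\infty}$, use Theorem~\ref{gwpthm} for the global smooth solutions $(\omega^\varepsilon, u^\varepsilon, \Phi^\varepsilon)$, and note that the estimates above are $\varepsilon$-uniform: $\|\omega^\varepsilon(t)\|_{L^\infty} \le \|\omega_0\|_{L^\infty}$, the $u^\varepsilon$ are equibounded and uniformly log-Lipschitz, and the $\Phi^\varepsilon_t$, $(\Phi^\varepsilon_t)^{-1}$ are uniformly H\"older in $x$ with exponent $\alpha(t)$ and uniformly Lipschitz in $t$ on compact time intervals. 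Arzel\`a--Ascoli then extracts a subsequence with $\Phi^\varepsilon \to \Phi$, $(\Phi^\varepsilon)^{-1} \to \Phi^{-1}$ locally uniformly, the limit a flow of H\"older homeomorphisms; setting $\omega(t) := \omega_0 \circ \Phi_t^{-1}$ gives $\omega^\varepsilon(t) \to \omega(t)$ in $L^p$ for $p < \infty$ and $u^\varepsilon \to u := K_M[\omega]$ locally uniformly by continuity of $K_M$ on $L^\infty$, and passing to the limit in the distributional form of \eqref{2dvort} identifies $(u,\omega)$ as a weak Euler solution obeying the representation \eqref{trans2d}. This yields (a), and $\|\omega(t)\|_{L^\infty} = \|\omega_0\|_{L^\infty}$ because $\Phi_t$ preserves Lebesgue measure.

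The heart of the matter is uniqueness. Given two $L^\infty$ solutions $(u_1,p_1)$, $(u_2,p_2)$ with the same data, put $w = u_1 - u_2$, $q = p_1 - p_2$; then $\partial_t w + u_1 \cdot \nabla w + w \cdot \nabla u_2 = -\nabla q$, and pairing with $w$ kills the transport and pressure terms (as $w$ is divergence-free and tangent to $\partial M$), leaving $\tfrac{d}{dt}\|w\|_{L^2}^2 \le 2\|\nabla u_2\|_{L^p}\|w\|_{L^{2p'}}^2$ for every $p \in [2,\infty)$, $p' = p/(p-1)$. Interpolating $\|w\|_{L^{2p'}} \le \|w\|_{L^2}^{1-1/p}\|w\|_{L^\infty}^{1/p}$, using $\|w\|_{L^\infty} \lesssim \|\omega_0\|_{L^\infty}$ from the kernel bound above, and inserting $\|\nabla u_2\|_{L^p} \le Cp\|\omega_0\|_{L^\infty}$, one gets $\dot y \le C\,p\,y^{1-1/p}$ for $y(t) := \|w(t)\|_{L^2}^2/R$ with $R$ fixed so that $y \le 1$, valid simultaneously for all $p \ge 2$; optimizing the right-hand side over $p$ (the choice $p \sim \log(1/y)$) turns this, while $y$ is small, into the Osgood inequality $\dot y \le C\,y\log(1/y)$, and since $y(0) = 0$ and $\int_{0}\tfrac{dy}{y\log(1/y)} = +\infty$, Osgood's lemma forces $y \equiv 0$, i.e.\ $u_1 \equiv u_2$ and hence $\omega_1 \equiv \omega_2$. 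Uniqueness of the homeomorphism flow in (a) is then the Osgood uniqueness theorem applied to the log-Lipschitz ODE $\dot\Phi_t = u(\Phi_t,t)$.

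Finally, the continuity properties: uniform-in-$x$ continuity of $t \mapsto \Phi_t^{-1}$ together with $\omega_0 \in L^\infty$ gives, by dominated convergence, $\omega \in C(\mathbb{R};L^p(M))$ for $p < \infty$, and combined with the uniform bound $\|\omega(t)\|_{L^\infty} = \|\omega_0\|_{L^\infty}$ and density of test functions this upgrades to $C_w(\mathbb{R};L^\infty(M))$, proving (b). For (c), $\omega_0^n \wsc \omega_0$ in $L^\infty(M)$ forces $\sup_n \|\omega_0^n\|_{L^\infty} < \infty$, so the a priori bounds above hold uniformly in $n$; a subsequence of the associated flows converges locally uniformly, its limit is a weak solution whose initial vorticity is $\omega_0$ (weak-$*$ convergence of the data survives the weak formulation), and by the uniqueness argument above this limit equals $S_t(\omega_0)$, whence every subsequence has a further subsequence converging to the same limit and therefore $S_t(\omega_0^n) \wsc S_t(\omega_0)$. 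The one genuine obstacle is closing the Osgood inequality in the uniqueness step: this rests on the \emph{linear}-in-$p$ growth of the elliptic constant $\|\nabla u\|_{L^p} \lesssim p\|\omega\|_{L^\infty}$ for the Biot--Savart operator on $M$, without which the optimization over $p$ would not produce an Osgood modulus; the fact that the flow is only H\"older (with a double-exponentially degenerating exponent) is a secondary difficulty that forces the use of Osgood-type ODE uniqueness throughout in place of Cauchy--Lipschitz.
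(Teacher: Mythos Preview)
The paper does not actually prove this theorem: it is stated as a classical result due to Yudovich \cite{Y63}, with brief attributions for the individual properties (log-Lipschitz regularity for (a), and references \cite{sv11,Ng21,LML,cde20} for (c)), but no argument is given in the text. Your sketch correctly reconstructs the standard Yudovich proof---log-Lipschitz/Osgood control of the flow, mollification plus Arzel\`a--Ascoli for existence, the $L^p$ energy/Osgood argument for uniqueness, and compactness plus uniqueness for weak-$*$ stability---so there is nothing in the paper to compare against beyond confirming that your outline matches the references it cites.
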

\end{myshade}

Property (a) that the flowmap is H\"older continuous follows from the fact that velocity fields with bounded vorticity enjoy Log-Lipschitz regularity.\footnote{For Yudovich solutions, the velocity is thus Osgood continuous and particle trajectories are unique.  This is at the heart of why there is a good uniqueness theory for such solutions in two dimensions - the mathematics of which has strong parallels with the measurable Riemann mapping theorem. See the discourse of D. Sullivan \cite{S15}.}
Property (b) shows that  the solution map $S_t: L^\infty\to L^\infty$ is well defined for every $t\in \mathbb{R}$. 
Regarding Property (c),  it is also known that the solution map is continuous in $L^p(M)$ on bounded sets in $L^\infty(M)$ \cite{cde20}.   As stated, Property (c) (see \cite{sv11, Ng21,LML}) gives continuity of the solution map in the weak-$*$ topology and it is particularly important in connection with long time behavior.  Specifically,
let $X_*=X_*(M)$ be the unit ball in the $L^\infty$ phase space 
\be\label{Xstar}
X_*:= \{ \omega\in L^\infty(M) \ : \ \|\omega\|_{L^\infty} \leq 1\}
\ee
endowed with the weak-$*$ topology.\footnote{We recall that the standard way to metrize $X_*$  which gives the weak-$*$ topology.  Consider a countable dense set of integrable functions $\{\varphi_j\}_{j\in \mathbb{N}}$ satifying $\|\varphi_j\|_{L^1(M)}\leq 1$. Let $p_j(\omega):= |\int_M \varphi_j \omega \rmd x|$ and set ${\rm dist}(\omega_1,\omega_2) = \sum_j 2^{-j} \frac{p_j(\omega_1-\omega_2)}{1+p_j(\omega_1-\omega_2)}$.} In light of properties (a)--(c), Euler forms a  dynamical system in the
compact metric space $X_*$.  By time reversibility of the dynamics and uniqueness, $S_t(X_*)=X_*$ for all $t\in \mathbb{R}$.
By property (a), weak-$*$ limits of any $\omega_0\in X_*$ always exist along subsequences as $t\to\infty$. We define the Omega limit set for any $\omega_0\in X_*$ by
\be\label{omlimset} 
\Omega_+(\omega_0) := \bigcap_{s\geq 0} \overline{\{ S_t(\omega_0), t\geq s \}}^*.
\ee
This set represents the collection of all persistent `coarse-scale' (in that information can be ``averaged out" in the weak-$*$ limits)  behaviors of the Euler orbit emanating from $\omega_0$.  In effect, the set \eqref{omlimset} can be thought of as what a myopic observer without glasses would see (e.g. averaging fine-scale filaments together) when looking at the flow evolved for a long period of time.  
This set is compact and connected in $X_*$, which follows from the fact that $X_*$ is equipped with the weak$^*$ topology.  We shall discuss further properties of this set, as well as conjectures concerning its composition in \S \ref{conjectent}.  See also the lecture notes of Šverák for a detailed discussion of these points \cite{sv11}. 

We remark finally that Theorem \ref{yudothm} can be generalized to accommodate weakly divergent vorticities.  This was proved slightly later by Yudovich himself \cite{Y95} (independently established by Serfati \cite{S94a}).  For example, let $Y=Y( M)$ be the space consisting of all functions $f\in L^p( M)$ for $2\leq p< \infty$ which are finite in the norm (see \cite{Y95,CS21} for generality):
 \be\label{yudospace}
 \| f\|_Y := \sup_{p\geq 2} \frac{ \|f\|_{L^p}}{\log( p)}.
 \ee
The Euler equation defines a dynamical system $S_t: Y\to Y$ for all times $t\in \mathbb{R}$. Note that for any $\omega\in Y$, the corresponding velocity field is Osgood continuous, giving uniqueness of Lagrangian trajectories.  As a consequence, all $L^p$ with $1\leq p<\infty$ norms are conserved for solutions in $Y$.    This makes the space $Y$ endowed the topology defined by the norm \eqref{yudospace} another good (albeit slightly larger) phase space on which to study the long time properties of 2D fluid motion.

\section{2D fluids: a tale of isolation, wandering, and long-time strife }\label{2dsing}

In this section, we discuss  properties of two-dimensional perfect fluids exclusively. Two dimensions comes with the simplifying feature that any divergence-free velocity field $u\in H^1(M)\cap C(\overline{M})$ which is tangent to $\partial M$ can be represented as a perpendicular gradient of a scalar streamfunction which is constant on connected components of the boundary.  That is, there is a streamfunction $\psi:M\to \mathbb{R}$ such that $\psi|_{\partial M}={\rm (const.)}$ and  $u=\nabla^\perp \psi$.  Since vorticity is a scalar, \eqref{eevb}--\eqref{eevb1} say
\begin{align}
\partial_t \omega + u \cdot \nabla \omega &= 0  \quad \qquad\qquad \ \text{in} \quad  M,\\ 
u &= K_M[\omega] \quad \qquad \text{in} \quad  M,\\
\omega|_{t=0}&=\omega_0 \quad \qquad\qquad \text{in} \quad  M.
\end{align}
Here, if $M$ is simply connected then  $K_ M= \nabla^\perp \Delta^{-1}$ is the perpendicular gradient $\nabla^\perp =(-\partial_2,\partial_1)$ of the Green function for the Dirichlet Laplacian on $ M$. If the domain is multiply connected, circulations along the inner boundaries (which are preserved in time by the Euler motion by the circulation theorem) must be also given to recover the harmonic component (see Footnote \ref{harmfoot}). This velocity is automatically divergence-free  and tangent to the boundary $u\cdot \hat{n}|_{\partial M}=0$.

 \subsection{Isolation: stationary states, symmetry and stability}\label{isolation}

The simplest class of permanent fluid motions are those which have stationary (time independent) velocity fields.  The structure of such solutions (their stability, symmetry properties, stagnation sets, etc) sheds a light on large scale features which can emerge and persist over time in dynamical solutions.   In terms of  the streamfunction $\psi:M\to\mathbb{R}$, the condition for being a stationary Euler solution is the statement that the gradients of the streamfunction and the vorticity are collinear, i.e. $\nabla \psi \ \| \ \nabla \omega$. There are a plethora of stationary solutions of the two-dimensional Euler equation.
For example, every  vector field on $\mathbb{T}\times [0,1]$ or $ \mathbb{T}\times \mathbb{R}$  taking the form of a shear flow
\be
u(x_1,x_2)= \begin{pmatrix} U(x_2) \\ 0
\end{pmatrix} ,
\ee
for some $U\in C^{1}$ is a stationary solution  of the Euler equation with vorticity $\omega = -U'(x_2)$ and constant pressure.  Similarly, every circular flow  on the disk
\be
u (x_1,x_2) = V(|x|)  \frac{x^\perp}{|x|}
\ee
 with  $r=|x|$ and $V\in C^{1}$  is stationary and has vorticity $\omega(r)=\frac{1}{r} \partial_r(r V(r))$ and pressure $p(r) = -\int^r \frac{V(\rho)^2}{\rho}\rmd \rho$.
  One can also generate steady solutions on general Riemannian manifolds with or without smooth boundary (simply connected for simplicity) by specifying a Lipschitz function $F :\mathbb{R}\to \mathbb{R}$ and solving the elliptic problem 
\begin{align}\label{sse1}
\Delta \psi &= F(\psi)  \quad  \ \text{in} \quad  M,\\ 
\psi &= 0\quad \qquad \text{on} \quad  \partial M,\label{sse2}
\end{align}
where  $\Delta$ is the Laplace-Beltrami operator of the Riemannian metric $g$ on $M$.  
 Then $u= \nabla^\perp \psi$ defines a stationary solution of the Euler equations with vorticity $\omega= F(\psi)$ and pressure $p= -\frac{1}{2}|\nabla\psi|^2 + G(\psi)$ where $G$ is an antiderivative of $F$.   Examples of stationary solutions satisfying \eqref{sse1}--\eqref{sse2}  include cellular flows\footnote{We remark in passing that it is an intriguing open issue to establish nonlinear instability of the steady state \eqref{cellflow} in $L^2$ of vorticity (or, in fact, velocity).  This would represent a large-scale instability, unlike the small-scale instability/singularity formation which is the subject of \S \ref{strifesec}  and in contrast to the stability Theorem \ref{thmArn} below.  Being on the second shell and possessing a hyperbolic stagnation point, this steady state is susceptible to generation of larger scale motions which may destabilize the structure \cite{fv}. Established methods for proving instability \cite{MS,fsv,FS} are difficult to apply to this example due to its fully two-dimensional structure.  We remark that for higher eigenfunctions $\psi = \sin(x_1) \sin(mx_2)$ in the highly oscillatory regime with $m\gg1$, unstable eigenfunctions can be constructed using averaging methods together with the  Meshalkin-Sinai continued fraction technique \cite{fvy}. A compelling piece of evidence for nonlinear instability of  \eqref{cellflow} is that, from the work of Shvydkoy and Latushkin \cite{sl}, we know that the spectrum of the linearized Euler in $H^1$ and $H^{-1}$ is the full band $ |\mathfrak{Re} z| \leq 1$ and so it is very unstable linearly.  It is unknown whether or not there is an embedded eigenvalue; if so, the instability would  follow from the work of Lin \cite{L04}.
 } (also known as Kolmogorov flows) with streamfunction
\be\label{cellflow}
\psi_{\mathsf{c}} (x)= \sin(x_1)\sin(x_2)
\ee
which are eigenfunctions of the Laplacian, i.e. $\Delta \psi_{\mathsf{c}}= - 2 \psi_{\mathsf{c}}$. There are also the Kelvin-Stuart cat's--eye vortices defined by the streamfunction
\be\label{cats}
\psi_{\mathsf{KS}}(x)= -\log \left(\gamma \cosh x_1 + \sqrt{\gamma^2-1} \cos x_2\right), \qquad \gamma\geq 1.
\ee
which solve equation $\Delta \psi_{\mathsf{KS}} = -e^{2 \psi_{\mathsf{KS}}}$ (these flows make an interesting connection with conformal geometry \cite{taylor}).

 Note that \eqref{sse1}--\eqref{sse2} is not generally a ``good" equation for $\psi$ in that it may have no, one, or many solutions depending on $F$.  For instance, taking affine $F(\psi)=-\lambda \psi$, we obtain the eigenvalue problem for the Laplacian which exhibits all those phenomena depending on the proportionality constant.
 Note also that not every Euler solution need have a globally defined relationship between vorticity and the streamfunction. Indeed there are shear flows (perturbations of Kolmogorov flow) for which this is not so: e.g.  defined on $M=[0,2\pi]^2$ for each $\ve>0$ by
 \be
\psi^\ve(x_1,x_2)= \sin(x_2) + \ve \sin(2x_2).
 \ee
 In this case, $ \psi^\ve$ has a line of critical points.  The recent work \cite{GSPS21} gives examples of this phenomenon for non-radial vortex patches with compactly supported velocity field on the plane.
If $ \psi$ has no critical points (on a multiply connected domain) or isolated critical points, there always will exist a single global map $F$.  



There is a privileged class of such stationary solutions \eqref{sse1}--\eqref{sse2} that are ``stable." Specifically, if $F :\mathbb{R}\to \mathbb{R}$  is Lipschitz, single valued and either
\be\label{astabcond}
-\lambda_1(M) < F'(\psi)< 0 \qquad \text{or}\qquad 0 < F'(\psi)< \infty
\ee 
where $\lambda_1>0$ is the smallest eigenvalue of $-\Delta$ in $M$.  Such steady states $\omega$  are called \emph{Arnold stable}.  They are nonlinearly (Lyapunov) stable in the $L^2(M)$ norm (see \cite{ak}; Thm. 4.3 and Thm. \ref{thmArn} below). The two ranges are referred to as type I and type II Arnold stability conditions. They ensure that the steady state is either a minimum or a maximum
of the energy (the action) on the orbit of a vorticity distribution in the group of volume preserving diffeomorphims \cite{ak}. The degenerate case of $F(\psi)={\rm (const.)}$ corresponds to a steady state $\omega$ which is nonlinearly stable in $L^\infty(M)$.  All these steady states have the property that 
  \begin{itemize}
 \item [\textbf{(H0)}] the operator  
\be\label{Lop}
\mathcal{L}_{\psi} := \Delta - F'(\psi) 
\ee  
 has a trivial kernel in $H^1_0(M)$.  
 \end{itemize}
The following result of \cite{cdg} shows that all such steady states are severely constrained by the symmetries of the vessel $M$:

  \begin{myshade}
  \vspace{-1mm}
\begin{theorem}[Symmetry of stable equilibria \cite{cdg}]\label{thmcdg}
Let $(M,g)$ be a compact two-dimensional Riemannian manifold with smooth boundary $\partial M$.
Suppose that there exists a Killing field $\xi$ for $g$ tangent to $\partial M$. 
Let  $ \psi \in C^3(M)$ be a steady state streamfunction with vorticity $\omega=F(\psi)$ such that $\mathcal{L}_{\psi}$ satisfies {\rm \textbf{(H0)}}.
Then $\pounds_\xi\psi=0$ where $\pounds_\xi$ is the Lie derivative along $\xi$. 
\end{theorem}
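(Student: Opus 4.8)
The plan is to \emph{linearize} the steady-state equation $\Delta\psi = F(\psi)$ along the flow of isometries generated by the Killing field $\xi$, and to recognize the resulting infinitesimal variation as an element of the kernel of $\mathcal{L}_\psi$ in $H^1_0(M)$; hypothesis \textbf{(H0)} then kills it. Concretely, let $\{\phi_s\}_{s\in\mathbb{R}}$ denote the flow of $\xi$. Because $M$ is compact and $\xi$ is tangent to $\partial M$, this flow is globally defined and each $\phi_s$ is an isometry of $(M,g)$ mapping $M$ to itself and $\partial M$ to itself (preserving each boundary component). The structural fact I would exploit is that isometries commute with the Laplace--Beltrami operator, $\Delta(f\circ\phi_s)=(\Delta f)\circ\phi_s$, so that differentiating in $s$ at $s=0$ yields the commutator identity $\Delta\,\pounds_\xi = \pounds_\xi\,\Delta$ on functions.

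Next I would set $w:=\pounds_\xi\psi=\xi\cdot\nabla\psi$, which lies in $C^2(M)$ since $\psi\in C^3(M)$. Differentiating the relation $\Delta\psi = F(\psi)$ along the flow, using the commutator identity and the chain rule $\pounds_\xi F(\psi)=F'(\psi)\,\pounds_\xi\psi$, gives
\begin{equation}\nonumber
\Delta w = F'(\psi)\,w \qquad\text{in } M, \qquad\text{i.e.}\qquad \mathcal{L}_\psi w = 0 .
\end{equation}
For the boundary behavior, I would note that on $\partial M$ the field $\xi$ is tangent while $\psi$ is (locally) constant there, so the tangential derivative $w=\xi\cdot\nabla\psi$ vanishes identically on $\partial M$; hence $w\in H^1_0(M)$. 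Thus $w$ is an $H^1_0(M)$ element of $\ker\mathcal{L}_\psi$, and \textbf{(H0)} forces $w\equiv 0$, which is precisely $\pounds_\xi\psi=0$, i.e.\ $\psi$ inherits the symmetry of $\xi$.

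I expect the genuinely essential step — and the only one with real content — to be the commutation $[\pounds_\xi,\Delta]=0$: this is exactly where the Killing hypothesis enters, and it is what forces the linearized variation to solve the \emph{same} linear equation that governs $\ker\mathcal{L}_\psi$. The remaining points are routine but worth stating carefully: the global existence and $M$-invariance of the isometry flow (immediate from compactness of $M$ and tangency of $\xi$ to $\partial M$), and the chain rule for $F$ — classical when $F\in C^1$, and otherwise (merely Lipschitz $F$) justified by the Stampacchia chain rule for Sobolev functions, with $\mathcal{L}_\psi$ then understood as carrying the $L^\infty$ coefficient $F'(\psi)$ and $w\in H^1_0(M)$ solving $\mathcal{L}_\psi w = 0$ weakly; in either reading the conclusion is unchanged.
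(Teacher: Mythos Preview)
Your argument is correct and follows essentially the same route as the paper: apply $\pounds_\xi$ to $\Delta\psi=F(\psi)$, use that the Lie derivative along a Killing field commutes with the Laplace--Beltrami operator, observe that $\pounds_\xi\psi$ vanishes on $\partial M$ because $\xi$ is tangential and $\psi$ is locally constant there, and invoke \textbf{(H0)}. Your write-up is somewhat more detailed (making the isometry flow explicit and addressing the Lipschitz-$F$ case via Stampacchia), but the underlying idea is identical.
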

\end{myshade}

Theorem \ref{thmcdg} is essentially the fact that in a Hamiltonian system, a critical point of the Hamiltonian is invariant under the symmetries of the Hamiltonian unless there is some non-trivial central manifold. Its proof is elementary and follows from applying $\pounds_\xi$ to the equation \eqref{sse1}, using the fact that the Lie derivative commutes with the Laplace-Beltrami operator, and noting that $\pounds_\xi$ is a tangential derivative along the boundary. 
A sufficient condition to ensure that $\mathcal{L}_{\psi}$ has a trivial kernel  is $F'(\psi)> -\lambda_1$ where $\lambda_1$ is the first eigenvalue of $-\Delta_g$ on $M$ --  all of the above stability conditions ensure this.
Theorem \ref{thmcdg} immediately implies that on the channel with $\xi=e_{x_1}$ 
all Arnold stable states are shears $u= v(x_2) e_{x_1}$, on the disk (or annulus) with $\xi=e_\theta$ 
all Arnold stable states are circular $u= v(r) e_\theta$ and on the hemisphere, all Arnold stable stationary solutions are zonal (functions of latitude).  It also implies that whenever there are two transversal Killing fields (as on the sphere or the torus), there are no non-trivial flows satisfying the Arnold stability conditions.  We note that rotation on the sphere ($\beta$--plane equations on the torus) introduces an anisotropy in the system and non-trivial Arnold stable solutions  can again exist \cite{cg21,WS99}.

Since  $F'(\psi)> -\lambda_1$ is an open condition, Theorem \ref{thmcdg} shows that such stable steady states on domains with symmetry are \emph{isolated} in $C^1$ of vorticity from asymmetric equilibria. Here isolated takes the following precise meaning
  \begin{myshade}
  \vspace{-1mm}
 \begin{definition}[Isolation from $G$ in $X$]\label{isodef}
 Given a Banach space $X$, we say that an element $u\in X$ is \emph{isolated} from $G\subset X$ if ${\rm dist}_X(u,G)>\ve$ for some $\ve>0$.
 \end{definition}
\end{myshade}
 Thus, Theorem \ref{thmcdg} reveals a strong form of rigidity of stable steady states and, as such, indirectly sheds light on the everyday observable phenomena such as axisymmetrization of tea or coffee when stirred, and formation of large stable vortices (e.g. hurricanes, the great red spot of Jupiter) in planetary atmospheres.

 It is not only the dynamically stable steady states which assume the symmetries of their vessels.  In some cases, all stationary solutions having certain structural properties to do with their stagnation sets must also ``fit" the geometry.  In particular,

  \begin{myshade}
  \vspace{-1mm}
\begin{theorem}[Hamel \& Nadirashvili \cite{HN17,HN19}]\label{thmNH}  The following hold:
\begin{itemize}
\item Let $M=  \mathbb{T} \times [0,1]$ be the periodic channel.  Let $u\in C^{2}(\overline{M})$ be  any stationary solution of the Euler equations satisfying $\inf_\Omega |u|> 0$.  Then $u$ is a shear flow $u= v(x_2) e_{x_1}$ with $v\in C^2$ having a strict sign.
\item Let $M=  \{ x \in \mathbb{R}^2 \ : \ |x|\in (1/2,1)\}$ be the annulus.  Let $u\in C^{2}(\overline{M})$ be any stationary solution of the Euler equations satisfying $\inf_\Omega |u|> 0$. Then $u$ is a circular flow $u= v(r) e_\theta$ with $v\in C^2$ having a strict sign.
\item Let $M=  \{ x \in \mathbb{R}^2 \ : \ |x|<1\}$ be the disk.  Let $u\in C^{2}(\overline{M}\setminus \{0\})$ be any stationary solution of the Euler equations satisfying $u> 0$ on $\overline{M}\setminus \{0\}$. Then $u$ is a circular flow $u= v(r) e_\theta$ with $v\in C^2$ having a strict sign.
\end{itemize} 
\end{theorem}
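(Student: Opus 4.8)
The plan is to pass to the stream function and convert the absence of stagnation points into rigidity of the level-set foliation, following Hamel and Nadirashvili \cite{HN17,HN19}. Write $u=\nabla^{\perp}\psi$ with $\psi$ constant on each connected component of $\partial M$; stationarity is equivalent to $\nabla\psi\parallel\nabla\omega$, i.e. $\omega$ is constant along streamlines (the level sets of $\psi$). Since $\inf|u|=\inf|\nabla\psi|>0$ on the relevant domain, $\psi$ has no critical points there, so it is a submersion and every level set is a smooth embedded curve; a maximum-principle argument excludes ``bubble'' components, since a Jordan-curve level set bounding a subdomain would force an interior extremum of $\psi$ inside it. On the channel (resp. annulus) this leaves only level sets winding once around the $\mathbb{T}$-factor (resp. around the hole), and on the disk only Jordan curves encircling the origin, the leaves shrinking toward $0$. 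Moreover the two boundary components of the channel/annulus are themselves regular leaves with distinct values (equal values would again force an interior critical point, and $\psi\equiv{\rm const}$ would give $u\equiv0$), and $M$ is foliated by leaves parametrized monotonically and transversally by $\psi$.

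Because the leaves are connected, the value of $\omega$ on $\{\psi=\lambda\}$ depends only on $\lambda$: there is a single $C^{1}$ function $F$ with $\omega=F(\psi)$ globally and $\Delta\psi=F(\psi)$ in $M$ (one checks $F'\circ\psi=(\nabla\omega\cdot\nabla\psi)/|\nabla\psi|^{2}$, which is continuous since $u\in C^{2}$). Let $\xi$ be the Killing field of the vessel — $\xi=e_{x_{1}}$ on the channel, $\xi=\partial_{\theta}$ on the annulus and the disk — which is tangent to $\partial M$. Applying $\pounds_{\xi}$ to $\Delta\psi=F(\psi)$ and using that $\pounds_{\xi}$ commutes with $\Delta$, the function $\phi:=\pounds_{\xi}\psi$ solves the linearized equation $\mathcal{L}_{\psi}\phi:=\Delta\phi-F'(\psi)\phi=0$, and $\phi=0$ on $\partial M$ because $\xi$ is tangential there and $\psi$ is constant on each boundary component. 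If one can show $\phi\equiv0$, then $\psi$ is invariant under the symmetry, i.e. $\psi=\psi(x_{2})$ (resp. $\psi=\psi(r)$), hence $u=v(x_{2})e_{x_{1}}$ (resp. $v(r)e_{\theta}$); the hypothesis $|u|>0$ (resp. the sign of $u$) then forces $v$ to have a strict sign, and $v\in C^{2}$ follows from $u\in C^{2}$.

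The crux — and the reason the statement is non-trivial — is precisely this last step: proving that the Dirichlet solution $\phi\in\ker\mathcal{L}_{\psi}$ vanishes identically \emph{without} a smallness or sign condition on $F'(\psi)$, so that hypothesis \textbf{(H0)} of Theorem~\ref{thmcdg} is not available a priori but must itself be established. This is where $\inf|u|>0$ enters decisively. On the channel, both $g_{1}:=\partial_{x_{1}}\psi=u_{2}$ and $g_{2}:=\partial_{x_{2}}\psi=-u_{1}$ solve $\mathcal{L}_{\psi}f=0$, and $g_{1}^{2}+g_{2}^{2}=|u|^{2}>0$, so at every point at least one is nonzero; in particular the nodal set of $\phi$ is a locally finite union of $C^{1}$ arcs, and on each nodal domain $\phi$ has one sign and solves $\mathcal{L}_{\psi}\phi=0$ with zero data on its boundary. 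Comparing $\phi$ against a non-vanishing solution $g$ of $\mathcal{L}_{\psi}g=0$ — e.g. $g=g_{2}=-u_{1}$, which is nonzero on $\partial M$ since there $u_{2}=0$ and hence $|u_{1}|=|u|>0$, and in a neighborhood thereof — through the identity $\div(g^{2}\nabla(\phi/g))=g\,\mathcal{L}_{\psi}\phi-\phi\,\mathcal{L}_{\psi}g=0$ together with the maximum principle propagates the vanishing of $\phi$ inward from $\partial M$ and forces $\phi\equiv0$. The rotationally symmetric cases are handled analogously, with $\partial_{\theta}\psi=-r^{2}u_{r}$ playing the role of $\phi$; for the disk one works on $\{\epsilon<|x|<1\}$ and lets $\epsilon\downarrow0$, using the sign of $u$ to control the limit and to see that the resulting circular flow is regular up to the removable singularity at $0$. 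The delicate point in every case is controlling — equivalently, ruling out — interior zeros of the comparison solution $g$, which is where the full strength of Hamel and Nadirashvili's argument is needed.
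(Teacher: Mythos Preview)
The paper does not supply its own proof of this theorem; it is quoted as a result of Hamel--Nadirashvili \cite{HN17,HN19} and the text moves on immediately after the statement. So there is no in-paper argument to compare against, and I assess your sketch on its own merits.

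Your set-up is correct: absence of stagnation points forces the level sets of $\psi$ to foliate $M$ by noncontractible curves, whence a global $C^{1}$ relation $\Delta\psi=F(\psi)$, and $\phi=\pounds_{\xi}\psi$ solves $\mathcal{L}_{\psi}\phi=0$ with Dirichlet data. The gap is exactly where you flag it. The identity $\div\!\big(g^{2}\nabla(\phi/g)\big)=0$ yields $\phi\equiv 0$ only on a region where $g$ is \emph{nowhere zero}; taking $g=-u_{1}=\partial_{x_{2}}\psi$ gives $g\neq 0$ on $\partial M$ and in a collar, but nothing you have established prevents $u_{1}$ from vanishing on an interior curve (there $u_{2}\neq 0$, so $|u|>0$ is not violated). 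Across such a nodal line of $g$ the quotient $\phi/g$ is undefined, the integration by parts breaks down, and ``propagating vanishing inward'' has no mechanism. In effect you are trying to deduce \textbf{(H0)} for $\mathcal{L}_{\psi}$ from $|u|>0$ alone, and you concede you have not; deferring to \cite{HN17,HN19} at this point leaves the proof incomplete. It is worth knowing that Hamel--Nadirashvili do \emph{not} organize their argument around $\mathcal{L}_{\psi}$: once the streamline foliation is in hand, they run a sliding argument with the strong maximum principle for the nonlinear equation $\Delta\psi=F(\psi)$ to show $\partial_{x_{2}}\psi$ (resp.\ $\partial_{r}\psi$) has a strict sign throughout $M$, and symmetry then follows. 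Your linearization route, in the spirit of Theorem~\ref{thmcdg}, would require an independent proof that $\ker\mathcal{L}_{\psi}\cap H^{1}_{0}=\{0\}$, which is not cheaper than the sliding method.
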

\end{myshade}
In Theorem \ref{thmNH}, we have highlighted just a couple results of \cite{HN17,HN19} -- termed Liouville theorems -- which show symmetry of solutions having a certain structure.   We remark that it is an interesting open issue (conjectured in \cite{HN19}) whether on the disk domain it is enough to say  $u\in C^{2}(\overline{M}\setminus \{z\})$ for some interior point $z\in M$ with $u> 0$ on $\overline{M}\setminus \{z\}$ to conclude that $z$ must be the origin and $u$ must be circular.

In a similar spirit to Theorem \ref{thmNH}, there is the recent work \cite{GSPSY21} which establishes radial symmetry for all compactly supported single signed vorticity distributions on $\mathbb{R}^2$, as well as for more singular vortex sheet configurations \cite{GSPSY21b}.  Free boundary fluid bodies also exhibit strong forms of rigidity \cite{HN19,cdgb}.

Note that the flows of Theorem \ref{thmNH} are \textit{not} isolated from other stationary solutions, since one can find nearby shear/radial flows in any given $C^{1}$ neighborhood.  However, this theorem shows that stationary solutions without stagnation points \textit{are} isolated from any non-shear stationary solution of the equations in $C^{1}$ (e.g. from cellular flow \eqref{cellflow} and cat's--eyes \eqref{cats}).  
It is important to emphasize that without the assumption of no stagnation points, shear flows are no longer isolated and that these statements depend strongly on topology. For instance, Lin and Zeng \cite{LZ11} showed that  Couette flow $u(x)=(x_2,0)$ is isolated from non-shears in the $H^s$, $s>3/2$ topology while there exist non-shear cat's--eye vortices arbitrarily close in the $s<3/2$ topologies.  Coti Zelati--Elgindi--Widmayer \cite{CEW20} showed that Kolmogorov flow $u_\mathsf{K}(x) = (\sin (x_2), 0)$ is not isolated from non-shear steady states even in the analytic topology, showing an extreme example of the effect of stagnation. However the work of \cite{CEW20} also showed that the assumption of non-stagnation is not always necessary for isolation by showing that all (Sobolev $H^s$, $s>5$) neighboring solutions to Poiseuille flow $u_0(x)=(|x_2|^2,0)$ are shear flows.

One can ask about the ``flexibility" of steady states more systematically.  For example, say one wants to find nearby steady states corresponding to  
a slight change in the vorticity, a  wrinkling of the domains (slight changes of the background metric), or a wiggling in the boundary of the vessel (see Figure \ref{fig:chan}).

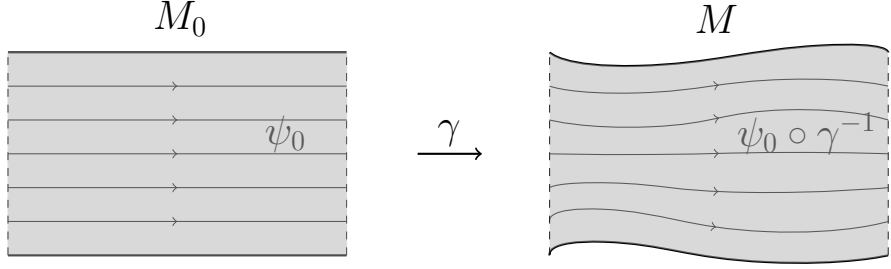
\begin{figure}[h!]
\centering
\begin{tikzpicture}[scale=0.9, every node/.style={transform shape}]

	 \draw[thick, black, ->] (3.55,1.5)--(4.5,1.5);
	 \draw  (3.7,1.8) node[anchor=west] {\Large ${\gamma}$};
	 \draw  (-0.5,3.5) node[anchor=west] {\Large $M_0$};
	 \draw  (7.5,3.5) node[anchor=west] {\Large $M$};
         \draw  (1.6,1.8) node { \Large $\psi_0$};
           \draw  (9.3,1.8) node { \Large $\psi_0\circ \gamma^{-1}$};

	\draw [thick] (-2.5,0)--(2.5,0);
	\draw [thick] (-2.5,3)--(2.5,3);
	
	\draw[thin,   decoration={markings, mark = at position -0.5 with {\arrow{>}}},
        postaction={decorate}] (-2.5,2.5)--(2.5,2.5);
	\draw [thin,   decoration={markings, mark = at position -0.5 with {\arrow{>}}},
        postaction={decorate}]  (-2.5,2)--(2.5,2);
	\draw [thin,   decoration={markings, mark = at position -0.5 with {\arrow{>}}},
        postaction={decorate}]  (-2.5,1.5)--(2.5,1.5);
	\draw [thin,   decoration={markings, mark = at position -0.5 with {\arrow{>}}},
        postaction={decorate}] (-2.5,1)--(2.5,1);
	\draw[thin,   decoration={markings, mark = at position -0.5 with {\arrow{>}}},
        postaction={decorate}] (-2.5,0.5)--(2.5,0.5);
	
        \draw [dashed] (-2.5,0)--(-2.5,3);
        \draw [dashed] (2.5,0)--(2.5,3);
	\draw[fill, black!30!white, opacity=0.5] (-2.5,0)--(2.5,0)-- (2.5,3)--(-2.5,3)--cycle;
	
	\draw [name path=A,thick] plot [smooth, tension=1]  coordinates {(5.5,0) (6.3,0.2) (8.7,-0.1)  (10.5,0)};
	\draw [name path=B, thick] plot [smooth, tension=1]  coordinates {(5.5,3) (6.8,2.8)  (9.5,3.1)  (10.5,3)} ;

	\draw [thin,   decoration={markings, mark = at position -0.5 with {\arrow{>}}},
        postaction={decorate}]  plot [smooth, tension=1]  coordinates {(5.5,2.5) (6.8,2.4) (9,2.6)   (10.5,2.5)};
	\draw [thin,   decoration={markings, mark = at position -0.5 with {\arrow{>}}},
        postaction={decorate}]  plot [smooth, tension=1]  coordinates {(5.5,2) (7,1.9) (9,2.15)   (10.5,2)};
	\draw [thin,   decoration={markings, mark = at position -0.5 with {\arrow{>}}},
        postaction={decorate}]  plot [smooth, tension=1]  coordinates {(5.5,1.5)  (7,1.49) (9,1.52)  (10.5,1.5)};
	\draw [thin,   decoration={markings, mark = at position -0.5 with {\arrow{>}}},
        postaction={decorate}] plot [smooth, tension=1]  coordinates {(5.5,1) (6.4,1.07) (8.4,0.93) (10.5,1)};
	\draw [thin,   decoration={markings, mark = at position -0.5 with {\arrow{>}}},
        postaction={decorate}]  plot [smooth, tension=1]  coordinates {(5.5,0.5) (6.3,0.68) (8.7,0.35) (10.5,0.5)};
	
        \draw [dashed] (5.5,0)--(5.5,3);
        \draw [dashed] (10.5,0)--(10.5,3);
\tikzfillbetween[of=A and B]{black!30!white, opacity=0.5};	   
\end{tikzpicture}
  \caption{Deformation of periodic channel $M_0$ and streamfunction $\psi_0$ by $\gamma$.}
  \label{fig:chan}
\end{figure}

There have been a number of results recently on the flexibility of stationary solutions. Here we discuss the implications for a large class of stable steady states.
Specifically, let $M_0\subset \mathbb{R}^2$ and $\psi_0:M_0\to \mathbb{R}$ be any streamfunction  satisfying:
  \begin{itemize}
 \item [\textbf{(H1)}] The vorticity $\omega_0=\Delta \psi_0=F_0(\psi_0)$ satisfies
 $F_0'(\psi_0)> -\lambda_1(M_0)$. \vspace{2mm}
  \item [\textbf{(H2)}]
There exists $c_{\psi_0}>0$ such that $\forall$ $c\in {\rm image}(\psi_0)$ one has $ \mu(c)\leq 1/c_{\psi_0}$ where
   \be\label{rotation}
 \mu(c):= \oint_{\{\psi_0=c\}} \frac{\rmd \ell}{|\nabla \psi_0|}.
  \ee
  \end{itemize}

The purpose of Hypothesis \textbf{(H1)} is to ensure something slightly stronger than \textbf{(H0)}, namely that the operator $\mathcal{L}_\psi$ be positive definite. Hypothesis \textbf{(H1)} is guaranteed for all known stable states (e.g.  \eqref{astabcond} and constant vorticity).
Hypothesis \textbf{(H2)}   ensures that the period of revolution of fluid parcels along streamlines (the travel time) is bounded.  This condition  is satisfied for any base flow without stagnation points on  multiply connected domains.  A sufficient condition  on simply connected domains  to ensure it holds  is that there is a unique critical point where the vorticity does not vanish $\omega_0\neq 0$.  Such solutions are flexible:

  \begin{myshade}
  \vspace{-1mm}
\begin{theorem}[Structural stability of stable states \cite{cdg}]\label{2dEthmarnold}
  \label{existence}
 Let $\alpha>0$ and $k \geq 3$. Let  $(M_0,g_0)$ be a compact two-dimensional Riemannian submanifold of $\mathbb{R}^2$ with $C^{k,\alpha}$ boundary.
  Suppose $\psi_0\in  C^{k,\alpha}(M_0)$ satisfies  {\rm \textbf{(H1)}--\textbf{(H2)}} on $(M_0,g_0)$ with vorticity profile $F_0\in C^{k-2,\alpha}(\mathbb{R})$.
Then there are constants $\ve_1,\ve_2, \ve_3$ depending only on $ M_0, F_0, g_0$ and
$\|\psi_0\|_{C^{k,\alpha}}$ such that if $(M, g)$ is a compact Riemannian manifold
and $\rho:M_0\to \mathbb{R}$  with $\int_{D_0} \rho\  \rmd {\rm vol}_{g_0} = {\rm Vol}_g(D)$ and $g:M_0\to \mathbb{R}^2$ satisfy
\begin{align*}
\|\partial M - \partial M_0\|_{C^{k,\alpha}}\leq \ve_1,
\quad
  \|\rho-1\|_{C^{k,\alpha}(M_0)}\leq \ve_2,\quad
  \|g-g_0\|_{C^{k,\alpha}(M_0)}&\leq \ve_3,
  \end{align*}
then there is a diffeomorphism $\gamma: M_0 \to M$ with Jacobian ${\rm det} (\nabla \gamma)= \rho$, and a function $F: \mathbb{R} \to \mathbb{R}$ close in $C^{k-2,\alpha}$ to $F_0$
so that $\psi = \psi_0\circ \gamma^{-1} \in C^{k,\alpha}(M)$ and $\psi$ satisfies \eqref{sse1}--\eqref{sse2} on $(M,g)$.
Thus, $u =\nabla^\perp \psi$  is an
Euler solution on $(M,g)$ satisfying   {\rm \textbf{(H1)}--\textbf{(H2)}}  nearby $u_0$ whose vorticity is $\omega =  F(\psi)$.
\end{theorem}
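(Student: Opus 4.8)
The plan is to prove the theorem by an implicit function theorem argument in which the vorticity profile $F$ is the principal unknown, organized around the level-set/travel-time structure of $\psi_0$ that Hypotheses \textbf{(H1)}--\textbf{(H2)} render non-degenerate.

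\emph{Step 1: forward solve on a fixed domain.} Choosing, smoothly in the data, a reference diffeomorphism $\iota\colon M_0\to M$ whose Jacobian realizes the prescribed volume normalization (Dacorogna--Moser / Moser's trick) and pulling back the metric, one may regard the semilinear problem $\Delta_g\psi=F(\psi)$ in $M$, $\psi|_{\partial M}=0$, as a boundary value problem on the fixed manifold $M_0$ with coefficients depending smoothly on $(\partial M,g)$. For $F$ near $F_0$ in $C^{k-2,\alpha}(\Rr)$ and $(\partial M,g)$ near $(\partial M_0,g_0)$ in $C^{k,\alpha}$, Hypothesis \textbf{(H0)} --- triviality of the kernel of $\mathcal{L}_{\psi_0}=\Delta_{g_0}-F_0'(\psi_0)$ in $H^1_0(M_0)$ --- makes $\mathcal{L}_{\psi_0}$ an isomorphism, so the implicit function theorem in Hölder spaces together with Schauder estimates (this is where $k\geq 3$ is used) yields a unique solution $\psi=\psi[F;\partial M,g]\in C^{k,\alpha}$ near $\psi_0$, depending smoothly on its arguments.

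\emph{Step 2: encode the rearrangement as a scalar functional equation.} The requirement that $\psi[F;\partial M,g]$ be a density-$\rho$ rearrangement of $\psi_0$ is captured by a \emph{rearrangement lemma}: there is a diffeomorphism $\gamma\colon M_0\to M$ with $\det\nabla\gamma=\rho$ and $\gamma(\partial M_0)=\partial M$ such that $\psi[F;\partial M,g]=\psi_0\circ\gamma^{-1}$ if and only if the weighted distribution functions agree,
\[
\frac{\rmd}{\rmd c}\,{\rm Vol}_g\big(\{\psi[F;\partial M,g]<c\}\big)=\frac{\rmd}{\rmd c}\int_{\{\psi_0<c\}}\rho\,\rmd{\rm vol}_{g_0}\qquad\text{for every $c$ in the common range.}
\]
The forward implication is immediate. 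For the converse, Hypothesis \textbf{(H2)} is essential: the bound $\mu(c)=\oint_{\{\psi_0=c\}}\rmd\ell/|\nabla\psi_0|\leq 1/c_{\psi_0}$ excludes separatrices, so $\psi_0$ and, by $C^{k,\alpha}$-closeness, $\psi[F;\partial M,g]$ carry the same regular level-set foliation (nested closed streamlines around isolated centers), and $\gamma$ can then be built by a parametrized Moser-type construction --- an arbitrary circle diffeomorphism on each level curve, with the transverse coordinate fixed by the displayed area-matching identity. Thus the theorem reduces to solving, for $F$, the equation $\Phi(F;\partial M,\rho,g)=0$ between functions of the single variable $c$, where
\[
\Phi(F;\partial M,\rho,g):=\frac{\rmd}{\rmd c}\,{\rm Vol}_g\big(\{\psi[F;\partial M,g]<c\}\big)-\frac{\rmd}{\rmd c}\int_{\{\psi_0<c\}}\rho\,\rmd{\rm vol}_{g_0},
\]
and one has the base point $\Phi(F_0;\partial M_0,1,g_0)=0$.

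\emph{Step 3: invertibility of the linearization (the crux), and conclusion.} One must show $D_F\Phi(F_0;\partial M_0,1,g_0)$ is an isomorphism between the appropriate function spaces (Hölder spaces of functions of $c$ adapted to the range of $\psi_0$). Linearizing Step 1, $F_0+\ve h\mapsto\psi_0+\ve\phi+O(\ve^2)$ with $\mathcal{L}_{\psi_0}\phi=h(\psi_0)$ and $\phi|_{\partial M_0}=0$; since the first variation of a sublevel-set volume along $\phi$ is $-\oint_{\{\psi_0=c\}}\phi\,\rmd\ell/|\nabla\psi_0|$, one gets $D_F\Phi(F_0)h=-\frac{\rmd}{\rmd c}\oint_{\{\psi_0=c\}}\mathcal{L}_{\psi_0}^{-1}[h(\psi_0)]\,\rmd\ell/|\nabla\psi_0|$. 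Injectivity is where the strict positivity of \textbf{(H1)} (rather than only \textbf{(H0)}) enters: if $h$ lies in the kernel then $c\mapsto\oint_{\{\psi_0=c\}}\phi\,\rmd\ell/|\nabla\psi_0|$ is constant, and this constant vanishes at the extreme value of $\psi_0$ taken on $\partial M_0$, where $\phi=0$; hence it vanishes identically, so by the coarea formula $\int_{M_0}\phi\,\mathcal{L}_{\psi_0}\phi\,\rmd x=\int h(c)\big(\oint_{\{\psi_0=c\}}\phi\,\rmd\ell/|\nabla\psi_0|\big)\,\rmd c=0$, and since \textbf{(H1)} makes $-\mathcal{L}_{\psi_0}$ positive definite on $H^1_0(M_0)$ this forces $\phi\equiv0$, whence $h(\psi_0)=\mathcal{L}_{\psi_0}\phi\equiv0$ and $h\equiv0$ on the range of $\psi_0$. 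Surjectivity with closed range then follows from Fredholm theory, the composition of $\mathcal{L}_{\psi_0}^{-1}$ with level-set averaging being a smoothing perturbation of an explicit one-dimensional model operator whose coefficients are built from $\mu$ --- for which, once more, the regularity of the foliation guaranteed by \textbf{(H2)} is what is needed. Granting the isomorphism, the implicit function theorem produces $F=F(\partial M,\rho,g)$ near $F_0$ in $C^{k-2,\alpha}$, hence $\psi=\psi[F;\partial M,g]=\psi_0\circ\gamma^{-1}\in C^{k,\alpha}$ via the rearrangement lemma, so $u=\nabla^\perp\psi$ is the asserted Euler solution on $(M,g)$. I expect essentially all the genuine difficulty to be concentrated in Step 3 --- reducing the two-dimensional elliptic inverse composed with level-set averaging to a tractable one-dimensional operator and extracting its non-degeneracy from \textbf{(H1)}--\textbf{(H2)} --- while the forward solve and the rearrangement lemma, though requiring care (near critical points, and concerning the common range of $\psi_0$ and $\psi[F]$), are comparatively routine.
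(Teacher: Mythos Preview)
The paper does not contain a proof of this theorem: it is stated with attribution to \cite{cdg} (Constantin--Drivas--Ginsberg, ``Flexibility and rigidity in steady fluid motion''), and the surrounding text only discusses consequences and related prior work (\cite{WV05,CS12}) without giving any argument. So there is no proof in this paper to compare your proposal against.

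That said, your sketch is a reasonable outline of the strategy actually used in \cite{cdg}: an implicit function theorem in which the unknown is essentially the vorticity profile $F$ (or an equivalent one-variable object), with the forward elliptic solve controlled by \textbf{(H1)} via invertibility of $\mathcal{L}_{\psi_0}$, and the rearrangement constraint encoded through matching of the level-set distribution functions, where \textbf{(H2)} guarantees the regular foliation structure needed for the Moser-type construction of $\gamma$. Your Step~3 identifies the genuine crux correctly --- reducing the composite operator (elliptic inverse followed by level-set averaging) to a tractable one-dimensional map and extracting its nondegeneracy from \textbf{(H1)}--\textbf{(H2)}. The injectivity argument you give is along the right lines; the surjectivity/Fredholm part you flag as ``granting the isomorphism'' is indeed where most of the technical work in \cite{cdg} lies, and your proposal is honest in signaling that this is not fully worked out here.
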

\end{myshade}

Theorem \ref{2dEthmarnold} generates solutions with the same streamline topologies of the base state since the new streamfunction $\psi$ is on the orbit of the base $\psi_0$ in the group of area preserving diffeomorphisms.\footnote{As we will described in \S \ref{strifesec}, this fact is useful for constructing isochronal steady solutions (having time-periodic flowmap) on  domain nearby disks/ellipses. }
This result built on the previous works of \cite{WV05,CS12}.  Wirosoetisno and Vanneste \cite{WV05} proved that shear flows without stagnation points on the channel can be deformed to domains with wiggled boundaries.  This analysis can be extended also to circular flows on annuli or with single critical points on the disk, showing that all the steady states of Theorem \ref{thmNH} are flexible to deformation of the fluid vessel. Choffrut and  Šverák \cite{CS12} gave a full characterization of the steady states nearby certain ones satisfying a version of (\textbf{H1})  on annular domains by showing that they are in one-to-one correspondence with their distribution functions, i.e. for all $\omega_0$ sufficiently close to $\omega$, there exists a unique stationary solution on the orbit 
$
\mathcal{O}_{\omega_0}:= \{ \omega_0 \circ \varphi \ :\ \varphi  \in \mathscr{D}_\mu(M)  \}.$
 in the group of area preserving diffeomorphisms.  That stationary states come in such rich families is a truly infinite dimensional feature of the Euler equations. 
 
We remark in passing that, in the opposite direction,
 Ginzburg and Khesin \cite{GK0,GK} show that if  $M$ is a simply connected planar domain and $\omega_0$ is Morse, positive and has both a local maximum and minimum in the interior, then $\mathcal{O}_{\omega_0}$ contains no smooth Euler steady state.  
Izosimov and Khesin \cite{IK} later gave
necessary conditions on the vorticity $\omega_0$  to have a smooth steady Euler solution on $\mathcal{O}_{\omega_0}$  for
any metric, as well as a sufficient condition for the existence of a steady solution for some metric.

It is important to note that Hypothesis (\textbf{H0}) or (\textbf{H1}) are not strictly needed for the conclusions of Theorem \ref{2dEthmarnold} to hold if the kernel of $\mathcal{L}_\psi$ can be well understood.  This is demonstrated by the work of Coti Zelati--Elgindi--Widmayer \cite{CEW20} for Kolmogorov flow $u_\mathsf{K}(x_1,x_2) = (\sin (x_2), 0)$. These shear flows are eigenfunctions of the Laplacian having vorticity $F_0(\psi_\mathsf{K})= -\psi_\mathsf{K}$. The operator $\mathcal{L}_{\psi_\mathsf{K}}:=\Delta-F'_0(\psi_\mathsf{K})$ has a non-trivial kernel $ {\rm Ker}(\mathcal{L}_{\psi_\mathsf{K}})= {\rm span}\{\sin(x_1), \cos(x_1),\sin(x_2), \cos(x_2)\}$.  Nevertheless, nearby non-shear solutions can be found by a Lyapunov-Schmidt scheme in which, to deal with this degeneracy,  extra degrees of freedom from the kernel are introduced within the construction.

All of the above results point to Euler equilibria being non-isolated from one another.

 \begin{myshade2}
  \vspace{-1mm}
\begin{problem}\label{conj1}
Show that there is no steady 2D Euler solution $\omega\in C^{1}$ that is isolated in $C^{1}$ from other stationary solutions (modulo symmetries).
\end{problem}
\end{myshade2}

If isolated steady states did exist, then the ray generated by scaling would be a candidate attractor for the Eulerian dynamics either forward or backward in time.

We now begin our discussion of dynamics nearby steady states.  We first recall the famous theorem of Arnold showing that certain large-scale structures (which include monotone shear flows on the channel and radial vortices with decreasing vorticity on the disk or plane) do persist for all time under evolution.  It reads:

\begin{myshade}
  \vspace{-1mm}
\begin{theorem}[Arnold \cite{arn11,arn12,arn13,ak}]\label{thmArn}  
Let $M\subset \mathbb{R}^2$ be a simply connected domain with smooth boundary.  Let $u_*:= \nabla^\perp \psi_*$ be a steady solution of Euler with the property that $\omega_*:=\nabla^\perp \cdot u_* = F(\psi_*)$. If $F$ is single valued and
\be\label{ranges}
-\lambda_1(M) < F'(\psi)< 0 \quad \text{or}\quad 0 < F'(\psi)< \infty
\ee 
where $\lambda_1>0$ is the smallest eigenvalue of $-\Delta$ in $M$,  then $\omega$ is nonlinearly stable in the $L^2(M)$ norm.  That is, for any $\delta>0$, there is an $\ve:=\ve(\delta)$ such that for all $\omega_0\in L^\infty(M)$ satisfying $\|\omega_0-\omega_*\|_{L^2}<\ve$ and with equal circulations at the boundary of $\Omega$ to $v$, then $\| S_t(\omega_0)-\omega_*\|_{L^2} \leq  \delta$ for all $t\in \mathbb{R}$.
\end{theorem}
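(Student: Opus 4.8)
The plan is to run Arnold's energy--Casimir method, realizing the steady state $\omega_*$ as an $L^2$-coercive extremum of a functional conserved by the 2D Euler flow. Along \eqref{2dvort} the kinetic energy
\[
E[\omega] := \tfrac{1}{2}\int_M |u|^2\,\rmd x = -\tfrac{1}{2}\int_M \psi\,\omega\,\rmd x,\qquad u=\nabla^\perp\psi,\quad \psi=\Delta^{-1}\omega ,
\]
is conserved, and since in two dimensions the scalar vorticity is merely rearranged along the Osgood-continuous flow, $\omega(t)=\omega_0\circ\Phi_t^{-1}$ as in \eqref{trans2d}, every Casimir $C_f[\omega]:=\int_M f(\omega)\,\rmd x$ is conserved as well --- for $\omega_0\in L^\infty$ this is contained in Theorem \ref{yudothm}, and in particular the whole distribution function of $\omega(t)$ is time-independent. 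Here $\Delta^{-1}$ denotes the Dirichlet inverse Laplacian on the simply connected domain $M$, unambiguous once the boundary circulation $\oint_{\partial M}u\cdot\rmd\ell=\int_M\omega$ is fixed; this is precisely why the hypothesis requires $\omega_0$ to share the boundary circulation of $\omega_*$. One then studies $H:=E+C_f$.

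First I would choose the Casimir so that $\omega_*$ is a critical point of $H$. Since $\omega_*=F(\psi_*)$ with $F$ strictly monotone on each of the admissible ranges \eqref{ranges}, $F$ is invertible on the image of $\psi_*$; take $f$ with $f'=F^{-1}$ there, extended to all of $\mathbb{R}$ so that $f''=1/F'$ keeps a fixed sign, so that $f'(\omega_*)=\psi_*$. A direct computation --- exploiting that $E$ is \emph{exactly} a quadratic form in $\omega$, so its Taylor expansion terminates at the Hessian, and expanding the Casimir to second order with integral remainder --- gives the identity
\[
H[\omega]-H[\omega_*] \;=\; \tfrac{1}{2}\,\|\omega-\omega_*\|_{\dot{H}^{-1}(M)}^2 \;+\; \int_M R_\omega(x)\,(\omega-\omega_*)^2\,\rmd x ,
\]
where $\|\eta\|_{\dot{H}^{-1}}^2:=\int_M|\nabla\Delta^{-1}\eta|^2\,\rmd x$ and $R_\omega(x):=\int_0^1(1-s)\,f''\big(\omega_*(x)+s(\omega(x)-\omega_*(x))\big)\,\rmd s$; the linear contributions $\pm\int_M\psi_*(\omega-\omega_*)\,\rmd x$ cancel by the choice of $f$.

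The crux is coercivity of $H$ at $\omega_*$, and the two cases of \eqref{ranges} are treated separately. In the type II regime $0<F'<\infty$ one has $f''>0$, hence $R_\omega\ge 0$; provided $F'$ remains bounded over the range of values of $\omega$ --- which is fixed, equal to that of $\omega_0$, by the transport structure --- one gets $f''\ge c>0$ there and $H[\omega]-H[\omega_*]\ge c\,\|\omega-\omega_*\|_{L^2}^2$. In the type I regime $-\lambda_1(M)<F'<0$ one has $f''<0$; writing $|F'|\le\mu<\lambda_1$ gives $R_\omega\le -\tfrac{1}{2\mu}$, while the Poincar\'e inequality on $H^1_0(M)$ gives $\|\eta\|_{\dot{H}^{-1}}^2\le\lambda_1^{-1}\|\eta\|_{L^2}^2$, so that
\[
H[\omega_*]-H[\omega] \;\ge\; \tfrac{1}{2}\Big(\tfrac{1}{\mu}-\tfrac{1}{\lambda_1}\Big)\,\|\omega-\omega_*\|_{L^2}^2 .
\]
Either way, $\big|H[\omega]-H[\omega_*]\big|\ge c_0\,\|\omega-\omega_*\|_{L^2}^2$ for a constant $c_0>0$ depending only on $M$ and $F$.

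Finally I would conclude Lyapunov stability. Since $H$ is conserved, for every $t$
\[
c_0\,\|S_t(\omega_0)-\omega_*\|_{L^2}^2 \;\le\; \big|H[S_t(\omega_0)]-H[\omega_*]\big| \;=\; \big|H[\omega_0]-H[\omega_*]\big| ,
\]
and by the identity above the right-hand side is $\le C\,\|\omega_0-\omega_*\|_{L^2}^2$ (using $\|\cdot\|_{\dot{H}^{-1}}\le\lambda_1^{-1/2}\|\cdot\|_{L^2}$ and boundedness of $f''$ near the range of $\omega_*$), so taking $\varepsilon=\varepsilon(\delta)$ with $C\varepsilon^2<c_0\delta^2$ yields $\|S_t(\omega_0)-\omega_*\|_{L^2}\le\delta$ for all $t\in\mathbb{R}$. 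For merely $L^\infty$ data one first proves this for smooth $\omega_0$ and passes to the limit via the weak-$*$ continuity of $S_t$ in Theorem \ref{yudothm}(c) together with lower semicontinuity of the $L^2$ norm. The main obstacle is the coercivity step: the remainder $R_\omega$ must be signed and quantitatively controlled over the entire range of values $\omega(t)$ can reach --- a range one only controls through the transported structure $\omega(t)=\omega_0\circ\Phi_t^{-1}$ and conservation of all $L^p$ norms, not through $L^2$-smallness of the data --- so when $F'$ is not uniformly bounded one must add an auxiliary convex Casimir that vanishes on the range of $\omega_*$, and the sign split between the type I and type II regimes (and the appeal to the spectral gap $\lambda_1$ in type I) is where the substantive work lies.
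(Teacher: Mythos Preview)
Your proposal is correct and follows essentially the same energy--Casimir argument as the paper: both choose the Casimir with $f'=F^{-1}$ so that $\omega_*$ is critical, both arrive at the identical second-variation identity (your $\tfrac12\|\omega-\omega_*\|_{\dot H^{-1}}^2$ is exactly the paper's $\tfrac12\int_M|u-u_*|^2$), and both close the coercivity in type~I via Poincar\'e with the spectral gap $\lambda_1$ and in type~II via strict convexity of $f$. The only differences are cosmetic---your integral-remainder $R_\omega$ versus the paper's direct convexity/concavity bounds---and that you add a few completeness remarks (extension of $f$, the auxiliary Casimir when $F'$ is unbounded, and the approximation step for $L^\infty$ data) which the paper's proof omits.
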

\end{myshade}

The two ranges in \eqref{ranges} are referred to as type I and type II.  The result holds also for multiply connected domains if we restrict to perturbations which conserve the circulation along the boundaries.  See the recent work of Gallay and Šverák for a detailed discussion of this theory for radial vortices \cite{GS21}. We note also that similar results can be established for other nonlinear fluid equations \cite{HMRW}.

We remark that, since Arnold stable solutions come in infinite dimensional families  according to Theorem \ref{2dEthmarnold} and the results of \cite{CS12}, the asymptotic behavior of the long time limits can be extremely difficult to determine. Some results and conjectures in this direction are deferred to \S \ref{conjectent}.

\begin{proof}
For a suitable $H:\mathbb{R}\to \mathbb{R}$ to be chosen, we introduce the functional
\be
A[\omega] = \frac{1}{2} \int_M |u|^2 \rmd x + \int_M H[\omega] \rmd x.
\ee
We want $A[\omega]$ to be locally convex/concave and coercive with respect to the $L^2$ norms of the velocity
and vorticity in a neighborhood of the critical point. Recall, coercive with respect to a norm means
that control on the energy functional controls the norm.
Since $F$ is strictly decreasing  for type I flows and increasing for type II flows, it is invertible and we can $H$ such that $H'[\omega]= F^{-1}(M)$ and
\be
H''[\omega] =1/F'(F^{-1}(M)).
\ee
It follows that  $H$ is strictly concave if the type I condition holds and strictly convex if the type II condition holds.

Note that $A$ is invariant under the Euler flow  (energy and enstrophy are conserved).
For $\omega_0\in L^\infty(M)$, denote $\omega(t)= S_t(\omega_0)$ and $u(t) = K_M[\omega(t)]$.  Then
\begin{align*}
A[\omega_0] - A[\omega_*]&= A[S_t(\omega_0)]  - A[\omega_*].
\end{align*}
We next write
\begin{align*}
A[S_t(\omega_0)]  - A[\omega_*]&= \frac{1}{2} \int_M\Big( |u(t)|^2-  |u_*|^2\Big) \rmd x + \int_M \Big(H[\omega(t)] - H[\omega_*] \Big)\rmd x\\
&= \frac{1}{2} \int_M\Big( |u(t)|^2-  |u_*|^2\Big) \rmd x \\
&\qquad + \int_M \Big(H[\omega(t)] - H[\omega_*]- H'[\omega_*] (\omega-\omega_*)  \Big)\rmd x\\
&\qquad \qquad + \int_M H'[\omega_*] (\omega-\omega_*) \rmd x.
\end{align*}
Using the fact that $H'[\omega_*] =F^{-1}(\omega_*) = \psi_*$, and that $u_*^\perp=-\nabla \psi_*$, we have
\begin{align*}
\int_\Omega H'[\omega_*] (\omega-\omega_*) \rmd x&= \int_M \psi_*\nabla^\perp \cdot (u-u_*) \rmd x = \int_M\Big(|u_*|^2 - u_*\cdot u\Big)\rmd x,
\end{align*}
since $\psi_*|_{\partial M}=0$.
Thus we obtain
\begin{align} \nonumber
 A[\omega_0]  - A[\omega_*]&=  \frac{1}{2} \int_M |u(t)-u_*|^2 \rmd x +  \int_M \Big(H[\omega(t)] - H[\omega_*]- H'[\omega_*] (\omega-\omega_*)  \Big)\rmd x.
\end{align}
By concavity/convexity of $H$, we have the following bounds
\begin{align*}
\text{type I}: \qquad 
 -\frac{c}{2}(\omega-\omega_*)^2 \geq  H[\omega] - H[\omega_*] - &H'[\omega_*] (\omega-\omega_*)  \geq   -\frac{C}{2}(\omega-\omega_*)^2,\\
\text{type II}: \qquad 
 \frac{C}{2}(\omega-\omega_*)^2 \geq H[\omega] - H[\omega_*] - &H'[\omega_*] (\omega-\omega_*)  \geq   \frac{c}{2}(\omega-\omega_*)^2,
\end{align*}
for appropriate constants $0<c<C<\infty$. Recalling Poincar\'{e}'s inequality, 
\be
\lambda_1 \int_M |u(t)-u_*|^2 \rmd x \leq  \int_M |\omega(t)-\omega_*|^2\rmd x, 
\ee
we deduce that
\begin{align*} 
\text{type I}: \qquad  A[\omega_*]-A[\omega_0]  &\geq  \frac{C-1/\lambda_1}{2}  \int_M |\omega(t)-\omega_*|^2\rmd x,\\ 
\text{type II}: \qquad  A[\omega_0]  - A[\omega_*]&\geq  \frac{c}{2}  \int_M |\omega(t)-\omega_*|^2\rmd x.
\end{align*}
For type I steady states, $C> 1/\lambda_1$ giving coercive control.
Thus the kinetic energy and the enstrophy of the perturbation are uniformly bounded by data with
\be
 |A[\omega_0]  - A[\omega_*]| \lesssim  \int_M |\omega_0-\omega_*|^2\rmd x,
\ee
thereby completing the proof.
\end{proof}


A consequence of this result together with interpolation is that $\omega$ is Lyapunov stable in the same sense in  $L^{p}(M)$ for $p\in [2,\infty)$.  However, the dynamics of the Euler equation in the spaces $L^{p}(M)$ is not understood  for $p<\infty$.  Global existence of solutions with initial data $\omega_0\in L^2( M)$ is known \cite{Y63,DM87} but not uniqueness. In fact, Euler is known to possess multiple weak solutions emanating from $\omega_0\in L^p$ with $p<\infty$, at least in the presence of an external force (see Vishik \cite{V18}). As such, Arnold's stability theorem is  an ``a priori estimate" rather than a true dynamical stability \cite{arn12}.
In view of this, it is of interest to understand the stability properties of stationary solutions in spaces on which Euler is a well defined dynamical system.  This question was asked  in the context of the space $L^\infty$ by Yudovich (Question \textbf{6a} of \cite{Y03}) of Arnold stable solutions. Unfortunately $L^2$ stable Euler states are not, in general, stable in the $L^\infty$ topology (see Remark \ref{rem} below).

Here we give an example of a Banach space, $X$, on which Euler is a dynamical system and Arnold stable solutions (in fact, any $L^2$ stable solutions) are Lyapunov stable in the $X$ topology.
Specifically, let $X=X( M)$ be the space consisting of all functions $f\in L^p( M)$ for $2\leq p< \infty$ which are finite in the norm:
\be\label{xtop}
\|f\|_{X}=\sum_{p=2}^\infty \frac{\|f\|_{L^p}}{p (\log(p))^2}.
\ee
Observe that $\|f\|_{X}\lesssim \|f\|_{L^\infty}$ whenever the right hand side is bounded.
 It is also not difficult to check that this is an existence and uniqueness class for the 2d Euler equation. Indeed  (see Lemma \ref{lem} below), $ \| f\|_Y \lesssim  \| f\|_X$ where $Y$ is the Yudovich space defined by \eqref{yudospace} in which Euler  is globally wellposed \cite{Y95}. Since all $L^p$ with $1\leq p<\infty$ norms of vorticity are conserved for solutions in $Y$, one has $\|\omega_0\|_X= \|\omega(t)\|_X$ and $\omega(t)\in X$ for all $t\in \mathbb{R}$.

We show that if $ \omega_* \in X$ is the vorticity of a steady Euler solution which is stable in $L^2$ (e.g. satisfies Arnold's conditions  \eqref{astabcond}), then it is stable in $X$:

\begin{myshade}
  \vspace{-1mm}
\begin{theorem}\label{stabinX}
Let $ \omega_* \in X$ be a stationary state of the 2D Euler equation that is Lyapunov stable in $L^2( M)$. Then, $\omega_*$ is Lyapunov stable in $X$. 
\end{theorem}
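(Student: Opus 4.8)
The plan is to bootstrap from the assumed $L^2$-stability, exploiting the fact that \emph{every} $L^p$ norm of vorticity with $1\le p<\infty$ is conserved by the Euler flow. This conservation is available precisely because $X\hookrightarrow Y$: by the discussion preceding the theorem (and \cite{Y95}), for $\omega_0\in X$ the solution $\omega(t):=S_t(\omega_0)$ exists for all time, is unique, and is a measure-preserving rearrangement of $\omega_0$, so $\|\omega(t)\|_{L^p}=\|\omega_0\|_{L^p}$ for every $p\in[1,\infty)$ and every $t$, and $\omega(t)\in X$; hence it suffices to consider perturbations $\omega_0\in X$. Writing $\omega:=\omega_*$ for brevity, I will use repeatedly the trivial single-term bound $\|f\|_{L^p}\le p(\log p)^2\|f\|_X$ read off from \eqref{xtop}; in particular $\|f\|_{L^2}\le C_X\|f\|_X$ with $C_X:=2(\log 2)^2$. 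The idea is to split the series \eqref{xtop} at a large index $P$, handle the tail $p>P$ by conservation alone, and handle the head $2\le p\le P$ by interpolating between the ($L^2$-small) quantity $\|\omega(t)-\omega\|_{L^2}$ and the (bounded) quantity $\|\omega(t)-\omega\|_{L^{2P}}$.

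Fix $\delta>0$. For the \textbf{tail}: for $p>P$, conservation and the triangle inequality give $\|\omega(t)-\omega\|_{L^p}\le\|\omega_0\|_{L^p}+\|\omega\|_{L^p}\le\|\omega_0-\omega\|_{L^p}+2\|\omega\|_{L^p}$, hence
\[
\sum_{p>P}\frac{\|\omega(t)-\omega\|_{L^p}}{p(\log p)^2}\ \le\ \|\omega_0-\omega\|_X+2\sum_{p>P}\frac{\|\omega\|_{L^p}}{p(\log p)^2}\qquad\text{for all }t.
\]
Because $\omega\in X$ the last series is the tail of a convergent sum, so I fix $P=P(\delta,\omega)$ once and for all with $2\sum_{p>P}\|\omega\|_{L^p}/(p(\log p)^2)<\delta/3$; the displayed left-hand side is then $\le\|\omega_0-\omega\|_X+\delta/3$, uniformly in $t$.

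For the \textbf{head}: for $2\le p\le P$, writing $\tfrac1p=\tfrac\theta2+\tfrac{1-\theta}{2P}$ one has $\theta=\theta(p)=\tfrac{2P-p}{p(P-1)}\in[\tfrac1{P-1},1]$, and log-convexity of $L^p$ norms gives $\|\omega(t)-\omega\|_{L^p}\le\|\omega(t)-\omega\|_{L^2}^{\theta}\|\omega(t)-\omega\|_{L^{2P}}^{1-\theta}$. Conservation bounds the second factor: $\|\omega(t)-\omega\|_{L^{2P}}\le\|\omega_0\|_{L^{2P}}+\|\omega\|_{L^{2P}}\le2\|\omega\|_{L^{2P}}+2P(\log 2P)^2\|\omega_0-\omega\|_X$, which is $\le B_0$ for a constant $B_0=B_0(P,\omega)$ as soon as $\|\omega_0-\omega\|_X\le1$. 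Hence, as long as $\|\omega(t)-\omega\|_{L^2}\le1$,
\[
\sum_{2\le p\le P}\frac{\|\omega(t)-\omega\|_{L^p}}{p(\log p)^2}\ \le\ \max(1,B_0)\,\sigma\,\|\omega(t)-\omega\|_{L^2}^{1/(P-1)},\qquad \sigma:=\sum_{p\ge2}\frac1{p(\log p)^2}<\infty,
\]
using $\theta(p)\ge\tfrac1{P-1}$. Now invoke the $L^2$-stability of $\omega$ applied to the (bona fide Yudovich) perturbation $\omega_0\in X$: there is $\eta_0>0$ so that $\|\omega_0-\omega\|_{L^2}<\eta_0$ forces $\sup_t\|\omega(t)-\omega\|_{L^2}$ to be small enough — in particular $\le1$ and $\le(\delta/(3\max(1,B_0)\sigma))^{P-1}$ — that this head is $\le\delta/3$ for all $t$. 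Combining: if $\|\omega_0-\omega\|_X<\eta:=\min\{1,\ \delta/3,\ \eta_0/C_X\}$ then $\|\omega_0-\omega\|_{L^2}\le C_X\|\omega_0-\omega\|_X<\eta_0$, so the head is $\le\delta/3$, the tail is $\le\|\omega_0-\omega\|_X+\delta/3<2\delta/3$, and adding gives $\|S_t(\omega_0)-\omega_*\|_X<\delta$ for all $t\in\mathbb R$, which is Lyapunov stability in $X$.

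The main obstacle, and the reason the steps must be carried out in exactly this order, is that the interpolation exponent $\theta(p)$ degenerates like $1/P$ as $p\uparrow P$: one cannot let $P\to\infty$, so $P$ must be frozen \emph{first}, using only $\omega_*\in X$ through the tail estimate, after which the required amount of $L^2$-smallness — with the awkward power $1/(P-1)$ in it — is routed through the $L^2$-stability modulus. The other point that cannot be skipped is the global conservation of all $L^p$ vorticity norms, which itself rests on $X\hookrightarrow Y$ and Yudovich's uniqueness theory; without it neither the tail bound nor the boundedness of $\|\omega(t)-\omega\|_{L^{2P}}$ is available, and both are exactly what prevents the high-$p$ norms of $\omega(t)$ from running away.
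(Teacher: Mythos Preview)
Your proof is correct and follows essentially the same approach as the paper's: split the $X$-norm sum at a large index, control the tail purely by $L^p$-conservation and the fact that $\omega_*\in X$, and control the head by interpolating between $L^2$ and a fixed high $L^q$ so that the $L^2$-stability (with an exponent degenerating like $1/P$) closes the estimate. The only cosmetic differences are that the paper interpolates between $L^2$ and $L^{N_\ve}$ (the cutoff itself) rather than $L^{2P}$, and organizes the tail via $\|\omega(t)\|_{L^p}+\|\omega_*\|_{L^p}$ rather than your slightly cleaner $\|\omega_0-\omega_*\|_X+2\sum\|\omega_*\|_{L^p}/p(\log p)^2$.
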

\end{myshade}

\begin{proof}
Fix $\ve\in(0,1)$. Since $\omega_*\in X$, there exists $N_\ve>1$ so that
\be
\sum_{p=N_\ve }^\infty \frac{\|\omega_*\|_{L^p}}{p (\log(p))^2}<\frac{\ve}{4}.
\ee 
Thus, if $\omega_0\in B_{\ve/4}( \omega_*)$ in the $X$ topology and $\omega(t):= S_t(\omega_0)$, we have 
\be
\sum_{p=N_\ve }^\infty \frac{\|\omega(t)\|_{L^p}}{p (\log(p))^2}< \frac{\ve}{2}, \qquad \text{for all} \qquad t\geq 0,
\ee 
by conservation of the $L^p$ norms. 
Since $\omega_*$ is stable in $L^2$, for each $\kappa>0$, there exists $\delta_\kappa>0$ so that $\|\omega_*-\omega_0\|_{X}<\delta_\kappa$ implies
\be
 \|\omega_*-\omega(t)\|_{L^2}<\kappa  \quad \text{for all} \quad t\geq 0 .
\ee
This follows since the $X$ norm controls the $L^2$ norm. 
Now, by interpolation, if $2\leq p\leq N_\ve-1$ we have
\begin{align} \nonumber
\|\omega_*-\omega(t)\|_{L^p} &\leq \|\omega_*-\omega(t)\|_{L^2}^\frac{N_\ve-p}{N_\ve-2}\|\omega_*-\omega(t)\|_{L^{N_\ve}}^{\frac{p-2}{N_\ve-2}}\\ \nonumber
&<\kappa^{\frac{1}{N_\ve-2}}\max\{\|\omega_*-\omega(t)\|_{L^{N_\ve}},1\}\\
&\leq \kappa^{\frac{1}{N_\ve-2}}N_\ve(\log(N_\ve))^2.
\end{align}
Thus, we obtain
\begin{align} \nonumber
\|\omega_*-\omega(t)\|_{X}&\leq \sum_{p=1}^{N_\ve-1}\frac{\|\omega_*-\omega(t)\|_{L^p}}{p(\log(p))^2}+\sum_{p=N_\ve}^\infty \frac{\|\omega_*\|_{L^p}+\|\omega(t)\|_{L^p}}{p(\log(p))^2}\\
&<\kappa^{\frac{1}{N_\ve-2}}(N_\ve)^2(\log(N_\ve))^2+\frac{3\ve}{4}.
\end{align}
Choosing
$\kappa= \Big({\ve}/{4N_\ve^2\log(N_\ve)^2}\Big)^{N_\ve-2},$ we find $\|\omega_*-\omega(t)\|_{X}<\ve$ for all $t\geq 0$.
\end{proof}

\begin{lemma}\label{lem}
We have the following inclusions: $L^\infty \subset X\subset Y.$
\end{lemma}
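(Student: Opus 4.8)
The engine behind both inclusions is the near-monotonicity of Lebesgue norms on the finite-measure vessel $M$: by H\"older's inequality, whenever $2\le q\le p$ one has $\|f\|_{L^q}\le |M|^{1/q-1/p}\|f\|_{L^p}\le C_M\|f\|_{L^p}$ with $C_M:=\max(1,|M|^{1/2})$ a constant independent of $p$ and $q$. I would isolate this estimate first, since both inclusions then reduce to elementary summation.

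For $L^\infty\subset X$: if $f\in L^\infty(M)$ then $\|f\|_{L^p}\le |M|^{1/p}\|f\|_{L^\infty}\le C_M\|f\|_{L^\infty}$ for every $p\ge 2$, so
\[
\|f\|_X=\sum_{p=2}^\infty\frac{\|f\|_{L^p}}{p(\log p)^2}\le C_M\|f\|_{L^\infty}\sum_{p=2}^\infty\frac{1}{p(\log p)^2},
\]
and the last series converges (Cauchy condensation collapses it to a multiple of $\sum_n n^{-2}$). Hence $\|f\|_X\lesssim_M\|f\|_{L^\infty}$.

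The second inclusion $X\subset Y$ is where the only real difficulty lies. A term-by-term reading of the series defining $\|f\|_X$ only yields $\|f\|_{L^p}\le p(\log p)^2\|f\|_X$, which is far too weak for the $O(\log p)$ growth demanded by the $Y$-norm. The fix I would use is to bound a single $\|f\|_{L^p}$ by a \emph{geometrically long block} of the series rather than a single term. Fix a real $p\ge 4$; for every integer $k$ with $p\le k\le p^2$, near-monotonicity gives $\|f\|_{L^k}\ge \|f\|_{L^p}/C_M$ while $(\log k)^2\le 4(\log p)^2$, so
\[
\|f\|_X\ge\sum_{p\le k\le p^2}\frac{\|f\|_{L^k}}{k(\log k)^2}\ge\frac{\|f\|_{L^p}}{4C_M(\log p)^2}\sum_{p\le k\le p^2}\frac1k\ge\frac{\|f\|_{L^p}}{4C_M(\log p)^2}\,\log\frac{p^2}{p+1}.
\]
Since $\log\frac{p^2}{p+1}\ge\frac12\log p$ for $p\ge 4$, this gives $\|f\|_{L^p}\le 8C_M(\log p)\|f\|_X$; the finitely many remaining exponents $2\le p<4$ are absorbed by $\|f\|_{L^p}\le C_M\|f\|_{L^4}\le 4C_M(\log 4)^2\|f\|_X$ together with $\log p\ge\log 2$. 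Taking the supremum over $p\ge 2$ yields $\|f\|_Y\lesssim_M\|f\|_X$.

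Thus the whole argument reduces to one observation — that on a bounded domain a single $L^p$ norm is controlled by a $\log p$-length window of nearby exponents — with everything else being routine bookkeeping of constants and the convergence of $\sum_n 1/(n(\log n)^2)$.
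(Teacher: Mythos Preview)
Your argument is correct, and it takes a genuinely different route from the paper's. The paper argues the inclusion $X\subset Y$ indirectly: it first observes that finiteness of $\|f\|_X$ forces the existence of a subsequence $p_i\to\infty$ along which $\|f\|_{L^{p_i}}\le C\log p_i$ (otherwise the tail of the series would dominate the divergent series $\sum 1/(p\log p)$), and then fills in the gaps $q\in[p_i,p_{i+1}]$ by interpolating between $L^1$ and $L^{p_{i+1}}$, invoking the elementary inequality $(\log n)^{(m-1)/(n-1)}\le\log m$ for $n\ge m$ large. This yields membership in $Y$, but the resulting constant depends on $\|f\|_{L^1}$ as well as on the subsequence, so one does not immediately read off a clean norm inequality.

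Your block argument is more direct: by near-monotonicity of $L^p$ norms on the bounded vessel, the single value $\|f\|_{L^p}$ controls a geometric window $p\le k\le p^2$ of terms in the defining series, and the harmonic sum over that window recovers exactly one factor of $\log p$. This produces the quantitative bound $\|f\|_Y\lesssim_M\|f\|_X$ with an explicit constant depending only on $|M|$, which is a slightly sharper statement than what the paper's proof delivers. The paper's interpolation approach, on the other hand, is more in the spirit of Yudovich's own arguments and makes transparent that the space $Y$ is robust under filling in from a sparse subsequence.
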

\begin{proof}
The first inclusion $L^\infty \subset X$ is obvious.  To see that $X\subset Y$, note that for any $f\in X$ there exists a subsequence $\{p_i\}_{i\in N}$ with $p_i \to \infty$ such that 
\be
\|f\|_{L^{p_i}} \leq C \log(p_i)
\ee
for some constant $C>0$.  We aim to show that $\|f\|_{L^{p}} \leq C' \log(p)$ on the whole sequence for a possibly different constant $C'>0$.  Indeed, fix any $q \in [p_i, p_{i+1}]$.  By interpolation, we have
\be
\|f\|_{L^{q}} \leq   \|f\|_{L^1}^\frac{p_{i+1} -q}{p_{i+1}-1}\|f\|_{L^{p_{i+1}}}^{\frac{q-1}{p_{i+1}-1}} .
\ee
Now note that for all  $m$ sufficiently large we have
\be
\log(n)^{\frac{m-1}{n-1}} \leq \log(m), \qquad \text{for all } \quad n \geq m.
\ee
Thus, for all $i$ sufficiently large, we have for all $q \in [p_i, p_{i+1}]$ that
\begin{align}
\|f\|_{L^{q}} &\leq   \|f\|_{L^1}^\frac{p_{i+1} -q}{p_{i+1}-1} C^{\frac{q-1}{p_{i+1}-1}} \ln(q)\leq \max\{   \|f\|_{L^1}, 1, C\}   \ln(q).
\end{align}
This proves that $f\in Y$.
\end{proof}

  We remark that stability trivially holds for the phase space $X_*$ given by \eqref{Xstar} endowed with the $L^2$ topology.  However, the space $X$ in Theorem \ref{stabinX} reflects a stronger topology for which all $L^2$ stable steady states are Lyapunov stable and for which particle trajectories are controlled (since $\omega\in X$ implies $u$ is Osgood).

\begin{remark}[Nonlinear stability in $L^\infty$]\label{rem}
There exist particular stationary solutions which possess stronger stability properties.  For example, in simply connected domain, every flow with constant vorticity (Couette flow on the periodic channel) is nonlinearly stable in $L^\infty$ since  there is no ``stretching" of a vortical perturbation. 
In view of this fact, one may ask whether or not stability in $L^2$ can be promoted to stability in $L^\infty$ in general \cite{MSY,Y03}.
This is not the case.  For example, consider the Rankine vortex, which is the stationary solution
$\omega_*= \mathbf{1}_{B_1(0)}$.   The vortex patch is known to be nonlinearly stable in $L^2$ for a certain class of perturbations (\S 7 of \cite{WP85}).  However, it is not stable in $L^\infty$.  To see this, consider an initial condition of two patches, the Rankine vortex and a small patch of intensity $\ve$ outside.  The presence of the second patch immediately shifts the Rankine vortex, resulting in an $O(1)$ change in $L^\infty$.  On the other hand, this example reflects a kind of illposedness (in the sense of Hadamard, a lack of continuity of the solution operator $S_t:L^\infty\to L^\infty$) for the Cauchy problem in $L^\infty$.  We note that if $\omega_*$ is continuous, then $S_t:L^\infty\to L^\infty$ is continuous at $\omega_*$.  A natural question is 
\end{remark}

  \begin{myshade3}
  \vspace{-1mm}
\begin{question}[Yudovich (2003), \cite{Y03}]\label{yudoquest}
Does there exist a continuous $L^2$ stable steady state $\omega_*$  that is unstable in $L^\infty$?
\end{question}
\end{myshade3}

  Theorems \ref{thmArn} and \ref{stabinX} shed some light on the Eulerian dynamics on the phase space $X$.
  Systems that cover all accessible phase volume are sometimes called ergodic, mixing or stochastic. Ergodicity is often invoked to predict the asymptotic behavior
  as maximizers of some entropy functional defined on the phase space -- we shall briefly return to this point in \S \ref{conjectent}.
An immediate corollary of  \ref{thmArn} and \ref{stabinX} is the failure of ergodicity in this sense in the $X$ phase space due to the presence of Lyapunov stable equilibria.
Another example of failure of ergodicity is the existence of wandering points. This is the subject of the subsequent section \S \ref{wanderingsec}.
We remark that it is possible that neither of these phenomena exclude the possibility of ergodicity in a weak-$*$ sense.  Establishing that this is not the case is the essence of Problem \ref{omlimprob}.

We close this section with one final open question:
  \begin{myshade3}
  \vspace{-1mm}
\begin{question}\label{queststab}
Let $(M,g)$ be a compact two-dimensional Riemannian manifold without boundary (e.g. $\mathbb{S}^2$ with the round metric or $\mathbb{T}^2$ with the flat metric).  Do there exist any non-trivial stable steady Euler solutions?
\end{question}
\end{myshade3}
In the above, the perturbations should have trivial harmonic component. We should remark that the first eigenfunctions of Laplacian (harmonics) on any such manifold are stable, but these are necessarily constant and thus are trivial.   Desingularized periodic point vortex lattices \cite{aref,Saffman,Tkachenko}  may be an interesting to investigate in the context of Question \ref{queststab}.

\subsection{Wandering:  non-recurrence in infinite dimensions}\label{wanderingsec}

In this section, we discuss another truly infinite dimensional feature of perfect fluid motion.
We first recall the notion of non-wandering from finite-dimensional dynamics, which takes the form of the Poincar\'{e} recurrence theorem:

  \begin{myshade}
  \vspace{-1mm}
\begin{theorem}[Poincar\'{e} Recurrence]\label{PRthm}  
Let $(X,\Sigma,\mu)$ be a finite measure space and $\varphi: X\mapsto X$ be a measure-preserving transformation.  For any $E\in \Sigma$ (the sigma algebra of measurable subsets of $X$), the measure
\be
\mu\big(\{x\in E \ | \ \exists N\in \mathbb{N} \ \text{such that} \ \varphi^n(x) \notin E \ \text{for all} \ n>N\}\big) =0.
\ee
\end{theorem}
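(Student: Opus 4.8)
The plan is to reduce the statement to showing that the ``never-return'' set has measure zero, and then to recover the full claim by a last-visit bookkeeping argument. I would set
\[
E_0 := \{\, x \in E \ : \ \varphi^n(x)\notin E \text{ for all } n \geq 1\,\} = E \cap \bigcap_{n\geq 1}\varphi^{-n}(X\setminus E),
\]
which is $\Sigma$-measurable since $\varphi$ is measurable. The key claim to establish first is that $\mu(E_0)=0$.

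The one genuinely non-bookkeeping step is the disjointness of the backward iterates $\varphi^{-n}(E_0)$, $n\geq 0$. Indeed, suppose $0\leq m<n$ and $x\in\varphi^{-m}(E_0)\cap\varphi^{-n}(E_0)$. Writing $y:=\varphi^m(x)$ gives $y\in E_0\subseteq E$ while also $\varphi^{\,n-m}(y)=\varphi^n(x)\in E_0\subseteq E$ with $n-m\geq 1$, contradicting the defining property of $E_0$ that $y$ never returns to $E$. Since $\varphi$ preserves $\mu$, each $\varphi^{-n}(E_0)$ has measure $\mu(E_0)$, so disjointness together with $\mu(X)<\infty$ forces
\[
\sum_{n\geq 0}\mu(E_0)=\sum_{n\geq 0}\mu\big(\varphi^{-n}(E_0)\big)=\mu\Big(\bigcup_{n\geq 0}\varphi^{-n}(E_0)\Big)\leq\mu(X)<\infty,
\]
and hence $\mu(E_0)=0$.

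To conclude, denote by $A$ the set appearing in the theorem, namely $A:=\{\,x\in E : \exists\,N\in\mathbb{N}\text{ with }\varphi^n(x)\notin E\text{ for all }n>N\,\}$; this is measurable, being $E\cap\bigcup_{N\geq 0}\bigcap_{n>N}\varphi^{-n}(X\setminus E)$. I would then show $A\subseteq\bigcup_{k\geq 0}\varphi^{-k}(E_0)$. Given $x\in A$ with threshold $N$, the set of times $n\in\{0,1,\dots,N\}$ with $\varphi^n(x)\in E$ is nonempty (it contains $0$); let $k$ be its largest element. Then $\varphi^k(x)\in E$, and for every $m\geq 1$ one has $\varphi^{k+m}(x)\notin E$---by maximality of $k$ when $k+m\leq N$, and by the threshold property when $k+m>N$---so $\varphi^k(x)\in E_0$, i.e. $x\in\varphi^{-k}(E_0)$. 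Thus $A$ lies in a countable union of $\mu$-null sets, whence $\mu(A)=0$.

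I do not expect a real obstacle here: the argument is soft and uses only finiteness of $\mu$, the invariance $\mu\circ\varphi^{-1}=\mu$ (iterated to $\varphi^{-k}$), and the pigeonhole-type disjointness. The only points demanding mild care are the measurability of $E_0$ and $A$ and the last-visit reduction of $A$ to preimages of $E_0$, both of which are routine.
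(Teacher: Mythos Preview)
Your proof is correct and is essentially the standard argument for Poincar\'e recurrence. Note, however, that the paper does not actually give a proof of this theorem: it is stated as a classical result and immediately followed by a discussion of its consequences, so there is nothing to compare against.
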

\end{myshade}

  This  says that in a finite measure space, the images of a positive measure set under a measure-preserving transformation will be forced to intersect the original set repeatedly. 
  This behavior should be contrasted with that of wandering:

\begin{myshade}
  \vspace{-1mm}
 \begin{definition}[Wandering in $X$]\label{wanddef}
Let $X$ be Banach space and $S_t: X\to X$ for $t\in \mathbb{R}$ be a dynamical system. 
 A given $\omega\in X$ is called a \emph{wandering point} if for all initial conditions in a neighborhood $\| \omega_0-\omega\|_X\leq \ve $, there is a moment $T>0$ such that the solution emanating from any such $\omega_0$ leaves that vicinity forever after, i.e. $\|S_t(\omega_0)-\omega\|_X> \ve$  for all $t>T$.   The dynamics has a \emph{wandering neighborhood} $U$ in $X$ if  $S_t(U)\cap U=\emptyset$ for all $t>T$ for some $T:=T(U)$. 
 \end{definition}
\end{myshade}

  There are various finite dimensional approximations to the Euler equations (see e.g. Chapter 5 of \cite{MP12}); for example, the Galerkin approximation \cite{MB} , Sullivan's fluid algebra models \cite{S11}, the Kirchhoff point vortex approximation \cite{K83}, and lattice models \cite{S20}.   An application of the Poincar\'{e} Recurrence Theorem \ref{PRthm} can be used to show that some of these finite dimensional approximations are non-wandering.  For example, energy-preserving Galerkin truncations form a dynamical system on a sphere in $\mathbb{R}^m$ where $m$ is the number of Fourier modes retained.  For some such trucations, the motion preserves the Euclidean volume on spherical shells (Liouville theorem).   Sullivan's fluid algebra models  and Zeitlin's approximation \cite{ZM} 
 also all have this property. Thus, the Poincar\'{e} Recurrence theorem applies and almost every moving point returns to a neighborhood in which it started repeatedly. We remark that this same properties allows Galerkin Euler to support a Gibbs measure (Gaussian measure) since the flow is tangent to the $L^2$ spheres and the Euclidean volume is preserved on spherical shells (which are preserved). This measure corresponds to a $k^2$
equipartition energy spectrum. This so-called Liouville theorem was proved first by Burgers \cite{burgers} and then Lee \cite{lee} and Hopf \cite{hopf}. Sullivan \cite{S14} establishes the result for fluid algebra models \cite{S11} and uses it to motivate a conjecture about almost-sure wellposedness in the zero-viscosity limit.

 The non-wandering behavior implied by Poincar\'{e} Recurrence relies crucially on the space $X$ in Theorem \ref{PRthm} being finite and finite dimensional. 
 Despite the fact that certain finite dimensional approximations to Euler are volume preserving, an example of wandering points for the infinite dimensional Euler equations in the unit $L^\infty$ ball phase space (recall from the discussion around Thm. \ref{yudothm} that $X_*$  is a compact metric space preserved by the Euler dynamics) was given by Nadirashvili.

\begin{myshade}
  \vspace{-1mm}
\begin{theorem}[Nadirashvili \cite{N}]\label{thmwand1}  Let $M=  [-\pi, \pi) \times [0,1]$ be the periodic channel.  There is a scalar field $\xi\in C^{\infty}(M)$  such that 
for any $\ve>0$ sufficiently small, there exists a finite time $T:=T(\ve)$ so that any  initial condition satisfying $\|\omega_0 - \xi\|_{L^\infty} <\ve$ wanders in $L^\infty$, i.e. $\|S_t(\omega_0) - \xi\|_{L^\infty}>\ve$ for all $t>T$.
\end{theorem}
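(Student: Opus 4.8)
\emph{Strategy and choice of $\xi$.} The plan is to exploit the mixing action of a strong ambient shear on a small localized vortex bump, together with the irreversibility of phase mixing in the weak-$*$ sense. I would take $\xi = \bar\omega + b$, where $\bar\omega(x_2) = -V'(x_2)$ is the vorticity of a shear flow $\bar u = V(x_2)\,e_{x_1}$ that is Arnold stable of type I in the sense of \eqref{astabcond}--\eqref{ranges} (for instance $V>0$, gently concave with $|V''| < \pi^2 V = \lambda_1(M)\,V$, so that $F'=V''/V\in(-\lambda_1,0)$), and where $b\in C^\infty_c(M)$ is a bump of small amplitude supported in a tiny disk $D\subset\subset M$, chosen so that $V'$ does not vanish on the $x_2$-projection of $D$, that $\int_M b\,\rmd x = 0$, and that $b$ is genuinely $x_1$-dependent, i.e. $\Pi_0^\perp b\neq 0$ (here $\Pi_0^\perp$ removes the $x_1$-average). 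By Theorem \ref{yudothm}, $S_t$ is a well-defined, weak-$*$ continuous flow on $X_*$ near $\xi$; by Theorem \ref{thmArn}, Arnold stability gives $\|S_t\omega_0 - \bar\omega\|_{L^2(M)} \le \delta$ for all $t$ and all $\omega_0$ near $\xi$, with $\delta$ as small as desired once $b$ and $\varepsilon$ are small.

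\emph{Reduction to separation from the $\omega$-limit set.} It suffices to show that $\xi$ is separated from its $\omega$-limit set \eqref{omlimset}: ${\rm dist}_{L^2}(\xi,\Omega_+(\omega_0)) \ge d_0 > 0$ for all $\omega_0\in B^{L^\infty}_\varepsilon(\xi)$, with $d_0$ independent of $\omega_0$. Indeed $\Omega_+(\omega_0)$ is weak-$*$ compact and $\omega\mapsto\|\omega-\xi\|_{L^2}$ is weak-$*$ lower semicontinuous, so a standard argument with weak-$*$ subsequential limits forces $\liminf_{t\to\infty}\|S_t\omega_0-\xi\|_{L^2}\ge d_0$; since $\|f\|_{L^\infty}\ge|M|^{-1/2}\|f\|_{L^2}$, this yields the conclusion for any $\varepsilon < |M|^{-1/2}d_0$, and the uniform escape time $T(\varepsilon)$ follows as long as the convergence below comes with a rate depending only on conserved norms of the data (hence uniform over $B^{L^\infty}_\varepsilon(\xi)$). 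To obtain $d_0$, I would show that \emph{every} $\omega_\infty\in\Omega_+(\omega_0)$ is $x_1$-independent on a fixed strip $\Sigma=\{|x_2-x_2^0|<\rho\}$ containing the projection of $D$; since $\Pi_0^\perp\xi=\Pi_0^\perp b$ is non-zero and supported in $\Sigma$ (and remains bounded below, up to $O(\varepsilon)$, for $\omega_0\in B^{L^\infty}_\varepsilon(\xi)$), the orthogonal projection $\Pi_0^\perp$ in $L^2(\Sigma)$ separates $\xi$ from $\Omega_+(\omega_0)$.

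\emph{The mechanism and the main obstacle.} Thus everything reduces to proving: for every smooth $\phi$ supported in $\Sigma$ with vanishing $x_1$-average, $\langle S_t\omega_0,\phi\rangle\to 0$ as $t\to\infty$, uniformly in $\omega_0\in B^{L^\infty}_\varepsilon(\xi)$. The heuristic is the classical one: to leading order the bump is transported by the shear, $S_t\omega_0-\bar\omega\approx b(x_1-V(x_2)t,x_2)$, and since $V'\neq0$ on $D$, a non-stationary-phase (Riemann--Lebesgue) estimate in $x_2$ gives $\langle b(\,\cdot-V(\cdot)t,\cdot),\phi\rangle\to0$. Making this rigorous for the genuine nonlinear evolution over the \emph{entire} half-line is the crux: the correction to pure transport is $O(\delta)$ at each instant (by Arnold stability, $u_2=\tilde u_2$ with $\|\tilde u\|_{L^2}\lesssim\|S_t\omega_0-\bar\omega\|_{L^2}\le\delta$) but could a priori accumulate linearly in $t$. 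The plan to control it is to set $\tilde\omega=S_t\omega_0-\bar\omega$ and arrange $|\bar\omega'|=|V''|$ to be small on $\Sigma$ (take $V$ nearly affine near $x_2^0$), so that the stretching source $-\tilde u_2\bar\omega'$ is negligible and $g_\phi(t):=\langle\tilde\omega(t),\phi\rangle$ obeys a transport identity with inhomogeneity quadratically small in $\delta$; together with the uniform Lipschitz bound $|g_\phi'(t)|\lesssim\|u\|_{L^2}\|\nabla\phi\|_{L^\infty}$ from energy conservation, a weak inviscid-damping estimate of the form $\int_0^\infty g_\phi(t)^2\,\rmd t<\infty$ (or even just $\limsup_{t\to\infty}|g_\phi(t)|<|\langle\xi,\phi\rangle|$) forces $g_\phi(t)\to0$. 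Extracting this last estimate — a soft, quantitative form of nonlinear inviscid damping near a shear, drawn out of the coercivity of the Arnold functional and the non-degeneracy of $V'$ — is where the real difficulty lies; the remainder is bookkeeping with weak-$*$ compactness.
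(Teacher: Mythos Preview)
Your approach has a genuine gap. The entire argument hinges on the final claim: that $\langle S_t\omega_0,\phi\rangle\to 0$ for every mean-zero test function $\phi$ supported in a strip, uniformly over an $L^\infty$-ball of data. This is a statement of nonlinear inviscid damping. Even in its softest form it is only known near monotone shears for perturbations in Gevrey regularity (Bedrossian--Masmoudi, Ionescu--Jia, Masmoudi--Zhao); nothing of the sort is available for $L^\infty$ perturbations. The heuristic you sketch --- controlling the source $-\tilde u_2\bar\omega'$ by making $V$ nearly affine and hoping the $O(\delta)$ correction does not accumulate --- does not close: over infinite time a perturbation of size $\delta$ advected by a velocity correction of size $\delta$ will generically drift by $O(1)$, and there is no known mechanism at this regularity giving the integrated decay $\int_0^\infty g_\phi(t)^2\,\rmd t<\infty$. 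You have correctly identified this as ``where the real difficulty lies,'' but it is not a technical detail --- it is a far deeper problem than the theorem you are trying to prove.

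The paper's argument is elementary and avoids damping entirely. The key choice is to take the base state to have \emph{constant} vorticity (Couette, $\bar\omega\equiv -1$), so that the perturbation $\tilde\omega=\omega-\bar\omega$ is \emph{exactly} transported with no source term, and $\|\omega(t)-\bar\omega\|_{L^\infty}$ is conserved. This gives $L^\infty$ stability of the velocity and a uniform gap between the particle speeds on the top and bottom walls. One then designs $\xi=\bar\omega+\delta h$ where $h$ vanishes on a half-channel $B$ but is bounded below by $1/2$ on a transversal segment $\gamma$ in the complement. The gap in boundary speeds forces $\Phi_t(\gamma)$ to wrap around the cylinder and intersect $B$ for all $t>2\pi$; since vorticity is transported, $\omega(t)$ on $\Phi_t(\gamma)\cap B$ exceeds $-1+\delta/2-\varepsilon$, while $\xi|_B=-1$, yielding $\|\omega(t)-\xi\|_{L^\infty}\ge\delta/2-\varepsilon>\varepsilon$ for $\varepsilon<\delta/4$. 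No asymptotic analysis, no weak limits --- just transport and a topological observation about curves on the cylinder. The insight you missed is that constant vorticity makes the problem exact rather than perturbative.
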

\end{myshade}

In Nadirashvili's original paper, he works on the annulus with a circular flow having zero vorticity (streamfunction $\psi(x) = \ln |x|$). The ideas are the same.  The significance of this result is that it establishes the existence of data $\omega_0\in L^\infty$ (those satisfying $\omega_0\in\mathcal{O}_{\xi}$) such that the Euler solution $S_t(\omega_0)$ is not ergodic on $\mathcal{O}_{\omega_0}\cap \{ {\mathsf E}= {\mathsf E}_0\}$ where $\mathcal{O}_{\omega_0}$ is the orbit of $\omega_0$ in the group of area preserving diffeomorphisms:
\be
\mathcal{O}_{\omega_0}:= \{ \omega_0 \circ \varphi \ :\ \varphi  \in \mathscr{D}_\mu(M)  \},
\ee
and $ \{ {\mathsf E}= {\mathsf E}_0\}$ is the collection of vorticity fields with equal energy.   This set is the natural phase space for a given datum $\omega_0$, and incorporates all known conservation laws of the Euler equation.  Failure of ergodicity in this sense is relevant to the applicability of statistical hydrodynamic theories of long term behavior, see further discussion in \S \ref{conjectent}.  We remark that there have been recent developments establishing wandering (rather, non-approaching) in 3d Euler \cite{KKPS14,KKPS20}.

\begin{proof}
 We first construct the field $\xi$.  Let  $v(x) = (x_2,0)$ be Couette flow (any $L^2$ stable shear flow can generate such an example). 
Note that  the field is solenoidal $\nabla \cdot v=0$, tangent to the boundary $v\cdot \hat{n}|_{\partial M}=0$, with circulations $\oint_{\{y=1\}} v \cdot \rmd \ell=2\pi$ and $\oint_{\{y=0\}} v \cdot \rmd \ell=0$ and has vorticity $\eta:= \nabla^\perp\cdot v=-1$.

\begin{figure}[h!]
\label{fig:Nad}
\centering
		\begin{tikzpicture}[scale=0.8, every node/.style={transform shape}]
		\draw [thick] (-3.5,0)--(3.5,0);
		\draw [thick] (-3.5,3)--(3.5,3);
        \draw [dashed] (-3.5,0)--(-3.5,3);
        \draw [dashed] (3.5,0)--(3.5,3);
   \draw[very thick,red] (-1.75,0)--(-1.75,3);
		\draw[fill, black!30!white, opacity=0.5] (0,0)--(3.5,0)-- (3.5,3)--(0,3)--cycle;
         \draw [fill, red!50!white] plot [smooth cycle] coordinates {(1,0.8) (1.5,1) (2,1.65) (2.5,.5)};
         \draw[thin, blue, ->] (0,.5)--(.5,.5);
                  \draw[thin, blue, ->] (0,1)--(1,1);
         \draw[thin, blue, ->] (0,1.5)--(1.5,1.5);
         \draw[thin, blue, ->] (0,2)--(2,2);
          \draw[thin, blue, ->] (0,2.5)--(2.5,2.5);
          \draw[very thick, blue, ->] (0,3)--(3,3);
		\draw[thin, ->] (-3.5,0)--(3.5,0);
		\draw[thin, ->] (0,0)--(0,3);	
        \draw  (-1.75,1) node[anchor=east] {\Large $\textcolor{red}{\gamma}$};
        \draw  (3.2,1.5) node[anchor=east] {\Large $B$};
        \draw  (2,1) node { ${G}$};
       \draw  (0,2.8) node[anchor=east] {$y$};
	\draw  (3.3,0) node[anchor=north] {$x$};
\end{tikzpicture}
\caption{Setup for Nadirashvili's example. Here $g$ could be supported on $G$ and the blue lines represent the velocity field of the Couette flow. The set $B$ is the grey region.}
\label{fig:tildeomega}
\end{figure}
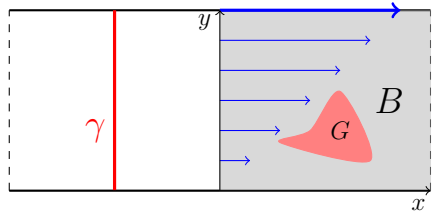
Consider the sets (see Figure \ref{fig:tildeomega})
\begin{equation}
B= [0,\pi]\times[0,1], \qquad  \gamma=\{-\pi/2\}\times [0,1].
\end{equation} 
 Let  $h\in C^\infty(M)$ be any scalar function $h: M\to \mathbb{R}$ satisfying
\begin{equation}\label{hprop}
h|_{\gamma}\geq 1/2, \qquad h|_{B}=0, \qquad  \lVert h\rVert_{L^\infty}\leq 1.
\end{equation}
Thus, $h$ is supported entirely in the white region of Figure \ref{fig:tildeomega}, and is non-trivial on the line $\gamma$.
For $ \delta \ll 1$, define $\xi\in C^\infty(M)$ as a perturbation of the Couette vorticity
\begin{equation}
\xi := \eta + \delta h.
\end{equation}
It follows that $\xi|_B =-1$.
For any $g\in L^\infty(M)$ such that $\|g\|_{L^\infty}<1$ and let 
\begin{equation}\label{om0def}
\omega_0=\xi+ \ve g, \quad \text{for} \ \ \ve \ll 1.
\end{equation}
For instance, $g$ could be supported in the region $G\subset B$ of Fig. \ref{fig:tildeomega}. It follows by \eqref{om0def}, \eqref{hprop} and $\|g\|_{L^\infty}<1$ that we have
\be\label{om0online}
\|\omega_0 - \xi\|_{L^\infty} \leq\ve, \qquad \|\omega_0 - \eta\|_{L^\infty} \leq\ve +\delta, \qquad   \omega_0|_{\gamma} > \eta +\delta/2-\ve.
\ee
Consider $\omega(t):=S_t(\omega_0)$.  The perturbed vorticity  $\tilde{\omega}:=\omega-\eta$ is transported:
\begin{align}
\partial_t \tilde{\omega}+ u\cdot \nabla \tilde{\omega}&=0,\\
\tilde{\omega}|_{t=0} = \delta h+ \ve g,
\end{align}
where $u= K_M[\omega]$.
Thus, we find that
\be
\|\omega(t)-\eta\|_{L^\infty} \leq \ve +\delta ,
\ee 
showing that Couette flow (as any constant vorticity solution) is stable in $L^\infty$ topology for vorticity.
We now appeal to the following elementary Lemma:

\begin{lemma} \label{lilem}  Let $M=  [-\pi, \pi) \times [0,1]$ and 
 $u\in C^{1}(M)$ be divergence-free $\nabla \cdot u=0$ with $u\cdot \hat{n}|_{\partial\Omega}=0$ and with $\oint_{\{y=0\}} u \cdot \rmd \ell=0$.   Then for some $C:= C(M)>0$
\be
\| u\|_{L^\infty} \leq C \|\nabla^\perp\cdot u\|_{L^\infty}.
\ee
\end{lemma}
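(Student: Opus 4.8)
The plan is to split $u$ on the channel $M=\mathbb{T}\times[0,1]$ into the part generated from its vorticity $\omega:=\nabla^\perp\cdot u$ through the Dirichlet stream function and a harmonic correction, and then to estimate each piece. First I would solve the Dirichlet problem $\Delta\phi=\omega$ in $M$ with $\phi=0$ on $\partial M$ (uniquely solvable, since the Dirichlet Laplacian on the smooth bounded domain $M$ has trivial kernel), and set $w:=u-\nabla^\perp\phi$. Because $\phi$ vanishes on each of the two boundary circles $\{y=0\}$, $\{y=1\}$ one has $\partial_1\phi=0$ there, so $w$ is still tangent to $\partial M$; it is divergence-free, and $\nabla^\perp\cdot w=\omega-\Delta\phi=0$, so it is also curl-free. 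The structural input is then that a divergence-free, curl-free vector field tangent to $\partial M$ on the channel must be a constant multiple of $e_1$: such a $w$ has a single-valued stream function (its monodromy around the periodic direction equals $\int_{-\pi}^{\pi}w_2\,\rmd x$, which is independent of $y$ because $w$ is divergence-free and periodic, and which vanishes at $y=0$ by tangency), and that stream function is harmonic and locally constant on $\partial M$, hence affine in $y$ — equivalently, one may invoke the fact recalled above that the harmonic fields tangent to $\partial M$ form a space isomorphic to $H^1(M)\cong\mathbb{R}$, spanned by $e_1$. Thus $u=\nabla^\perp\phi+c\,e_1$ for some $c\in\mathbb{R}$.

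Next I would bound the rotational part $\nabla^\perp\phi$. For any $p\in(2,\infty)$, the Calder\'on--Zygmund estimate for the Dirichlet Laplacian on $M$ together with $\omega\in L^\infty(M)\subset L^p(M)$ gives $\|\phi\|_{W^{2,p}(M)}\le C_p\|\omega\|_{L^p(M)}\le C_p|M|^{1/p}\|\omega\|_{L^\infty(M)}$; since $M$ is two-dimensional, $W^{2,p}(M)\hookrightarrow C^1(\overline{M})$ for $p>2$, so $\|\nabla^\perp\phi\|_{L^\infty(M)}=\|\nabla\phi\|_{L^\infty(M)}\le C(M)\|\omega\|_{L^\infty(M)}$. (Calder\'on--Zygmund can be avoided altogether: the Green function $G$ of the Dirichlet Laplacian on $M$ satisfies $|\nabla_x G(x,y)|\lesssim|x-y|^{-1}$, and $|x|^{-1}$ is locally integrable in $\mathbb{R}^2$, so $\|\nabla\phi\|_{L^\infty}\le\|\omega\|_{L^\infty}\sup_x\int_M|\nabla_x G(x,y)|\,\rmd y<\infty$.) The essential point is that only \emph{one} derivative of $\phi$ is estimated here, so the endpoint failure of elliptic regularity on $L^\infty$ — the same failure responsible for the log-Lipschitz, rather than Lipschitz, bound on $\nabla u$ — does not intervene.

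Finally, the circulation hypothesis pins down $c$. On $\{y=0\}$ one has $\rmd\ell=e_1\,\rmd x$, so $\oint_{\{y=0\}}e_1\cdot\rmd\ell=2\pi$; feeding $u=\nabla^\perp\phi+c\,e_1$ and $\oint_{\{y=0\}}u\cdot\rmd\ell=0$ into this identity gives $2\pi c=-\oint_{\{y=0\}}\nabla^\perp\phi\cdot\rmd\ell$, whence $|c|\le\|\nabla^\perp\phi\|_{L^\infty(M)}=\|\nabla\phi\|_{L^\infty(M)}\le C(M)\|\omega\|_{L^\infty(M)}$. Combining the two estimates,
\be
\|u\|_{L^\infty(M)}\le\|\nabla\phi\|_{L^\infty(M)}+|c|\le 2\,C(M)\,\|\omega\|_{L^\infty(M)},
\ee
which is the assertion with $\omega=\nabla^\perp\cdot u$.

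I do not anticipate a genuine obstacle here; the two points requiring care are (i) applying the elliptic estimate to $\nabla\phi$ (one derivative) and not to $\nabla u=\nabla\nabla^\perp\phi$ (two derivatives), since the latter $L^\infty$ bound is false, and (ii) correctly isolating the harmonic (circulation) degree of freedom — which is exactly where the hypothesis $\oint_{\{y=0\}}u\cdot\rmd\ell=0$ enters and where it is genuinely needed, as the field $u=e_1$ shows ($\omega\equiv 0$ yet $\|u\|_{L^\infty}=1$).
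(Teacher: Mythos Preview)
Your argument is correct and is essentially a fleshed-out version of what the paper does: the paper does not give a detailed proof but simply remarks that the stronger bound $\|u\|_{C^\alpha}\le C(\alpha)\|\nabla^\perp\cdot u\|_{L^\infty}$ for $\alpha\in[0,1)$ follows from Schauder theory. Your explicit decomposition $u=\nabla^\perp\phi+c\,e_1$, the $W^{2,p}$/Green-function estimate for $\nabla\phi$, and the use of the circulation hypothesis to control $c$ are exactly the ingredients underlying that citation, and your closing remarks (one derivative only; the harmonic mode is where the circulation hypothesis is genuinely used) identify precisely the two places where a careless reader could go wrong.
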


In fact, the stronger result that $\| u\|_{C^\alpha}\leq  C(\alpha) \|\nabla^\perp\cdot u\|_{L^\infty}$ for all $\alpha\in [0,1)$ holds by Schauder theory (see e.g. \cite{gt}).
As a consequence of Lemma \ref{lilem}, 
\be\label{linfbnd}
\|u(t)-v\|_{L^\infty} \leq C ( \ve +\delta)
\ee
since by Galilean transformation, a small constant velocity can be removed to ensure the circulation on the bottom boundary be zero.  Now we take $\delta$ and $\ve$ sufficiently small so that 
\be\label{linfbnd}
\|u(t)-v\|_{L^\infty} \leq 1/4.
\ee
This velocity field $u(t)$ defines an area preserving diffeomorphism $\Phi_t$ of the periodic channel with $\Phi_0={\rm id}$. Since the velocity is tangent to the walls $u_2(t)|_{y=0,1}=0$, $\Phi_t$ maps the boundaries to themselves.  Note that
from \eqref{linfbnd} we deduce
\be\label{veldiff}
u_1(t)|_{x_2=0} \leq  1/4, \qquad u_1(t)|_{x_2=1} \geq 3/4
\ee
so that the smallest relative velocity is $\min_{\{y=1\}} u_1(t) - \max_{\{y=0\}} u_1(t)\geq1/2$ for all $t$.
We now consider the image of the line $\gamma$ under the flow $\gamma(t):= \Phi_t(\gamma)$.  Due to the relative speeds along the top and bottom boundary, we find \eqref{veldiff}
\be
{\rm dist}(\gamma(t)|_{x_2=1}, \gamma(t)|_{x_2=0})\geq  t/2.
\ee
\begin{figure}[h!]
\label{fig:evolution}
\centering
		\begin{tikzpicture}[scale=0.9, every node/.style={transform shape}]
		\draw [thick] (-3.5,0)--(3.5,0);
		\draw [thick] (-3.5,3)--(3.5,3);
        \draw [dashed] (-3.5,0)--(-3.5,3);
        \draw [dashed] (3.5,0)--(3.5,3);
		\draw[fill, black!30!white, opacity=0.5] (0,0)--(3.5,0)-- (3.5,3)--(0,3)--cycle;
   \draw[very thick,red, dashed] (-2.5,0)--(3.5,1.91);
   \draw[very thick,red, dashed] (-3.5,1.91)--(-0.1,3);
         \draw [red, very thick] plot [smooth, tension=2.1] coordinates {(-2.5,0) (-1,.75) (0,.35) (1,1) (2,1.78) (3.5,2.25) };
         \draw [red, very thick] plot [smooth, tension=1.7] coordinates {(-3.5,2.25) (-2.5,2.75) (-2,2) (-1,1.85) (-.5,2.5) (-.1,3) };
         \draw [fill, red!50!white, ] plot [smooth cycle] coordinates {(1.2,.9) (1.5,.5) (2,1) (2.5,.2) (3.2,.8) (2.7, 1.2) (2.5, .8) (2.1,1.1) (1.75,1) (1.5, .75) (1.2,.9)};
         \draw  (2.7,.6) node { ${G(t)}$};
          \draw[very thick, blue, ->] (-0.1,3)--(1.6,3);
          \draw[very thick, blue, ->] (-2.5,0)--(-1.5,0);
		\draw  (-2.5,1) node { \Large \textcolor{red}{${\gamma_1(t)}$}};
    		\draw  (3.2,2.5) node[anchor=east] {\Large $B$};
				\end{tikzpicture}
\caption{Representation of a possible evolution $\gamma(t)$. Here the dashed red line is the evolution of the line $\gamma$ if the problem were linear (with velocity $3/4$ at the top and $1/4$ at the bottom, the worst case scenario). A possible region $G(t):= \Phi_t(G)$ is also shown.}
\label{fig:imagewrap}
\end{figure}
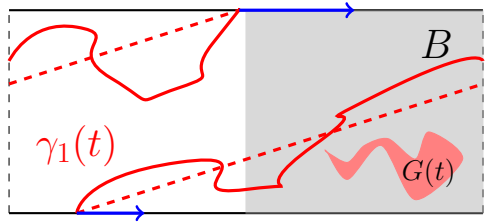
Combined with the fact that the domain is periodic in $x\in [-\pi,\pi)$, this shows that for large enough time we have that the curve must partially occupy the region $B$ forever after:
\be\label{intersectionprop}
\gamma(t) \cap B \neq \emptyset  \quad \text{for all} \ t> 2\pi.
\ee
See  Figure \ref{fig:imagewrap} for a depiction of a possible evolution.
It is enough to wait until the image of the curve wraps once around the torus -- the longer time elapses, the more the image will become wrapped. This fact can be clearly seen by lifting to a covering space of the cylinder, see Figure \ref{fig:imagewrapcover}.

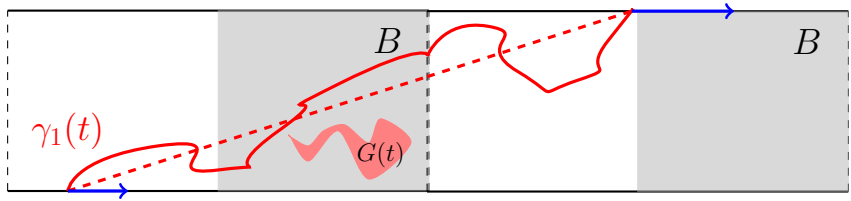
\begin{figure}[h!]
\centering
		\begin{tikzpicture}[scale=.8, every node/.style={transform shape}]
		\draw [thick] (-3.5,0)--(3.5,0);
		\draw [thick] (-3.5,3)--(3.5,3);
        \draw [dashed] (-3.5,0)--(-3.5,3);
        \draw [dashed] (3.5,0)--(3.5,3);
		\draw[fill, black!30!white, opacity=0.5] (0,0)--(3.5,0)-- (3.5,3)--(0,3)--cycle;
   \draw[very thick,red, dashed] (-2.5,0)--(3.5,1.91);
         \draw [red, very thick] plot [smooth, tension=2.1] coordinates {(-2.5,0) (-1,.75) (0,.35) (1,1) (2,1.78) (3.5,2.25) };
\draw [fill, red!50!white, ] plot [smooth cycle] coordinates {(1.2,.9) (1.5,.5) (2,1) (2.5,.2) (3.2,.8) (2.7, 1.2) (2.5, .8) (2.1,1.1) (1.75,1) (1.5, .75) (1.2,.9)};
\draw  (2.7,.6) node { ${G(t)}$};
          \draw[very thick, blue, ->] (-2.5,0)--(-1.5,0);
		\draw  (-2.5,1) node { \Large \red{${\gamma_1(t)}$}};
    		\draw  (3.2,2.5) node[anchor=east] {\Large $B$};
				\end{tikzpicture}
						\hspace{-.022\textwidth}		
		\begin{tikzpicture}[scale=.8, every node/.style={transform shape}]
		\draw [thick] (-3.5,0)--(3.5,0);
		\draw [thick] (-3.5,3)--(3.5,3);
        \draw [dashed] (-3.5,0)--(-3.5,3);
        \draw [dashed] (3.5,0)--(3.5,3);   
		\draw[fill, black!30!white, opacity=0.5] (0,0)--(3.5,0)-- (3.5,3)--(0,3)--cycle;      
   \draw[very thick,red, dashed] (-3.5,1.91)--(-0.1,3);
         \draw [red, very thick] plot [smooth, tension=1.8] coordinates {(-3.5,2.25) (-2.5,2.75) (-2,2) (-1,1.85) (-.5,2.5) (-.1,3) };
          \draw[very thick, blue, ->] (-0.1,3)--(1.6,3);
    		\draw  (3.2,2.5) node[anchor=east] {\Large $B$};
				\end{tikzpicture}
\caption{The unwrapped evolution on the covering space of the cylinder.}
\label{fig:imagewrapcover}
\end{figure}
To complete the proof, we note that since vorticity is transported $\omega(t)= \omega_0\circ \Phi_t^{-1}$, we have from \eqref{om0online} that 
\be
\omega(t)|_{\gamma(t)} > \eta+ \delta/2-\ve.
\ee
It follows from \eqref{intersectionprop} that for all $t>2\pi$, the above implies that
\be
\|\omega(t)-\xi\|_{L^\infty} \geq \|\omega(t)-\xi\|_{L^\infty(B)} > \delta/2-\ve,
\ee
since $\xi|_B=\eta =-1$.  For any $\ve< \delta/4$,  $\|\omega(t)-\xi\|_{L^\infty}>\ve$ establishing the claim.
\end{proof}

We should remark that two distinguishing features of Nadirashvili's setup is that the domain of the fluid has boundary and a non-trivial fundamental group. Elgindi and Jeong \cite{EJ20} showed in the case of the trivial fundamental group, when boundary is present, the
boundary of a large chunk of a vortex patch can play the role of an essential curve (see Figure \ref{fig:patch}) and an infinite spiral can form.  To show this, one must use the fact that the velocity vanishes at the corner of the domain and so particles cannot pass through to different sides of the square's boundary. This idea can establish existence of wandering points on simply connected domains with corners. 
\begin{figure}[h!]
\centering
  \includegraphics[height=3in]{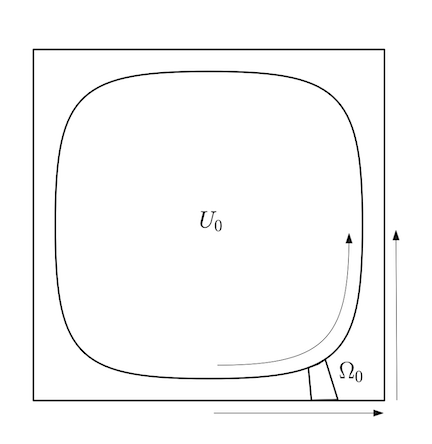}
  \caption{Spiral formation in the case of the square (Fig. 4 of \cite{EJ20}).}
  \label{fig:patch}
\end{figure}

On $M=\mathbb{T}^2$, 
Shnirelman
proved that a \textit{typical}
trajectory of the Euler equation is wandering.  He works with Lagrangian description rather than the Eulerian, rephrasing Euler with initial data $\omega_0$ as
\be\label{Lagrangiandyn}
\frac{\rmd}{\rmd t} (\Phi,{\omega_0}) =\left( K_{\mathbb{T}^2}[{\omega_0}\circ \Phi^{-1}]\circ \Phi, 0\right),
\ee
where we used ${\omega}(t):= \omega_0\circ \Phi^{-1}$ and where $K_{\mathbb{T}^2}$ is the Biot-Savart law on $\mathbb{T}^2$.  Rather than working in H\"{o}lder or Sobolev spaces, Shnirelman works with
 Besov spaces, $B^s_{2,\infty}$.  Note that $H^s \subset B^s_{2,\infty}\subset H^{s-}$ for any $s>0$. It is proved that $X^s:=B^s_{2,\infty}\times B^{s-1}_{2,\infty}$ defines a suitable phase-space for the pair $(\Phi-{\rm id},\omega_0)$ provided $s>3$ for all time. The main result is

\begin{myshade}
  \vspace{-1mm}
\begin{theorem}[Shnirelman \cite{S97}]\label{thmwand2}  
For $s > 3$, there exists an open and dense set $Y^s\subset X^s$,
such that each $(\Phi,\omega_0)\in Y^s$ is contained in a wandering neighborhood.
\end{theorem}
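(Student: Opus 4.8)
The plan is to establish \emph{wandering neighbourhoods} directly, using the stretching of an essential curve under the Lagrangian flow. As recorded above, for $s>3$ the pair $X^s=B^s_{2,\infty}\times B^{s-1}_{2,\infty}$ is a genuine phase space for \eqref{Lagrangiandyn}: one has $B^{s-1}_{2,\infty}(\mathbb{T}^2)\hookrightarrow C^1$, the Biot--Savart law on $\mathbb{T}^2$ is compatible with this scale, and the global two-dimensional theory (Theorem \ref{gwpthm}) transplants, so $t\mapsto(\Phi_t-\mathrm{id},\omega_0)$ is defined for all $t$, albeit with $\|\Phi_t-\mathrm{id}\|_{B^s}$ possibly growing (here $\Phi_t-\mathrm{id}$ is the displacement, read on the universal cover). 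Because the second component of \eqref{Lagrangiandyn} is frozen, a ball $U=B_r(\bar\Phi,\bar\omega)\subset X^s$ is a wandering neighbourhood as soon as there is one time $T$ with $\|\Phi^x_t-\mathrm{id}\|_{B^s}>r+\|\bar\Phi-\mathrm{id}\|_{B^s}$ for every $t>T$ and every $x=(\Phi_0,\omega_0)\in U$, where $\Phi^x_t$ is the flow map started at $\Phi_0$ and driven by $\omega_0$. Finally, if a point lies in a wandering neighbourhood $U$ then $U\subseteq Y^s$, so $Y^s$ is automatically open; the whole content of the theorem is therefore the \emph{density} of wandering neighbourhoods.

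The driving mechanism is the elongation of a fixed non-contractible simple closed curve $\gamma_0\subset\mathbb{T}^2$. Since $\mathrm{length}(\Phi_t(\gamma_0))\le\|D\Phi_t\|_{L^\infty}\,\mathrm{length}(\gamma_0)$ and $B^s_{2,\infty}\hookrightarrow C^1$ for $s>3$, it suffices to force $\mathrm{length}(\Phi_t(\gamma_0))\to\infty$ with a lower bound uniform over a small ball of data. The model is an exact shear $u=(U(y),0)$, a steady solution with $\Phi_t(x,y)=(x+tU(y),y)$ and $\mathrm{length}(\Phi_t(\gamma_0))\sim t\,\mathrm{Var}(U)$, which tends to $\infty$ whenever $U$ is non-constant; the same linear-in-$t$ lower bound persists for flows that remain sufficiently close to a shear, by the relative-speed bookkeeping in Nadirashvili's proof of Theorem \ref{thmwand1}: the two ``rails'' bounding a sheared strip separate at a rate bounded below, so the essential curve joining them must stretch. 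For a flow with a non-degenerate hyperbolic stagnation structure the stretching is instead exponential. In either case, once $\|D\Phi_t\|_{L^\infty}$ is bounded below by a prescribed constant for all large $t$, the flow map has left every fixed ball of $B^s$ for good.

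It remains to make such favourably structured configurations dense in $X^s$. Given an arbitrary $(\bar\Phi,\bar\omega)$, one perturbs the current vorticity $\tilde\omega=\bar\omega\circ\bar\Phi^{-1}$ by a small, carefully chosen term that installs a persistent shearing or differential-rotation structure --- for instance a small admissible shear component, or a perturbation placing the current stream function in Morse position with a regular non-contractible level set along which the travel time \eqref{rotation} is non-constant --- while keeping the change in $B^{s-1}_{2,\infty}$ arbitrarily small; the supercritical index $s>3$ leaves enough room for such global adjustments to have small norm. One then reads off a wandering ball around the perturbed datum as in the previous paragraph.

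The crux --- and where essentially all the difficulty lies --- is the \emph{long-time} control: one must show that the implanted structure survives for all $t$ robustly enough that the winding it generates can never be fully undone, uniformly over a small ball of data. Because the two-dimensional Euler flow is time-reversible and conserves the energy and every $\|\omega_0\|_{L^p}$, there is no monotone Lyapunov functional to lean on; instead one must combine conservation of energy with the structural estimates of \S\ref{isolation}, with the extra complication that --- unlike the channel or the disk --- $\mathbb{T}^2$ carries no non-trivial Arnold-stable steady state (by Theorem \ref{thmcdg}), so the persistence of the shearing must be extracted more delicately than in Nadirashvili's theorem. Propagating a uniform lower bound on $\|D\Phi_t\|_{L^\infty}$ for \emph{all} times, over an open set of data rather than at a single clairvoyantly chosen datum, is the heart of the matter.
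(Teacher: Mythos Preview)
The paper itself does not give a proof of this theorem; it is quoted from Shnirelman \cite{S97} and followed only by a short discussion comparing it with Nadirashvili's result. So there is no ``paper's own proof'' to match against, and one must compare with Shnirelman's original argument.

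Your outline is not a proof: it is a plan whose decisive step is left open. You reduce the question to showing that $\|D\Phi_t\|_{L^\infty}$ (hence $\|\Phi_t-\mathrm{id}\|_{B^s_{2,\infty}}$) is eventually bounded below by any prescribed constant, \emph{uniformly over a small ball of data and for all large $t$}, and you correctly identify this as ``the heart of the matter'' --- but you do not supply it. The Nadirashvili-type relative-speed argument you invoke needs a background stable structure (a monotone shear on a channel, constant vorticity, Arnold stability) that pins down the velocity field uniformly in time; on $\mathbb{T}^2$ there is no nontrivial Arnold-stable steady state (Theorem \ref{thmcdg}), so the implanted shear or Morse structure can in principle be destroyed by the nonlinear evolution, and nothing in your sketch prevents the stretching from being undone at later times. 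The density step (perturbing to install a favourable structure) is harmless; the persistence step is the entire content, and it is missing.

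Shnirelman's actual proof avoids this difficulty altogether by a different mechanism: rather than tracking a metric quantity like $\|D\Phi_t\|$, he constructs a \emph{generalized Lyapunov function} on $X^s$ --- a functional, built from topological data of the configuration (roughly, a signed winding/area-type invariant attached to the image of a fixed essential curve under $\Phi$), that is monotone along orbits. Because it is monotone and, on an open dense set, eventually strictly increases, level sets separate $U$ from $S_t(U)$ for all large $t$ without any need for long-time stability estimates on the velocity. That monotone functional is exactly what replaces the missing ``persistence'' argument in your sketch; it is the reason the paper \cite{S97} carries ``generalized Liapunov function'' in its title.
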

\end{myshade}

The main difference between the above result and the one of Nadirashvili is that Shnirelman's theorem takes place at the level of particle configurations $\Phi$, not velocity fields $u:=K_{\mathbb{T}^2}[{\omega_0}\circ \Phi^{-1}]$.  Seemingly, a wandering neighborhood in configuration space (at the level of the Lagrangian flowmap) is not inconsistent with non-wandering in phase space (at the level of the vorticity field), and so the results are complementary.  Finally, the dynamics are not uniformly-in-time bounded in the $X^s$ topology, unlike the $L^\infty$ phase space of Theorem \ref{thmwand1}.

  \begin{myshade3}
  \vspace{-1mm}
\begin{problem}\label{wanderq}
Show that there exist wandering neighborhoods in the phase space $L^\infty$ of vorticity on compact domains without boundary (e.g. $M=\mathbb{T}^2$ or $\mathbb{S}^2$).
\end{problem}
\end{myshade3}
 These domains should be somehow generic, in the spirit of Theorem \ref{thmwand2}.

  \subsection{Strife: instability and infinite-time blowup}\label{strifesec}

As discussed in \S \ref{isolation}, Arnold established a form of dynamical stability in his works \cite{arn11,arn12}.  Namely, if $\omega_*$ is a stationary solution satisfying one of two so-called Arnold stability conditions, then it is Lyapunov stable in the $L^2$ topology (Theorems \ref{thmArn}). This can be promoted to a true dynamical stability in a suitable phase space $X$ (Theorem  \ref{stabinX} herein).  
On the other hand, in stronger topologies such as $C^\alpha(M)$ or $H^s(M)$ for $s>0$, stationary solutions -- especially those identified by Arnold -- are known to be strongly unstable.  In fact, a behavior which is believed to be typical is a gradual \emph{deterioration of smoothness} or \emph{infinite-time blowup}.  Namely, solutions can have the property that $\|\omega(t)\|_{C^{\alpha}(M)} \to \infty$ as $t\to\infty$ thereby leaving the $C^{\alpha}$ phase space at infinite time.  Physically, such roughening is the result of a process of the stretching, folding and filamenting of vortex lines by coherent velocity structures and is the signature of the direct cascade of enstrophy to small scales, see Fig. \ref{fig1}.

In this direction, the Nadirashvili's argument \cite{N} of wandering  in Theorem \ref{thmwand1} can be modified to provide a simple example of sustained growth.  The construction is on the periodic channel for a constant vorticity steady Euler solution $\omega_*$.  Such a state is nonlinearly stable has the property that its velocity field exhibits a fixed `` gap," i.e. the particles on top boundary $\{x_2=1\}$ are moving strictly faster than those moving along the bottom $\{x_2=0\}$. Below we state the result for shear flows on $M = \mathbb{T}\times[0,1]$ but this is non-essential: constant vorticity steady states on a fluid vessel which is topologically annular would work.

\begin{myshade}
  \vspace{-1mm}
\begin{theorem}[Growth in $C^\alpha$ near constant vorticity states]\label{constvortinst}
Let $M = \mathbb{T}\times[0,1]$ and $\alpha>0$.
Let $\omega_*$ be an constant-vorticity steady Euler  solution  with
\be
u_*^{\mathsf{t}}:=\min_{x_1\in \mathbb{T}}u_*\big|_{x_2=1} >\max_{x_1\in \mathbb{T}}u_*\big|_{x_2=0} =:u_*^{\mathsf{b}}.
\ee
For any $0<\ve\ll 1$,  there exists $\xi\in  C^{\infty} (M)$ with $\|\xi - \omega_*\|_{C^{\alpha} (M)}<\ve $, a time $T:=T(\ve, M,u_*^{\mathsf{t}}-u_*^{\mathsf{b}})$ and a constant  $C:=C(\xi, M,u_*^{\mathsf{t}}-u_*^{\mathsf{b}})>0$ such that 
\begin{align*}
  \| S_t(\gamma_0)-\xi\|_{C^\alpha(M)} \geq \ve C t^\alpha \qquad &\text{for all} \qquad \|\gamma_0- \xi\|_{C^{\alpha} (M)}<\ve.
\end{align*}
\end{theorem}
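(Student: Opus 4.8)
The plan is to run the argument of Nadirashvili from the proof of Theorem~\ref{thmwand1} quantitatively. There one only needs that the image of an essential vertical segment eventually wraps once around the annulus and re-enters a fixed observation region; here I will show in addition that, by area preservation, the image of a whole \emph{strip} must become thin at rate $1/t$, and the short-scale oscillation of the transported vorticity that this forces reads off the lower bound on the $C^\alpha$ seminorm. After a Galilean shift assume $u_*|_{x_2=0}>0$. Fix a vertical segment $\gamma=\{a\}\times[0,1]$, a thin strip $S=\{|x_1-a|<\ell/2\}\times[0,1]$ of fixed width $\ell$ around it, and a fixed rectangle $B\subset M$ disjoint from $S$ whose $x_1$-extent is, say, half of $\mathbb{T}$. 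Let $h\in C^\infty(M)$ satisfy $0\le h\le1$, $h\equiv1$ on $\{|x_1-a|<\ell/4\}\times[0,1]$, $h\equiv0$ off $S$ (so $h\equiv0$ on $B$), with $\|h\|_{C^\alpha}\sim\ell^{-\alpha}$, and set $\xi:=\omega_*+\delta h$ with $\delta$ so small that $\|\xi-\omega_*\|_{C^\alpha}<\varepsilon$ (hence $\delta\sim\varepsilon\ell^\alpha$). The argument is run for $\gamma_0$ in a $C^\alpha$-neighbourhood of $\xi$ of radius comparable to $\delta$, small enough that every admissible $\gamma_0$ satisfies $\gamma_0>\omega_*+\tfrac12\delta$ wherever $h\equiv1$ and $|\gamma_0-\omega_*|<\tfrac14\delta$ off $S$; write $\omega(t):=S_t(\gamma_0)$ and $u(t):=K_M[\omega(t)]$.

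I would then import, essentially verbatim from the proof of Theorem~\ref{thmwand1}, the two stability inputs. Since $\omega_*$ is a \emph{constant}, $\omega(t)-\omega_*$ is transported by $u(t)$ and so stays uniformly small in $L^\infty$; by Lemma~\ref{lilem} the velocity $u(t)$ then stays uniformly $L^\infty$-close to $u_*$, and in particular the flow map $\dot\Phi_t=u(\Phi_t,t)$, $\Phi_0=\mathrm{id}$, retains a uniform velocity gap: a top-wall particle and a bottom-wall particle separate horizontally at rate $\ge c_0:=\tfrac12(u_*^{\mathsf t}-u_*^{\mathsf b})>0$ for all $t$. Hence $\gamma_a(t):=\Phi_t(\gamma)$ is a curve joining the two walls whose endpoints, lifted to the universal cover, are horizontally displaced by $\ge c_0 t$; the same holds for the two ``sheath'' curves $\Sigma^\pm(t):=\Phi_t(\{x_1=a\pm\ell/2\})$, which are disjoint from $\gamma_a(t)$ and, together with it, bound the region $\Phi_t(S)$ of area exactly $|S|=\ell$. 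Since every point of $\gamma_a(t)$ has $\Phi_t^{-1}$-preimage in $\{h\equiv1\}$ while $\Phi_t(S)^c$ has preimage off $S$, a pair of nearby points, one on $\gamma_a(t)\cap B$ and one in $\Phi_t(S)^c\cap B$, is exactly what is needed.

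Producing such a pair is the heart of the matter. For a pure constant-vorticity shear it is transparent: $\Phi_t$ maps $S$ to a parallelogram wrapped $\sim c_0 t/2\pi$ times around $\mathbb{T}$, so at any fixed $x_1=b$ in $B$ the strand of $\gamma_a(t)$ lies within vertical distance $\sim\ell/c_0 t$ of the nearer sheath strand; taking $P$ on the former and $Q$ on the latter (both in $B$, where $\xi\equiv\omega_*$) gives $|(\omega(t)-\xi)(P)-(\omega(t)-\xi)(Q)|\gtrsim\delta$ with $|P-Q|\lesssim\ell/c_0 t$, so that for all $t\ge T$
\[
\|S_t(\gamma_0)-\xi\|_{C^\alpha(M)}\ \ge\ [\,\omega(t)-\xi\,]_{C^\alpha}\ \gtrsim\ \frac{\delta}{(\ell/c_0 t)^{\alpha}}\ =\ \delta\Big(\tfrac{c_0}{\ell}\Big)^{\alpha}t^{\alpha}\ \sim\ \varepsilon\,c_0^{\alpha}\,t^{\alpha},
\]
where the last step uses $\delta\sim\varepsilon\ell^\alpha$, so $\ell$ drops out and $C\sim c_0^\alpha$ depends only on $M$ and the gap; $T$ is the time for $\gamma_a(t)$ to reach across $B$. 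For the true (non-shear) flow I would argue globally rather than strand by strand: $\gamma_a(t)$ has horizontal extent $\ge c_0 t$ and wraps through $B$, the sheaths likewise, and the region $\Phi_t(S)$ they enclose has area only $\ell$; a covering/pigeonhole argument --- slicing $B$ into unit-height vertical slabs, counting the ones the wrapped strip must cross, and bounding the enclosed area slab by slab --- produces a point of $\gamma_a(t)\cap B$ within $O(\ell/c_0 t)$ of $\Phi_t(S)^c$, with its nearest complementary point also in $B$ if one starts from a point of $\gamma_a(t)$ well inside $B$.

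The main obstacle is precisely this global step at the correct rate. The Yudovich flow map is only $C^{\alpha(t)}$ in space with $\alpha(t)\to0$, so a priori $\gamma_a(t)$ need not even be rectifiable, and the naive bound ``the area of a $\rho$-tube around a length-$L$ curve is $\gtrsim\rho L$'' can fail; moreover the Gronwall/Osgood estimates on trajectory separations only control how thin the strip can become, not how thin it must become. One therefore has to rely on the \emph{robust} data --- the horizontal extent $\ge c_0 t$ (which comes from integrating the velocity gap, not from any regularity of $\Phi_t$), the exact area preservation of $\Phi_t$, and the topological fact that $\gamma_a(t)$ separates the two sheaths --- and package these into a counting argument that never presupposes rectifiability. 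Everything else (the $L^\infty$-stability of constant-vorticity states, the Schauder-type estimate of Lemma~\ref{lilem}, the ``image wraps around $B$'' fact, and the interpolation and Galilean reductions) is already available from the proof of Theorem~\ref{thmwand1} and is used unchanged.
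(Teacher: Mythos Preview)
Your approach is essentially the paper's, and the key mechanism (velocity gap on the walls + area preservation $\Rightarrow$ the transported region is forced to become $O(1/t)$-thin somewhere) is exactly right. The concern you raise about rectifiability of $\gamma_a(t)$ is a red herring: the area argument you sketch never needs it, and the paper makes this explicit via a slightly cleaner setup. Rather than a single marked curve inside a strip with two ``sheath'' boundaries, the paper takes \emph{two} vertical segments $\gamma_1,\gamma_2$ carrying distinct vorticity labels (say $h|_{\gamma_1}\ge 1/2$, $h|_{\gamma_2}\le 1/4$) bounding a rectangle $A$, and proves the single lemma
\[
\mathrm{dist}\big(\Phi_t(\gamma_1),\Phi_t(\gamma_2)\big)\ \le\ \frac{8|A|}{t}.
\]
The proof is a pure contradiction via Fubini on the covering space: the endpoints of $\gamma_i(t)$ on the two walls stay ordered and separate horizontally at rate $\ge c_0$, so for each abscissa $x$ in an interval of length $\sim c_0 t$ the vertical line $\{x\}\times[0,1]$ meets both curves; if the distance exceeded $8|A|/t$, the vertical segment from the highest intersection with $\gamma_1(t)$ up by $8|A|/t$ would lie entirely in $\Phi_t(A)$, and integrating in $x$ gives area $>2|A|$. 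No tubes, no length, no regularity of $\Phi_t$ beyond it being an area-preserving homeomorphism carrying walls to walls. The final step is then immediate: at the realizing pair $p_1(t)\in\gamma_1(t)$, $p_2(t)\in\gamma_2(t)$ the transported vorticity values differ by $\gtrsim\varepsilon$ while $|p_1-p_2|\lesssim 1/t$, whence $\|\omega(t)\|_{C^\alpha}\gtrsim\varepsilon t^\alpha$, and since $\|\xi\|_{C^\alpha}$ is fixed this gives $\|S_t(\gamma_0)-\xi\|_{C^\alpha}\gtrsim\varepsilon t^\alpha$ for $t\ge T$.

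So: your proof goes through, but you can drop the strip-and-sheath bookkeeping and the worry about fractal curves entirely by tracking two labeled curves instead of one. Your direct use of $L^\infty$ transport of $\omega-\omega_*$ (since $\omega_*$ is constant) is in fact cleaner than the paper's detour through $L^2$-stability and interpolation.
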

\end{myshade}

\begin{proof}
The setup is along the same lines as the proof of Theorem \ref{thmwand1}.  For simplicity set  $u_*^{\mathsf{t}}=1$ and $u_*^{\mathsf{b}}=0$.
Define a pair of curves traversing the channel
\begin{equation}
 \gamma_1=\{-\pi/2\}\times [0,1], \qquad \gamma_2=\{-\pi/4\}\times [0,1].
\end{equation}
Let
\begin{equation}\label{hprop}
h|_{\gamma_1}\geq 1/2, \qquad h|_{\gamma_2}\leq 1/4,  \qquad  \lVert h\rVert_{C^\alpha}\leq 1.
\end{equation}
Set 
$
\xi := \omega_*+ \ve h$ for  $0<\ve \ll 1$.
We take initial data
\begin{equation}\label{om0def}
\gamma_0=\xi + \ve g,
\end{equation}
where $g\in C^\alpha(M)$ with $\|g\|_{L^\infty}<1/4$. 
Define the rectangle delimited by $\gamma_1,\gamma_2$ as 
\begin{equation}
\label{def:A}
A=[-\pi/2,-\pi/4]\times [0,1].
\end{equation}

\begin{figure}[h!]
\label{fig:Nadgrowth}
\centering
		\begin{tikzpicture}[scale=0.9, every node/.style={transform shape}]
		\draw [thick] (-3.5,0)--(3.5,0);
		\draw [thick] (-3.5,3)--(3.5,3);
        \draw [dashed] (-3.5,0)--(-3.5,3);
        \draw [dashed] (3.5,0)--(3.5,3);
   \draw[very thick,red] (-1.75,0)--(-1.75,3);
       		\draw  (-1.75,1) node[anchor=east] {\Large $\red{\gamma_1}$}; 		
   \draw[very thick,green!80!blue] (-.875,0)--(-.875,3);
       		\draw  (-.875,1) node[anchor=west] {\Large $\green{\gamma_2}$};
         \draw [fill, red!50!white] plot [smooth cycle] coordinates {(1,0.8) (1.5,1) (2,1.65) (2.5,.5)};
\draw  (2,1) node { ${G}$};
\draw  (-1.3,1) node { \Large ${A}$};
         \draw[thick, blue, ->] (0,.5)--(.5,.5);
                  \draw[thick, blue, ->] (0,1)--(1,1);
         \draw[thick, blue, ->] (0,1.5)--(1.5,1.5);
         \draw[thick, blue, ->] (0,2)--(2,2);
          \draw[thick, blue, ->] (0,2.5)--(2.5,2.5);
          \draw[thick, blue, ->] (0,3)--(3,3);
		\draw[thin, ->] (-3.5,0)--(3.5,0);
		\draw[thin, ->] (0,0)--(0,3);	
		\draw  (0,2.8) node[anchor=east] {$y$};
		\draw  (3.3,0) node[anchor=north] {$x$};
				\end{tikzpicture}
\caption{Setup for example of growth. Here $g$ could e.g. be supported on $G$ and $\xi$ has nontrivial support on $\gamma_1$ and $\gamma_2$. The set $A$ is the area contained between $\gamma_1$ and $\gamma_2$. }
\end{figure}
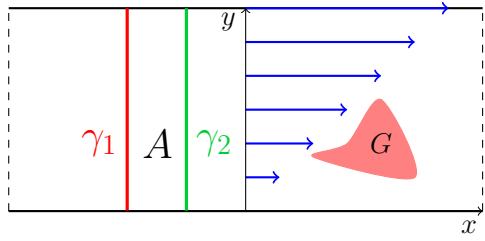

We claim that the distance between the images of the two lines $\gamma_1$ and $\gamma_2$ under the Lagrangian flow $\Phi_t$ decreases.  This will be used to deduce growth of the $C^\alpha$ norms since the curves carry different values of vorticity (since the background is constant).

\begin{myshade}
  \vspace{-1mm}
\begin{lemma}\label{distlem}
Let $\gamma_i(t) = \Phi_t(\gamma_i)$ for $i=1,2$ where $\Phi_t$ is the Lagrangian flow corresponding to the solution with initial vorticity $\gamma_0$. Then
\be\label{distresult}
{\rm dist}(\gamma_1(t),\gamma_2(t))\leq \frac{8A}{t}.
\ee
\end{lemma}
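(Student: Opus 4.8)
The plan is to follow the image $A_t:=\Phi_t(A)$ of the rectangle $A$ under the Lagrangian flow $\Phi_t$ of the solution with vorticity $\gamma_0$, and to play two competing facts against each other: incompressibility makes $\Phi_t$ measure preserving, so $|A_t|=|A|=\pi/4$ for all $t$; while the shear (the top of the channel is transported strictly faster than the bottom) forces $A_t$ to stretch horizontally and wrap around the cylinder. Since $\Phi_t$ is a homeomorphism of $M=\mathbb T\times[0,1]$ carrying each boundary circle to itself, $A_t$ is a topological quadrilateral whose sides, in cyclic order, are $\gamma_1(t)$, an arc $\beta_0(t)\subset\mathbb T\times\{0\}$, $\gamma_2(t)$, and an arc $\beta_1(t)\subset\mathbb T\times\{1\}$; the claim \eqref{distresult} is the quantitative statement that $\gamma_1(t)$ and $\gamma_2(t)$ are the two long edges of a thin strip of fixed area.

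\textit{Step 1 (quantifying the stretch).} Exactly as in the proof of Theorem~\ref{thmwand1}: since $\gamma_0$ is an $O(\varepsilon)$ perturbation of $\omega_*$ in $C^0$, vorticity is transported, and constant‑vorticity states are $L^\infty$‑stable, Lemma~\ref{lilem} gives $\|u(t)-u_*\|_{C^0(M)}\le 1/4$ for all $t$ once $\varepsilon$ is small; normalizing $u_*^{\mathsf{t}}=1$, $u_*^{\mathsf{b}}=0$ this yields $u_1(t)|_{x_2=1}\ge 3/4$ and $u_1(t)|_{x_2=0}\le 1/4$. Lift to the universal cover $\widetilde M=\mathbb R\times[0,1]$ and let $\widetilde\Phi_t$ be the lift with $\widetilde\Phi_0={\rm id}$. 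Integrating the horizontal velocity along trajectories on the two boundary lines, $\widetilde\beta_0(t):=\widetilde\Phi_t([-\pi/2,-\pi/4]\times\{0\})$ lies in $\{x_1\le a_0\}$ and $\widetilde\beta_1(t):=\widetilde\Phi_t([-\pi/2,-\pi/4]\times\{1\})$ lies in $\{x_1\ge a_1\}$ with $a_1-a_0= t/2-\pi/4\ge t/4$ for $t\ge\pi$. Thus the bottom and top sides of the quadrilateral $\widetilde A_t:=\widetilde\Phi_t([-\pi/2,-\pi/4]\times[0,1])$ are separated in the $x_1$–direction by at least $t/4$, while $|\widetilde A_t|=|A|$.

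\textit{Step 2 (slicing).} Fix $c\in(a_0,a_1)$. The line $\{x_1=c\}$ misses $\widetilde\beta_0(t)\cup\widetilde\beta_1(t)$, and by the intermediate value theorem every path in $\overline{\widetilde A_t}$ from $\widetilde\beta_0(t)$ to $\widetilde\beta_1(t)$ meets the closed set $S_c:=\{x_1=c\}\cap\overline{\widetilde A_t}$; that is, $S_c$ separates one pair of opposite sides of the topological square $\overline{\widetilde A_t}$. By the elementary plane‑topology fact that such a set must contain a connected subset joining the other pair, $S_c$ contains a continuum $J_c$ joining $\widetilde\gamma_1(t)$ to $\widetilde\gamma_2(t)$; being a connected subset of the line $\{x_1=c\}$, $J_c$ is a segment. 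Choosing $\widetilde x\in J_c\cap\widetilde\gamma_1(t)$, $\widetilde y\in J_c\cap\widetilde\gamma_2(t)$ and projecting to $M$ (the covering map is distance non‑increasing),
\[
{\rm dist}(\gamma_1(t),\gamma_2(t))\le|\widetilde x-\widetilde y|\le \mathcal H^1(J_c)\le \mathcal H^1\big(\overline{\widetilde A_t}\cap\{x_1=c\}\big).
\]
As this holds for every $c\in(a_0,a_1)$, Fubini gives $(a_1-a_0)\,{\rm dist}(\gamma_1(t),\gamma_2(t))\le\int_{a_0}^{a_1}\mathcal H^1(\overline{\widetilde A_t}\cap\{x_1=c\})\,dc\le|\widetilde A_t|=|A|$, whence ${\rm dist}(\gamma_1(t),\gamma_2(t))\le |A|/(a_1-a_0)\le 4|A|/t\le 8A/t$. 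This establishes \eqref{distresult} for $t\ge\pi$, which is the only regime used in what follows; the bounded remaining range is inessential and needs no separate argument.

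The step most in need of care is the plane‑topology input of Step~2 together with the passage to the cover: one must check that $\overline{\widetilde A_t}$ really is a topological square with the four named arcs as its sides in the stated cyclic order (immediate since $\widetilde\Phi_t$ is a homeomorphism fixing $\partial\widetilde M$ setwise), and then apply correctly the lemma that a closed set separating two opposite sides of a square must contain a crossing continuum joining the other two. The remaining ingredients — conservation of area, the velocity gap inherited from Theorem~\ref{thmwand1}, and the final Fubini estimate — are routine.
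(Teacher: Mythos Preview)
Your proof is correct and follows essentially the same strategy as the paper: use the velocity gap to show the top and bottom sides of $\Phi_t(A)$ become horizontally separated by at least $t/4$ on the cover, then combine vertical slicing with area preservation and Fubini. The only difference is presentational — you argue directly (for each slice $\{x_1=c\}$ you produce a segment joining the two curves and integrate), whereas the paper argues by contradiction (assuming the distance exceeds $8|A|/t$ forces each slice of $A_t$ to contain a segment of at least that length, overfilling the area). Your version has the minor advantage that the topological step — why the slice must contain a continuum joining $\gamma_1(t)$ to $\gamma_2(t)$ — is made explicit via the square-crossing lemma, which the paper's sketch leaves implicit.
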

\end{myshade}

\begin{proof}
First note that for any $p>2$, interpolation and $L^2$-stability yields 
\be
\| u(t) - u_*\|_{L^\infty}  \leq C \| \omega (t) - \omega_*\|_{L^{p}} \leq 2 C \| \omega(t) - \omega_*\|_{L^{\infty}}^{\frac{p-2}{p}}  \| \omega(t) - \omega_*\|_{L^{2}}^{\frac{2}{p}}.
\ee
By choosing $\ve\ll1$ sufficiently small,  it follows that  for all $t\in \mathbb{R}$,
\be\label{veldiff}
u_1(t)|_{x_2=0} \leq  1/4, \qquad u_1(t)|_{x_2=1} \geq 3/4.
\ee
Letting $x_i^\mathsf{t}(t) =\gamma_i(t)|_{x_2=1}$ and $x_i^\mathsf{b}(t) =\gamma_i(t)|_{x_2=0}$,  on the covering space of $M$ (see Figure \ref{fig:imagewrapcover}) we have the ordering
\be
x_1^\mathsf{b}(t) < x_2^\mathsf{b}(t) <\frac{t}{4} \qquad \text{and} \qquad x_2^\mathsf{t}(t) > x_1^\mathsf{t}(t) >\frac{3t}{4}.
\ee
Assume now for contradiction that the result \eqref{distresult} is false. 
Fix $x\in (\frac{1}{4}t, \frac{3}{4}t)$. Let $y_i(x)$ be the highest \emph{$x_2$-value} of any point on the line $x=x_0$ intersecting $\gamma_i(t)$. By continuity and by the assumption, the vertical line $[y_1(x), y_1(x)+\frac{8A}{t}]$ is in the domain. See e.g. Figure \ref{fig:absurd}. On the other hand, by volume preservation and Fubini's theorem, we arrive at a contradiction 
\[|A|\geq \int_{t/4}^{3t/4}\int_{0}^1 \chi_A  \geq \int_{t/4}^{3t/4}\frac{8A}{t}dx>2|A|.
\vspace{-6mm}
\] 
\end{proof}

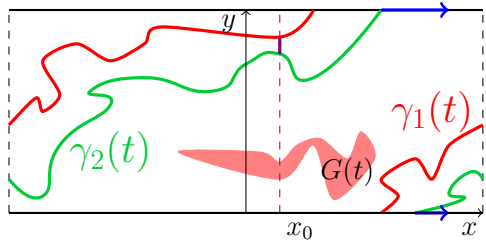
\begin{figure}[h!]
\centering
		\begin{tikzpicture}[scale=0.9, every node/.style={transform shape}]
		\draw [thick] (-3.5,0)--(3.5,0);
		\draw [thick] (-3.5,3)--(3.5,3);
        \draw [dashed] (-3.5,0)--(-3.5,3);
        \draw [dashed] (3.5,0)--(3.5,3);
         \draw [red, very thick] plot [smooth] coordinates {(2,0) (2.25,.25) (2,.5) (2.5,.8) (2.75,.5) (3,1) (3.5,1.3)};
         \draw [red, very thick] plot [smooth] coordinates {(-3.5,1.3) (-3.1,1.7) (-3,1.5) (-2.75,1.75) (-3,2.25) (-2.5,2.5) (-2,2.25) (-1.5, 2.5) (-1.25, 2.75) (0.5, 2.6)  (1,3) };
         \draw [green!80!blue, very thick] plot [smooth, tension=1] coordinates {(2.5,0) (3.2,.2) (3,.3) (3.3,.6) (3.5,.5)};
         \draw [green!80!blue, very thick] plot [smooth, tension=1] coordinates {(-3.5,.5) (-3,.25) (-2.75,1) (-2,1.6) (-2.3,1.8) (-1.5,2) (-.5, 1.8) (0,2.2) (.5,2.35) (1,2) (2,3) };   
\draw [fill, red!50!white, ] plot [smooth cycle] coordinates {(-1,.9) (.5,.5) (1,1) (1.3,.2) (1.9,.8) (1.7, 1.2) (1.5, .8) (1.1,1.1) (.75,1) (.5, .75) (.2,.9)};
\draw  (1.5,.6) node { ${G(t)}$};
          \draw[very thick, blue, ->] (2,3)--(3,3);
          \draw[very thick, blue, ->] (2.5,0)--(3,0);
          \draw[dashed, blue!30!red] (.5,0)--(.5,3);
               \draw[very thick, blue!50!red] (.5,2.35)--(.5,2.6);     
		\draw  (2.75,1.5) node { \Large \red{${\gamma_1(t)}$}};
        		\draw  (-2,.9) node { \Large \green{${\gamma_2(t)}$}};
\draw[thin, ->] (-3.5,0)--(3.5,0);
\draw[thin, ->] (0,0)--(0,3);	
\draw  (0,2.8) node[anchor=east] {$y$};
\draw  (3.3,0) node[anchor=north] {$x$};
\draw  (.8,0) node[anchor=north] {$x_0$};
				\end{tikzpicture}
				\caption{Cartoon of the evolution spiraling around the torus.  Dashed purple vertical line indicates the point where the distance between the lines is smallest. }
				\label{fig:absurd}
\end{figure}

With Lemma \ref{distlem}, the result follows after noting that for some $c>0$ we have
 \begin{equation}
 \|\omega(t)\|_{C^\alpha}\geq \frac{|\omega(t,p_1(t))-\omega(t,p_2(t))|}{|p_1(t)-p_2(t)|^\alpha}\geq  \frac{c\ve  t^\alpha}{(8A)^\alpha}.
 \end{equation}
where $\omega(t):=S_t(\gamma_0)$  and $p_1(t)$ and $p_2(t)$ are the points on the lines $\Phi_t(\gamma_1)$ and $\Phi_t(\gamma_2)$ which realize the distance \eqref{distresult} (see the purple lines in Figure \ref{fig:absurd}).
\end{proof}

There has been subsequent work exhibiting this type of behavior (often stated for the gradient of the vorticity) on domains with the boundary  \cite{BM15,IJ20,Y74,Y00,N,KS14,serre99}, on the torus \cite{Denisov1,Z15} near stationary solutions and the plane near a vortex dipole \cite{CJ21a}.    The works  \cite{Y74,Y00,N} provide linear-in-time growth rates whereas the others provide superlinear growth rates, with \cite{KS14} on the domain of a disk standing out as the sole example of double-exponential growth from smooth data of the vorticity gradient, saturating the bound \eqref{dexp}.  The result is:

\begin{myshade}
  \vspace{-1mm}
\begin{theorem}
[Kiselev-Šverák \cite{KS14}]\label{KSthm}
 Let $ M$ be the unit disk.  There exists smooth initial datum $\omega_0$ with $\|\nabla \omega_0\|_{L^\infty} /\|\omega_0\|_{L^\infty}>1$ such that the Euler solution $\omega(t)= S_t(\omega_0)$ satisfies for all $ t\in \mathbb{R}$ the lower bound
\be\label{dexplb}
\frac{\| \nabla \omega(t)\|_{L^\infty( M)}}{\|\omega_0\|_{L^\infty}}  \geq \left(\frac{\| \nabla \omega_0\|_{L^\infty( M)}}{\|\omega_0\|_{L^\infty( M)} }\right)^{c \exp\left({c\|\omega_0\|_{L^\infty( M)}  |t|}\right)}
\ee
for some constant $c>0$.
\end{theorem}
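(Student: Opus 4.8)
The plan is to reproduce the Kiselev--Šverák mechanism: exploit a hyperbolic stagnation point \emph{on the boundary} of the disk whose compression rate is \emph{logarithmically enhanced} by the Biot--Savart law, and follow a Lagrangian trajectory that is drawn into it. First I would reduce to the symmetry class of vorticities odd in $x_1$ and odd in $x_2$: since the disk $D$ and the Biot--Savart operator $K_D=\nabla^\perp\Delta^{-1}$ commute with the two axis reflections, this class is preserved by the Euler flow (Theorem~\ref{gwpthm} keeps the solution smooth for all time), each open quadrant is flow-invariant, $\omega$ vanishes identically on the coordinate axes for all $t$, and on $D^+:=D\cap\{x_1>0,x_2>0\}$ one may take $0\le\omega\le1$. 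Each point where a symmetry axis meets $\partial D$ is then a stagnation point of the flow, and --- with the sign of $\omega$ on $D^+$ chosen appropriately --- the fluid on the boundary arc of $D^+$ is compressed toward one of them; call it $p\in\partial D$.

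The analytic heart is a Biot--Savart estimate showing that, in coordinates $(s,\rho)$ near $p$ ($s$ arclength along $\partial D$, $\rho$ depth into $D$), a particle on the boundary at position $s\in(0,\delta_0)$ obeys
\[
\dot s \;=\; -\,s\Big(c_0\!\!\int_{\{s<|z|<\delta_0\}\cap D^+}\!\!\frac{z_1 z_2}{|z|^4}\,\omega(z,t)\,\mathrm{d}z \;+\; B(s,t)\Big),\qquad |B|\le C .
\]
One proves this by writing the disk Green's function as the free kernel plus a smooth reflection, folding the four quadrants with the odd--odd symmetry, and bounding the near-diagonal piece $|z|\lesssim s$ by a constant; the kernel $z_1z_2/|z|^4=\sin(2\theta)/(2|z|^2)$ is positive on $D^+$, so no cancellation occurs. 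The point is the resulting \emph{super-exponential} compression: if at time $t$ one has $\omega(\cdot,t)\ge\tfrac12$ on a fixed macroscopic region of $D^+$ abutting $p$, the integral is $\ge c\log(1/s)-C$ and hence $\dot s\le -c\,s\log(1/s)$.

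Next I would choose $\omega_0\in C^\infty(\overline D)$, odd--odd, equal to $1$ on a large portion of $D^+$ adjacent to the boundary near $p$ and transitioning to $0$ over a layer of width $\sim\varepsilon_0$ as one approaches the relevant symmetry axis (on which $\omega\equiv0$), arranged so that $\|\nabla\omega_0\|_{L^\infty}\ge c/\varepsilon_0>1=\|\omega_0\|_{L^\infty}$. Following the boundary trajectory $s(t)$ with $s(0)=\varepsilon_0$: on any interval where the macroscopic lower bound above holds, $\dot s\le -c\,s\log(1/s)$, so with $z:=\log(1/s)$ one has $\dot z\ge cz$, whence $s(t)\le\varepsilon_0^{\exp(ct)}$, a double-exponential collapse toward $p$. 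A continuity argument must then promote this to all $t\ge0$ by showing the macroscopic vorticity lower bound never fails. Granting that, the transition layer between the transported value $\omega\approx1$ and the axis value $\omega=0$ is squeezed at the same rate, giving $\|\nabla\omega(t)\|_{L^\infty}\gtrsim 1/s(t)\gtrsim\varepsilon_0^{-\exp(ct)}$; since $\|\nabla\omega_0\|_{L^\infty}/\|\omega_0\|_{L^\infty}\sim1/\varepsilon_0$, this is \eqref{dexplb} after adjusting $c$, and the matching double-exponential upper bound is the analogue of \eqref{dexp} in Theorem~\ref{gwpthm}.

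The crux --- and the place I expect essentially all the difficulty to reside --- is the step just described: proving that the macroscopic lower bound on the vorticity near $p$ survives for \emph{all} time, so that the logarithmic enhancement is never switched off. Since the vorticity is merely transported, one must rule out that the high-vorticity region is advected away from $p$ or that low-vorticity ``fingers'' thread through the annulus around $p$ in a way that destroys the logarithm; the remedies are that the disk boundary confines the high-vorticity tongue against $p$, that the hyperbolic dynamics pins the low-vorticity region onto a thin sliver along the symmetry axis where the Biot--Savart kernel $\sin(2\theta)/|z|^2$ vanishes, and --- most delicately --- a monotonicity/comparison property of the transported level sets (inherited from a monotonicity of $u$ built into $\omega_0$) that keeps them nested around $p$ uniformly in time. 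Making this quantitative is the substance of the Kiselev--Šverák argument.
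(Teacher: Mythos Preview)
The paper does not actually prove this theorem: it is quoted as a result of Kiselev--\v{S}ver\'ak \cite{KS14}, followed only by the one-line remark that the extension from $t>0$ to all $t\in\mathbb{R}$ ``follows by a simple modification.'' So there is no in-paper proof to compare against.

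That said, your sketch is a faithful outline of the Kiselev--\v{S}ver\'ak mechanism, and the paper later records exactly the Biot--Savart expansion you invoke (the ``Key Lemma,'' stated here as Lemma~\ref{PtWiseKeyLemma}) as the analytic core. Two small corrections to your setup: in \cite{KS14} the symmetry imposed is odd in $x_1$ only (not odd--odd), and the relevant hyperbolic stagnation point is the boundary point where the single symmetry axis meets $\partial D$; the positive-vorticity region occupies the half-disk $\{x_1>0\}$, not a quadrant. Your odd--odd variant can also be made to work, but the original argument is slightly cleaner with one reflection. Your identification of the crux is exactly right: the delicate part is not the Key Lemma itself but the \emph{persistence} of the macroscopic vorticity lower bound near the boundary stagnation point for all time, which in \cite{KS14} is handled by a comparison argument tracking two Lagrangian particles on $\partial D$ and using that the one closer to the hyperbolic point experiences a larger compression rate (a monotonicity built into the Key Lemma). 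The ``simple modification'' for $t\in\mathbb{R}$ alluded to in the paper is just to arrange the initial data symmetrically so that the same mechanism runs both forward and backward in time.
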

\end{myshade}
\begin{remark}
The original paper of Kiselev-Šverák \cite{KS14} finds data for which \eqref{dexplb} holds for all $t>0$.   The case of all $t\in \mathbb{R}$ follows by a simple modification.
\end{remark}

 In generality (nearby arbitrary steady states), little is known about  infinite time blowup.  However, the following theorem due to Koch \cite{K02} establishes a strong form of nonlinear instability:

 \begin{myshade}
  \vspace{-1mm}
\begin{theorem}[Koch \cite{K02}]\label{thminstab}  
Every stationary solution $\omega_*\in C^{1,\alpha}$ of the two dimensional
Euler equation whose Lagrangian flow is not periodic in time is nonlinearly unstable in $C^{\alpha}$.   Specifically, for all $M$ and $\ve$, there exists a time $T:=T(M,\ve)$  and a solution $\omega(t)=S_t(\omega_0)$ with the property that 
\be\label{nlstab}
\| \omega_0 - \omega_*\|_{C^{\alpha}} \leq \ve , \qquad while \qquad \| \omega(T) - \omega_*\|_{C^{\alpha}} \geq M.
\ee
\end{theorem}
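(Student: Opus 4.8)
The plan is, first, to convert the hypothesis --- that the Lagrangian flow $\Phi^*_t$ of the steady velocity $u_*=\nabla^\perp\psi_*$ is not time-periodic --- into a concrete local stretching mechanism. Since $\omega_*=\Delta\psi_*\in C^{1,\alpha}$ we have $\psi_*\in C^{3,\alpha}$, so the travel-time function (the period of revolution along a streamline, cf.\ \eqref{rotation}) and the action-angle coordinates below are $C^1$. Trajectories of $u_*$ lie on level sets of $\psi_*$, and on a regular level set made of closed curves the motion is periodic with period $\mu(c)=\oint_{\{\psi_*=c\}}\rmd\ell/|\nabla\psi_*|$. I would first check that $\Phi^*_\tau={\rm id}$ for some $\tau>0$ forces $\mu$ to be constant on every maximal nested family of closed streamlines; hence, in the generic case, non-periodicity produces an open region $R\subset M$ foliated by closed streamlines $\{\psi_*=c\}$, $c\in(c_-,c_+)$, on which $\mu$ is \emph{not} constant. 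In action-angle coordinates $(I,\theta)\in(c_-,c_+)\times\mathbb{T}$ on $R$ the flow of $u_*$ then becomes the pure shear $(I,\theta)\mapsto(I,\theta+\Omega(I)t)$ with $\Omega=2\pi/\mu$ and $\Omega'\not\equiv 0$. (The degenerate configurations --- non-closed streamlines, or a separatrix along which $\mu\to\infty$ furnishes arbitrarily slowly rotating neighbours --- would be handled by variants of the same idea, following \cite{K02}; when $\partial M\neq\emptyset$ the case above is the essential one.)

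With the shearing region in hand I would mimic the constructions in the proofs of Theorems \ref{thmwand1} and \ref{constvortinst}. Fix $M$ and $\ve$. I would pick $I_1<I_2$ in $(c_-,c_+)$ with $\Omega(I_1)\neq\Omega(I_2)$ and a smooth $\rho_0$ supported in a small coordinate ball inside $\{I_1\le I\le I_2\}$, with $\|\rho_0\|_{C^\alpha}\le\ve$ and non-constant in the transverse variable $I$ (so $\rho_0$ genuinely distinguishes neighbouring streamlines), and set $\omega_0=\omega_*+\rho_0$, $\omega(t)=S_t(\omega_0)$, with full Lagrangian flow $\Phi_t$. Since vorticity is transported, $\omega(t)=\omega_*\circ\Phi_t^{-1}+\rho_0\circ\Phi_t^{-1}$, and the whole matter reduces to showing that $\Phi_t$ inherits the shearing of $\Phi^*_t$ --- concretely, that $\Phi_t({\rm supp}\,\rho_0)$ is a filament of transverse width $\le C/t$. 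Granting this, across that width $\rho_0\circ\Phi_t^{-1}$ sweeps out the full range of $\rho_0$ (a variation $\gtrsim\ve$), while the Lipschitz term $\omega_*\circ\Phi_t^{-1}-\omega_*$ contributes only $O(\text{width})=o(\text{width}^\alpha)$, so that
\be
\|\omega(t)-\omega_*\|_{C^\alpha(M)}\ \geq\ \frac{c\,\ve}{(C/t)^\alpha}\ =\ c'\,\ve\,t^\alpha,
\ee
which tends to $\infty$; choosing $T=T(M,\ve)$ with $c'\ve T^\alpha\ge M$ would finish the proof.

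The crux, and the step I expect to be the main obstacle, is establishing this stretching of $\Phi_t$ for \emph{all} $t$, not merely for $\Phi^*_t$. I would run a dichotomy: either $\|\omega(t)-\omega_*\|_{C^\alpha}\ge M$ already for some $t\le T$, and we are done; or $\|\omega(t)-\omega_*\|_{C^\alpha}<M$ throughout $[0,T]$, in which case Schauder estimates for the Biot--Savart law give $\|u(t)-u_*\|_{C^{1,\alpha}}\le C(M+\|\omega_*\|_{C^\alpha})$ on $[0,T]$, i.e.\ the driving field is a \emph{bounded} (but not small) $C^1$ perturbation of $u_*$. One must then propagate the shear structure of $\Phi^*_t$ to $\Phi_t$ under such a perturbation. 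The naive Gronwall comparison loses an exponential-in-$t$ factor and is useless; instead I would work in the action-angle coordinates and use that (i) the perturbation $u-u_*$ is itself divergence-free with vorticity of size $\lesssim M$, hence uniformly Log-Lipschitz (Yudovich-type control), and (ii) the unperturbed motion is a genuine shear, for which trajectory separation is only polynomial, so a point launched in $\{I_1\le I\le I_2\}$ stays in a slightly enlarged sub-annulus and its angle keeps drifting relative to its neighbours at an essentially linear rate, forcing $\Phi_t({\rm supp}\,\rho_0)$ to retain transverse width $\lesssim 1/t$. Making this comparison \emph{uniform in $t$} while the velocity perturbation is only bounded --- not small --- is the delicate point, and here I would follow Koch \cite{K02} closely.
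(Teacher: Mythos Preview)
The paper does not give a full proof of this theorem --- only a one-paragraph sketch citing Koch --- so your proposal is already considerably more detailed than what the paper offers. The mechanism you describe (perturbation vorticity is transported by the full flow $\Phi_t$ up to the lower-order source $(u-u_*)\cdot\nabla\omega_*$; non-periodicity forces $\|\nabla\Phi^*_t\|_{L^\infty}\to\infty$; a well-placed bump then gets stretched) is exactly the mechanism the sketch invokes, and your action--angle reduction is a perfectly valid concretization of the abstract fact that $\|\nabla\Phi^*_t\|\to\infty$.

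Where you diverge from the sketch is in the contradiction hypothesis, and this is precisely the step you flag as delicate. Your dichotomy assumes only that $\|\omega(t)-\omega_*\|_{C^\alpha}<M$ on $[0,T]$, which gives the velocity deviation $u-u_*$ merely \emph{bounded} in $C^{1,\alpha}$ by a constant involving $M$. You then try to propagate the shear structure of $\Phi^*_t$ to $\Phi_t$ under a bounded $C^1$ perturbation of the vector field --- which, as you recognize, a naive Gronwall cannot do uniformly in $t$. The paper's sketch takes a different and easier route: it assumes for contradiction that $\omega_*$ is \emph{Lyapunov stable} in $C^\alpha$, so that for every $\delta>0$ there is $\ve(\delta)$ with $\|\omega(t)-\omega_*\|_{C^\alpha}<\delta$ for \emph{all} $t$ once $\|\omega_0-\omega_*\|_{C^\alpha}<\ve(\delta)$. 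Schauder then gives $\|u(t)-u_*\|_{C^{1,\alpha}}<C\delta$ uniformly in $t$ --- that is the ``Lipschitz stability of the flowmap'' in the sketch. Now one fixes a late time $T$ first (so that $\|\nabla\Phi^*_T\|$ is large), and only \emph{afterwards} chooses $\delta$ small; the Gronwall loss $e^{LT}$ is then a fixed constant absorbed by the smallness of $\delta$, and $\nabla\Phi_T$ inherits the large stretching of $\nabla\Phi^*_T$ at that single time. The perturbation, of size $\ve(\delta)$ and placed where $\nabla\Phi^*_T$ is large, then produces $C^\alpha$ growth contradicting the stability bound $\delta$. In short: the smallness built into Lyapunov stability is what closes the argument, and your ``bounded but not small'' route is strictly harder than what Koch actually does.
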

\end{myshade}

The idea of the proof is that the vorticity of a perturbation is transported by the volume preserving flow $\Phi_t$ up to a lower order (compact) term.  If the gradient of the corresponding Lagrangian flow is large at some point, then derivatives of the perturbation can be large.    The gradient of the Lagrangian flow of the steady state $\omega_*$ grows unboundedly large unless it is periodic in time -- i.e.  $\omega_*$ is the vorticity of a so-called  \emph{isochronal} \cite{Y00,MSY}.  Towards a contradiction, stability in the $C^\alpha$ topology of the vorticity gives Lipschitz stability of the flowmap.  As such, unbounded growth of the base flow means nearby solutions have large stretching factor at a fixed late time.  The perturbation is designed to exploit this stretching, yielding instability. It would be interesting to extend Theorem \ref{thminstab} to \emph{supercritical} (in the sense that the dynamics are uncontrolled) spaces. 

  \begin{myshade3}
  \vspace{-1mm}
\begin{problem}\label{kochquest}
Let $X$ be any space compactly embedded in $L^2$. Show that  any smooth stationary solution $\omega_*$ of the two dimensional
Euler equation whose Lagrangian flow is not periodic in time is nonlinearly unstable in $X$.   
\end{problem}
\end{myshade3}
It would be interesting to prove first the above statement for $X=H^s$ with $s>0$.  We remark that the analogous instability statement in 3D would be for $X$ be any space compactly embedded in $L^2$ of velocity (rather than vorticity) and it appears to be much harder due to the lack of control on the dynamics there.

\begin{figure}[h!]
	\centering
	\begin{tikzpicture}[scale=1.5, every node/.style={transform shape}]
       \draw[ultra thick, black] plot [smooth cycle, tension=1.2] coordinates {(0,-2) (1.5,0) (0,2) (-1.5,0)}; 
    
        \draw[thick, ->, decoration={markings, mark=at position 0.625 with {\arrow{>}}},
        postaction={decorate}] plot [smooth cycle, tension=1.3] coordinates {(0,-1.33) (1,.2) (0,1.33) (-1,.1)}; 
        \draw[thick, ->, decoration={markings, mark=at position 0.625 with {\arrow{>}}},
        postaction={decorate}]   plot [smooth cycle, tension=.8] coordinates {(0.1,-.665) (.5,0) (0.05,.665) (-.5,0)}; 
        
      \draw[ultra thick, black, ->] (0,0)--(1.2,1);
            \draw[ultra thick, black, ->] (0,0)--(.25,.8);
            
       \draw[ultra thick, red, ->] plot [smooth, tension=1] coordinates {(0,0) (-.25,-.25) (-.4,0) (-.6,-.3)}; 
        \draw[ultra thick, red, ->] plot [smooth, tension=1] coordinates {(0,0) (-.3,.6) (-.7,.2) (-1.2,.6)};
        
               \draw[ultra thick, blue!80!white, ->] plot [smooth, tension=1] coordinates {(0,0) (.2,-.4) (-.2,-1) (.2,-1.5)}; 
               \draw[ultra thick, blue!80!white, ->] plot [smooth, tension=1] coordinates {(0,0) (.5,-.1) (.6,-.4) (.8,-.3)}; 
               
               
               \draw  (-4,1.5) node[anchor=west] {\Large${t=0}$};
               \draw  (-4,.8) node[anchor=west] {\Large$\textcolor{red}{t=\frac{1}{3}T}$};
               \draw  (-4,.1) node[anchor=west]{\Large$\textcolor{blue!80!white}{t=\frac{2}{3}T}$};
               \draw  (-4,-.6) node[anchor=west]{\Large$t=T$};
\end{tikzpicture}
  \caption{A fluid ``clock" which is right at least twice a day: isochronal flow with period $T=12$ hrs.  Clock hands are made of Lagrangian particles seen at the different times.}
  \label{clock}
\end{figure}
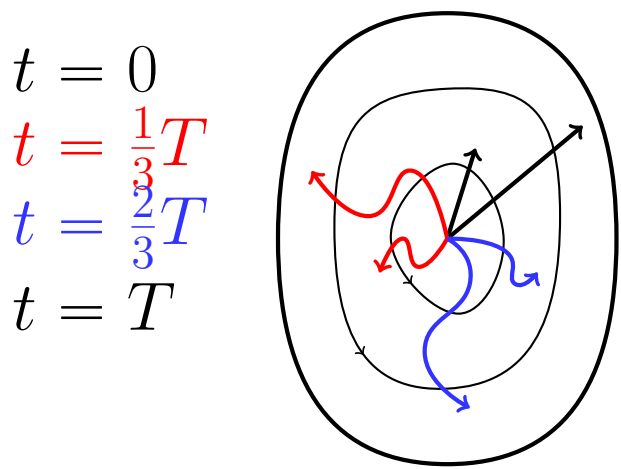

The assumption of the Lagrangian flowmap being non-periodic is not always necessary.  For example,  the trivial solution has periodic flowmap but is clearly is unstable in the sense of Theorem \ref{thminstab}.  We call those velocity fields with periodic flowmaps isochronal:

  \begin{myshade}
  \vspace{-1mm}
\begin{definition}
We say that a Hamiltonian $\psi: M\to \mathbb{R}$ or its corresponding Hamiltonian vector field $u=\nabla^\perp \psi: M\to \mathbb{R}^2$ is \emph{isochronal} if the flowmap $\Phi_t$ it generates via $\dot{\Phi}_t = u\circ \Phi_t$ is time periodic.  
\end{definition}
\end{myshade}

An interesting family of isochronal flows are elliptical vortices with constant vorticity, see \cite{MSY}.
Specifically, let $M$ be an elliptical vessel with major axis $a>0$ and minor axis $b>0$. This family is defined by the streamfunctions 
\be\label{elliptical}
\psi(x_1,x_2) = \frac{1}{2} \left(\left(\frac{x_1}{a}\right)^2+ \left(\frac{x_2}{b}\right)^2\right)
\ee
 which correspond to Euler solutions having constant vorticity 
$\Delta \psi =  {(a^2 + b^2)}/{a^2b^2}$.  One member of this family is the radial solid-body rotation vortex discussed in Remark \ref{remfail}. Having constant vorticity, these steady states are stable in $L^\infty$.
The travel time \eqref{rotation} $\mu(c) =2\pi ab$  along each streamline $\{\psi = c\}$  is manifestly independent of $c$ and thus these flows are isochronal. 

Although Theorem \ref{thminstab} does not directly apply, it can be shown that members of the family of elliptical vortices are not isolated (see Definition \ref{isodef}) from non-isochronal steady states in the $C^\alpha$ topology on vorticity \cite{MSY}.  As such, they too are nonlinearly unstable.  It seems reasonble to conjecture that \emph{all} steady states are unstable in $C^\alpha$.  
A natural starting point for addressing this question would be to characterize Euler flows having this isochronal property.  In certain cases, this can easily accomplished
\begin{myshade}
  \vspace{-1mm}
\begin{lemma}\label{isolem}
Let $M = \{ x  \ : \ |x|\leq 1\}$ be the disk.  There exists a unique smooth isochronal Euler flow (up to scaling), corresponding to solid body rotation. 
\end{lemma}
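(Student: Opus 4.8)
\emph{Plan.} I would pass to the streamfunction, use the isochrony hypothesis to pin down the streamline portrait as a single center filling the disk, invoke the Gidas--Ni--Nirenberg symmetry theorem to conclude the streamfunction is radial, and then read off solid body rotation by an elementary ODE computation. Set-up: a smooth steady Euler flow on $M$ is described by a streamfunction $\psi\in C^\infty(\overline M)$ with $\psi|_{\partial M}=0$ (subtracting a constant), $u=\nabla^\perp\psi$, $\omega=\Delta\psi$, and $u\cdot\nabla\omega=0$. By definition, $\psi$ isochronal means there is a $T>0$ with $\Phi_T=\mathrm{id}$; since $\Phi_t$ is an autonomous flow this says precisely that every streamline is a closed orbit whose minimal period divides $T$, hence is at most $T$. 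The constant flow $\psi\equiv 0$ is the degenerate member of the family of solid body rotations and may be set aside, so $\psi$ is nonconstant.

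\emph{Step 1: the streamline portrait.} This is the geometric heart. Near a hyperbolic stagnation point --- more generally, near any ``saddle-type'' critical point of $\psi$ --- the travel time $\mu(c)=\oint_{\{\psi=c\}}\frac{\rmd\ell}{|\nabla\psi|}$ of nearby closed streamlines diverges as they approach the separatrix, incompatible with all periods being bounded by $T$; hence $\psi$ has no saddles. A standard analysis of planar Hamiltonian flows (equivalently Morse theory on the disk together with the mountain-pass obstruction to two strict extrema) then forces: $\psi$ does not change sign, so after flipping the sign of $\psi$ one has $\psi>0$ in $M$, $\psi=0$ on $\partial M$; $\psi$ has a single interior critical point $p_0$ and none on $\partial M$, so $\nabla\psi\neq 0$ on $\overline M\setminus\{p_0\}$; and every regular level set $\{\psi=c\}$ is a single closed curve. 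Consequently $\omega$ is a well-defined function of $\psi$, $\omega=F(\psi)$, and $F$ is smooth: away from $p_0$ by using $\psi$ as a coordinate transverse to the streamlines, and at $p_0$ by the Morse lemma (a smooth function on $\mathbb R^2$ depending only on $|y|$ is a smooth function of $|y|^2$). Extending $F$ to a locally Lipschitz function on $\mathbb R$, one obtains that $\psi$ is a positive $C^2(\overline M)$ solution of $-\Delta\psi=-F(\psi)$ on the unit disk vanishing on the boundary, with $-F$ locally Lipschitz.

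\emph{Step 2: radial symmetry and the ODE.} By the Gidas--Ni--Nirenberg theorem such a solution is radially symmetric about the center of the disk and strictly radially decreasing; in particular $p_0=0$ and $\psi=\psi(r)$ with $\psi'(r)<0$ for $0<r<1$. (The disk being already a ball is what makes this step unconditional; whether the center of an Euler flow on the disk with a single stagnation point must lie at the origin is open in general, but the isochronal structure forces it. One could alternatively invoke the Hamel--Nadirashvili Liouville theorem, Theorem \ref{thmNH}, but that result presumes the stagnation point is at the origin, which Gidas--Ni--Nirenberg furnishes directly.) For a radial $\psi$ the streamline through radius $r$ is the circle of that radius, traversed at speed $|\nabla\psi|=|\psi'(r)|$, so $\mu(\psi(r))=2\pi r/|\psi'(r)|$; since $\mu$ is continuous on the connected set $[0,\max\psi)$ and $T/\mu$ is a positive integer there, $\mu$ is constant, say $\mu\equiv T$. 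Hence $\psi'(r)=-\Omega r$ with $\Omega=2\pi/T$, so $\psi=\frac{\Omega}{2}(1-r^2)$, $\omega=\Delta\psi\equiv-2\Omega$, and $u=\nabla^\perp\psi=-\Omega x^\perp$. Undoing the sign normalization, the smooth isochronal Euler flows on the disk are exactly $u=\alpha x^\perp$, $\alpha\in\mathbb R$, i.e.\ solid body rotation up to scaling; conversely each such flow is visibly a smooth steady Euler solution with time-periodic flowmap.

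\emph{Main obstacle.} The delicate part is Step 1: converting the dynamical hypothesis ``every streamline is closed with periods bounded by $T$'' into the rigid topological picture of a single nondegenerate center filling the disk, and then checking that the vorticity profile $F$ is regular enough (near both $\partial M$ and $p_0$) to feed into Gidas--Ni--Nirenberg. Excluding saddles is immediate from the divergence of the period near separatrices, but the remaining structural claims --- uniqueness of the interior critical point, absence of boundary stagnation points, no sign change, and smoothness of $F$ --- require a careful, if essentially routine, analysis of the streamline portrait.
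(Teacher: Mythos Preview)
Your proposal is correct and follows essentially the same route as the paper's proof: isochrony forces a single interior critical point and sign-definite $\psi$, hence a global relation $\Delta\psi=F(\psi)$ with $\psi=0$ on $\partial M$, then Gidas--Ni--Nirenberg gives radiality, and finally the only radial isochronal flow is solid body rotation. The paper is terser in two places---it dispatches the single-critical-point claim in one line (arguing that multiple critical points would yield a curve of stagnation, forcing $u\equiv 0$) and simply asserts the last step---whereas you supply more detail (excluding saddles via period blow-up near separatrices, and carrying out the ODE $\mu=2\pi r/|\psi'(r)|=\mathrm{const.}$ explicitly); but the architecture is identical.
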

\end{myshade}
\begin{proof}
If $\psi : M \to \mathbb{R}$ is an isochronal streamfunction for a Hamiltonian vector field on $\overline{M}$, then it possesses exactly one critical point (otherwise, there will be a curve along which the velocity $u=\nabla^\perp \psi$ vanishes, and so the period would need to be zero making $u$ is everywhere trivial). Being a stationary solution of Euler imposes $\nabla^\perp\psi \cdot \nabla \Delta \psi=0$ in $M$  (vorticity equation) and $\partial_\tau \psi|_{\partial M}=0$ (non-penetration condition).  Since $\psi\in C^2(M)$ has a single critical point, there is a $C^1$ function $F: \mathbb{R}\to \mathbb{R}$ such that
\begin{align}
\Delta \psi &= F(\psi), \qquad \text{in} \ M,\\
 \psi &= 0, \qquad  \ \ \ \ \ \text{on} \ \partial M.
\end{align}
See e.g. Lemma 5 of \cite{cdgb}.  Clearly $\psi$ cannot change sign in $M$ (it otherwise would contradict $\psi$ having a unique critical point).  It follows from the symmetry results of Gidas-Ni-Nirenberg \cite{gnn} that $\psi$ is radially symmetric $\psi = \psi(|x|)$.  The only (up to scaling) circular isochronal flow is solid body rotation $\psi(|x|) = \frac{1}{2} |x|^2$.
\end{proof}

In light of Lemma \ref{isolem},  since the isochronal flow is nearby (in any topology) circular solutions which are not isochronal, we have:

  \begin{myshade}
  \vspace{-1mm}
\begin{corollary}
 All steady Euler solutions on the disk are unstable in $C^{1,\alpha}$.
\end{corollary}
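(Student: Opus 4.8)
The plan is to reduce everything to Koch's instability theorem (Theorem~\ref{thminstab}) via a dichotomy on whether the base flow is isochronal. Let $u_*=\nabla^\perp\psi_*$ be a smooth steady Euler solution on the disk $M$, with scalar vorticity $\omega_*=\nabla^\perp\!\cdot u_*$. Since $M$ is simply connected, the Biot--Savart law identifies instability of $u_*$ in $C^{1,\alpha}$ with instability of $\omega_*$ in $C^\alpha$, so it is enough to work with $\omega_*$. If the Lagrangian flowmap generated by $u_*$ is \emph{not} periodic in time, then Theorem~\ref{thminstab} applies directly and $\omega_*$ is nonlinearly unstable in $C^\alpha$; this case needs nothing further.

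It remains to treat the case where the flowmap \emph{is} periodic, i.e. $u_*$ is isochronal. By Lemma~\ref{isolem} this forces $\psi_*=\tfrac{\lambda}{2}|x|^2$ for some $\lambda\in\mathbb{R}$ (the value $\lambda=0$ being the trivial flow), hence $u_*(x)=\lambda\,x^\perp$ and $\omega_*\equiv 2\lambda$. I would then produce an explicit family of \emph{non}-isochronal steady states limiting to it: for $\delta>0$ set $u_\delta(x):=\bigl(\lambda+\delta|x|^2\bigr)x^\perp$, a smooth circular flow and hence a smooth stationary Euler solution, with vorticity $\omega_\delta=2\lambda+4\delta|x|^2$, so that $\|\omega_\delta-\omega_*\|_{C^\alpha(M)}\to 0$ as $\delta\to0$. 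Its angular speed $r\mapsto\lambda+\delta r^2$ is non-constant on $M$, so the flow of $u_\delta$ turns each circle $\{|x|=r\}$ with an $r$-dependent period and is not time-periodic; Theorem~\ref{thminstab} therefore applies to each $\omega_\delta$.

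To pass from instability of the $\omega_\delta$ to instability of $\omega_*$ itself I would invoke the quantitative form of Theorem~\ref{thminstab}, taking the large constant appearing in \eqref{nlstab} to be, say, $2$. Given any tolerance $\eta\in(0,1)$, first pick $\delta$ small enough that $\|\omega_\delta-\omega_*\|_{C^\alpha}\le\eta/2$; Theorem~\ref{thminstab} applied to $\omega_\delta$ (with $\varepsilon$ there equal to $\eta/2$) provides data $\omega_0$ with $\|\omega_0-\omega_\delta\|_{C^\alpha}\le\eta/2$ and a time $T$ with $\|S_T(\omega_0)-\omega_\delta\|_{C^\alpha}\ge2$. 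The triangle inequality then gives $\|\omega_0-\omega_*\|_{C^\alpha}\le\eta$ while $\|S_T(\omega_0)-\omega_*\|_{C^\alpha}\ge 2-\eta/2\ge1$. Since $\eta$ was arbitrary, $\omega_*$ is nonlinearly unstable in $C^\alpha$ with fixed threshold $1$, which completes the isochronal case and hence the proof.

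The one step that deserves care is the assertion that the approximants $u_\delta$ have non-periodic Lagrangian flowmaps. This is transparent here, as $u_\delta$ is an explicit differential rotation; and it remains valid even when $\lambda<0$ and $\lambda+\delta r^2$ vanishes on some circle, since a non-constant radial profile of angular speed already precludes time-periodicity of the flowmap regardless of sign. Beyond this point, the argument is purely triangle-inequality bookkeeping layered on top of Theorem~\ref{thminstab} and Lemma~\ref{isolem}, and introduces no new analytic input.
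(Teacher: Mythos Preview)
Your proof is correct and follows exactly the route the paper intends: use Koch's theorem directly for non-isochronal steady states, invoke Lemma~\ref{isolem} to reduce the isochronal case to solid body rotation, and exploit its non-isolation from non-isochronal circular flows to transfer instability via the triangle inequality. The paper states this argument in a single sentence preceding the corollary, while you have made the approximating family $u_\delta$ and the quantitative bookkeeping explicit; there is no substantive difference.
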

\end{myshade}

Yudovich conjectured that the elliptical family, of which solid body rotation is a member, are the unique isochronal Euler solutions having constant vorticity \cite{Y00}.  However, preliminary computations of ours based on expanding the analytic Euler solutions with constant vorticity about the center critical point suggests there exists another branch of steady states that are isochronal, having domains which terminate at one with non-smooth boundary. As such, the situation may not be so simple. Nevertheless we ask

\begin{myshade2}
  \vspace{-1mm}
\begin{question}\label{isochronalconju}
For each simply connected domain $M\subset \mathbb{R}^2$, is there at most one (modulo scaling) smooth isochronal flow?
\end{question}
\end{myshade2}

A rough motivation for this question is as follows.  Steady Euler solutions can be constructed by finding a scalar function $\psi$ sharing the same level sets as its Laplacian, the vorticity $\omega=\Delta \psi$.  Imagine starting with a single level curve (say the boundary of the domain) and ``evolving" away to form a steady Euler solution. Thus there is one degree of freedom at each level of $\psi$, since the vorticity can be an arbitrary function of the streamfunction.  On the other hand, the condition of isochronality removes a degree of freedom, so by naive counting one should obtain at most one flow for 
suitable initial conditions.

More substantial support comes from the following simple fact: given an isochronal flow with streamfunction $\psi_0$, new isochronal flows can be generated by finding other stationary Euler solution on the orbit of the \emph{streamfunction}  $\psi_0$ in the group of area preserving diffeomorphisms.
\begin{myshade}
  \vspace{-1mm}
\begin{lemma}\label{isolem}
Let $M_0\subset \mathbb{R}^2$ and suppose that $\psi_0:M_0\to \mathbb{R}$ is a $C^1$ streamfunction of an isochronal Hamiltonian vector field on $M_0$. Let $M\subset \mathbb{R}^2$ with $|M|= |M_0|$  be diffeomorphic to $M_0$.  Any element $\psi$ of 
\be\nonumber
\mathcal{O}_{\psi_0}:= \{ \psi_0 \circ \varphi \ {\rm for \ any} \ \varphi:M \to M_0 \ {\rm smooth \ area\ preserving\ diffeomorphism} \}
\ee
is a streamfunction $\psi:M\to \mathbb{R}$ of an isochronal Hamiltonian vector field on $M$.
\end{lemma}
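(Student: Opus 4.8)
The plan is to show that the Lagrangian flowmap of $u=\nabla^\perp\psi$ on $M$ is conjugate, via $\varphi$, to the flowmap of $u_0=\nabla^\perp\psi_0$ on $M_0$; time-periodicity of the latter then transfers immediately to the former. Write $\Phi^0_t$ for the flow generated by $\dot{\Phi}^0_t=u_0\circ\Phi^0_t$ on $M_0$, which by hypothesis satisfies $\Phi^0_{t+T}=\Phi^0_t$ for some $T>0$, and set $\Phi_t:=\varphi^{-1}\circ\Phi^0_t\circ\varphi$. Since $\varphi$ is an area-preserving diffeomorphism we have $\det(\nabla\varphi)\equiv 1$.

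First I would record the elementary identity that for any $2\times 2$ matrix $A$ one has $A\,J\,A^{\mathsf T}=\det(A)\,J$, where $J=\left(\begin{smallmatrix}0&-1\\1&0\end{smallmatrix}\right)$ realizes $\nabla^\perp=J\nabla$. By the chain rule $\nabla(\psi_0\circ\varphi)=(\nabla\varphi)^{\mathsf T}\,(\nabla\psi_0)\circ\varphi$, so that
\[
u=\nabla^\perp(\psi_0\circ\varphi)=J\,(\nabla\varphi)^{\mathsf T}\,(\nabla\psi_0)\circ\varphi .
\]
Differentiating $\Phi_t=\varphi^{-1}\circ\Phi^0_t\circ\varphi$ in $t$ and using $(\nabla\varphi^{-1})|_{\varphi(y)}=\big((\nabla\varphi)|_y\big)^{-1}$ gives $\dot{\Phi}_t(x)=\big((\nabla\varphi)|_{\Phi_t(x)}\big)^{-1}\,u_0\big(\varphi(\Phi_t(x))\big)=\big((\nabla\varphi)|_{\Phi_t(x)}\big)^{-1}J\,(\nabla\psi_0)\big(\varphi(\Phi_t(x))\big)$. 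Comparing with the displayed formula for $u$, the two agree at the point $y=\Phi_t(x)$ precisely when $(\nabla\varphi)^{-1}J=J(\nabla\varphi)^{\mathsf T}$, i.e. $J=(\nabla\varphi)\,J\,(\nabla\varphi)^{\mathsf T}$, which is exactly the matrix identity above with $A=\nabla\varphi$ and $\det(\nabla\varphi)=1$. Hence $\Phi_t$ solves $\dot{\Phi}_t=u\circ\Phi_t$ with $\Phi_0=\mathrm{id}$, so it is the flowmap generated by the Hamiltonian vector field $\nabla^\perp\psi$ on $M$.

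It then remains only to read off periodicity: since $\Phi^0_{t+T}=\Phi^0_t$ for all $t$, we get $\Phi_{t+T}=\varphi^{-1}\circ\Phi^0_{t+T}\circ\varphi=\varphi^{-1}\circ\Phi^0_t\circ\varphi=\Phi_t$, so $\Phi_t$ is $T$-periodic and $\psi$ is isochronal. (Equivalently, and more in the spirit of \eqref{rotation}: the coarea formula identifies the travel time $\mu(c)$ along $\{\psi=c\}$ with $\tfrac{\rmd}{\rmd c}|\{\psi\le c\}|$, and since $\varphi$ preserves area $|\{\psi\le c\}|=|\varphi^{-1}(\{\psi_0\le c\})|=|\{\psi_0\le c\}|$, so $\mu\equiv\mu_0$ is constant.) For the remaining bookkeeping one checks that $\psi=\psi_0\circ\varphi\in C^1(M)$, and that a diffeomorphism $M\to M_0$ carries $\partial M$ onto $\partial M_0$, on which $\psi_0$ is constant; hence $\psi$ is constant on $\partial M$ and $u=\nabla^\perp\psi$ is a divergence-free vector field tangent to $\partial M$, i.e. a legitimate streamfunction/Hamiltonian pair on $M$.

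There is no serious obstacle here; the one point that needs care is invoking the area-preserving hypothesis in exactly the right form. With the convention $\det(\nabla\varphi)\equiv 1$ the conjugation identity is exact; if one only assumes $|\det(\nabla\varphi)|\equiv 1$ and allows orientation reversal ($\det=-1$), the identical computation yields $\Phi_t=\varphi^{-1}\circ\Phi^0_{-t}\circ\varphi$, which is still $T$-periodic, so the conclusion is unchanged. A minor secondary subtlety is that $\psi_0$ is merely $C^1$, so $u_0$ need not be Lipschitz and uniqueness of trajectories is not automatic; but this is already subsumed in the standing hypothesis that $\psi_0$ generates a single-valued time-periodic flow $\Phi^0_t$, and the conjugate $\varphi^{-1}\circ\Phi^0_t\circ\varphi$ then serves as the flowmap of $u$.
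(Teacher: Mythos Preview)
Your proof is correct and takes a genuinely different route from the paper. The paper argues entirely via the travel-time characterization: it uses the coarea formula to show that $\int_M f(\psi)\,\rmd x=\int_{M_0} f(\psi_0)\,\rmd x$ for all $f$, specializes to $f=\chi_{[a,c]}$ to get ${\rm Area}(\{\psi\le c\})=\mu_0(c-a)$, and then invokes the identity $\mu(c)=\tfrac{\rmd}{\rmd c}{\rm Area}(\{\psi\le c\})$ to conclude $\mu\equiv\mu_0$. This is exactly what you sketch parenthetically at the end. Your main argument instead exploits the symplectic content of the area-preserving hypothesis directly: the identity $(\nabla\varphi)J(\nabla\varphi)^{\mathsf T}=\det(\nabla\varphi)\,J$ says that $\varphi$ is a symplectomorphism, hence intertwines Hamiltonian vector fields, and so the flowmaps are conjugate and periodicity transfers verbatim. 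Your approach is cleaner in that it proves the defining property (periodicity of $\Phi_t$) directly, without passing through the level-set foliation or the implicit step from ``$\mu$ constant'' back to ``flow periodic''; the paper's approach, on the other hand, ties more explicitly into the travel-time machinery \eqref{rotation}--\eqref{muident} developed elsewhere in the section and makes the connection to Theorem~\ref{2dEthmarnold} more transparent.
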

\end{myshade}

\begin{proof}
Suppose that $\psi_0$ is isochronal with travel time $\mu_0(c)= \mu_0$ (defined by \eqref{rotation}). If $M_0$ is simply connected, then $\psi_0$ has one isolated critical point so that its level sets foliate $M_0$.  If it is doubly connected, it has no critical points, and there do not exist isochronal Hamiltonian vector fields on domains with more than one hole. In any case, call ${\rm rang}(\psi_0)=[a,b]$.  By the coarea formula, we have for any $f\in L^1([a,b])$  the identity
\be\nonumber
\int_{M_0} f(\psi_0(x)) \rmd x =\mu_0\int_a^b f(s) \rmd s.
\ee
  Let $M$ be a deformation of $M_0$ having equal area.  Any $\psi\in \mathcal{O}_{\psi_0}$ takes the form $\psi= \psi_0 \circ \gamma^{-1}$ where $\gamma: M_0 \to M$. Consequently, we have
\be \nonumber
\int_M f(\psi(x))\rmd x= \int_M f(\psi_0(\gamma^{-1}(x)))\rmd x  = \int_{M_0} f(\psi_0(x)) \rmd x =  \mu_0 \int_a^b f(s) \rmd s.
\ee
Choosing $f(x) = \chi_{[a,c]}(x)$, we obtain an identity on the area enclosed by $\psi$--levels:
\be\label{defarea}
{\rm Area}({\{\psi \leq c\}}) = \mu_0 (c-a).
\ee
 Finally, we recall the general identity (see e.g. \S 4.1 of \cite{serre93} or Appendix E of \cite{cdg})
\be\label{muident}
\mu(c) =\frac{\rmd }{\rmd c} {\rm Area}({\{\psi \leq  c\}}).
\ee
It follows from \eqref{defarea} and \eqref{muident} that  $\mu(c) = \mu_0$ is constant.  
\end{proof}

Using Lemma \ref{isolem}, we can exhibit a nontrivial family of isochronal Euler solutions. Specifically, let $\psi_0$ be the streamfunction for a isochronal steady Euler solution with  $\mu_0(c)= \mu_0$  on $M_0\subset \mathbb{R}^2$ satisfying the stability condition \textbf{(H1)}. For example, one can take $M_0$ to be an ellipse and  $\psi_0 $ to be the constant vorticity isochronal streamfunction \eqref{elliptical} on $M_0$.  
  Let $M$ be a slight deformation of $M_0$ having equal area (here we can also ``wrinkle" the domain $M$ by varying the Riemannian metric).  By Theorem \ref{2dEthmarnold} there exists a steady Euler streamfunction on $M$ of the form $\psi= \psi_0 \circ \gamma^{-1}$ where $\gamma: M_0 \to M$ is an area preserving diffeomorphism  \cite{cdg}. Since $\psi\in \mathcal{O}_{\psi_0}$, this  shows that for all sufficiently small deformations $M$ of an elliptical domain $M_0$, there exists at least one isochronal flow  $\omega =\Delta \psi$ on $M$. The algorithm producing the new solution via  Theorem \ref{2dEthmarnold} gives a unique diffeomorphism $\gamma: M_0 \to M$ and thus offers some support to Conjecture \ref{isochronalconju}.  It is conceivable that all isochronal Euler solutions can be constructed in this way, tracing out a path from solid body rotation on the disk.   Since all isochronal flows constructed in this way continue to satisfy \textbf{(H1)}, they are are non-isolated from non-isochronal steady solutions and thus unstable in $C^{1,\alpha}$ according to Koch's theorem \ref{thminstab}.

We finally remark that the issue of characterizing flows with the isochronal property subject to various constraints arises in the classical subject of the constructing the tautochrone curve, in Hamiltonian dynamical systems \cite{CS99} and even has applications to problems in engineering \cite{el}.

Returning to the issue of long-term growth, it is natural to suppose further that  all steady (at least) Euler solutions  should be infinitely unstable in $C^\alpha$ and that this behavior should be generic.  This conjecture was 
 enunciated by Yudovich (1974), \cite{Y74,Y00}, quote from \cite{MSY}:
\\

\noindent \emph{``There is a `substantial set' of inviscid incompressible flows whose vorticity gradients grow without bound.  At least this set is dense enough to provide the loss of smoothness for some arbitrarily small disturbance of every steady flow."} \cite{MSY}
\\

We express this conjecture as follows

\begin{myshade2}
  \vspace{-1mm}
\begin{conjecture}[Yudovich (1974), \cite{Y74}]\label{instconj}
No steady solutions of the Euler equations are stable in $C^{1,\alpha}$.  Moreover, for any steady state $\omega_*\in C^{\alpha}$  and any $\ve>0$, there is a $\|\omega_0-\omega_*\|_{C^{\alpha}} \leq \ve$ such that  $\|\omega(t)\|_{C^{\alpha}} \to \infty$ as $t\to \infty$.
\end{conjecture}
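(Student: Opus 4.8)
The plan is to prove the Conjecture by splitting steady states into three regimes and exploiting that, for every smooth two-dimensional solution, both $\|\omega(t)\|_{L^\infty}$ and the kinetic energy are conserved. In the first regime --- $\omega_*$ of constant vorticity, or more generally $L^2$-stable --- the argument of Theorem \ref{constvortinst} does essentially everything. For a general $L^2$-stable $\omega_*$ one first upgrades the velocity closeness used there: conservation of $\|\omega(t)\|_{L^\infty}$ keeps $\|\omega(t)-\omega_*\|_{L^\infty}$ bounded, Arnold's Theorem \ref{thmArn} keeps $\|\omega(t)-\omega_*\|_{L^2}=O(\ve)$ for all $t$, and interpolating gives $\|\omega(t)-\omega_*\|_{L^p}=O(\ve^{c_p})$, hence $\|u(t)-u_*\|_{L^\infty}=O(\ve^{c})$ by Biot--Savart. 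Then, provided $\omega_*$ exhibits persistent differential motion --- a velocity gap as in Theorem \ref{constvortinst} for shears, or differential rotation of neighboring streamlines for radial vortices with $F'<0$ --- two level curves carrying distinct vorticity values are sheared apart (or squeezed together) at a rate bounded below for all time, so the corresponding explicit orbit satisfies $\|\omega(t)\|_{C^\alpha}\gtrsim t^{\alpha}\to\infty$. By Theorem \ref{thmcdg}, on symmetric domains every $L^2$-stable state is a shear or circular flow and hence of this type, so the first regime is essentially complete there.

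The second regime is non-isochronal $\omega_*$ that are not $L^2$-stable. Here one loses uniform-in-time control of the perturbed velocity and only has Koch's finite-time instability (Theorem \ref{thminstab}): for each $N$ there is $\omega_0$ arbitrarily near $\omega_*$ with $\|S_T(\omega_0)\|_{C^\alpha}>N$ at some $T=T(N)$. To convert this into one orbit with $\|\omega(t)\|_{C^\alpha}\to\infty$ I would run a Baire category argument on $B_\ve(\omega_*)\subset C^{\alpha}$. The sets $U_N:=\{\omega_0 : \exists\, t>0 \text{ with } \|S_t(\omega_0)\|_{C^\alpha}>N\}$ are open, by global well-posedness (Theorem \ref{gwpthm}), continuous dependence, and lower semicontinuity of $\|\cdot\|_{C^\alpha}$; the key missing input is a \emph{local} strengthening of Koch's flowmap-stretching argument that makes the $U_N$ \emph{dense} --- i.e.\ finite-time growth from a dense set of (non-steady) nearby data, obtained by re-running the stretching estimate around intermediate states rather than around $\omega_*$ itself. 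Granting this, $\bigcap_N U_N$ is residual and nonempty, yielding $\sup_{t}\|S_t(\omega_0)\|_{C^\alpha}=\infty$ for a generic perturbation, which already proves the instability part of the Conjecture and matches Yudovich's ``substantial set.''

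The third regime is isochronal $\omega_*$, where Theorem \ref{thminstab} gives nothing. Here I would reduce to the previous regimes by non-isolation: Lemma \ref{isolem} together with the structural stability Theorem \ref{2dEthmarnold} shows that the isochronal flows obtained by deforming an ellipse or solid-body rotation are $C^\alpha$-approachable by non-isochronal steady states (still satisfying \textbf{(H1)}), so one picks such an $\omega_*'$ within $\ve/2$ and invokes regime one or two at scale $\ve/2$. For a general smooth isochronal $\omega_*$ one needs the analogous non-isolation --- plausible from the one-degree-of-freedom heuristic behind Question \ref{isochronalconju}, since destroying equioscillation of the travel time $\mu(c)$ is a codimension-one condition --- or, failing that, a direct proof that a perturbation breaking isochronality has unboundedly stretching flowmap, again feeding the $C^\alpha$ estimate.

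I expect the main obstacle to lie in the second regime: controlling the \emph{nonlinear} feedback so that the flowmap stretching of the true perturbed solution --- not of the frozen base state --- grows without bound, and, for the strong form $\|\omega(t)\|_{C^\alpha}\to\infty$, does so without long stretches of contraction. The Baire argument circumvents this for the $\limsup$ (hence the instability) statement, but the full limit seems to require a new persistence-of-growth principle valid outside the perturbative regime $\omega(t)\approx\omega_*\circ\Phi_t^{-1}$ and outside the $L^2$-stable class; this, together with the isochronal non-isolation question, is where the real work remains.
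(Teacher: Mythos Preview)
This statement is a \emph{conjecture} in the paper, not a theorem: the paper gives no proof. What the paper does offer in its vicinity is Koch's instability Theorem~\ref{thminstab} (finite-time $C^\alpha$ excursions near any non-isochronal steady state), the Baire-category growth Theorem~\ref{genthm} under a \emph{hypothesized} uniform flowmap-stretching lower bound, and the announced Theorem~\ref{mstablethm} for the special class of $\mathsf{M}$-stable states. None of these, singly or combined, proves Conjecture~\ref{instconj}; the paper is explicit that it remains open.

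Your proposal is therefore not to be compared against a paper proof but assessed as a strategy toward an open problem, and as such it has the gaps you yourself flag, plus one you understate. In your second regime the ``key missing input'' --- density of the $U_N$ via a localized Koch argument around non-steady intermediate states --- is precisely the hypothesis of Theorem~\ref{genthm}: a uniform-in-the-ball lower bound $\|\nabla\Phi_t\|_{L^\infty}\ge c(t)\to\infty$ for \emph{every} perturbation. The paper only verifies this near stable states with a boundary velocity gap (Corollary~\ref{stablethm}); away from $L^2$-stability there is no mechanism to prevent the perturbed orbit from wandering into a region where the flowmap stretching stalls, so density of $U_N$ is not established and your Baire step does not close. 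Moreover, even when the Baire argument runs, it yields only $\sup_t\|S_t(\omega_0)\|_{C^\alpha}=+\infty$, whereas the conjecture asks for $\|\omega(t)\|_{C^\alpha}\to\infty$; the paper's Theorem~\ref{mstablethm} gets the genuine limit only for the restricted $\mathsf{M}$-stable class, and you offer no mechanism for the general case. In your third regime you reduce to non-isolation of isochronal flows, but that is exactly the content of Open Question~\ref{isochronalconju} and is unproved beyond the disk (Lemma~\ref{isolem}) and small deformations of ellipses. In short, your outline correctly identifies the architecture the paper itself uses for partial results, but the steps you mark as ``missing inputs'' are the substance of the conjecture, not technicalities.
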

\end{myshade2}

A stronger form of the above conjecture reads: for any steady state $\omega_*\in C^{\alpha}$  and any $\ve>0$, in any $\ve$--neighborhood $N$ of $\omega_*$ in $C^{\alpha}$ there exists an open set $B\subset N$ such that all $\omega_0\in B$ have $\|\omega(t)\|_{C^{\alpha}} \to \infty$ as $t\to \infty$.

Here we show that Koch's instability argument can be used to prove \emph{generic} long time growth nearby certain stable steady states.
 Given $\omega_0\in C^\alpha(M),$ we may consider the solution map $S_t(\omega_0)$ taking $\omega_0$ to the unique Euler solution at time $t$ that is equal to $\omega_0$ at time $t=0.$ It is not difficult to see that $S_t$ is \emph{not} continuous on $C^\alpha.$ Despite this, we have the following Lemma.

\begin{myshade}
  \vspace{-1mm}
\begin{lemma}
The map $\|S_t(\cdot)\|_{C^\alpha(M)}: C^\alpha(M)\to \mathbb{R}^+$ is lower-semicontinuous for any $t\in\mathbb{R}$.
\end{lemma}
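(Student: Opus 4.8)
The plan is to deduce lower semicontinuity from the weak-$*$ continuity of the Yudovich solution map together with the compactness of H\"older embeddings. Fix $t\in\mathbb{R}$ and a sequence $\omega_0^n\to\omega_0$ in $C^\alpha(M)$; I must show $\|S_t(\omega_0)\|_{C^\alpha(M)}\le L$, where $L:=\liminf_{n\to\infty}\|S_t(\omega_0^n)\|_{C^\alpha(M)}$. If $L=+\infty$ there is nothing to prove, so I assume $L<\infty$ and pass to a subsequence (not relabeled) along which $\|S_t(\omega_0^n)\|_{C^\alpha(M)}\to L$; in particular $\{S_t(\omega_0^n)\}_n$ is bounded in $C^\alpha(\overline M)$.

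First I would record two facts drawn from the preceding theory. Since $C^\alpha(M)$ embeds continuously in $L^\infty(M)$, the hypothesis gives $\omega_0^n\to\omega_0$ in $L^\infty$, hence $\omega_0^n\wsc\omega_0$ in $L^\infty(M)$; on the $C^\alpha$ class $S_t$ agrees with the Yudovich solution (Theorems~\ref{lwpthm}, \ref{gwpthm}, \ref{yudothm}), so by the weak-$*$ continuity asserted in Theorem~\ref{yudothm}(c) one has $S_t(\omega_0^n)\wsc S_t(\omega_0)$ in $L^\infty(M)$, and the same holds along every subsequence. Next I would invoke Arzel\`a--Ascoli: the embedding $C^\alpha(\overline M)\hookrightarrow C(\overline M)$ is compact, so from the bounded family $\{S_t(\omega_0^n)\}_n$ I can extract a further subsequence converging uniformly on $\overline M$ to some $g\in C(\overline M)$.

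It then remains to identify $g$ and to bound its H\"older norm. Uniform convergence implies weak-$*$ convergence in $L^\infty$, so $S_t(\omega_0^n)\wsc g$ along this subsequence; comparing with the previous paragraph and using uniqueness of weak-$*$ limits forces $g=S_t(\omega_0)$. Finally, letting $n\to\infty$ in the inequality $|S_t(\omega_0^n)(x)-S_t(\omega_0^n)(y)|\le\|S_t(\omega_0^n)\|_{C^\alpha}\,|x-y|^\alpha$, valid for all $x\ne y$ in $\overline M$, together with $\|S_t(\omega_0^n)\|_{C^\alpha}\to L$, gives $|g(x)-g(y)|\le L\,|x-y|^\alpha$ and similarly $\|g\|_{L^\infty}\le L$; hence $\|S_t(\omega_0)\|_{C^\alpha(M)}=\|g\|_{C^\alpha(M)}\le L$, which is the claim. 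The only delicate point — and it is really bookkeeping rather than a genuine obstacle — is that two successive passages to subsequences occur, so one must be certain the uniform limit is $S_t(\omega_0)$ and not another function; this is guaranteed because the full sequence $\omega_0^n$ converges, so \emph{every} subsequence of $S_t(\omega_0^n)$ converges weak-$*$ to the \emph{same} limit $S_t(\omega_0)$ by Theorem~\ref{yudothm}(c). Substantively the statement is nothing more than ``weak-$*$ continuity of the Yudovich flow plus compactness of H\"older embeddings,'' and the failure of $S_t$ to be continuous in the $C^\alpha$ topology is precisely why lower semicontinuity is the best one can hope for.
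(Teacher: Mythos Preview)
Your proof is correct. The paper's argument is more direct: it simply asserts that if $\omega_0^n\to\omega_0$ in $C^\alpha$ (indeed even in $C(M)$), then $S_t(\omega_0^n)\to S_t(\omega_0)$ \emph{uniformly}, and lower semicontinuity of the $C^\alpha$ norm under uniform convergence is immediate from the definition. You reach the same uniform convergence, but by a slightly different route: rather than appealing to sup-norm stability of the Yudovich flow, you combine the weak-$*$ continuity of Theorem~\ref{yudothm}(c) with Arzel\`a--Ascoli on the $C^\alpha$-bounded subsequence, and then identify the uniform limit via uniqueness of weak-$*$ limits. Your approach has the virtue of using only results explicitly stated in the paper, at the cost of the subsequence bookkeeping you flag; the paper's version is shorter but leans on a continuity fact (that $S_t$ is continuous from $C(M)$ to $C(M)$) that is standard but not proved in the text. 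Both arguments culminate in the same elementary step: passing to the limit in the H\"older quotient.
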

\end{myshade}
\begin{proof}
If a family of continuous initial vorticities $\omega_n\rightarrow \omega_0$ in $C^\alpha(M)$ (even in $C(M)$), we have that $\omega_n(t)\rightarrow\omega(t)$ uniformly. The result then follows easily from the definition of the $C^\alpha$ norm.   
\end{proof}
\begin{remark}
While it is not difficult to show that $S_t(\cdot)$ is not continuous on $C^\alpha$, it is not immediately obvious whether the quantity $|S_t(\cdot)|_{C^\alpha}$ is continuous. However, lower semi-continuity is sufficient for our purposes.  
\end{remark}

In what follows, all balls are in the fixed $C^\alpha$ topology and $B_\ve$ will be a given ball in $C^\alpha.$

\begin{myshade}
  \vspace{-1mm}
\begin{theorem}\label{genthm}
Assume that for every $\omega_0\in B_\ve $ we have that the corresponding Lagrangian flow-map satisfies $\|\nabla\Phi_t\|_{L^\infty}\geq c(t),$ for some growth function $c(t)\rightarrow\infty$ as $t\rightarrow\infty.$ Fix a function $\delta(t)\rightarrow 0$ with $\delta(t)c(t)^\alpha\rightarrow\infty.$ Then, there is a dense set of $\omega_0$ in $B_\ve $ for which 
\[\sup_t\frac{\|S_t(\omega_0)\|_{C^\alpha}}{\delta(t)c(t)^\alpha}= +\infty.\]
\end{theorem}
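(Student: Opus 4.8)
The plan is a Baire category argument built around Koch's stretching mechanism. Concretely, for $N\in\mathbb{N}$ one would set
\[
A_N:=\Big\{\omega_0\in B_\ve\ :\ \sup_{t\ge 0}\tfrac{\|S_t(\omega_0)\|_{C^\alpha}}{\delta(t)c(t)^\alpha}>N\Big\}=\bigcup_{t\ge 0}\Big\{\omega_0\in B_\ve\ :\ \|S_t(\omega_0)\|_{C^\alpha}>N\,\delta(t)c(t)^\alpha\Big\}.
\]
By the lower semicontinuity of $\omega_0\mapsto\|S_t(\omega_0)\|_{C^\alpha}$ just proved, each member of the union is open, so $A_N$ is open in $B_\ve$. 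Since $B_\ve$ with the $C^\alpha$ metric is completely metrizable, hence a Baire space, it suffices to show that \emph{each $A_N$ is dense}: then $\bigcap_N A_N$ is a dense $G_\delta$ on which $\sup_t\|S_t(\omega_0)\|_{C^\alpha}/(\delta(t)c(t)^\alpha)=+\infty$, which is the claim.

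To prove density, fix $\omega_0$ in the interior of $B_\ve$, $r>0$ with $B_r(\omega_0)\subset B_\ve$, and $N$. First mollify: choose a \emph{smooth} $\omega_0'\in B_\ve$ with $\|\omega_0'-\omega_0\|_{C^\alpha}<r/2$. Using $\delta(t)\to0$ and $c(t)\to\infty$, fix $T$ so large that $C_* r/\delta(T)>N$ and $c(T)\ge1$, where $C_*>0$ is the universal constant below. Take the perturbation \emph{global and oscillatory}: let $\phi_\lambda\in C^\infty(M)$ ($0<\lambda<1$) satisfy $\|\phi_\lambda\|_{L^\infty}\le1$, $\|\phi_\lambda\|_{C^\alpha}\le C_\phi\lambda^{-\alpha}$, and the quantitative ``oscillation in every direction'' property that for all $x\in M$ and all unit vectors $e$ there is $s\in[0,\lambda]$ with $|\phi_\lambda(x+se)-\phi_\lambda(x)|\ge c_0$ (a finite trigonometric sum at frequency $\lambda^{-1}$ with enough directions achieves this, with universal $c_0,C_\phi$). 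Set $\mu:=r\lambda^\alpha/(4C_\phi)$ and $\tilde\omega_0:=\omega_0'+\mu\phi_\lambda$, so $\|\mu\phi_\lambda\|_{C^\alpha}<r/2$ and $\tilde\omega_0\in B_r(\omega_0)\cap B_\ve$; the scale $\lambda$ is sent to $0$ at the very end. The crucial structural point is that, since $\tilde\omega_0\in B_\ve$, the \emph{hypothesis applies to $\tilde\omega_0$ directly}: its flow obeys $\|\nabla\tilde\Phi_T\|_{L^\infty}\ge c(T)$, hence in two dimensions (area preservation) $L:=\|\nabla\tilde\Phi_T^{-1}\|_{L^\infty}\ge c(T)$; moreover $L\le \Lambda_T$ with $\Lambda_T$ \emph{independent of $\lambda$}, because $\mu K_M[\phi_\lambda]$ is $O(r)$ in $C^{1,\alpha}$, so $S_t(\tilde\omega_0)$ satisfies \eqref{dexp} with constants uniform in $\lambda$.

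Now exploit the compression dual to this stretching. Pick $\tilde y^\ast\in\overline{M}$ with $\|\nabla\tilde\Phi_T^{-1}(\tilde y^\ast)\|\ge L/2$, a unit $\hat f$ realizing it, and set $y':=\tilde y^\ast$, $y:=\tilde y^\ast+(\lambda/L)\hat f$. Since only the $C^2$-and-higher norms of $S_t(\tilde\omega_0)$ — not the $C^{1,\alpha}$ norm — deteriorate as $\lambda\to0$, one checks $\|\nabla^2\tilde\Phi_T^{-1}\|_{L^\infty}$ grows only polynomially in $\lambda^{-1}$ (with a $T$-dependent, $\lambda$-independent coefficient), so for $\lambda$ small (depending on $T,\omega_0'$) the first-order expansion of $\tilde\Phi_T^{-1}$ at $\tilde y^\ast$ on scale $\lambda/L$ is accurate up to $o(\lambda)$; hence $|\tilde\Phi_T^{-1}(y)-\tilde\Phi_T^{-1}(y')|\in[\tfrac14\lambda,2\lambda]$ and the two preimages bound a segment of length $\le2\lambda$. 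On that segment the Lipschitz mollified background varies by $\le2\|\nabla\omega_0'\|_{L^\infty}\lambda$, while — after an inconsequential shift of $y$ within the segment, using the oscillation property — $|\mu\phi_\lambda(\tilde\Phi_T^{-1}(y))-\mu\phi_\lambda(\tilde\Phi_T^{-1}(y'))|\ge\mu c_0$; choosing $\lambda$ also small enough that $2\|\nabla\omega_0'\|_{L^\infty}\lambda\le\tfrac12\mu c_0$ (possible since $\mu\sim\lambda^\alpha\gg\lambda$) gives $|\tilde\omega_0(\tilde\Phi_T^{-1}(y))-\tilde\omega_0(\tilde\Phi_T^{-1}(y'))|\ge\tfrac12\mu c_0$. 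Therefore
\[
\|S_T(\tilde\omega_0)\|_{C^\alpha}\ \ge\ \frac{\tfrac12\mu c_0}{(\lambda/L)^\alpha}\ =\ \tfrac12 c_0\,\mu\lambda^{-\alpha}L^\alpha\ \ge\ C_*\,r\,c(T)^\alpha,\qquad C_*:=\frac{c_0}{8C_\phi},
\]
using $\mu\lambda^{-\alpha}=r/(4C_\phi)$ and $L\ge c(T)$. Hence $\|S_T(\tilde\omega_0)\|_{C^\alpha}/(\delta(T)c(T)^\alpha)\ge C_* r/\delta(T)>N$, so $\tilde\omega_0\in A_N\cap B_r(\omega_0)$; this proves density of $A_N$, and the theorem follows.

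The hard part — and the reason a perturbation \emph{localized} at a prescribed stretching point of $\omega_0'$ does not work — is that controlling $\tilde\Phi_T$ by comparison with the base flow $\Phi_T'$ over $[0,T]$ loses a factor growing (double-exponentially) in $T$, which a $C^\alpha$-small perturbation cannot afford once $T$ is large; the remedy is the global oscillatory perturbation together with invoking the hypothesis directly for $\tilde\omega_0\in B_\ve$, so that no information about the location or the expanding direction of $\tilde\Phi_T$ is needed beyond $\|\nabla\tilde\Phi_T\|_{L^\infty}\ge c(T)$. The two remaining technical tasks are: constructing $\phi_\lambda$ with the quantitative oscillation property; and making the linearization of $\tilde\Phi_T^{-1}$ at scale $\lambda/L$ rigorous, which reduces to checking that the higher Hölder norms of $S_t(\tilde\omega_0)$ entering the log-Lipschitz bounds degrade only polynomially in $\lambda^{-1}$ — true precisely because $\mu K_M[\phi_\lambda]$ stays bounded in $C^{1,\alpha}$.
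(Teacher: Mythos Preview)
Your Baire-category scheme --- the $A_N$ are open by lower semicontinuity of $\omega_0\mapsto\|S_t(\omega_0)\|_{C^\alpha}$, and the dense $G_\delta$ intersection gives the conclusion --- is exactly the paper's argument. For the density step the paper simply cites Koch's mechanism as a black box: given $\omega_0\in B_\ve$ and any $T,\kappa>0$ there is $\xi$ with $\|\xi\|_{C^\alpha}<\kappa$ and $\|S_T(\omega_0+\xi)\|_{C^\alpha}\ge\tfrac12\kappa\,c(T)^\alpha$; one then takes $\kappa$ small and $T$ large with $1/\delta(T)\ge 4\kappa^{-1}N$. You instead reconstruct this mechanism explicitly, and your idea of a \emph{global} oscillatory perturbation combined with invoking the hypothesis directly for $\tilde\omega_0\in B_\ve$ is a clean way to sidestep the chicken-and-egg issue (one does not know \emph{where} $\tilde\Phi_T$ stretches before choosing $\xi$), which the paper's one-line citation leaves implicit.

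There is one technical slip in your write-up. You do not have, and do not need, a bound on $\nabla^2\tilde\Phi_T^{-1}$. Controlling that requires $\tilde\omega\in C^{1,\alpha}$, whose norm starts at order $\lambda^{-1}$ (from $\mu\nabla\phi_\lambda$) and then grows by the double-exponential estimate with a $T$-dependent \emph{exponent}; the resulting power of $\lambda^{-1}$ can easily overwhelm $(\lambda/L)^2$ once $T$ is large, so the linearization error need not be $o(\lambda)$ via that route. What the observation ``$\mu K_M[\phi_\lambda]$ stays bounded in $C^{1,\alpha}$'' actually delivers is that $\tilde u\in C^{1,\alpha}$ uniformly in $\lambda$, hence $\nabla\tilde\Phi_T^{-1}\in C^\alpha$ with a bound $N_T$ \emph{independent of $\lambda$}. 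This is precisely what closes the argument: the first-order expansion of $\tilde\Phi_T^{-1}$ at $y^\ast$ on scale $\lambda/L$ has H\"older remainder $O\bigl(N_T(\lambda/L)^{1+\alpha}\bigr)=o(\lambda)$, and the tangent to the curve $s\mapsto\tilde\Phi_T^{-1}(y^\ast+s\hat f)$ varies by only $O\bigl(N_T(\lambda/L)^\alpha/L\bigr)$, so that curve stays within $o(\lambda)$ of a straight segment of length in $[\lambda/4,\lambda]$ on which your oscillation property can be applied. With this correction your density argument goes through.
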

\end{myshade}

\begin{proof}
Call the set of $\omega_0$ satisfying the conclusion $\mathcal{U}$. Now let \[\mathcal{U}_{N}=\left\{\omega_0\in B_\ve : \sup_{t} \frac{\|S_t(\omega_0)\|_{C^\alpha}}{\delta(t)c(t)^\alpha}>N\right\}.\] By lower semi-continuity of the norm, this is an open set in the $C^\alpha$ topology. Now we will show that it is dense using the result of Koch  \cite{K02} (Theorem \ref{thminstab} herein). Indeed, by Koch's main Theorem \ref{thminstab}, given $\omega_0\in B_\ve ,$ and any $T,\kappa>0$, there exists $\xi$ with $\|\xi\|_{C^\alpha}<\kappa$ for which,  for $t=T$, we have
\[\|S_t(\omega_0+\xi)\|_{C^\alpha}\geq \frac{1}{2}\kappa c(T)^\alpha.\] Thus, take $\kappa$ as small and $T$  large so that $\frac{1}{\delta(T)} \geq 4\kappa^{-1} N.$ Then, 
\[\frac{\|S_t(\omega_0+\xi)\|_{C^\alpha}}{\delta(T)c(T)^\alpha}\geq 2N.\] It follows that $\omega_0+\xi\in \mathcal{U}_N$. Thus $\mathcal{U}_N$ is dense in $B_\ve .$ Now let $B$ be any ball with $\overline{B}\subset B_\ve $. $\overline{B}$ is a complete metric space. The sets $\mathcal{U}_N\cap \overline{B}$ are open and dense in $\overline{B}$. Thus, by the Baire Category Theorem\footnote{Note that the step of intersecting with $\overline{B}$ is just to emphasize that the Baire Category Theorem applies to open subsets of complete metric spaces, even though open subsets are not usually complete. In fact, intersecting with a closed ball ``inside" of the original ball is just the first step of the proof of the Baire Category Theorem.} \[\left\{\omega_0\in \overline{B}: \sup_t \frac{\|S_t(\omega_0)\|_{C^\alpha}}{\delta(t)c(t)^\alpha} =+\infty\right\}:=\mathcal{U}\cap \bar{B}=\bigcap_{N=1}^\infty \mathcal{U}_N\cap \overline{B}\] is dense in $\overline{B}.$ The result follows. 
\end{proof}

\begin{remark}
This result can be applied \emph{within} symmetry classes.  As such, it might be used to prove generic (within symmetry) fast growth.
\end{remark}

A direct corollary of  Theorem \ref{genthm} is generic growth near stable steady states.

\begin{myshade}
  \vspace{-1mm}
\begin{corollary}
[Generic loss of smoothness near stable  steady states ]\label{stablethm}
Let $M\subset \mathbb{R}^2$ be an annular fluid domain.  Fix $\alpha>0$ and let $\omega_*\in {C^{\alpha}(\bar M)}$ be an Arnold stable Euler solution such that the boundary travel times are not equal. Then there exists $\ve>0$ such that the set  
\be\label{Dset}
\left\{\omega_0\in B_\ve(\omega_*)\ : \ \exists c>0 \ \ \text{so that} \ \ \sup_{t} \frac{\|\omega(t)\|_{C^\alpha}}{ |t|^{\alpha-}}= +\infty  \right\}
\ee
 contains a dense set in $ B_\ve(\omega_*)$.
\end{corollary}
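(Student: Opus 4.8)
The plan is to cast the corollary as an application of Theorem \ref{genthm}. The whole task then reduces to producing a radius $\ve>0$ and a constant $c_0>0$ such that \emph{every} $\omega_0\in B_\ve(\omega_*)$ has a Lagrangian flowmap obeying $\|\nabla\Phi_t\|_{L^\infty(M)}\ge c_0|t|$ for all $t\in\mathbb R$ (for $t<0$ this follows from the $t>0$ case by time‑reversibility). Granting this, we invoke Theorem \ref{genthm} with $c(t)=c_0|t|$ and, for each $\eta\in(0,\alpha)$, with $\delta(t)=(1+|t|)^{-\eta}$, so that $\delta\to 0$ and $\delta(t)c(t)^\alpha\sim|t|^{\alpha-\eta}\to\infty$. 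This yields, for each such $\eta$, a dense $G_\delta$ set $\mathcal U_\eta\subset B_\ve(\omega_*)$ on which $\sup_t\|\omega(t)\|_{C^\alpha}/(1+|t|)^{\alpha-\eta}=+\infty$. Since $t\mapsto\|\omega(t)\|_{C^\alpha}$ is finite and locally bounded by global wellposedness (Theorem \ref{gwpthm}), an infinite supremum is realized along a sequence $|t_j|\to\infty$ with $\|\omega(t_j)\|_{C^\alpha}\gtrsim M_j|t_j|^{\alpha-\eta}$, $M_j\to\infty$; this forces an infinite supremum at every exponent $\alpha-\eta'$ with $\eta'>\eta$ as well. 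Intersecting $\mathcal U_{1/k}$ over $k\in\mathbb N$ (after intersecting with a closed ball $\overline B\subset B_\ve(\omega_*)$ and applying Baire exactly as in the proof of Theorem \ref{genthm}) produces a dense set of $\omega_0$ at which $\sup_t\|\omega(t)\|_{C^\alpha}/|t|^{\alpha-\eta}=+\infty$ for every $\eta>0$, which is the set \eqref{Dset}.

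The heart of the matter is the uniform linear lower bound on $\|\nabla\Phi_t\|_{L^\infty}$, and this is where Arnold stability and the annular topology enter. First, upgrade $L^2$‑stability to a time‑uniform velocity bound: for $\omega_0\in B_\ve(\omega_*)$ (using the boundary circulation of $\omega_*$, as required by Theorem \ref{thmArn}), $\|\omega(t)\|_{L^\infty}=\|\omega_0\|_{L^\infty}$ is conserved (Theorem \ref{yudothm}) while $\|\omega(t)-\omega_*\|_{L^2}$ can be made as small as we like, uniformly in $t$, by shrinking $\ve$ (Theorem \ref{thmArn}); interpolation makes $\|\omega(t)-\omega_*\|_{L^p}$ uniformly small for each fixed $p$, and feeding this into the two‑dimensional Biot--Savart law and Schauder estimates on $M$ (the difference $u(t)-u_*$ has vanishing boundary circulation and vorticity $\omega(t)-\omega_*$) gives $\sup_{t\in\mathbb R}\|u(t)-u_*\|_{C^0(\overline M)}\le\kappa(\ve)$ with $\kappa(\ve)\to 0$. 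Both boundary components are invariant under $\Phi_t$, and on each the flow is a non‑autonomous circle flow that stays $C^0$‑close to the steady rotation of $u_*$, whose periods are the boundary travel times $T_{\mathrm{in}},T_{\mathrm{out}}$ (i.e.\ $\mu$ from \eqref{rotation} on the two boundary streamlines). By hypothesis $T_{\mathrm{in}}\neq T_{\mathrm{out}}$, so for $\ve$ small the relative winding of a point of $\partial_{\mathrm{in}}M$ against a point of $\partial_{\mathrm{out}}M$ under $\Phi_t$ grows at a fixed positive rate $\gtrsim\bigl|T_{\mathrm{in}}^{-1}-T_{\mathrm{out}}^{-1}\bigr|$ (the degenerate case of a stagnation point on one boundary, i.e.\ an infinite travel time, is handled the same way since the other boundary still genuinely rotates).

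Now run the volume‑preservation argument of Lemma \ref{distlem}, transplanted from the channel to the annulus via the universal cover $\mathbb R\times[0,1]$ of $M$. Fix two disjoint Lipschitz transversals $\gamma_1,\gamma_2$ joining $\partial_{\mathrm{in}}M$ to $\partial_{\mathrm{out}}M$ with ${\rm dist}(\gamma_1,\gamma_2)=d_0>0$, bounding a region $A$ of fixed area. The relative winding forces the lifted images $\widetilde\Phi_t\widetilde\gamma_i$ to be horizontally displaced by $\sim|t|$; since $\widetilde\Phi_t(\widetilde A)$ still has area $|A|$, Fubini (verbatim as in the proof of Lemma \ref{distlem}) produces a thin vertical slice, whence ${\rm dist}\bigl(\gamma_1(t),\gamma_2(t)\bigr)\le C/|t|$ for all large $|t|$, with $C$ independent of $\omega_0\in B_\ve(\omega_*)$. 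Pick $p_i\in\gamma_i(t)$ realizing this distance and join them by a segment of length $\le C/|t|$; because $\det\nabla\Phi_t\equiv 1$ in two dimensions, $\nabla\Phi_t^{-1}$ and $\nabla\Phi_t$ have the same operator norm at every point, so $\|\nabla\Phi_t\|_{L^\infty}=\|\nabla\Phi_t^{-1}\|_{L^\infty}$; applying $\Phi_t^{-1}$ to that segment yields a path from $\Phi_t^{-1}(p_1)\in\gamma_1$ to $\Phi_t^{-1}(p_2)\in\gamma_2$ of length $\ge d_0$, hence $\|\nabla\Phi_t\|_{L^\infty}\ge d_0|t|/C=:c_0|t|$, uniformly over $B_\ve(\omega_*)$.

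The hard part will be the velocity‑control step: getting $\|u(t)-u_*\|_{C^0(\overline M)}$ genuinely uniform in time out of the only available input — time‑uniform $L^2$ smallness from Theorem \ref{thmArn} plus the conserved $L^\infty$ bound — and correctly treating the harmonic/circulation component on the multiply connected domain, so that the relative‑winding estimate on the two boundary streamlines holds uniformly for all $\omega_0\in B_\ve(\omega_*)$. The length‑to‑gradient conversion, the transplantation of Lemma \ref{distlem} to the annulus, and the Baire‑category packaging are then routine given Theorems \ref{thmArn}, \ref{genthm}, and \ref{thminstab}.
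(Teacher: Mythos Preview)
Your proposal is correct and follows essentially the same route as the paper: establish a uniform linear lower bound $\|\nabla\Phi_t\|_{L^\infty}\gtrsim |t|$ for all $\omega_0\in B_\ve(\omega_*)$ by adapting the argument of Theorem~\ref{constvortinst}/Lemma~\ref{distlem} to the annular setting, then feed this into Theorem~\ref{genthm}. The paper's proof is a two-liner that simply cites these two theorems; you have correctly filled in the adaptation (Arnold $L^2$-stability plus conserved $L^\infty$ bound interpolate to uniform $C^0$ velocity control, exactly as in the proof of Lemma~\ref{distlem}; unequal boundary travel times replace the ``gap'' hypothesis; the area argument lifts to the universal cover), and your extra Baire intersection over $\eta=1/k$ is the right way to read the $|t|^{\alpha-}$ in the statement.
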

\end{myshade}
\begin{proof}
  The argument in Thm \ref{constvortinst} shows that the gradient of the Lagrangian flowmap for any perturbation grows at least linearly.  We then apply Thm \ref{genthm}.
\end{proof}

We also announce a stronger result for a class of $\mathsf{M}$--stable stationary solutions.  These are non-isochronal $L^2$ stable stationary solutions possessing an isolated global maximum or minimum at either or single point or at the boundary. Here generic  loss of smoothness can be definitively established:

\begin{figure}[htb]\centering
        \includegraphics[width=.3\columnwidth]{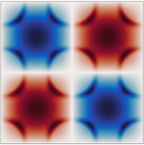} 
    \includegraphics[width=.3\columnwidth]{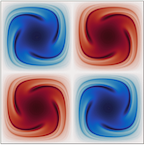} 
    \includegraphics[width=.3\columnwidth]{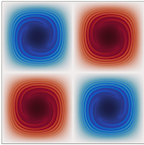} 
  \caption{\small Numerical simulation of   slightly viscous Navier-Stokes with viscosity $\nu= 10^{-6}$  on $\mathbb{T}^2$ exhibiting spiral formation.  Shown are heatmaps of the vorticity field at subsequent times, with right-most panel at $t:= 0.0025 \times  t_\nu$ where $t_\nu := 1/\nu$. Initial data $\omega_0= \omega_*+ \omega_0'$ is a perturbation $\omega_0'(x,y):= \sin(x)\sin(y) \exp\left(-100\times\left|\cos(x)^2\cos(y)^2- \tfrac{1}{4}\right|^2\right)$ of the cellular flow $\omega_*(x,y):= \sin(x)\sin(y)$. See \cite{C21,CD21}. 
 For perturbations with odd-odd symmetry, $\omega_*$ is nonlinearly stable under Euler evolution. For movie, see \url{https://www.youtube.com/watch?v=25Md9qxIReE}
  }
\label{fig1}
\end{figure}

\begin{myshade}
  \vspace{-1mm}
\begin{theorem}
[Generic loss of smoothness near $\mathsf{M}$--stable  steady states \cite{DE21}]\label{mstablethm}
Let $M\subset \mathbb{R}^2$ be a bounded fluid domain.  Fix $\alpha>0$ and let $\omega_*\in {C^{\alpha}(\bar M)}$ be a $\mathsf{M}$--stable Euler solution. Then there exists $\ve>0$ such that the set  
\be\label{Dset}
\left\{\omega_0\in B_\ve(\omega_*)\ : \ \exists c>0 \ \ \text{so that} \ \ \|\omega(t)\|_{C^\alpha}\geq c |t|^\alpha \ \ \text{for all} \ \  t\in \mathbb{R} \right\}
\ee
 contains an open and  dense set in $ B_\ve(\omega_*)$.
\end{theorem}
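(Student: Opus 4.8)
The plan is to combine a quantitative \emph{winding--thinning} estimate --- of the type behind Theorem~\ref{constvortinst} and Lemma~\ref{distlem} --- which turns a ``gap-type'' feature of the solution into the \emph{pointwise} bound $\|\omega(t)\|_{C^\alpha}\gtrsim|t|^\alpha$ valid at \emph{every} time, with an openness/density argument built on Koch's instability (Theorem~\ref{thminstab}) and the weak continuity of the solution map at a fixed time. Unlike Theorem~\ref{genthm} and Corollary~\ref{stablethm}, which give only a dense $G_\delta$ with growth in a $\sup_t$ sense, the stronger conclusion here is obtained by exploiting the special geometry of $\mathsf{M}$-stable states rather than the abstract hypothesis $\|\nabla\Phi_t\|_{L^\infty}\to\infty$. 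Assume first that $\omega_*$ attains its global extremum, say a maximum, at a single interior point $p_0$ (the boundary and minimum cases are parallel). Fix $\lambda$ just below $\max\omega_*$ so that $U:=\{\omega_*>\lambda\}$ is a small topological disk with $\overline{U}\subset M$. For any solution whose data $\omega_0$ is $L^2$-close to $\omega_*$, the $L^2$-stability of $\omega_*$ (part of the $\mathsf{M}$-stable hypothesis), interpolation with $L^\infty$, and the Biot--Savart bound (as in the proof of Lemma~\ref{distlem}, cf.\ Lemma~\ref{lilem}) give $\|u(t)-u_*\|_{L^\infty}\le\ve'$ with $\ve'$ small, \emph{uniformly in $t$}; moreover $\{\omega(t)>\lambda\}=\Phi_t(\{\omega_0>\lambda\})$ is transported, hence has constant area and, by the $L^2$-stability again, stays close in measure to $U$, so it remains a small blob near $p_0$ for all $t\in\mathbb{R}$. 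Thus $A(t):=M\setminus\overline{\{\omega(t)>\lambda\}}$ is, for every $t$, a topological annulus with flow-invariant outer boundary $\partial M$ and localized inner boundary, and the family $\{A(t)\}$ is $\Phi_t$-invariant with constant area.

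\smallskip
\noindent\textbf{The winding estimate.}
Non-isochronality of $u_*$ forces a non-zero discrepancy between the mean angular velocity of $u_*$ around $p_0$ near the inner boundary of $A$ and that along $\partial M$ (interposing an intermediate level curve if necessary). Measuring angles relative to a point $z(t)\in\{\omega(t)>\lambda\}$ transported from a maximizer of $\omega_0$, one shows that the image under $\Phi_t$ of any ``radial'' arc $\gamma$ crossing $A$ from the inner to the outer boundary winds around the annulus at a rate bounded below: for a fixed $\delta_0>0$ and all large $|t|$, the two endpoints of $\Phi_t(\gamma)$ separate in angle by at least $\delta_0|t|$. As in the ``barber pole'' picture behind Lemma~\ref{distlem}, consecutive wraps of $\Phi_t(\gamma)$ are then forced, by conservation of the enclosed area, to lie within distance $C/|t|$ of one another. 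Hence, if at some time $t_0$ the solution $\omega(t_0)$ possesses inside $A$ a \emph{transition of size $g_0>0$ across such a crossing arc} $\gamma$ --- the two sides of $\gamma$ carrying values differing by $g_0$ --- then transport of vorticity produces, for each large $t$, two points at distance $\le C/|t-t_0|$ across which $\omega(t)$ varies by at least $g_0$, so $\|\omega(t)\|_{C^\alpha}\ge g_0(C/|t-t_0|)^{-\alpha}$ for all large $|t|$. Shrinking the constant and using $\|\omega(t)\|_{C^\alpha}\ge\|\omega(t)\|_{C^0}=\|\omega_0\|_{C^0}>0$ on bounded time intervals, every such datum lies in the set displayed in the statement with $c=c(\omega_0)>0$.

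\smallskip
\noindent\textbf{Openness and density.}
Fix a crossing arc $\gamma$ of $A$, a separating threshold, and a time $t_0$: that $S_{t_0}(\omega_0)$ possesses the corresponding transition across $\gamma$ is a $C^0$-open condition on $S_{t_0}(\omega_0)$, hence --- since $\omega_0\mapsto S_{t_0}(\omega_0)$ is continuous from $C^\alpha(M)$ (indeed from $C(M)$) into $C(M)$, as used in the lower-semicontinuity lemma above --- a $C^\alpha$-open condition on $\omega_0$. By the winding estimate applied with initial time $t_0$ (legitimate since $\omega(t_0)$ is still $L^2$-close to $\omega_*$, so the estimate persists unchanged), every $\omega_0$ in this open set satisfies $\|\omega(t)\|_{C^\alpha}\ge c|t|^\alpha$ for all $t\in\mathbb{R}$. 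It remains to see that the union $\mathcal G$ of these open sets over a countable family of $(\gamma,t_0)$ is dense in $B_\ve(\omega_*)$. Given $\omega_1\in B_\ve(\omega_*)$ and $\eta>0$: since the winding estimate furnishes $\|\nabla\Phi_t\|_{L^\infty}\ge c|t|$ for the flow of every datum near $\omega_*$, the mechanism of Koch's instability theorem (Theorem~\ref{thminstab}, applied to the general base solution $\omega_1$, exactly as Koch's theorem is invoked in the proof of Theorem~\ref{genthm}) yields a perturbation $\xi$ with $\|\xi\|_{C^\alpha}<\eta$, localized near $p_0$, and a time $t_0$ for which $S_{t_0}(\omega_1+\xi)$ develops a steep feature inside $A$ transverse to a short crossing arc $\gamma$ --- i.e.\ a transition of positive size across $\gamma$. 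Thus $\omega_1+\xi\in\mathcal G$, so $\mathcal G$ is open, dense, and contained in the set in the statement, which proves the theorem.

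\smallskip
\noindent\textbf{Main obstacle.}
The heart of the matter is the winding estimate: the perturbed flow $\Phi_t$ does \emph{not} preserve the level sets of $\psi_*$, so a priori the differential rotation driving the winding could wash out over infinite time as trajectories drift across streamlines. Ruling this out --- keeping $A(t)$ genuinely annular with a localized inner boundary and the winding rate bounded below uniformly in $t$ --- is exactly where the $\mathsf{M}$-stable hypothesis enters essentially: the global vorticity extremum makes the top super-level set a small transported blob, furnishing a stable inner wall, while the outer wall $\partial M$ is genuinely invariant and non-isochronality pins the rotation-rate discrepancy. A secondary but non-trivial point is arranging, in the density step, that Koch's destabilizing perturbation localizes near $p_0$ with its steep feature transverse to a crossing arc of $A$, so that the winding then amplifies it to rate $|t|^\alpha$; and the boundary-extremum case requires the inner wall to be replaced by a short arc along $\partial M$ near the extremal point, with the corresponding modification of the rotation-discrepancy input.
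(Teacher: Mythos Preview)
Your winding/shearing mechanism is essentially the one the paper sketches: $L^2$-stability pins particles transversally to the streamlines of $\omega_*$, non-isochronality forces a differential angular velocity, and this yields linear-in-time shearing of the Lagrangian flowmap for \emph{every} datum in $B_\ve(\omega_*)$. That part is fine and matches the paper.

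Where you diverge from the paper, and where you make your life harder, is the openness/density step. The paper does not go through Koch. Its scheme is the direct Nadirashvili-type template already used in Theorem~\ref{constvortinst}: given any $\omega_1\in B_\ve(\omega_*)$ and $\delta>0$, set $\xi=\omega_1+\delta h$ where $h$ is a small $C^\alpha$ function carrying two prescribed distinct values on two nearby ``crossing'' level curves. Because the shearing estimate holds for the \emph{entire} neighborhood $B_\ve(\omega_*)$, every $\omega_0\in B_{\ve'}(\xi)$ experiences the same winding; the gap built into $\xi$ at $t=0$ survives for all such $\omega_0$ (it is an $L^\infty$-robust condition), and Lemma~\ref{distlem}-type area counting converts it into $\|\omega(t)\|_{C^\alpha}\ge c|t|^\alpha$ for the whole ball $B_{\ve'}(\xi)$. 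Openness and density are then immediate: the open set is $B_{\ve'}(\xi)$, and density comes from being able to place $\xi$ $\delta$-close to any $\omega_1$.

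Your route instead invokes Koch's mechanism to manufacture, near $\omega_1$, a perturbation that at some time $t_0$ develops a ``steep feature transverse to a crossing arc'', and then argues openness via $C^0$-continuity of $S_{t_0}$. The gap here is the one you flag yourself but underweight: Koch's argument produces a large $C^\alpha$ seminorm at time $T$, realised at \emph{some} pair of points determined by where $\nabla\Phi_T$ is large; it does not by itself place that steepness across one of your pre-chosen crossing arcs, nor with a definite $L^\infty$ jump (a large H\"older quotient can come from a small oscillation over a much smaller distance). Upgrading ``large $C^\alpha$ somewhere'' to ``$L^\infty$ transition of fixed size across a fixed crossing arc'' is exactly the content you need, and it is not free. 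The paper sidesteps this entirely by planting the gap by hand at $t=0$; you should do the same.
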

\end{myshade}
The basic idea of the proof is that, since the vorticity is non-constant, the Eulerian stability strongly constrains the motion of particles in the direction transverse to the vortex lines.  Thus, if the state is not isochronal, then  particles starting near different streamlines of the base steady solution will experience a differential rotation which causes that increased separation over time (in a covering of the domain). As such, what is really required is that, away from regions where the solution is locally isochronal, the vorticity is not identically constant.  In that case, provided it is not isochronal, the solution is generically infinitely unstable in $C^{\alpha}$.  
This allows us to establish indefinite shearing of the Lagrangian flowmap for a full neighborhood of time dependent solutions nearby the steady state. 
Using this we show for any $\delta>0$ there exists $\xi\in B_\delta(\omega_*)$  such that for  $\ve>0$ sufficiently small, any $\omega_0\in B_{\ve}(\xi)$ gives rise to $\omega(t) = S_t(\omega_0)$ having the property
\be
 \|\omega(t)\|_{C^\alpha}\geq c t^{\alpha}
\ee
for some $c:= c(\xi)>0$.  The result now follows.

The strongest in this line of conjectures dispenses with the assumption of proximity to steady states:

\begin{myshade2}
  \vspace{-1mm}
\begin{conjecture}\label{conjgrow2}
For each $\omega_*\in C^{\alpha}$  and any $\ve>0$, there is a $\|\omega_0-\omega_*\|_{C^{\alpha}} \leq \ve$ such that  $\|\omega(t)\|_{C^{\alpha}} \to \infty$ as $t\to \infty$.
\end{conjecture}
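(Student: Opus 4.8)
The plan is to argue by a dichotomy on the long-time behavior of the Lagrangian flow of the background solution itself. Given $\omega_*\in C^\alpha$ and $\ve>0$, let $\omega_*(t):=S_t(\omega_*)$ be the global solution furnished by Theorem~\ref{gwpthm}, and let $\Phi^*_t$ denote its flow, $\dot\Phi^*_t=u_*(t,\Phi^*_t)$ with $\Phi^*_0=\mathrm{id}$; since $u_*\in C^{1,\alpha}$ is Lipschitz the map $\Phi^*_t$ is $C^1$ in space, and as $\det\nabla\Phi^*_t\equiv 1$ one has in two dimensions $\|\nabla(\Phi^*_t)^{-1}\|_{L^\infty}=\|\nabla\Phi^*_t\|_{L^\infty}$, whence $\|\omega_*(t)\|_{C^\alpha}\lesssim\|\omega_*\|_{C^\alpha}\|\nabla\Phi^*_t\|_{L^\infty}^\alpha$. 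I would split according to whether (A) $\|\nabla\Phi^*_t\|_{L^\infty}\to\infty$ as $t\to\infty$, or (B) $\liminf_{t\to\infty}\|\nabla\Phi^*_t\|_{L^\infty}<\infty$.

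In case (A) I would adapt the proof of Koch's Theorem~\ref{thminstab}: its mechanism — that the vorticity of a perturbation is transported by the flow of the perturbed solution up to a lower-order compact term, so that sustained stretching of the background flow forces large perturbation gradients — uses only that $\|\nabla\Phi^*_t\|_{L^\infty}$ grows, not that the background is steady. Since here the growth tends to infinity, one expects a single small perturbation $\xi$, supported where the stretching concentrates, to yield $\|S_t(\omega_*+\xi)\|_{C^\alpha}\gtrsim \ve\,c(t)^\alpha$ for all large $t$, with $c(t):=\|\nabla\Phi^*_t\|_{L^\infty}\to\infty$ playing the role of the growth function in Theorem~\ref{genthm}; the Baire-category argument there (using lower semicontinuity of $\omega_0\mapsto\|S_t(\omega_0)\|_{C^\alpha}$) then even upgrades this to a dense set of perturbations in $B_\ve(\omega_*)$. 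One subtlety to be handled carefully is that $C^\infty$ is not dense in $C^\alpha$, so the background cannot be mollified; the instability construction must be carried out at the $C^\alpha$ level directly, as in Theorems~\ref{thmwand1} and~\ref{constvortinst}.

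Case (B) is where the genuine difficulty lies. Here $\|\omega_*(t)\|_{C^\alpha}$ need not grow (indeed if $\|\nabla\Phi^*_t\|_{L^\infty}$ stays bounded it does not), so one must perturb, and the perturbation's own flow — which tracks $\Phi^*_t$ only on bounded time intervals — must be shown to accumulate a large gradient over infinite time. The strategy I would pursue is to prove a rigidity statement: an Euler solution whose Lagrangian flow visits a bounded set of $C^1$ diffeomorphisms along a sequence of times $t_k\to\infty$ is forced to be either trivial or, in an appropriate asymptotic sense, \emph{isochronal} — its restriction to each invariant region accumulating on a one-parameter group of rotations, as in the discussion preceding Question~\ref{isochronalconju} and Lemma~\ref{isolem} — and then to destabilize it exactly as the elliptical vortices are destabilized in Theorems~\ref{constvortinst} and~\ref{mstablethm}: place a small smooth blob straddling two streamlines whose travel times, once infinitesimally detuned by the perturbation, drive a linear-in-time separation of the blob's boundary (measured in a covering of the domain), hence $\gtrsim t^\alpha$ growth of $\|\omega(t)\|_{C^\alpha}$.

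The main obstacle is precisely this rigidity step in case (B): ruling out an Euler solution with a recurrently non-stretching Lagrangian flowmap that is neither steady nor asymptotically isochronal. This is a long-time structure theorem for two-dimensional Euler flows, presumably at least as hard as Yudovich's Conjecture~\ref{instconj}, since no available technique controls the flowmap over infinite time for general data. A reasonable intermediate target would be to first settle case (B) when $\omega_*$ is itself a steady state — recovering Conjecture~\ref{instconj} — and then within symmetry classes, before attempting the general tame background.
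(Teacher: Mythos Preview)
The statement you are attempting to prove is labeled a \emph{Conjecture} in the paper; there is no proof given, and the surrounding discussion treats it as an open problem strictly stronger than Yudovich's Conjecture~\ref{instconj}. So there is nothing to compare your attempt against --- you are proposing a strategy toward an open problem, not an alternative proof.

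That said, your outline has real gaps beyond the one you flag. In case~(A), even granting that Koch's mechanism transfers to time-dependent backgrounds, the conclusion of Theorem~\ref{thminstab} is an instability statement ($\|\omega(T)-\omega_*\|_{C^\alpha}\ge M$ at \emph{some} time $T$), and the Baire argument of Theorem~\ref{genthm} yields only $\sup_t \|S_t(\omega_0)\|_{C^\alpha}/(\delta(t)c(t)^\alpha)=+\infty$, i.e.\ unboundedness along a subsequence. Neither gives $\|\omega(t)\|_{C^\alpha}\to\infty$ as $t\to\infty$, which is what the conjecture demands. Passing from $\limsup=\infty$ to $\lim=\infty$ is exactly the kind of recurrence obstruction that makes long-time statements for 2D Euler hard: nothing prevents the $C^\alpha$ norm from dipping back down infinitely often. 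The paper's Theorem~\ref{mstablethm} does achieve a genuine lower bound $\|\omega(t)\|_{C^\alpha}\ge c|t|^\alpha$ for all $t$, but only near $\mathsf{M}$-stable steady states, using stability to pin particles transversally to streamlines --- a mechanism unavailable for a general $\omega_*$.

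Your case~(B) rigidity step is, as you correctly diagnose, the heart of the matter and is wide open; it would in particular resolve Conjecture~\ref{instconj}. So the honest summary is: your dichotomy is a reasonable way to organize the difficulty, but neither branch is currently within reach, and case~(A) is less settled than your write-up suggests.
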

\end{myshade2}
In fact, a more severe form of singularity formation might be generic (see Conjecture \ref{svconj} below) and is related to the asymptotic portrait of the motion.

\subsection{Asymptotic picture: ``entropy" decrease} \label{conjectent}

In this section, we give a mathematically precise but conjectural picture of generic behavior at  infinite time (either $+\infty$ or $-\infty$) for 2D Euler solutions.   
We should take a moment to discuss what is observed from numerical simulations and physical experiment.  Starting from almost any initial data,  large scale coherent structures are robustly seen to emerge as time progresses through a process of vortex merger.  The formation and persistence of these structures are sometimes understood to be a manifestation of the inverse energy cascade.  See Figures \ref{fig2} and \ref{fig3} for two `typical' examples of such behavior on the torus $\mathbb{T}^2$ (see \cite{Modin1} for more impressive examples on the sphere $\mathbb{S}^2$). 
These observations indicate that a great deal of diversity is lost in the long time behavior -- the vorticity portrait on $\mathbb{T}^2$ always begins to resemble some very special subset of phase space consisting of a collection of  coherent vortices wandering around the domain.  It is tempting to view this apparent `contraction' in phase space as a decrease in entropy, roughly understood as a measure of diversity of a set.\footnote{We stress that this apparent decrease of diversity is in the phase space. As discussed in \S \ref{lap}, there is an alternative description of fluid motion in its configuration space in the group of area preserving diffeomorphisms $\mathscr{D}_\mu(M)$.  In principle, ``entropy" could flow from the  phase space variables (which is what we typically observe in nature and simulation) to the configuration space variables. This configuration space is known to have infinite diameter ${\rm diam} \mathscr{D}_\mu(M)=\infty$ \cite{ER91,shnir2}, and so it can absorb an arbitrary amount of entropy.  In this way, one might be able to understand irreversibility at infinite time in terms of this transfer of diversity within the two descriptions of fluid motion.  We owe this remark to A.Shnirelman (private communication).}
 Together with the detailed structure of the limiting profiles, understanding these phenomena from first principles (as properties of long time limits of solutions of the 2D Euler equations)  is a major challenge.

\begin{figure}[htb]\centering
    \includegraphics[width=.3\columnwidth]{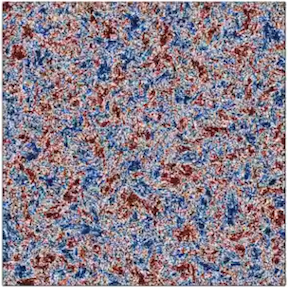} 
    \includegraphics[width=.3\columnwidth]{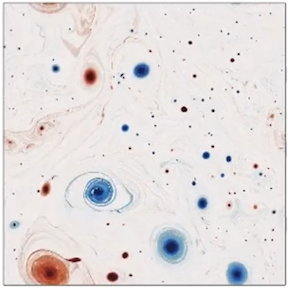} 
    \includegraphics[width=.3\columnwidth]{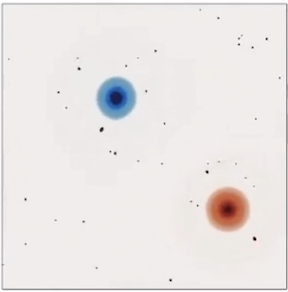} 
  \caption{Emergence of a vortex dipole at long times in a simulation a slightly viscous Navier-Stokes. 
 For movie, see \url{https://www.youtube.com/watch?v=-YdEYumSSJ0}
 }
\label{fig2}
\end{figure}

\begin{figure}[htb]\centering
    \includegraphics[width=.3\columnwidth]{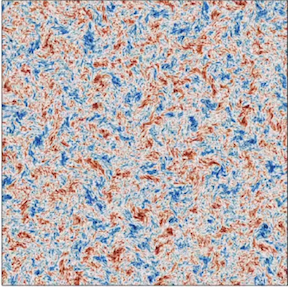} 
    \includegraphics[width=.3\columnwidth]{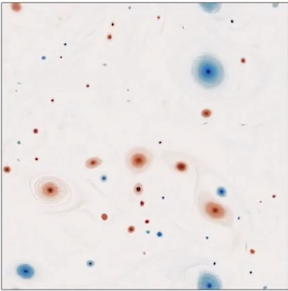} 
    \includegraphics[width=.3\columnwidth]{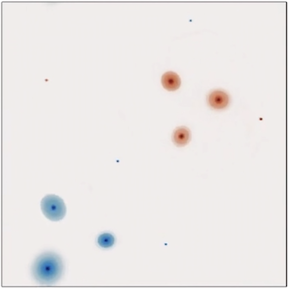} 
   \caption{Emergence of a vortex tripole pair at long times in a simulation of slightly viscous Navier-Stokes.
 For movie, see \url{https://www.youtube.com/watch?v=a3ENdIy1WL0}}
\label{fig3}
\end{figure}

There has been a great deal of activity in this direction under the name of `statistical hydrodynamics' \cite{M90,O49,R91,SSR91}. Specifically, under an implicit assumption on the uniform validity of finite dimensional approximations for long times together with an assumption of ergodicity of the finite dimensional dynamics, they predict that ``end states" are thermodynamic equilibria which arise from maximizing a certain entropy\footnote{We remark that the entropy which is maximized in these theories is associated to the distribution of vorticity in space. Were it not for the constraint imposed by conservation of energy and momentum, the maximum entropy state in this sense would be a uniform mixing, i.e. replacing the vorticity by its average over the entire flow domain. It is a very different measure than from the notion of entropy discussed above in connection to the lack of diversity of end states in the infinite dimensional phase space of available vorticity configurations.  Thus, there is no contradiction in fluid motion increasing the former while decreasing the latter. } measuring the available phase space of a given configuration on a fixed momentum and energy surface.  Lars Onsager, a pioneer in these ideas, painted the following picture in the context of the point-vortex dynamical system:
\\

\noindent \emph{``$\dots$ vortices of the same sign will tend to cluster -- preferably the strongest ones -- so as to use up excess energy at the least possible cost in terms of degrees of freedom. It stands to reason that the large compound vortices formed in this manner will remain as the only conspicuous features of the motion; because the weaker vortices, free to roam practically at random, will yield rather erratic and disorganized contributions to the flow."} \cite{O49}
\\

Indeed, Onsager's conjectured picture is a remarkably accurate description of the late stages of numerical simulations of the continuum system (see e.g. the final panels in  Figures \ref{fig2} and \ref{fig3}). However, 
in practice,  these equilibrium theories often predict that the fluid will settle down to a  steady solution which is completely determined by its momentum and energy together with possibly the vorticity distribution function of its initial data.  This prediction is in contradiction with simulations and experiment which indicate that the flow can, and generally does,  retain time dependence asymptotically. {For example, Figures \ref{fig2} and \ref{fig3} which are initiated with Gaussian random data seem to have associated end states which are time dependent and are different despite having comparable initial energy/vorticity distribution.  On the other hand, in Figure \ref{fig1}, the flow is started near a steady state and appears to relax back to equilibrium in a weak sense.}   

There are a number of possible causes for the failure of statistical hydrodynamic theories to capture this aspect of fluid flows.  
The issue mainly resides in the commutation of two limits, $N\to \infty$ (from finite to infinite dimensional) and $t\to \infty$ (long time). 
Assuming the finite dimensional dynamics are sufficiently ergodic\footnote{There are instances in which the finite dimensional dynamics are known to not be ergodic (see \cite{Kh82} for results on the point-vortex system), although the presence of multiple ergodic components in the phase space by itself may not invalidate the conclusions of equilibrium theories \cite{ES93}.}, these theories predict that the long time behavior at fixed $N$ will spend the majority of its time around the maximizer of the entropy (maintaining the aforementioned constraints).  A leap of faith is taken when it is asserted that the infinite dimensional system therefore displays the same behavior.

For the infinite dimensional dynamics, wandering neighborhoods exist \cite{N,S97} (Theorems \ref{thmwand1} and \ref{thmwand2}), along with infinite dimensional families of Lyapunov stable solutions \cite{arn11,arn12} (Theorems \ref{2dEthmarnold}, \ref{thmArn} and \ref{stabinX}).  Most severely, there are examples of fluid flows which exhibit mixing (loss of compactness) which means that the Euler system is ``open" in the sense that information (say enstrophy) can be lost through infinite frequency in the limit of long time.  This subtle behavior is intrinsically linked to the infinite dimensionality and cannot be captured by its finite dimensional approximations.    All of the above are instances of non-equilibrium behavior of a mechanical system and, as such, there is no good reason to expect theories built on equilibrium considerations to be realistic.

Shnirelman built a theory which embraced this infinite dimensional aspect of fluid motion \cite{shn01} (see also \cite{DD22}).  This theory made the concrete prediction that, if Euler is ``maximally mixing" up to constraints imposed by the conserved integrals (momentum, energy, etc), then the long time behavior is described by an Arnold stable stationary flow. Despite its more realistic setting, this prediction is also in seeming contradiction with numerical simulations starting far from equilibrium (although it may be appropriate near stable equilibria, see Fig. \ref{fig1}).  

This points toward the apparent fact that the Euler dynamics is not generally completely effective at mixing. Rather, solutions can become ``trapped" in time-dependent regimes which can indefinitely avoid further mixing.   While  the end states may be (and by many observations, are) a very rich and complicated set,  there may exist a precise mathematical framework which can  effectually capture the spirit of the above discussion.
Our considerations are motivated by the following two conjectures concerning fluid flows on bounded planar domains $ M\subset \mathbb{R}^2$:

\begin{myshade2}
  \vspace{-1mm}
\begin{conjecture}[Šverák (2011), \cite{sv11}]\label{svconj}
Generic initial data $\omega_0\in L^\infty( M)$ gives rise to inviscid incompressible motions whose vorticity orbits $\{ \omega(t)\}_{t\in \mathbb{R}}$ are not precompact in $L^2( M)$.
\end{conjecture}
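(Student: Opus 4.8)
The statement is an open conjecture, so what follows is a proposed line of attack rather than a proof. The natural framework is Baire category. Fix $R>0$ and work in $B_R:=\{\omega_0\in L^\infty(M):\|\omega_0\|_{L^\infty}\le R\}$, a complete metric space in the $L^\infty$ norm (the scheme also runs, with the same obstruction, for the weak-$*$ or $L^p$ topologies on $B_R$, the choice only affecting the precise reading of ``generic''). Fix an orthonormal basis $\{e_k\}_{k\ge 1}$ of $L^2(M)$ (say Dirichlet eigenfunctions of $-\Delta$, ordered by eigenvalue) and let $P_{>N}$ be the $L^2$-orthogonal projection onto $\overline{\mathrm{span}}\{e_k:k>N\}$. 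Since $\|\omega(t)\|_{L^2}$ is conserved by the Yudovich flow (Theorem~\ref{yudothm}), the orbit $\{S_t(\omega_0)\}_{t\in\mathbb{R}}$ is bounded in $L^2(M)$, and by a standard Hilbert-space compactness criterion (equi-smallness of the coordinate tails in an orthonormal basis) it is precompact in $L^2(M)$ if and only if
\[
Q_N(\omega_0):=\sup_{t\in\mathbb{R}}\|P_{>N}S_t(\omega_0)\|_{L^2}\longrightarrow 0\qquad\text{as }N\to\infty.
\]
Thus the set of data with non-precompact orbit is $\mathcal{G}=\{\omega_0\in B_R:\inf_N Q_N(\omega_0)>0\}$, and the target is to show $\mathcal{G}$ is residual in $B_R$ (or at least dense, if ``generic'' is read that way).

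The soft step is to check that $\omega_0\mapsto Q_N(\omega_0)$ is lower semicontinuous on $B_R$: for Yudovich data the solution map is continuous from $B_R$ into $C([0,T];L^p(M))$ for every $p<\infty$ and $T>0$ (Theorem~\ref{yudothm} and the $L^p$-continuity on bounded sets cited there), so each $\omega_0\mapsto\|P_{>N}S_t(\omega_0)\|_{L^2}$ is continuous for fixed $t$, and $Q_N$, a supremum of continuous functions, is lower semicontinuous. Hence $\{Q_N>\delta\}$ is open for every $\delta>0$, and if one can exhibit a single $\delta_0>0$ with $\{Q_N>\delta_0\}$ dense for every $N$, then $\bigcap_N\{Q_N>\delta_0\}$ is a dense $G_\delta$ contained in $\mathcal{G}$ and the conjecture follows in the Baire sense. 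Everything therefore reduces to the density statement: \emph{for every $\omega_0\in B_R$, every $\eta>0$ and every $N$, there is $\tilde\omega_0$ with $\|\tilde\omega_0-\omega_0\|_{L^\infty}<\eta$ and a time $t$ at which $\|P_{>N}S_t(\tilde\omega_0)\|_{L^2}>\delta_0$}.

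This density statement is the heart of the matter, and it is strictly stronger than the known instability results. A perturbation that is $O(\eta)$ in $L^\infty$ carries only $O(\eta)$ of enstrophy, so the fixed amount $\delta_0$ of enstrophy that must reach frequencies above $\lambda_N$ cannot come from the perturbation: it has to be extracted from the $O(1)$ background enstrophy of $\omega_0$ by a macroscopic rearrangement that the perturbation merely triggers. In particular a shrinking \emph{front} of bounded amplitude does not help --- its Fourier tail is controlled by the $1/k$ decay of a jump discontinuity, and one computes $\sup_w\|P_{>N}(\text{front of width }w)\|_{L^2}=O(N^{-1/2})\to0$ --- so gradient blowup by itself (as in Koch's Theorem~\ref{thminstab}, or the linear/superlinear growth of Theorems~\ref{constvortinst} and~\ref{mstablethm}) does not obviously destroy $L^2$-precompactness. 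What one needs instead is genuine volume-filling mixing: at times $t_j\to\infty$ and scales $N_j\to\infty$, an $O(1)$-amplitude oscillation at scale $\sim\lambda_{N_j}^{-1/2}$ trapped on a region of measure $\gtrsim\delta_0^2$.

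The most promising route is through the spiral geometry behind Theorems~\ref{thmwand1} and~\ref{constvortinst}: when an essential curve carrying a vorticity jump is wound around an annular region by a stable background shear, the vorticity genuinely oscillates at ever-finer scales across the swept-out region, the number of windings growing without bound as $t\to\infty$; a configuration of this kind does force $Q_N\ge\delta_0$ for all $N$, since for fixed $N$ a definite fraction of the spiral's energy eventually lies above frequency $\lambda_N$. The obstruction at present is quantitative: in Theorem~\ref{constvortinst} the oscillation amplitude across the spiral arms is tied to the perturbation size $\ve=\eta$, so the high-frequency enstrophy produced is only $\sim\eta^2|A|$ and $\delta_0$ degrades as $\eta\to0$. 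Overcoming this requires showing that an arbitrarily small perturbation can recruit a definite fraction of the \emph{background's own} vorticity into the spiral --- a macroscopic instability of the background, not transport of the perturbation --- and I expect this to be the main obstacle. A sensible intermediate target is to establish the density statement first in a neighbourhood of an $\mathsf{M}$-stable or Arnold-stable steady state (Corollary~\ref{stablethm}, Theorem~\ref{mstablethm}), by upgrading those $C^\alpha$-growth arguments to a uniform lower bound on the flux of enstrophy across dyadic scales, before attempting the passage to arbitrary backgrounds via the Baire argument above. One should keep in mind the consistency check with the observed emergence of coherent vortices: the resolution must be that a persistent, scale-refining filamentary ``debris'' field accumulates around the coherent cores and carries the $\delta_0$ of fine-scale enstrophy, rather than all of it being absorbed into the mergers.
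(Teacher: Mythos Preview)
The paper does not prove this statement: it is explicitly labelled a conjecture and left open. So there is no ``paper's own proof'' to compare against. What the paper does offer is context: it remarks that the precise meaning of ``generic'' is yet to be elucidated (and suggests the separable space $X$ of \eqref{xtop} may be a better arena than $L^\infty$ for formulating genericity), and it points to the inviscid damping Theorem~\ref{id} as the only setting in which both Conjectures~\ref{svconj} and~\ref{shnconj} are currently verified --- there, for Gevrey perturbations of monotone shears, weak but not strong $L^2$ convergence to a modified shear is established, which is exactly non-precompactness of the orbit.

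Your Baire-category framework is a natural way to make ``generic'' precise, and the reduction to lower semicontinuity of $Q_N$ plus a uniform density statement is clean. Two remarks. First, you are right that the density step is where everything lives, and your diagnosis of the obstruction is accurate: Koch-type instability (Theorem~\ref{thminstab}) and the spiral constructions (Theorems~\ref{thmwand1}, \ref{constvortinst}) produce gradient growth or wandering, but the high-frequency enstrophy they generate scales with the perturbation size, not with the background, so they do not directly yield a fixed $\delta_0$. The inviscid damping results are the one place where an $O(1)$ fraction of enstrophy is provably sent to infinity in frequency, and even there the mechanism relies on very strong (Gevrey) control near a special equilibrium --- far from a density statement in $B_R$. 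Second, a minor point on the setup: $L^\infty(M)$ is non-separable, so a dense $G_\delta$ in $(B_R,\|\cdot\|_{L^\infty})$ is a somewhat weak notion of genericity; the paper's suggestion to work in $X$ (or in $X_*$ with the weak-$*$ topology, which is metrizable and compact) may give a more robust statement, though the density obstruction is of course no easier there.

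In short: your proposal is an honest and well-organized outline of a plausible attack, with the hard step correctly isolated; the paper has nothing further to offer on how to close that gap.
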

\end{myshade2}

The precise meaning of ``generic" in the above statement is to be elucidated.  It may also be better to use the space $X$ introduced by \eqref{xtop} herein in regards to the genericity statement above as $X$ is a separable Banach space and has the property that a unit ball in $X$ is invariant for the Euler dynamics at finite times.
Conjecture \ref{svconj} represents an objective form of creation of `small scale' vorticity. In particular, it implies that a form of mixing or self-averaging must occur in the limit of infinite time somewhere in the flow domain, although in no way implies that the dynamics perform this mixing as efficiently as possible.

\begin{myshade2}
  \vspace{-1mm}
\begin{conjecture}[Shnirelman (2013), \cite{shn13}]\label{shnconj}
For any initial data  $\omega_0\in  L^\infty( M)$, the collection of $L^2( M)$ weak limits of the orbit $\{ \omega(t)\}_{t\in \mathbb{R}}$ consists of  vorticities which generate $L^2( M)$ precompact orbits under 2D Euler evolution.
\end{conjecture}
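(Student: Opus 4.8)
The plan is to transfer the statement to the weak-$*$ closure of the orbit and then reduce it to a single rigidity property of the enstrophy. Fix $\omega_0\in L^\infty(M)$ and set $\mathcal{A}:=\overline{\{S_t(\omega_0):t\in\mathbb{R}\}}^{\,*}$, the closure in the (metrizable) weak-$*$ topology of a ball of $L^\infty(M)$. By Theorem \ref{yudothm} --- in particular the weak-$*$ continuity of $S_t$ in part (c) --- together with time reversibility, $\mathcal{A}$ is weak-$*$ compact with $S_t(\mathcal{A})=\mathcal{A}$ for all $t$, and every element of $\mathcal{A}$ is a weak-$*$ limit of a sequence $\omega(t_n)=S_{t_n}(\omega_0)$; the statement concerns the cluster set at infinity, i.e. $\Omega_+(\omega_0)\cup\Omega_-(\omega_0)$ (this is consistent with Conjecture \ref{svconj}: the orbit itself need not be precompact, only its limit set). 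I would then record the fate of the conserved quantities under weak limits: $\int_M\omega\,\rmd x$ and $\int_M x_j\,\omega\,\rmd x$ are linear and weak-$*$ continuous, hence constant on $\mathcal{A}$; the kinetic energy is in fact \emph{rigid} on $\mathcal{A}$, because the Biot--Savart operator gains a derivative and is therefore compact from $L^2(M)$ to $L^2(M)$, so $\omega(t_n)\wc\bar\omega$ forces $u(t_n)=K_M[\omega(t_n)]\to K_M[\bar\omega]$ \emph{strongly} in $L^2$, whence $\tfrac12\|K_M[\bar\omega]\|_{L^2}^2$ equals the energy of $u_0$ for every $\bar\omega\in\mathcal{A}$; by contrast the enstrophy $\mathcal{E}(\omega):=\|\omega\|_{L^2}^2$ and the other Casimirs are only weak-$*$ lower semicontinuous, so $\mathcal{E}(\bar\omega)\le\mathcal{E}(\omega_0)$, possibly strictly --- this is exactly the enstrophy that may leak to infinite frequency.

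Next comes the reduction. For a bounded subset $K$ of $L^\infty(M)$ on which $\mathcal{E}$ is constant, $K$ is precompact in $L^2(M)$ \emph{if and only if} $\mathcal{E}$ remains constant on the weak-$*$ closure of $K$ (weak $L^2$ convergence plus convergence of $L^2$ norms gives strong convergence, by the Radon--Riesz property). Applied to the orbit $K=\{S_t(\bar\omega):t\in\mathbb{R}\}$, on which $\mathcal{E}\equiv\mathcal{E}(\bar\omega)$ since Yudovich solutions conserve enstrophy, the conjecture becomes \emph{equivalent} to the one-step rigidity
\[
(\star)\qquad \mathcal{E}(g)=\mathcal{E}(\bar\omega)\quad\text{whenever}\quad \omega(t_n)\wc\bar\omega,\ t_n\to\pm\infty,\ \text{and}\ S_{\tau_n}(\bar\omega)\wc g,\ \tau_n\to\pm\infty.
\]
A diagonalization using the weak-$*$ continuity of the $S_\tau$ shows that any such $g$ is itself a weak-$*$ cluster point of the original orbit at infinity, so $\mathcal{E}(g)\le\mathcal{E}(\bar\omega)\le\mathcal{E}(\omega_0)$ holds for free (and, incidentally, $\bigcup_{\bar\omega\in\Omega_+(\omega_0)}\Omega_+(\bar\omega)\subseteq\Omega_+(\omega_0)$, a purely topological idempotency I would isolate as a lemma). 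Everything therefore reduces to showing that \emph{no second stage} of enstrophy loss is possible once a single weak limit has been taken.

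To attack $(\star)$ I would encode the first-stage loss by a Young measure / reduced defect measure of $\omega(t_n)\wc\bar\omega$, writing $\mathcal{E}(\bar\omega)=\mathcal{E}(\omega_0)-\int_M m(x)\,\rmd x$ with $m\ge 0$ the pointwise variance; since $t\mapsto S_t(\bar\omega)$ is an honest (Yudovich) Euler solution --- again by weak-$*$ continuity --- and the oscillatory remainder $\omega(t_n+t)-S_t(\bar\omega)$ solves a transport equation whose velocities converge strongly to $\bar u=K_M[\bar\omega]$, a defect-measure transport argument should give that the defect is carried by the (log-Lipschitz, measure-preserving) flow of $\bar u$, so $\int_M m\,\rmd x$ is conserved along the $\bar\omega$-dynamics. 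The remaining, decisive, point is to prove that a further weak limit $S_{\tau_n}(\bar\omega)\wc g$ produces a \emph{zero} second-stage variance, i.e. that the already-mixed state $\bar\omega$ cannot itself shed enstrophy to infinite frequency. \textbf{This is the main obstacle and it is genuinely open}: it requires structural information about $\omega$-limit sets of 2D Euler that is not presently available, and it is entangled with Conjecture \ref{svconj}, since it is precisely a statement about which states carry $L^2$-precompact orbits. The route I would pursue is to build a Lyapunov functional of ``enstrophy-defect'' or relative-entropy type that is nonincreasing along the weak-$*$ flow and \emph{strictly} decreasing off the set of precompact orbits, so that its having reached its limiting value at $\bar\omega$ (a weak limit of the constant-enstrophy orbit of $\omega_0$) forces it to be stationary thereafter; as a more modest first target I would settle $(\star)$ in the perturbative regime near an Arnold-stable or $\mathsf{M}$-stable equilibrium, where Theorems \ref{thmArn}, \ref{stabinX} and \ref{mstablethm} supply enough control on the orbit to push the defect-transport argument through.
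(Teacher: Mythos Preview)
This statement is a \emph{conjecture} in the paper, and the paper offers no proof of it. It is presented as open, with only partial evidence recorded: Šverák's observation that for each $\omega_0$ at least \emph{one} element of $\Omega_+(\omega_0)$ generates an $L^2$--precompact orbit, and the inviscid--damping theorems (Theorem~\ref{id} and its successors) which verify the full conclusion in a Gevrey neighborhood of monotone shears. There is therefore no ``paper's own proof'' to compare against, and your proposal should be read as a research outline rather than a proof.

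That said, your reduction is correct and worth keeping. The Radon--Riesz step is sound: since Yudovich orbits conserve $\mathcal{E}(\omega):=\|\omega\|_{L^2}^2$, and since weak-$*$ convergence in a ball of $L^\infty(M)$ implies weak $L^2$ convergence, the orbit of any $\bar\omega$ is $L^2$--precompact if and only if every weak-$*$ cluster point at $t\to\pm\infty$ retains the enstrophy $\mathcal{E}(\bar\omega)$. Your diagonalization showing $\Omega_\pm(\bar\omega)\subseteq\Omega_\pm(\omega_0)$ for $\bar\omega\in\Omega_\pm(\omega_0)$ is also correct (it uses only the metrizability of the weak-$*$ ball and the continuity in Theorem~\ref{yudothm}(c)). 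And your remark that the kinetic energy is rigid under these limits---because $K_M:L^2\to L^2$ is compact---while the enstrophy is merely lower semicontinuous, is the right dichotomy.

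The gap is exactly where you place it, and it is the entire content of the conjecture. Your defect--measure sketch, even if made rigorous, only shows that the \emph{first}-stage defect mass $\int_M m\,\rmd x$ is advected by the measure-preserving flow of $\bar u$; this says nothing about whether the new Euler evolution $t\mapsto S_t(\bar\omega)$ can generate a \emph{second} defect at a subsequent weak limit. Likewise, positing a Lyapunov functional that is nonincreasing along the weak-$*$ flow and strictly decreasing off the set of precompact orbits is not an argument but a restatement of the goal: the existence of such a functional is essentially equivalent to the conjecture. Your suggested perturbative first target (near Arnold-- or $\mathsf{M}$--stable equilibria) is reasonable, and indeed the only regime in which anything like the full conclusion is presently known is the inviscid--damping setting of Theorem~\ref{id}; but nothing in the paper or in your outline closes the gap in general.
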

\end{myshade2}

Conjecture \ref{shnconj} asserts that there exists a genuine (weak) attractor for the 2D Euler equations, the 
Omega limit set defined\footnote{This definition of the Omega limit set of the phase space slightly different from the standard one used in dynamical systems which includes also the weak-$*$ limit points of sequences of initial data. This  definition has the property that the Omega limit set of the phase space is the entire phase space and as such cannot capture the irreversible features of 2D fluid motion. On the other hand, the set \eqref{omlinset} is quite possibility a meager portion of the phase space, in line with Conjectures \ref{svconj} and \ref{shnconj}.} by 
 \be\label{omlinset}
\Omega_+(X_*):= \bigcup_{\omega_0\in X_*} \Omega_+(\omega_0), \qquad \Omega_+(\omega_0) := \bigcap_{s\geq 0} \overline{\{ S_t(\omega_0), t\geq s \}}^*.
\ee 
which is made up entirely of those orbits which ``do not mix at infinity."   Examples of such orbits are stationary \cite{arn13}, time periodic \cite{HM17}, quasiperiodic  \cite{C13} and perhaps also chaotic \cite{paco}, states.

Taken together, Conjectures \ref{svconj}  and \ref{shnconj} give a precise notion of ``entropy decrease" for perfect fluids in the long time limit -- a ``second law of thermodynamics" for 2D Euler.  By this we mean that it gives some quantification of the apparent fact that the velocity fields of long lived states (whose orbits according to Conjecture \ref{shnconj} are precompact) represent a `sparse' set in the entire $L^\infty$ phase space  (whose orbits according to Conjectures \ref{svconj} are generally not precompact).  See Figure \ref{fig:longtime} for a cartoon involving the Omega limit set  of the phase space $X_*$ defined in \eqref{Xstar}.

\begin{figure}[h!]
	\centering
\begin{tikzpicture}[scale=.4, line cap=round, line join=round]
	\shade[ball color = gray!40, opacity = 0.4] (0,0) circle (2cm);
	\draw[thick] (0,0) circle (2cm);
	\draw[thick] (-2,0) arc (180:360:2 and 0.6);
	\draw[thick, dashed] (2,0) arc (0:180:2 and 0.6);
	
	         	 \draw  (-4.5,0) node[anchor=west] {\Large $S_t:$}; 
	\draw[thick, black, ->] (3.0,0)--(4.5,0);
	\draw  (1.0,2.0) node[anchor=west] { $X_*$};
	\draw  (8.5,2.0) node[anchor=west] { $X_*$};
	
	\shade[ball color = gray!40, opacity = 0.4] (7.5,0) circle (2cm);
	\draw[thick] (7.5,0) circle (2cm);
	\draw[thick] (5.5,0) arc (180:360:2 and 0.6);
	\draw[thick, dashed] (9.5,0) arc (0:180:2 and 0.6);
\end{tikzpicture}
	\hspace{10mm}
	\begin{tikzpicture}[scale=.4, line cap=round, line join=round]
\draw[dashed] (0,0) circle (2cm);
\draw[dashed] (-2,0) arc (180:360:2 and 0.6);
\draw[ dashed] (2,0) arc (0:180:2 and 0.6);
		\draw[thick, gray] plot [smooth, tension=1.5] coordinates {(-1.5,.9) (-1.3,-.8) (-1,0) (-1.2,.5) (-.9,.9) (-1.3,-.6) (-.9,-1.2) (-.4,-1.5) (0,0) (-.2,-.2) (-.4, -.8) (-.6,-1.2) (.5,-.5) (0,1) (-.2,.8) (-.4,.2) (1,1) (1.5,-.8) (.3,0)};
		\draw  (-8.5,0) node[anchor=west] {\Large $\Omega_+(X_*) = $}; 
	\end{tikzpicture}
	\caption{Euler preserves the unit $L^\infty$ ball $X_*$ \eqref{Xstar} for all finite time $S_t:X_*\to X_*$.  However the Omega limit set $\Omega_+(X_*)$  \eqref{omlimset} of the entire phase space can be sparse.}
	\label{fig:longtime}
\end{figure}
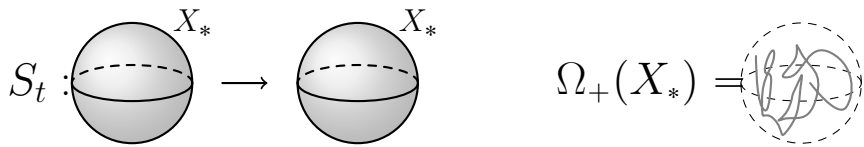

{A beautiful refinement to Shnirelman's Conjecture \ref{shnconj} is given by the recent work Modin and Viviani \cite{Modin0,Modin1} which postulates some structure of the precompact orbits which form $\Omega_+(X_*)$.  They conjecture, based on careful numerical simulations of Euler on the sphere using a refinement of Zeitlin's method \cite{ZM}, that the portrait of fluid motion in the limit $t\to\infty$ consists generically of $N$ blobs where $N$ is the maximum number of point vortices on the fluid vessel $M$ for which the dynamics is integrable (given the macroscopic characteristics; energy, circulation, momentum).  The subsequent motion of the center of masses of these blobs is described approximately by the point vortex system. See Figures \ref{figKM1} and \ref{figKM2} (kindly provided by Klas Modin)  for an empirical support of this conjecture.  We remark also that Conjecture 1 of \cite{Modin2} posits necessary conditions for the point vortex dynamics on a  2-dimensional Riemannian manifold with symmetry group $G$ to be integrable.  }

\begin{figure}[h!]\centering
    \includegraphics[width=1\columnwidth]{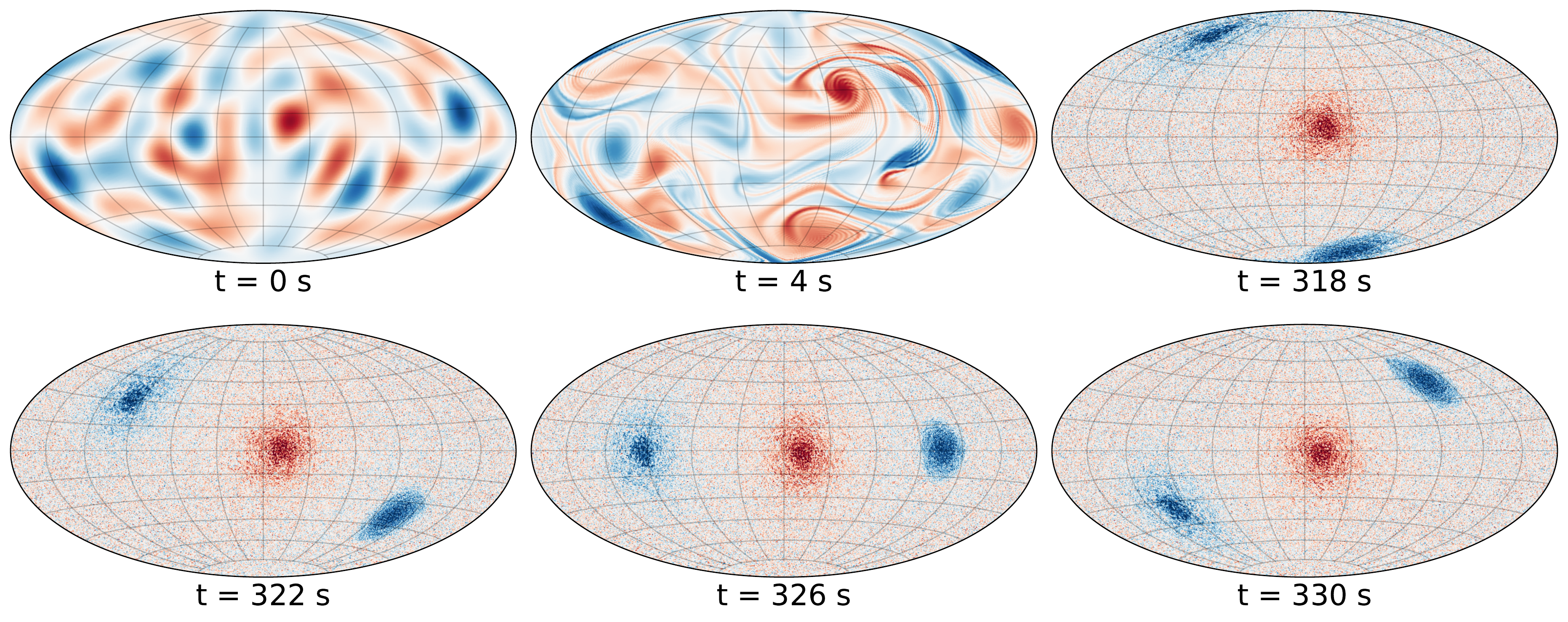} 
   \caption{Evolution of vorticity for Euler equations on the sphere, with randomly generated, smooth initial data. After a transient stage, where vorticity regions of equal sign undergo weak mixing, a dynamically stable configuration of 3 vortex blobs emerge. These blobs continue to interact, yielding quasi-periodic trajectories well approximated by 3 point vortices on the sphere. That 3 blobs emerge seems to correspond to integrability conditions: on the sphere N point vortex dynamics is integrable for $N\leq 3$, and in general non-integrable for $N\geq 4$.  For movie, see \url{https://play.chalmers.se/media/Sphere+simulation+smooth+data/0_sqc7dy3t}}
\label{figKM1}
\end{figure}

\begin{figure}[h!]\centering
    \includegraphics[width=1\columnwidth]{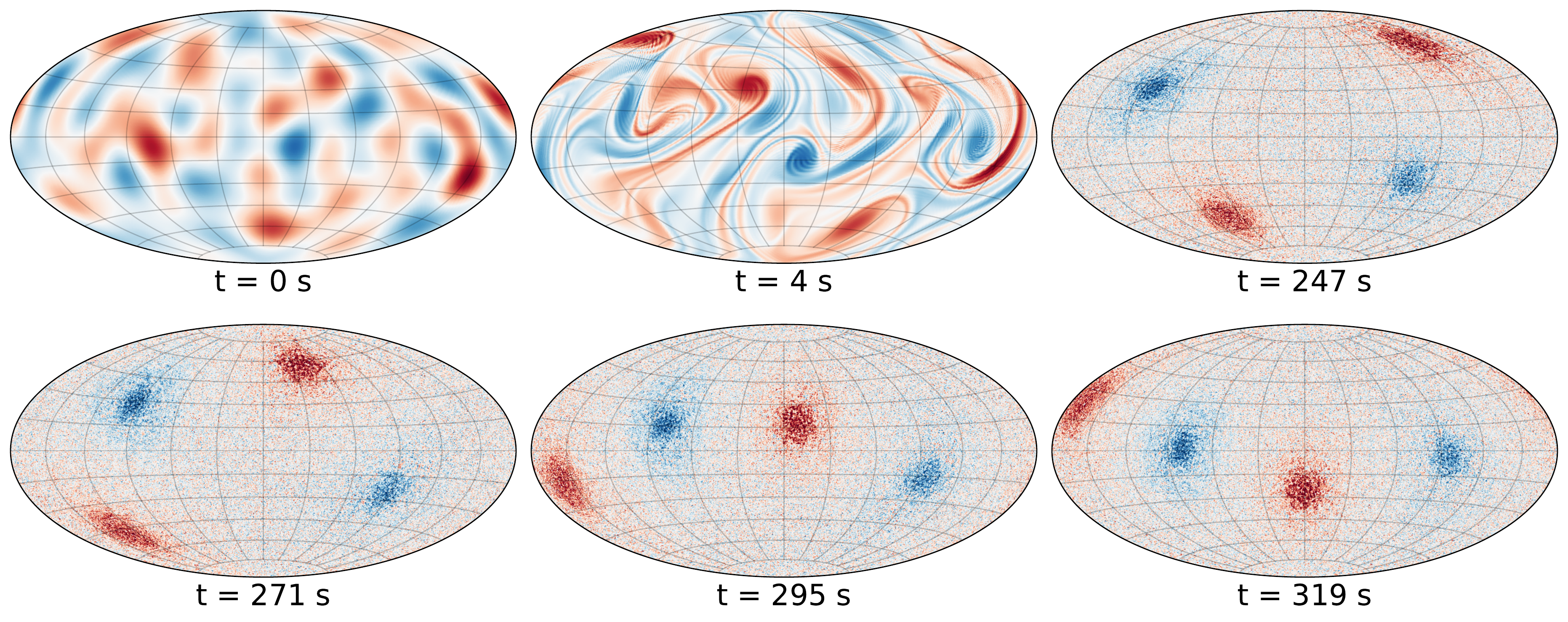} 
   \caption{Evolution of vorticity for Euler equations on the sphere, with randomly generated, smooth initial data chosen so that the \emph{total angular momentum vanishes}. A dynamically stable configuration of 4 vortex blobs emerge. The trajectories of these blobs are well approximated by 4 point vortices on the sphere. That 4 blobs emerge seems to correspond to integrability conditions: on the sphere N point vortex dynamics with vanishing momentum is integrable for $N\leq 4$.  For movie, see \url{https://play.chalmers.se/media/Sphere+simulation+vanishing+momentum/0_eik8wnab}}
\label{figKM2}
\end{figure}

A much simpler question than Conjectures \ref{svconj}  and \ref{shnconj} is the following
  \begin{myshade3}
  \vspace{-1mm}
\begin{problem}\label{omlimprob}
Show that $\Omega_+(X_*)\neq X_*$.
\end{problem}
\end{myshade3}
Namely, show that there are some motions which cannot persist indefinitely.  The point $\xi$ and its neighborhood from Nadirashvili's wandering Theorem \ref{thmwand1} appears a good candidate for exclusion from the Omega limit set (i.e. show that $\xi$ is a weak-$*$ wandering point).   A resolution of Problem \ref{omlimprob} would exclude the existence of a weak-$*$ ergodic orbit on leaves of equal energy, i.e. a datum $\omega_0\in X_*$ for which $\Omega_+(\omega_0)=X_*\cap \{ \mathsf{E}=\mathsf{E}_0\}$ (which, by itself, appears to be a open issue), and would definitively establish the irreversibility of Euler at long times.

While there are very few rigorous results concerning 2D Euler which survive for all time, there are two  which must be mentioned in connection with these Conjectures. First is a result of Šverák which establishes that, for each initial datum $\omega_0\in L^\infty( M)$, there exists at least one element of its Omega limit set (one long time limit) which corresponds to a $L^2$ precompact orbit \cite{sv11}, lending some support for Conjecture \ref{shnconj}.
Next, there is one setting for which both of these Conjectures are proved to be correct in some sense.  At this moment, these results pertain only to flows near certain equilibria on the periodic channel $\mathbb{T}\times [0,1]$ or its unbounded analogue $\mathbb{T}\times \mathbb{R}$. This work on \emph{inviscid damping}, initiated with the celebrated paper of Bedrossian and Masmoudi \cite{BM15}, show that in an open neighborhood (in a Gevrey-$\frac{1}{s}$ for $s\in(0,1]$ topology with regularity quantified by the scale of norms $\|f\|_{\mathcal{G}^{\lambda;s}} = \left\|e^{\lambda |\nabla|^s} f\right\|_{L^2}$) of any sufficiently smooth monotone shear flow, the solutions starting from any initial data (with the sole exception of other stationary states) converge weakly but not strongly in $L^2$ back to equilibrium as time goes to infinity \cite{IJ20}. 
The  following is the (rough) statement

\begin{myshade}
  \vspace{-1mm}
\begin{theorem}[Bedrossian and Masmoudi  \cite{BM15}]\label{id} Let $M = \mathbb{T}\times[0,1]$ or $\mathbb{T}\times\mathbb{R}$.  There exists an $\ve_0:= \ve_0(\lambda, \lambda')$ such that if 
\be
\left\|u_0- \begin{pmatrix} x_2 \\0\end{pmatrix} \right\|_{L^2} + \|\omega_0-1\|_{\mathcal{G}^{\lambda;\frac{1}{2}}} = \ve<\ve_0
\ee
then there is a profile $\omega_\infty\in\mathcal{G}^{\lambda';\frac{1}{2}}$  corresponding to a (possibly different) shear flow such that the solution converges weakly-$*$ in $L^\infty(M)$ to this equilibrium, i.e. $\omega(t) \wsc \omega_\infty$ as $t\to \infty$.   Moreover there is an open set of data such that 
\begin{itemize}
\item
 $\|\omega(t)\|_{C^\alpha} \approx \ve t^\alpha$ for all $\alpha>0$, $t\in \mathbb{R}$,
 \item $\|\omega_\infty\|_{L^2} < \|\omega_0\|_{L^2}$.
 \end{itemize}
\end{theorem}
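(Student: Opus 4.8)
The strategy is to establish inviscid damping near the Couette shear by a nonlinear bootstrap in a Gevrey class of index $2$, carried out in coordinates that move with the flow. First I would pass to the moving frame: writing $v=x_2$ and $z=x_1-tx_2$ and setting $f(t,z,v):=\omega(t,z+tv,v)$, the linearized dynamics around Couette becomes simply $\pa_t f=0$. For the nonlinear problem one must let the second coordinate adapt as well, replacing $v$ by a profile $v=v(t,x_2)$ solving a transport equation (morally the Lagrangian average of the horizontal velocity), so that the asymptotic shear --- the ``possibly different shear flow'' of the statement --- is built into the change of variables. Recovering the velocity perturbation from $f$ through Biot--Savart produces, after this change of variables, a kernel carrying the oscillatory factor $e^{ikt(v-v')}$; nonstationary phase --- equivalently, on the Fourier side, decay of the symbol $(k^2+(\eta-kt)^2)^{-1}$ away from the resonant set $\eta\approx kt$ --- yields the linear inviscid damping estimates $\|u^{x_1}(t)-\langle u^{x_1}(t)\rangle_{x_1}\|_{L^2}\lesssim\ve\langle t\rangle^{-1}$ and $\|u^{x_2}(t)\|_{L^2}\lesssim\ve\langle t\rangle^{-2}$, in either geometry.

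The core of the argument is a continuity argument controlling $f$ uniformly in a time-dependent Gevrey-$\frac12$ norm $\|f(t)\|_{A(t)}^2:=\sum_k\int|A(t,k,\eta)\widehat f(t,k,\eta)|^2\,\rmd\eta$, where the Fourier multiplier $A(t,k,\eta)$ behaves like $e^{\lambda|(k,\eta)|^{1/2}}$ but carries an extra time-dependent weight engineered to anticipate the regularity loss caused by the nonlinear \emph{echo cascade}: when a frequency $(k,\eta)$ crosses its critical time $t\approx\eta/k$, the transport nonlinearity can shuttle energy between resonant modes, and $A$ is allowed to decay there by an amount which, summed along the entire resonant chain, costs only a bounded quantity of Gevrey-$\frac12$ regularity. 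This summability is precisely why Gevrey index $2$, that is $s=\frac12$, is the threshold regularity --- below it the echoes may plausibly destabilize the flow, in analogy with the nonlinear Landau-damping problem. I would then run the energy estimate
\be
\fr{\rmd}{\rmd t}\|f(t)\|_{A(t)}^2 = -\,\mathrm{CK}(t) + (\text{transport, reaction, and coordinate-change errors}),
\ee
where $\mathrm{CK}(t)\ge 0$ collects the ``Cauchy--Kovalevskaya'' gain from $\dot A/A<0$, and check that every nonlinear contribution is either bounded by $C\ve\,\|f(t)\|_{A(t)}^2$ --- using the inviscid-damping decay of the velocity, which follows from the elliptic structure together with the bootstrap control of $f$ --- or absorbed into $\mathrm{CK}(t)$. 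Closing the bootstrap gives $\sup_t\|f(t)\|_{\mathcal{G}^{\lambda';1/2}}\lesssim\ve$, and since $\pa_t\widehat f$ is then time-integrable frequency by frequency, $f(t)\to f_\infty$ strongly in a slightly weaker Gevrey-$\frac12$ space. \emph{This echo analysis --- the design of $A$ and the closing of the energy estimate --- is by far the main obstacle}; the change of variables and the linear decay are comparatively routine.

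Undoing the coordinate change gives $\omega(t,x)=f\big(t,\,x_1-t\bar v(x_2)+O(\ve),\,x_2\big)$ with $\bar v$ the asymptotic profile and $\bar v'\approx 1$. Testing against a fixed integrable function, using the strong convergence $f(t)\to f_\infty$, and invoking the Riemann--Lebesgue lemma in $x_1$ (the phase $t\bar v(x_2)$ has nonvanishing derivative) shows that every nonzero-$z$ Fourier mode of $f_\infty$ averages to zero, so $\omega(t)\wc\omega_\infty$ where $\omega_\infty$ is the vorticity of the shear flow with profile $\langle f_\infty\rangle_z\circ\bar v$, and $\omega_\infty\in\mathcal{G}^{\lambda';1/2}$ since $f_\infty$ is. Because the weak-$*$ topology of $L^\infty(M)$ restricted to bounded sets is metrized by exactly such pairings, this is the claimed $\omega(t)\wsc\omega_\infty$.

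Finally, the two ``moreover'' claims hold on an open set of data --- for instance a small Gevrey ball around a perturbation of Couette whose nonzero-$z$ Fourier modes are nondegenerate. For the growth estimate: the bootstrap gives $\|\omega(t)\|_{L^\infty}\lesssim\ve$, while $\na_x z(t,x)=(1,-t\bar v'(x_2))+O(\ve)$ gives $\|\na\omega(t)\|_{L^\infty}\sim t\|\pa_z f\|_{L^\infty}\lesssim\ve t$, so interpolation yields $\|\omega(t)\|_{C^\alpha}\lesssim\ve t^\alpha$; the matching lower bound $\|\omega(t)\|_{C^\alpha}\gtrsim\ve t^\alpha$ is exactly the shearing mechanism of Theorem~\ref{constvortinst} (Couette being a constant-vorticity steady state with a velocity gap), since the images of two transversal segments separate at a linear rate while carrying distinct vorticity values. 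For the strict enstrophy drop: $2$D Euler conserves every $L^p$ norm of the vorticity and the coordinate change is volume preserving, so $\|f(t)\|_{L^2}=\|\omega(t)\|_{L^2}=\|\omega_0\|_{L^2}$ for all $t$; passing to the limit and using $\|\langle f_\infty\rangle_z\|_{L^2}^2\le\|f_\infty\|_{L^2}^2$ by the Cauchy--Schwarz inequality gives $\|\omega_\infty\|_{L^2}\le\|\omega_0\|_{L^2}$, with strict inequality precisely when some nonzero-$z$ mode of $f_\infty$ survives --- which is the case throughout the chosen open set.
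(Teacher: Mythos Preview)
The paper does not prove Theorem~\ref{id}: it is stated as a (rough) summary of the main result of \cite{BM15}, cited without proof and followed immediately by remarks on the generalizations of Ionescu--Jia and Masmoudi--Zhao. There is therefore no ``paper's own proof'' to compare against.

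That said, your sketch is a faithful outline of the Bedrossian--Masmoudi strategy: the moving-frame change of variables $(z,v)$, the time-dependent Gevrey multiplier $A(t,k,\eta)$ designed to absorb the echo cascade (and the identification of $s=\tfrac12$ as the threshold for summability of the resonant chain), the CK-type energy estimate closing the bootstrap, and the Riemann--Lebesgue argument for weak-$*$ convergence are all the right ingredients. Your derivation of the two ``moreover'' items is also correct in spirit, and invoking Theorem~\ref{constvortinst} for the lower bound $\|\omega(t)\|_{C^\alpha}\gtrsim\ve t^\alpha$ is appropriate since Couette has constant vorticity and a velocity gap. One small caution: on the channel $\mathbb{T}\times[0,1]$ the analysis requires the perturbation to be supported away from the boundary (this is implicit in \cite{BM15} and explicit in the later works), and your sketch does not flag this; on $\mathbb{T}\times\mathbb{R}$ no such issue arises.
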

\end{myshade}

This has been generalized by Ionescu and Jia \cite{IJ20} and Masmoudi and Zhao \cite{MZ20} to hold for the class of monotone shear flows i.e. $u_*:= (v(x_2),0) $ where $v\in C^\infty([0,1])$ is a monotone function.  
To some extent, similar phenomena can be established also in the radial vortex setting \cite{BCV,IJ}.
 Theorem \ref{id} and the like are the only to fully characterize the Omega limit sets \eqref{omlimset} for Euler, albeit for very smooth perturbations of special equilibria.  Specifically, if $\omega_*$ is a monotone shear on $\mathbb{T}\times [0,1]$ or $\mathbb{T}\times \mathbb{R}$, then for $\| \omega_0- \omega_*\|_{\mathcal{G}^{\lambda',\frac{1}{2}}} \ll 1$, one has
\be
\Omega_+(\omega_0) = \{ \omega_\infty\} 
\ee
where $\omega_\infty:=\omega_\infty({\omega_0})$ is a (slightly modified) shear flow nearby $ \omega_*$. The convergence happens weakly, not strongly, in $L^2$ for an open set of data, so some amount of mixing definitively occurs in accord with Conjecture \ref{svconj}. Moreover, they show that a certain full neighborhoods in the Gevrey phase space relax to equilibrium at long time, confirming Conjecture  \ref{shnconj} in this context. Of course, in this setting much more can be said,  but far from equilibrium one may hope that  two Conjectures in concert are as robust as seem the phenomena which they describe.

\section{3D fluids: finite-time singularity formation}\label{3dsec}
We now move to discuss the issue of finite-time singularity in solutions to the Euler equation. This problem has been considered by many authors in different contexts. For the purposes of this article, we will discuss analytical results in the following categories:
\begin{itemize}
\item Necessary conditions for blow-up 
\item Lower dimensional models
\item Blow-up of infinite energy solutions
\item Self-similar Blow-up
\end{itemize}
As we discuss each of the above issues, we will mention relevant numerical simulations that have helped to guide the analytical studies. 

\subsection{Background, 1D models and special solutions}

\subsubsection{Necessary Conditions for Blow-up} The local existence and uniqueness of solutions in $C^{1,\alpha}$ and a wide variety of function spaces is not difficult to establish using the basic energy estimate
\begin{equation}\label{BasicEE}\frac{\rmd}{\rmd t}\|u\|_{X}\leq C_{X} \|\nabla u\|_{L^\infty} \|u\|_{X}, \end{equation} enjoyed by smooth solutions to the Euler equation. It follows also that a necessary condition for blow-up at some time $T_*$ is that \begin{equation}\label{BasicBUC}\lim_{t\rightarrow T_*}\int_{0}^t \|\nabla u\|_{L^\infty}=+\infty. \end{equation} This result is somewhat unsatisfying since to determine whether there is a blow-up in a particular scenario, or likewise to find a good candidate for a blow-up, one must study the behavior of the whole matrix $\nabla u$. This approach is limited by our poor understanding of the corresponding pressure term $D^2p$. 

An important improvement on the basic blow-up criterion \eqref{BasicBUC} is that of Beale, Kato, and Majda \cite{bkm}. In particular, they observed that there is a little more than a logarithm of room in the basic energy estimate \eqref{BasicEE} while there is only a logarithmic loss in replacing $\|\nabla u\|_{L^\infty}$ by $\|\nabla\times u\|_{L^\infty}$, since we know that $\div(u)=0$. While this improvement appears on the surface to be merely technical, it is actually paradigmatic in that we now only need to study the evolution of the vorticity $\omega=\nabla\times u$. Indeed, $\omega$ now evolves by the equation:
\[\partial_t\omega + u\cdot\nabla\omega = \omega\cdot\nabla u,\] and in fact the equation is closed due to the fact that
\[u=(-\Delta)^{-1}\nabla\times \omega.\] There are, strictly speaking, ``better" analytical blow-up criteria than the BKM (see, for example, \cite{Plan}). However, the BKM criterion is the \emph{best} in that it gives us something concrete to look for, pointwise growth, on a quantity, $\omega$, whose evolution is understandable. 

A second important improvement is due to Constantin, Fefferman, and Majda \cite{CFM96}. This criteria answers the basic question: \emph{Can a 3d Euler solution blow up in a two-dimensional way?} Since 2d Euler solutions are global, the answer should be ``no" in a general setting. In particular, the authors of \cite{CFM96} proved that if the velocity field $u$ remains uniformly bounded up to the blow-up time and if the \emph{direction} of vorticity remains $C^1$, then no blow-up can occur. In other words, in order that a singularity occur, the vorticity vector must change direction very quickly. We remark that if a singularity were to occur at a point where the vorticity vanishes for all $t<T_*$ (as occurs in \cite{E_Classical}), the direction of vorticity becomes discontinuous at the time of blow-up. If the vorticity is nowhere vanishing initially, it is possible that the criteria of \cite{CFM96} could be helpful in ruling out blow-up. See Question \ref{Sullivan}. 

A natural question one could ask when discussing blow-up criteria is whether they are sharp. In fact, it is possible to show that the BKM criterion is actually sharp in several senses. In particular, if we have a solution with $|\omega|_{L^\infty}\approx \frac{1}{T_*-t}$ as $t\rightarrow T_*$, then the BKM criterion is clearly borderline. This was done in setting of $C^{1,\alpha}$ solutions in \cite{E_Classical} and it is not difficult to deduce that the BKM criterion is actually \emph{sharp} in the scale of $L^p$ spaces for general $C^{1,\alpha}$ solutions of the Euler equation. In other words, given $p<\infty$, there are classical $C^{1,\alpha}$ solutions that remain bounded in $L^p$ uniformly up to the blow-up time. This phenomenon is present in many of the basic models where we know finite-time singularity. This is not difficult to show in the 1d Burgers equation, for example. However, an interesting consequence of \cite{CGM} and a direct calculation gives the following

\begin{myshade}
  \vspace{-1mm}
\begin{lemma}
If $u$ is a smooth solution to the 1d Burgers equation:
\[u_t +uu_x=0.\] Then, if $u$ becomes singular at $t=T_*$,  we have
\[\sup_{t\in[0, T_*)} \|u_x(t)\|_{L^{3/2}}=+\infty.\]
\end{lemma}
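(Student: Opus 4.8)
\emph{Proof proposal.} The plan is to exploit the fact that the one‑dimensional Burgers equation is completely explicit through its characteristics, and then to reduce the claim to a localized one‑variable integral estimate near the blow‑up point. First I would recall that for smooth $u_0$ the flow map is $\Phi_t(x)=x+tu_0(x)$, that $u(\Phi_t(x),t)=u_0(x)$, and hence, differentiating in $x$, that $u_x(\Phi_t(x),t)=u_0'(x)/\bigl(1+tu_0'(x)\bigr)$ with $\Phi_t'(x)=1+tu_0'(x)$. The solution stays smooth on $[0,T_*)$ precisely because $\Phi_t$ is a diffeomorphism there, so the hypothesis that it becomes singular at $T_*$ forces $m:=\min_x u_0'(x)=-1/T_*<0$, attained (in the periodic or rapidly decaying setting) at some point $x_*$, which is necessarily a critical point of $u_0'$, i.e.\ $u_0''(x_*)=0$.

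Next I would change variables $y=\Phi_t(x)$ in the $L^{3/2}$ integral. Since $\rmd y=(1+tu_0'(x))\,\rmd x>0$ for $t<T_*$, this yields the clean identity
\be\label{burgersCOV}
\|u_x(t)\|_{L^{3/2}}^{3/2}=\int \frac{|u_0'(x)|^{3/2}}{\bigl(1+tu_0'(x)\bigr)^{1/2}}\,\rmd x ,
\ee
so everything is reduced to understanding this time‑dependent weighted integral of the \emph{fixed} function $u_0'$.

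The final step is a local lower bound near $x_*$. On a small fixed interval $I_\delta=\{|x-x_*|\le\delta\}$ one has $|u_0'(x)|^{3/2}\ge c_0:=(|m|/2)^{3/2}>0$, while Taylor's theorem at the critical point $x_*$ gives $1+tu_0'(x)\le \varepsilon+C(x-x_*)^2$ on $I_\delta$, where $\varepsilon:=1+tm=1-t/T_*\to 0^+$ as $t\to T_*$ and $C$ depends only on $\sup_{I_\delta}|u_0'''|$ and $T_*$. Inserting this into \eqref{burgersCOV},
\be
\|u_x(t)\|_{L^{3/2}}^{3/2}\ \ge\ c_0\int_{-\delta}^{\delta}\frac{\rmd z}{\sqrt{\varepsilon+Cz^2}}\ =\ \frac{2c_0}{\sqrt{C}}\,\operatorname{arcsinh}\bigl(\delta\sqrt{C/\varepsilon}\bigr)\ \longrightarrow\ +\infty \quad (t\to T_*),
\ee
which is exactly the claim. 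The same computation with the exponent $3/2$ replaced by any $p<3/2$ shows instead that $\int(\varepsilon+Cz^2)^{-(p-1)}\,\rmd z$ converges as $\varepsilon\to0$ (since $2(p-1)<1$), so $\|u_x(t)\|_{L^p}$ stays uniformly bounded up to $T_*$ — confirming that $3/2$ is the critical exponent.

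I do not expect a serious obstacle: the only delicate point is the geometry of $u_0'$ near its minimum, namely that the minimum is attained and not approached too flatly, which is automatic via the Taylor expansion in the periodic or decaying setting and can otherwise be handled by a minimizing‑sequence argument on the set where $u_0'$ is close to $m$. The reference \cite{CGM} presumably supplies the systematic framework behind the identity \eqref{burgersCOV}; for this particular lemma the characteristic computation is self‑contained.
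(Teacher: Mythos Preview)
Your argument is correct and self-contained. The paper does not actually give a proof of this lemma: it simply records it as ``an interesting consequence of \cite{CGM} and a direct calculation,'' and then remarks separately that for $p<3/2$ there are singular solutions with $\|u_x\|_{L^p}$ uniformly bounded. Your characteristic computation, leading to the identity
\[
\|u_x(t)\|_{L^{3/2}}^{3/2}=\int \frac{|u_0'(x)|^{3/2}}{\bigl(1+tu_0'(x)\bigr)^{1/2}}\,\rmd x,
\]
followed by the local Taylor bound $1+tu_0'(x)\le \varepsilon+C(x-x_*)^2$ near the minimizer, is exactly the kind of ``direct calculation'' the paper is alluding to. The reference \cite{CGM} (Collot--Ghoul--Masmoudi) treats self-similar profiles for Burgers with transverse viscosity and supplies the broader structural picture, but for the bare one-dimensional statement your route is the natural one and does not require that machinery. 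Your closing remark that flatter minima only make the divergence worse (so the only genuine caveat is attainment of the minimum, automatic in the periodic or decaying setting) is the right way to dispose of the residual technical point.
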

\end{myshade}

Note that there are smooth solutions to the Burgers equation that becomes singular so that $\|u_x\|_{L^p}<\infty$ for any $p<3/2$ uniformly up to the blow-up time. 

We thus close the section on blow-up criteria with the following conjecture, the meaning of which is that any smooth Euler solution that becomes singular in finite time must do so with a minimal intensity. Ideally, one would hope that this conjecture can be established \emph{without} a full resolution of the blow-up problem.

\begin{myshade2}
  \vspace{-1mm}
\begin{conjecture}\label{bkmconj}
There exists a universal $p_*<\infty$ so that if a smooth solution to the Euler equation $\omega\in C^\infty_c(\mathbb{R}^3\times [0,T_*))$ becomes singular as $t\rightarrow T_*$, then 
\[\sup_{t\in[0, T_*)}  \|\omega(t)\|_{L^{p_{*}}}=+\infty.\]
\end{conjecture}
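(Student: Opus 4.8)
This is a conjecture, so the following is a research program rather than a complete argument, and I flag below the point where I expect it to stall.

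\textbf{Step 1 (Reformulation).} By the Beale--Kato--Majda criterion \eqref{bkm}, a smooth solution on $[0,T_*)$ continues past $T_*$ once $\int_0^{T_*}\|\omega(t)\|_{L^\infty}\,\rmd t<\infty$, so the conjecture is equivalent to producing a universal $p_*<\infty$ for which
\[
\sup_{t\in[0,T_*)}\|\omega(t)\|_{L^{p_*}(\mathbb{R}^3)}<\infty \ \Longrightarrow\ \text{the solution is global}.
\]
The plan is to build on the two unconditional a priori facts available: energy conservation $\|u(t)\|_{L^2}=\|u_0\|_{L^2}$, and the Cauchy formula $\omega(\cdot,t)=\big(\nabla\Phi_t\,\omega_0\big)\circ\Phi_t^{-1}$, which keeps $\omega(t)$ compactly supported for $t<T_*$. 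Combining the $L^{p_*}$ bound with Calder\'on--Zygmund theory for Biot--Savart ($1<p_*<\infty$) yields a uniform bound on $\|\nabla u(t)\|_{L^{p_*}}$, hence (for $p_*<3$) on $\|u(t)\|_{L^q}$ with $\frac{1}{q}=\frac{1}{p_*}-\frac{1}{3}$, so that the velocity stays in a fixed ball of $L^2\cap\dot W^{1,p_*}(\mathbb{R}^3)$.

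\textbf{Step 2 (Choosing $p_*$ and bootstrapping).} The exponent should be dictated by the ``worst-case'' blowup profile, exactly as the generic cubic shock forces $p_*=3/2$ for Burgers in the Lemma above. First I would run the scaling heuristic: for an (asymptotically) self-similar singularity $\omega(x,t)\approx(T_*-t)^{-1}\Omega\big((x-x_0)(T_*-t)^{-\mu}\big)$, the localized energy scales like $(T_*-t)^{5\mu-2}$, so finite energy forces $\mu\ge\tfrac{2}{5}$; then $\|\omega(t)\|_{L^p}\approx(T_*-t)^{3\mu/p-1}$ diverges exactly for $p>3\mu$, i.e. for $p>\tfrac{6}{5}$ at the energy-critical rate $\mu=\tfrac{2}{5}$. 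I would therefore set $p_*$ slightly above the critical exponent of the extremal admissible profile and attempt to close a bootstrap: (i) differentiate the vorticity equation $\partial_t\omega+u\cdot\nabla\omega=\omega\cdot\nabla u$ and use the uniform $\|\nabla u\|_{L^{p_*}}$ bound to propagate a slightly stronger, scaling-subcritical norm of $\omega$ (such as $\dot H^{s}$ or $\dot W^{1+\delta,p_*}$, with $s,\delta$ small) through product and commutator estimates in which the stretching term stays below the critical threshold; (ii) feed the resulting a priori bound into a BKM-type continuation. A crucial auxiliary ingredient would be a Liouville-type theorem excluding ``slow'' (type-II, $\mu>\tfrac{2}{5}$) asymptotically self-similar blowup for smooth finite-energy data, in the spirit of the known non-existence results for self-similar Euler singularities, adapted to profiles whose vorticity is merely $L^{p_*}$-integrable.

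\textbf{Step 3 (The main obstacle).} Each ingredient above is, as it stands, out of reach, and the difficulty is structural: at the level of $\|\omega\|_{L^{p_*}}$ with $p_*<\infty$, 3D Euler is supercritical. The control on $\|\nabla u\|_{L^{p_*}}$ is one unit of integrability short of what a BKM-type bootstrap needs, because the stretching term $\omega\cdot\nabla u$ is exactly critical for $\dot W^{1,p}$-scaled norms and there is no dissipation to absorb the borderline loss --- the same obstruction that blocks ``$L^p$ control of vorticity implies regularity'' for any finite $p$. On top of that, the conjecture demands a \emph{universal} $p_*$: one must isolate the blowup scenario that is hardest for $L^p$ norms --- the analog of the generic cubic shock --- and rule out all worse scenarios for $C^\infty$ data (recall that no finite $p_*$ works in the $C^{1,\alpha}$ class). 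Establishing that smooth finite-energy Euler blowup cannot be arbitrarily slow, i.e. that $\mu$ is bounded above, seems essentially as hard as the blowup problem itself and is, in my view, the crux. Realistic intermediate targets would be: (a) the conjecture conditional on asymptotic self-similarity with $\mu\le\mu_0$ for an explicit $\mu_0$, which would also pin down $p_*$; (b) the axisymmetric setting, where the known self-similar constructions and the available Liouville theorems supply rigid information; (c) identifying the sharp value of $p_*$ in the $C^\infty$ limits of the known $C^{1,\alpha}$ examples.
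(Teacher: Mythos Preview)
This statement is presented in the paper as an open conjecture, not a theorem; the paper offers no proof and explicitly frames it as something one would hope to establish \emph{without} a full resolution of the blowup problem. So there is no ``paper's proof'' to compare your attempt against, and you correctly recognized this by presenting a research program rather than an argument.

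Your heuristics are in line with the paper's surrounding discussion. The self-similar scaling computation (local energy $\sim (T_*-t)^{5\mu-2}$, hence $\mu\ge 2/5$, hence $\|\omega\|_{L^p}$ diverges for $p>3\mu\ge 6/5$) matches the paper's remarks on self-similar profiles, and your caveat that no finite $p_*$ can work for $C^{1,\alpha}$ data is exactly the content of the paper's remark following the conjecture (where $p_*(\alpha)\to\infty$ as $\alpha\to 0$, citing \cite{EGM3dE}). Your Step~3 correctly identifies the obstruction as supercriticality: an $L^{p_*}$ bound on $\omega$ gives $\nabla u\in L^{p_*}$ via Calder\'on--Zygmund, which is one level of integrability short of closing any BKM-type continuation, and there is no known mechanism to bridge that gap. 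The paper does not suggest any route around this; the conjecture remains open.
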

\end{myshade2}

\begin{remark}
If we are looking at $C^{1,\alpha}$ solutions, necessarily $p_*$ must depend on $\alpha$ and $p_*(\alpha)\rightarrow\infty$ as $\alpha\rightarrow 0$. See \cite{EGM3dE}.
\end{remark}

\subsubsection{Lower Dimensional Models: Advection and Vortex Stretching}\label{AdvectionAndVS}

Aside from proving bounds, understanding the \emph{dynamics} of solutions of the Euler equation presents a significant challenge in any dimension. In dimension two, the vorticity is transported by the velocity field that is determined through the Biot-Savart law $u=(-\Delta)^{-1}\nabla^\perp \omega$. Because of the simple nature of transport, there are a number of special types of solutions one can study to gain intuition about general solutions: one can consider point vortices, vortex sheets, vortex patches, etc. Such solutions allow us to visualize vortex motion in two dimensions. In three dimensions the situation is a bit more grim and our understanding is much weaker, the main difference being that vorticity is no longer transported but is now also ``stretched:"
\begin{equation}
\label{GeneralVorticityEqn}
\partial_t\omega + u\cdot\nabla\omega=\omega\cdot\nabla u.
\end{equation} 
A natural approach to understanding the vorticity equation is thus to simply understand the equation \eqref{GeneralVorticityEqn} in various settings, forgetting about the link between $u$ and $\omega$. It turns out that a number of the classical models of the Euler equation such as the SQG equation \cite{CMT} and other active scalar equations \cite{KReview}, the De Gregorio equation \cite{DG}, and many others can be recast in the general form \eqref{GeneralVorticityEqn}, the defining feature of the equation being the relationship between $u$ and $\omega$. Even in one dimensional renditions, the combination of transport \emph{and} stretching is significantly richer than just transport. 

Before discussing specific examples, it is important to remark that there can be no model that describes the dynamics of \emph{all} solutions of the vorticity equation. At best, we should hope for a model that accurately describes the behavior of some solutions perhaps in a symmetry class or even more restrictive class. In fact, the more universal of a model we try to give, the less likely we are to understand it any better than we do the Euler equation itself. At the same time, there are a few guiding principles we can use to devise models that may be relevant to some Euler solutions:
\begin{itemize}
\item The vorticity is transported and stretched by the same velocity field $u$. It is not necessary that $u$ should always be divergence-free since the model may only describe lower dimensional dynamics. 
\item That the velocity field $u=(-\Delta)^{-1}\nabla\times \omega$ is determined through an operator of degree $-1$ is important though it could be of even less degree depending on the scenario. 
\item The parity between $u$ and $\omega$ is important, but also may change depending on the scenario. 
\end{itemize}

\subsubsection{General Ideas}

We now discuss a little bit the general intuition behind the \emph{linear} equation 
\begin{equation}
\label{1dVort}\partial_t \omega + u\partial_x\omega=\omega\partial_x u,
\end{equation} where $u$ is a given time-independent mean-zero function on $\mathbb{S}^1.$ 
Let $x_0, x_1$ be two consecutive zeros of $u$ and let us assume that $u>0$ in $(x_0, x_1)$. while $\partial_x(x_0)>0$ and $\partial_x u(x_1)<0$. Then, using a simple calculation, it is easy to show that smooth solutions of \eqref{1dVort} in $(x_0,x_1)$ can be written as:
\[\omega(x,t)= \omega(x_0,0)F_*(x,t)+\partial_x \omega(x_0,0)u(x)+ \Omega(x,t),\] where 
\[\|\Omega\|_{L^2}\leq C\exp(-ct)\] and 
$F_*(x,t)$ is just the solution of \eqref{1dVort} with data $\omega\equiv 1$ on $[x_0,x_1]$. Note that it is not difficult to see that $F_*$ grows exponentially. 

The above decomposition on $(x_0,x_1)$ tells us that if $u$ has only finitely many zeros that are all non-degenerate, the smooth solutions of \eqref{1dVort} all approach a finite dimensional family of solutions exponentially fast. The key to the proof is that $\frac{\omega}{u}$ is transported and any smooth solution to the transport equation 
\[\partial_t f+u\partial_x f=0\] vanishing initially at $x_0$ will decay exponentially in $L^2([x_0,x_1])$ as $t\rightarrow\infty$. What this means is that smooth solutions to \eqref{1dVort} evolve almost completely based on the Taylor expansion of the data near the zeros of $u$: the constant part is magnified at stationary points where $\partial_x u>0$, the first derivative term is preserved, and the higher order terms are dissipated. This (linear) behavior appears to be similar to what has been found even in the nonlinear case and it is tempting to conjecture that this behavior is ``universal" even among nonlinear 1d vorticity equations, the main difference being that what happens in infinite time for the linear problem may happen in finite time in the non-linear problem. We remark that the presence of advection is what leads to a relatively simple dynamics in \eqref{1dVort}. Without advection, the dynamics of the linear problem would be completely infinite dimensional.  
Let us now discuss two simple non-linear examples.

\subsubsection{Examples and a conjecture} Consider the following projection operator \[\mathbb{P}_1(f)=\sin(x)\int_{\mathbb{S}^1} f(x)\sin(x)\rmd x.\] Now consider the non-linear problem:
\begin{equation}\label{1dEqn1}\partial_t\omega + u\partial_x\omega=\omega\partial_x u,\end{equation}
\[u=\mathbb{P}_1(\omega).\]
Interestingly, we can give a complete characterization of blow-up and global regularity in this problem. We may write:
\[u=\lambda(t)\sin(x),\qquad \lambda(t)=\int_{\mathbb{S}^1} \omega(x,t) \sin(x)\rmd x.\]
Thus, \eqref{1dEqn1} becomes:
\[\partial_t \omega +\lambda(t) \sin(x)\partial_x\omega =\lambda(t)\cos(x)\omega.\]
It follows that 
\[\partial_t \Big(\frac{\omega}{\sin(x)}\Big)+\lambda(t)\sin(x)\partial_x\Big(\frac{\omega}{\sin(x)}\Big)=0.\]
Thus, letting $\dot{X}_t(a) = \lambda(t)\sin(X_t(a))$ with $X_0(a)=a$,
we get that
\[\frac{\omega(X_t(a),t)}{\sin(X_t(a))} = \frac{\omega_0(x)}{\sin(x)}, \qquad \implies \qquad \omega(x,t)=\omega_0(X_t^{-1}(x),t)\frac{\sin(x)}{\sin(X_t^{-1}(x))}.\]
It follows that 
\[\lambda(t)=\frac{1}{2\pi}\int_{0}^{2\pi} \omega_0(X_t^{-1}(x),t)\frac{\sin^2(x)}{\sin(X_t^{-1}(x))}\rmd x.\]
Now let's solve for $X$. We get $\frac{\dot{X}_t(a)}{\sin(X_t(a))} = \lambda(t).$ If we let $\Lambda(t)=\int_{0}^t\lambda(s)ds$ we get:
\[\frac{\rmd}{\rmd t}\log(\tan(X_t(a)/2))=\lambda(t) \qquad \implies \qquad \log\frac{\tan(x/2)}{\tan(X_t^{-1}(x)/2)}=\Lambda(t).\]
Thus, we obtain
\[X_t^{-1}(x)=2\arctan(e^{-\Lambda(t)}\tan(x/2)).\] 
We now only need to consider
\[\Lambda'(t)=\frac{1}{2\pi}\int_{-\pi}^{\pi} \omega_0(2\arctan(\exp^{-\Lambda(t)}\tan(x/2))\frac{\sin^2(x)}{\sin(2\arctan(\exp^{-\Lambda(t)}\tan(x/2))}\rmd x\]
\[=\frac{1}{2\pi}\int_{-\pi}^\pi \omega_0(2\arctan(\exp^{-\Lambda(t)}\tan(x/2))\frac{\exp(\Lambda(t))\sin^2(x)\Big(1+\exp(-2\Lambda(t))\tan^2(x/2)\Big)}{2\arctan(x/2)}.\]
Therefore we arrive at the expression:
\[\Lambda'(t)=\frac{\exp(\Lambda(t))}{4\pi} \int_{-\pi}^{\pi} \omega_0 (...) \frac{\sin^2(x)}{\tan(x/2)}\rmd x+\frac{\exp(-\Lambda(t))}{4\pi} \int_{-\pi}^{\pi} \omega_0(...)\sin^2(x)\tan(x/2)\rmd x.\]
Notice, however, that the functions $\frac{\sin^2(x)}{\tan(x/2)}$ and $\sin^2(x)\tan(x/2)$ are smooth $2\pi$-periodic functions with zero mean on $[-\pi,\pi]$. This is a key observation. 
It is now easy to see that if $\omega_0\in C^1$ we have global regularity. Indeed, in the first integral, we use the mean-zero property of $\frac{\sin^2(x)}{\tan(x/2)}$ and we get:
\[|\omega_0(2\arctan(\exp(-\Lambda(t)\tan(x/2))))-\omega_0(0)|\leq C|\omega_0|_{C^1}\exp(-\Lambda(t))\tan(x/2).\] Arguing similarly in the second integral (subtracting $\omega_0(\pi)$ this time) we get that
\[|\Lambda'(t)|\leq C\|\omega_0\|_{C^1}, \] for some universal constant $C$. This bound is sharp since $\lambda(t)=\Lambda'(t)$ can be constant for all time.
We arrive at the following theorem. 

\begin{myshade}
  \vspace{-1mm}
\begin{theorem}
$C^1$ solutions to \eqref{1dEqn1} are globally regular and $\lambda(t)$ is uniformly bounded for all time in that case.
\end{theorem}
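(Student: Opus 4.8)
The plan is to exploit that $\mathbb{P}_1$ projects $u$ onto a single Fourier mode, so the entire evolution \eqref{1dEqn1} is slaved to the scalar $\lambda(t):=\int_{\mathbb{S}^1}\omega(x,t)\sin x\,\rmd x$, and to show that $\lambda$, equivalently $\Lambda(t):=\int_0^t\lambda$, satisfies an autonomous ODE $\Lambda'=G(\Lambda)$ whose right-hand side is globally bounded in terms of $\|\omega_0\|_{C^1}$. First I would record the standard facts: with $u=\lambda(t)\sin x$, equation \eqref{1dEqn1} is semilinear with a smooth single-mode velocity, so $C^1$ solutions exist and are unique on a maximal interval (and extend to $t<0$ by reversibility); since $\|\partial_x u\|_{L^\infty}=|\lambda(t)|$ and $\|\partial_x^2 u\|_{L^\infty}\le|\lambda(t)|$, differentiating the equation and integrating along characteristics gives $\frac{\rmd}{\rmd t}\|\omega\|_{C^1}\lesssim|\lambda(t)|\,\|\omega\|_{C^1}$, so the continuation criterion is simply $\int_0^{T_*}|\lambda(t)|\,\rmd t<\infty$. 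It therefore suffices to bound $\lambda$ a priori.

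Second, I would carry out the explicit integration already begun in the text. Since $\partial_t(\omega/\sin x)+\lambda(t)\sin x\,\partial_x(\omega/\sin x)=0$, the ratio $\omega/\sin x$ is transported by the flow of $\dot X_t(a)=\lambda(t)\sin X_t(a)$, which integrates to $X_t^{-1}(x)=2\arctan\!\big(e^{-\Lambda(t)}\tan(x/2)\big)$ and hence $\omega(x,t)=\omega_0\big(X_t^{-1}(x)\big)\,\sin x/\sin X_t^{-1}(x)$. Plugging this into $\Lambda'(t)=\lambda(t)=\frac1{2\pi}\int\omega(x,t)\sin x\,\rmd x$ and changing variables yields, with $X_t^{-1}$ a function of $\Lambda(t)$ and $x$ alone, the closed scalar relation
\[\Lambda'(t)=\frac{e^{\Lambda(t)}}{4\pi}\int_{-\pi}^{\pi}\!\omega_0\big(X_t^{-1}(x)\big)\,\frac{\sin^2 x}{\tan(x/2)}\,\rmd x+\frac{e^{-\Lambda(t)}}{4\pi}\int_{-\pi}^{\pi}\!\omega_0\big(X_t^{-1}(x)\big)\,\sin^2 x\,\tan(x/2)\,\rmd x.\]

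Third --- and this is the step I expect to be the crux --- I would exploit a structural cancellation that tames the prefactors $e^{\pm\Lambda}$; here the single-mode nature of $\mathbb{P}_1$ is essential. The kernels $\frac{\sin^2 x}{\tan(x/2)}$ and $\sin^2 x\,\tan(x/2)$ are smooth, $2\pi$-periodic, and have zero mean, so in the first integral I may replace $\omega_0(X_t^{-1}(x))$ by $\omega_0(X_t^{-1}(x))-\omega_0(0)$ and in the second by $\omega_0(X_t^{-1}(x))-\omega_0(\pm\pi)$ without changing the value. For the first, using $|X_t^{-1}(x)|=2\arctan(e^{-\Lambda}\tan(x/2))\le 2e^{-\Lambda}\tan(x/2)$ together with the Lipschitz bound on $\omega_0$, the integrand is bounded by $2e^{-\Lambda}\|\omega_0'\|_{L^\infty}\tan(x/2)\cdot\frac{\sin^2 x}{\tan(x/2)}=2e^{-\Lambda}\|\omega_0'\|_{L^\infty}\sin^2 x$, so the $\tan(x/2)$ cancels exactly and the $e^{\Lambda}$ prefactor is absorbed, leaving $\lesssim\|\omega_0\|_{C^1}\int\sin^2 x\,\rmd x$; symmetrically, $|\pi-X_t^{-1}(x)|=2\arctan(e^{\Lambda}\cot(x/2))\le 2e^{\Lambda}\cot(x/2)$ makes the second integrand $\lesssim e^{-\Lambda}\|\omega_0'\|_{L^\infty}e^{\Lambda}\cot(x/2)\,\sin^2 x\,\tan(x/2)=\|\omega_0'\|_{L^\infty}\sin^2 x$. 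This yields $|\lambda(t)|=|\Lambda'(t)|\le C\|\omega_0\|_{C^1}$ on the interval of existence, with $C$ universal; note that the appearance of $\|\omega_0'\|_{L^\infty}$ shows $C^1$ regularity (rather than mere continuity) is exactly what the argument consumes.

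Finally I would close the bootstrap: with $|\lambda(t)|\le C\|\omega_0\|_{C^1}$ uniformly, $\int_0^{T_*}|\lambda|\,\rmd t<\infty$ for every finite $T_*$, so the solution is global, and Grönwall gives $\|\omega(t)\|_{C^1}\le\|\omega_0\|_{C^1}e^{C\|\omega_0\|_{C^1}|t|}$ (equivalently, the explicit formula shows $\omega(\cdot,t)\in C^1$ since $X_t^{-1}$ is a $C^\infty$ diffeomorphism with derivatives controlled by $e^{|\Lambda(t)|}<\infty$ and $\sin x/\sin X_t^{-1}(x)$ is $C^1$, using that $X_t^{-1}$ fixes $0$ and $\pm\pi$ with non-vanishing derivative there). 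Since $\lambda(t)=\Lambda'(t)$, the same uniform bound gives the second assertion. I would end by noting sharpness: $\omega_0=\sin x$ gives $u=\sin x$ stationary with $\lambda\equiv 1$, so $\lambda$ need not decay and boundedness is the best one can hope for.
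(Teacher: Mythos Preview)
Your proposal is correct and follows essentially the same approach as the paper: reduce to the closed ODE for $\Lambda$, observe that the two kernels $\sin^2 x/\tan(x/2)$ and $\sin^2 x\,\tan(x/2)$ are smooth, periodic and mean-zero so that one may subtract $\omega_0(0)$ and $\omega_0(\pi)$ respectively, and then use the Lipschitz bound on $\omega_0$ to cancel both the $e^{\pm\Lambda}$ prefactors and the $\tan(x/2)$ factor. The paper's argument is exactly this; you have in fact filled in the second integral explicitly (the paper just says ``arguing similarly'') and added the continuation-criterion framing and the sharpness example, all of which are correct.
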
 
\end{myshade}

It is also clear from the above that we can construct $\omega_0\in C^\alpha$ for any $\alpha<1$ for which $\Lambda(t)$ satisfies the bound:
\[\Lambda'(t)\geq c \exp((1-\alpha)\Lambda(t)).\] This leads to a finite-time singularity and the following

\begin{myshade}
  \vspace{-1mm}
\begin{theorem}
For any $\alpha<1$ there are $C^\alpha$ solutions to \eqref{1dEqn1} that become singular in finite time. 
\end{theorem}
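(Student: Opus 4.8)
The plan is to push the explicit reduction already carried out one step further and then select the initial datum so that the scalar ODE for $\Lambda(t)=\int_0^t\lambda(s)\,\rmd s$ blows up in finite time. Using the identities $\frac{\sin^2 x}{\tan(x/2)}=4\sin\frac x2\cos^3\frac x2$ and $\sin^2 x\,\tan\frac x2=4\sin^3\frac x2\cos\frac x2$, and writing $y_\Lambda(x):=2\arctan\!\big(e^{-\Lambda}\tan\tfrac x2\big)$ (so that $X_t^{-1}(x)=y_{\Lambda(t)}(x)$), the identity for $\Lambda'(t)$ obtained above takes the form
\[
\Lambda'(t)=\frac{e^{\Lambda(t)}}{\pi}\,I_1(\Lambda(t))+\frac{e^{-\Lambda(t)}}{\pi}\,I_2(\Lambda(t)),\qquad I_j(\Lambda):=\int_{-\pi}^{\pi}\omega_0\big(y_\Lambda(x)\big)\,K_j(x)\,\rmd x,
\]
with $K_1(x)=\sin\frac x2\cos^3\frac x2$ and $K_2(x)=\sin^3\frac x2\cos\frac x2$. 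The construction hinges on the choice of $\omega_0$; I would take $\omega_0(y):=\sgn(y)\,\chi(y)\,|y|^{\alpha}$, where $\chi$ is a fixed smooth even cutoff with $\chi\equiv1$ on a neighbourhood of $0$ and supported in a small neighbourhood of $0$. This $\omega_0$ lies in $C^{\alpha}(\mathbb{S}^1)$ (near $0$ it equals $\sgn(y)|y|^\alpha$, which is $\alpha$-Hölder, and it is smooth away from $0$).

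The first key point is a sign structure. On $(-\pi,\pi)$ both kernels $K_1,K_2$ have the sign of $x$, and $y_\Lambda(x)$ has the sign of $x$, so $\omega_0(y_\Lambda(x))$ has the sign of $x$; hence both integrands are pointwise nonnegative, giving $I_1(\Lambda),I_2(\Lambda)\ge 0$ and in particular $\Lambda'(t)\ge \frac{e^{\Lambda(t)}}{\pi}I_1(\Lambda(t))$. The second key point is a lower bound on $I_1$. For all $\Lambda$ above a fixed threshold $\Lambda_0$ (large enough that $y_\Lambda(x)$ lies in the region where $\chi\equiv1$ whenever $\tan\frac x2\le1$), restricting the integral to $\{x\in(0,\pi):\tan\frac x2\le1\}$, substituting $s=\tan\frac x2$ (so $K_1(x)\,\rmd x=\frac{2s\,\rmd s}{(1+s^2)^3}$), and using $2\arctan z\ge z$ for $0\le z\le1$, one gets
\[
I_1(\Lambda)\ \ge\ \int_0^1\big(2\arctan(e^{-\Lambda}s)\big)^{\alpha}\frac{2s\,\rmd s}{(1+s^2)^3}\ \ge\ e^{-\alpha\Lambda}\int_0^1\frac{2s^{1+\alpha}\,\rmd s}{(1+s^2)^3}\ =:\ c_\alpha\,e^{-\alpha\Lambda},\qquad c_\alpha>0.
\]
Moreover $I_1(\Lambda)>0$ for every finite $\Lambda$, so by continuity $m:=\inf_{[0,\Lambda_0]}I_1>0$.

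From here the blow-up is an elementary ODE comparison. While $\Lambda(t)\in[0,\Lambda_0]$ one has $\Lambda'(t)\ge \frac m\pi>0$, so $\Lambda$ reaches $\Lambda_0$ at some finite time $t_1$; for $t\ge t_1$ one has $\Lambda'(t)\ge \frac{c_\alpha}{\pi}e^{(1-\alpha)\Lambda(t)}$, hence $\frac{\rmd}{\rmd t}e^{-(1-\alpha)\Lambda(t)}\le -\frac{(1-\alpha)c_\alpha}{\pi}$, and since $1-\alpha>0$ this forces $\Lambda(t)\to+\infty$ as $t\uparrow T_*$ for the finite time $T_*=t_1+\frac{\pi}{(1-\alpha)c_\alpha}e^{-(1-\alpha)\Lambda_0}$. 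Finally I would translate this into a genuine singularity: for $t<T_*$ the map $X_t^{-1}=y_{\Lambda(t)}$ is an analytic diffeomorphism of $\mathbb{S}^1$ fixing $0$ and $\pi$, and (reversing the computation in the excerpt) $\omega(x,t):=\omega_0\big(X_t^{-1}(x)\big)\frac{\sin x}{\sin(X_t^{-1}(x))}$ is a $C^\alpha$-in-$x$ solution of \eqref{1dEqn1} with $u=\lambda(t)\sin x$, $\lambda=\Lambda'$; using $\frac{\sin x}{\tan(x/2)}=2\cos^2\frac x2$ one gets, for any fixed $x\in(0,\pi)$, $\omega(x,t)\gtrsim e^{(1-\alpha)\Lambda(t)}\tan^{\alpha}\!\tfrac x2\,\cos^2\!\tfrac x2\to\infty$ as $t\uparrow T_*$, so $\|\omega(t)\|_{L^\infty}\to\infty$.

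I do not expect a serious obstacle — the point of this model is exactly that everything is explicit. The only steps requiring a little care are: (i) verifying that $\sgn(y)\chi(y)|y|^\alpha$ is genuinely $C^\alpha$ (a short Hölder estimate, splitting into same-sign and opposite-sign pairs of points); (ii) arranging the signs so the two competing terms in the $\Lambda$-equation cannot cancel — this is precisely why one takes \emph{odd} data with a prescribed modulus, after which $I_1$ and $I_2$ are automatically nonnegative and only the growing term $e^{\Lambda}I_1$ matters; and (iii) checking that the representation formula defines a bona fide $C^\alpha$ solution for $t<T_*$, which is immediate because $u=\lambda(t)\sin x$ is smooth in $x$ regardless of the regularity of $\omega$, so the characteristic flow is a smooth diffeomorphism and the formula is legitimate (and classical wherever $\omega_0$ is $C^1$). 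Everything else is the bookkeeping already performed above, and one may equally replace $\omega_0$ by any $C^\alpha$ datum that is odd, nonnegative on $(0,\pi)$, and comparable to $|y|^\alpha$ near $0$.
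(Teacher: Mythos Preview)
Your proof is correct and follows exactly the approach the paper sketches: the paper's argument is the single sentence that one can construct $\omega_0\in C^\alpha$ so that $\Lambda'(t)\ge c\,e^{(1-\alpha)\Lambda(t)}$, and you carry this out in full with the explicit choice $\omega_0(y)=\sgn(y)\chi(y)|y|^\alpha$, the sign/parity argument forcing $I_1,I_2\ge0$, and the substitution $s=\tan(x/2)$ yielding $I_1(\Lambda)\gtrsim e^{-\alpha\Lambda}$.
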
 
\end{myshade}

It is interesting that the sharp regularity threshold is not $C^1$ but is rather given by a (sharp) Osgood-type condition on the modulus of continuity of $\omega_0$. For example, even when $\omega_0$ has a modulus of continuity like $r|\log r|$ there is still global regularity. 

\subsubsection{$\partial_x u=\mathbb{P}_1(\omega)$}
We now consider the problem 
\begin{equation}\label{1dEqn2}\partial_t\omega + u\partial_x\omega=\omega\partial_x u,\end{equation}
\[\partial_x u=\mathbb{P}_1(\omega).\] This problem may seem to be a trivial modification of the preceding one since the system simply becomes (after a translation): 
\[\partial_t \omega + \lambda(t)\sin(x)\partial_x\omega=\lambda(t)\cos(x)\omega,\]
where
\[\lambda(t)=\int_{0}^{2\pi}\omega(x,t)\cos(x)\rmd x.\] The strange fact is that smooth solutions to \emph{this} problem now can develop a singularity in finite time. This issue is that the parity of the velocity is the \emph{opposite} of the parity of the vorticity, in a sense. 
Indeed, for this problem we arrive at the equation:
\begin{align*}
\Lambda'(t)&=\frac{\exp(\Lambda(t))}{4\pi} \int_{-\pi}^{\pi} \omega_0 (2\arctan(\exp^{-\Lambda(t)}\tan(x/2)) \frac{\sin(x)\cos(x)}{\tan(x/2)}\rmd x\\
&\qquad +\frac{\exp(-\Lambda(t))}{4\pi} \int_{-\pi}^{\pi} \omega_0(2\arctan(\exp^{-\Lambda(t)}\tan(x/2))\sin(x)\cos(x)\tan(x/2)\rmd x.
\end{align*}
The key difference now is that $\frac{\sin(x)\cos(x)}{\tan(x/2)}$ and $\sin(x)\cos(x)\tan(x/2)$ are \emph{not} mean-zero. The following theorem is an exercise.

\begin{myshade}
  \vspace{-1mm}
\begin{theorem}
There exist analytic solutions to \eqref{1dEqn2} on $\mathbb{S}^1$ that blow up in finite time.
\end{theorem}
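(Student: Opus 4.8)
The plan is to reduce \eqref{1dEqn2} to an autonomous scalar ODE and then choose analytic data for which that ODE blows up in finite time. Exactly as in the computation displayed just before the theorem, one looks for the solution in the form $\omega(x,t)=\omega_0\big(\phi_{\Lambda(t)}(x)\big)\,\frac{\sin x}{\sin\phi_{\Lambda(t)}(x)}$, where $\phi_\Lambda(x):=2\arctan\!\big(e^{-\Lambda}\tan(x/2)\big)$ is the flow map of $\dot X_t=\lambda(t)\sin X_t$ and $\Lambda(t)=\int_0^t\lambda$; substituting this ansatz into the relation defining $\lambda$ produces the closed equation $\dot\Lambda=G(\Lambda)$, $\Lambda(0)=0$, with $G(\Lambda)=\frac1{4\pi}\big(e^{\Lambda}I_1(\Lambda)+e^{-\Lambda}I_2(\Lambda)\big)$ and $I_j(\Lambda)=\int_{-\pi}^{\pi}\omega_0\big(\phi_\Lambda(x)\big)w_j(x)\,\rmd x$, where $w_1(x)=\frac{\sin x\cos x}{\tan(x/2)}=(1+\cos x)\cos x$ and $w_2(x)=\sin x\cos x\tan(x/2)=(1-\cos x)\cos x$ (this is precisely the equation displayed in the text, rewritten using $\frac{\sin x}{\tan(x/2)}=1+\cos x$ and $\sin x\tan(x/2)=1-\cos x$). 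The map $\Lambda\mapsto G(\Lambda)$ is real-analytic, so local existence and uniqueness of $\Lambda$ --- and hence of the analytic solution $\omega$ --- is a routine Picard argument, and the solution persists as long as $\Lambda$ remains finite.

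The decisive difference from \eqref{1dEqn1} is that the kernels $w_1,w_2$ are \emph{not} mean-zero: $\int_{-\pi}^{\pi}w_1(x)\,\rmd x=\pi$ and $\int_{-\pi}^{\pi}w_2(x)\,\rmd x=-\pi$ (in the variable $u=\tan(x/2)$ they become proportional to $\frac{1-u^2}{(1+u^2)^3}$ and $\frac{u^2(1-u^2)}{(1+u^2)^3}$, which have nonzero integral, whereas the corresponding \eqref{1dEqn1} kernel $\propto\frac{u}{(1+u^2)^3}$ is odd and integrates to zero). Since $\phi_\Lambda(x)\to0$ pointwise on $(-\pi,\pi)$ as $\Lambda\to\infty$, dominated convergence gives $I_1(\Lambda)\to\omega_0(0)\pi$ and $I_2(\Lambda)\to-\omega_0(0)\pi$, so if $\omega_0(0)>0$ the $e^{\Lambda}$ term in $G$ is genuinely amplifying and $G(\Lambda)\sim\tfrac{\omega_0(0)}{4}e^{\Lambda}$ as $\Lambda\to\infty$. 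The remaining task is to pick $\omega_0$ so that in addition $G(\Lambda)>0$ for \emph{all} $\Lambda\ge0$.

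I would take $\omega_0(x)=a+\cos x$ with $a$ large (say $a=3$): analytic, $2\pi$-periodic, $\omega_0(0)=a+1>0$. Because the constant $a$ multiplies $\int w_1=\pi$ (resp.\ $\int w_2=-\pi$), one has $I_1(\Lambda)=a\pi+J_1(\Lambda)$, $I_2(\Lambda)=-a\pi+J_2(\Lambda)$ with $J_j(\Lambda)=\int_{-\pi}^{\pi}\cos\phi_\Lambda(x)\,w_j(x)\,\rmd x$; from $|\cos\phi_\Lambda|\le1$ one gets $|J_j|\le\int_{-\pi}^{\pi}|w_j|=4$, from $|\partial_\Lambda\phi_\Lambda|=\frac{2e^{-\Lambda}|\tan(x/2)|}{1+e^{-2\Lambda}\tan^2(x/2)}\le1$ one gets $|J_j'|\le4$, and $J_j(0)=\int_{-\pi}^{\pi}\cos x\,w_j(x)\,\rmd x=\pi$. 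Hence $4\pi G(\Lambda)=2a\pi\sinh\Lambda+\big(e^{\Lambda}J_1(\Lambda)+e^{-\Lambda}J_2(\Lambda)\big)$: for $0\le\Lambda<\pi/4$ the Lipschitz bound gives $J_j(\Lambda)\ge\pi-4\Lambda>0$, so the bracket is $\ge2(\pi-4\Lambda)\cosh\Lambda>0$; for $\Lambda\ge\pi/4$ the bracket is $\ge-8\cosh\Lambda$ while $2a\pi\sinh\Lambda\ge2a\pi\tanh(\pi/4)\,\cosh\Lambda$, so $4\pi G(\Lambda)\ge2\cosh\Lambda\,\big(a\pi\tanh(\pi/4)-4\big)>0$ once $a$ is large. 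Thus $G>0$ on all of $[0,\infty)$ and $G(\Lambda)\gtrsim e^{\Lambda}$ for large $\Lambda$, so $\Lambda$ solves $\dot\Lambda=G(\Lambda)>0$ with $\int_0^{\infty}\frac{\rmd\sigma}{G(\sigma)}<\infty$; separation of variables gives $\Lambda(t)\to\infty$ as $t\to T_*:=\int_0^{\infty}\frac{\rmd\sigma}{G(\sigma)}<\infty$. On $[0,T_*)$, $\Lambda(t)$ is finite and real-analytic in $t$, $\phi_{\Lambda(t)}$ is a real-analytic diffeomorphism of $\mathbb{S}^1$ (analytic also at $x=\pi$: writing $x=\pi-y$ gives $\phi_\Lambda(\pi-y)=\pi-2\arctan(e^{\Lambda}\tan(y/2))$), and $\frac{\sin x}{\sin\phi_{\Lambda(t)}(x)}$ is analytic and nonvanishing, so $\omega\in C^{\omega}\big(\mathbb{S}^1\times[0,T_*)\big)$; evaluating the ansatz at the repelling fixed point $x=0$, where $\phi_\Lambda'(0)=e^{-\Lambda}$, gives $\omega(0,t)=\omega_0(0)/\phi_{\Lambda(t)}'(0)=(a+1)e^{\Lambda(t)}\to\infty$, so $\|\omega(t)\|_{L^\infty(\mathbb{S}^1)}\to\infty$ as $t\to T_*$.

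The hard part is the step just described: showing $G>0$ on the \emph{whole} half-line, not merely for large $\Lambda$. The subtlety is genuine --- \eqref{1dEqn2} possesses a codimension-one set of steady states ($\{\omega:\int_{\mathbb{S}^1}\omega\cos x\,\rmd x=0\}$, which already contains every constant $\omega_0$), and for a badly chosen $\omega_0$ the increasing quantity $\Lambda$ can stall at a finite $\Lambda_*$ with $G(\Lambda_*)=0$, i.e.\ relax to a nearby steady state rather than blow up; naive size bounds on $I_1,I_2$ leave a sign-indefinite window near $\Lambda=0$. What rescues the argument is combining a large zeroth Fourier mode (forcing $I_1\approx a\pi$, $I_2\approx-a\pi$) with the Lipschitz-in-$\Lambda$ control of the $O(1)$ corrections $J_j$, which closes the small-$\Lambda$ gap. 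Everything else --- the reduction to the scalar ODE, the analyticity of the resulting $\omega$, and the Riccati-type blow-up of $\Lambda$ --- is routine.
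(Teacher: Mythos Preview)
The paper does not prove this theorem; it states the reduced ODE for $\Lambda$, makes the crucial observation that the kernels $\frac{\sin x\cos x}{\tan(x/2)}$ and $\sin x\cos x\tan(x/2)$ are \emph{not} mean-zero (in contrast to \eqref{1dEqn1}), and then declares the result ``an exercise.'' Your proof is a correct and complete execution of exactly that program: you take the displayed equation for $\Lambda'$, rewrite the kernels as $(1\pm\cos x)\cos x$, compute their integrals and $L^1$-norms exactly, and then choose $\omega_0=a+\cos x$ so that the resulting scalar ODE $\dot\Lambda=G(\Lambda)$ has $G>0$ on all of $[0,\infty)$ with $G(\Lambda)\gtrsim e^{\Lambda}$ at infinity, forcing $\Lambda\to\infty$ in finite time; the blowup of $\omega$ then follows from evaluation at the repelling fixed point.

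The one place you go beyond what the paper's hint strictly requires is the uniform positivity of $G$ on $[0,\infty)$, which you handle by splitting into the regime $\Lambda<\pi/4$ (where the Lipschitz bound $|J_j'|\le 4$ keeps $J_j(\Lambda)\ge\pi-4\Lambda>0$) and $\Lambda\ge\pi/4$ (where the large constant mode dominates via $2a\pi\sinh\Lambda$). This is a genuine detail: constants are steady states here, so some nontrivial mode is needed to initiate growth, and one must then rule out relaxation to a nearby equilibrium. Your choice of a large zeroth mode plus $\cos x$ handles this cleanly. I checked your numerical claims ($\int|w_j|=4$, $J_j(0)=\pi$, $|\partial_\Lambda\phi_\Lambda|\le1$, the threshold $a>4/(\pi\tanh(\pi/4))\approx1.94$) and they are all correct, as is the analyticity near $x=\pi$ via the substitution $x=\pi-y$.
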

\end{myshade}

While the above models appear to be somewhat overly-simplified, we do gain some very important pieces of information about the dynamics of the general equation 
\[\partial_t\omega+u\partial_x\omega=\omega\partial_x u,\] 
with some functional relation between $u$ and $\omega.$ Generally speaking, the picture we get is that:
\begin{itemize}
\item Growth of vorticity is coming from the vortex stretching term $\omega\partial_x u.$
\item The transport term depletes the growth. 
\item Whether the growth or depletion wins depends on the regularity of the solution \emph{and} the parity of the mapping relating $u$ and $\omega$. 
\end{itemize}
The above observations along with the existing results on various models of this type motivates us to make the following guess about the nature of solutions of the 1d vorticity equation for general Biot-Savart law $\omega\rightarrow u$. As we will discuss below, some aspects of the conjecture have already been established on a number of non-trivial models.

\begin{myshade2}
  \vspace{-1mm}
\begin{conjecture}\label{1dModelConjecture}
Consider the 1d vorticity equation on $\mathbb{S}^1$:
\[\partial_t\omega+u\partial_x\omega=\omega\partial_x u,\qquad u=\mathcal{K}(\omega),\] where $\mathcal{K}$ is a non-trivial Fourier integral operator with  bounded symbol $m$ satisfying $|m(k)|\leq \frac{C}{1+|k|}$. 
\begin{itemize}
\item In general, $H^s$ solutions should develop singularities when $s<3/2$. 
\item If $m$ is odd, there exist analytic solutions that become singular in finite time. 
\item If $m$ is even, $H^s$ solutions must be global whenever $s>\frac{3}{2}$. 
\end{itemize}
\end{conjecture}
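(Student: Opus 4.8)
Since Conjecture~\ref{1dModelConjecture} is open, what follows is a program organized around the dichotomy made visible by the two solvable examples \eqref{1dEqn1} and \eqref{1dEqn2}. In all three parts the first reduction is the same. One works inside a parity (symmetry) class on $\mathbb{S}^1$ in which the velocity $u=\mathcal{K}(\omega)$ is forced to possess a nondegenerate zero, placed after translation at $x=0$. In such a class the ratio $\omega/\sigma$, where $\sigma$ is the canonical smooth function vanishing simply at the stagnation points with the correct parity (in the rank-one examples $\sigma=\sin$), is transported along the characteristic flow $\dot{X}_t=u(X_t,\cdot)$, so that $\omega(x,t)=(\omega_0/\sigma)(X_t^{-1}(x))\,\sigma(x)$ up to the error incurred by not diagonalizing $\mathcal{K}$ against $\sigma$. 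The solution's growth is then governed by the scalar cumulative-stretching variable $\Lambda(t)=\int_0^t\lambda(s)\,\rmd s$, $\lambda(t)=\partial_x u(0,t)$, which satisfies an integro-differential equation whose kernel is the pullback of the kernel of $\mathcal{K}$ under a $\tanh$-type change of variables, exactly as in the displayed formulae preceding \eqref{1dEqn2}. The decisive question is whether the kernel multiplying $e^{\Lambda}$ has zero mean: if it does, $\dot\Lambda$ stays bounded by $\|\omega_0\|_{C^1}$ and one expects global regularity; if it does not, $\dot\Lambda\gtrsim e^{c\Lambda}$ and $\Lambda$ blows up in finite time.

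For part (2), $m$ odd, the plan is to show that oddness of the symbol flips parity — mapping the chosen class to one with a stagnation point at $0$ driven by the \emph{wrong}-parity part of $\omega$, i.e. the configuration of \eqref{1dEqn2} rather than \eqref{1dEqn1} — so that the leading pullback kernel has nonzero mean and the Riccati mechanism is operative. Concretely: (i) select the parity class using the flip; (ii) compute $\lambda(t)=\partial_x u(0,t)$ by pairing $\omega(t)$ against the localized kernel of $\mathcal{K}$ at $0$, substitute the characteristics formula $X_t^{-1}(x)=2\arctan(e^{-\Lambda(t)}\tan(x/2))$-type, and isolate the $e^{\Lambda}$-coefficient; (iii) exploit oddness of $m$ to show this coefficient is a fixed nonzero multiple of a sign-definite quantity, so that for analytic data with a favorable sign near $0$ one gets $\dot\Lambda\geq c\,e^{\Lambda}-C$ and finite-time blowup. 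Step (iii) is where the real work lies: when $\mathcal{K}$ is not rank one the characteristics representation is not exact, so one must treat the genuine operator as a perturbation of its leading Fourier content, establish persistence of the nonzero-mean/positivity, and — crucially — show the perturbed Riccati inequality is not destroyed. I expect this robust lower bound to be the main obstacle in part (2).

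Part (1) is the low-regularity shadow of part (2): for $C^\alpha$ data with $\alpha<1$ (equivalently $H^s$, $s<3/2$, by the Sobolev embedding on $\mathbb{S}^1$) one should not need $m$ odd at all, because the vortex-stretching term beats transport once a corner is present. The plan mirrors the construction behind the $C^\alpha$-blowup theorem for \eqref{1dEqn1}: pin a $C^\alpha$ cusp of $\omega_0$ at the stagnation point with a favorable sign, so that the same integro-differential relation yields $\dot\Lambda\gtrsim e^{(1-\alpha)\Lambda}$ — the exponent $1-\alpha$ being precisely the deficit by which the Hölder modulus of $\omega_0$ fails to produce the single mean-zero cancellation — hence finite-time singularity. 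The technical point is that the degree-$(-1)$ smoothing of $\mathcal{K}$ must be used to keep $u\in C^1$ (so characteristics make sense) while $\omega$ is only $C^\alpha$, and one must verify the cusp is not instantaneously smoothed out; quantifying this for a general Fourier integral operator is the heart of part (1).

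Part (3), $m$ even and $s>3/2$, is the deepest part and is the exact analogue of global regularity for the De Gregorio equation (the $m$-even model par excellence, $u=-|\partial_x|^{-1}\omega$). The plan is an energy estimate: a \eqref{BasicEE}-type inequality shows a blowup at $T_*$ forces $\int_0^{T_*}\|\partial_x u\|_{L^\infty}\,\rmd t=\infty$, so it suffices to control $\|\partial_x u\|_{L^\infty}$ by lower-order norms with at most a logarithmic loss and then close a Grönwall/Osgood loop. Evenness of $m$ enters twice. First, $u=\mathcal{K}\omega$ then has the same parity as $\omega$, so in the class where $u$ stagnates at $0$ the part of $\omega$ producing $\partial_x u(0,\cdot)$ is the one matched to the flow there — the configuration of \eqref{1dEqn1} — and the relevant pullback kernel is mean-zero. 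Second, that mean-zero property converts the naively linear-in-$\|\omega\|_{L^\infty}$ contribution near each zero of $u$ into one controlled by the modulus of continuity of $\omega$, i.e. by $\|\omega\|_{C^{\beta}}$ with a small power, which defeats the Riccati mechanism. The genuine obstruction — and the reason this remains a conjecture — is that, unlike 2D Euler, there is no conservation law bounding $\|\omega\|_{L^\infty}$; closing the estimate requires showing the transport depletion is strong enough that $\|\omega(t)\|_{H^{s}}$ grows at most exponentially, presumably through a nonlinear stability analysis about the one-parameter family of steady states $u\propto\sigma$, tracking $\omega/\sigma$ in a weighted $H^{s-1}$ norm. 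Making that stability argument uniform over admissible $\mathcal{K}$ (not merely De Gregorio's operator) is where I expect the decisive difficulty to lie.
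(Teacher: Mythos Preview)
This statement is a \emph{conjecture}, not a theorem; the paper offers no proof, only heuristic evidence drawn from the two rank-one examples \eqref{1dEqn1}--\eqref{1dEqn2} and the De Gregorio literature. You correctly recognize this and present a research program rather than a proof, and that program is closely aligned with the paper's own heuristics: the parity dichotomy governing whether the pullback kernel at the stagnation point has nonzero mean, the $C^\alpha$-cusp mechanism for part~(1), and the stability-near-steady-states picture for part~(3). In that sense there is nothing to ``compare against'' and your outline is a faithful extrapolation of the paper's reasoning.

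Two sharpening remarks. First, your identification of De Gregorio as the canonical even-$m$ model is right, but you should note that global regularity for De Gregorio is itself only \emph{partially} settled---known for nonnegative data and for sign-changing data under a symmetry hypothesis, but open in full generality. So the difficulty in part~(3) is not merely ``uniformity over admissible $\mathcal{K}$''; the conjecture is open already for the single operator $\partial_x u=H(\omega)$. Second, in part~(2) your step (iii) is the correct target, but once $\mathcal{K}$ is not rank one the transported-ratio formula $\omega=(\omega_0/\sigma)\circ X_t^{-1}\cdot\sigma$ is no longer exact and there is no evident small parameter to treat the remainder perturbatively; the paper suggests no mechanism here either, so this is a genuine open step rather than a technicality you can expect to close routinely.
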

\end{myshade2}

\noindent We have written the bullet points of the conjecture in order of perceived difficulty. Note that the two models we discussed above exhibit exactly the behavior described in the above conjecture. 
\begin{remark}
It is important to emphasize that the spatial domain in the above conjecture is $\mathbb{S}^1$. When the spatial domain has a boundary such as the case of domains like $[a,b]$, the results of the conjecture do not necessarily hold. In particular, there may be singularities no matter the parity of $m$--see \cite{SarriaSaxton} as an example. 
\end{remark}

Let us now move to discuss several of the existing results on non-trivial models that seem to support the above conjecture. 

\subsubsection{Overview of Results on 1d Models of the Vorticity Equation}

As mentioned above, the first 1d model of the vorticity equation that we are aware of is the model of Constantin, Lax, and Majda \cite{CLM}, where only the effect of vortex stretching is studied.  A related model is the De Gregorio model:
\[\partial_t\omega+u\partial_x\omega=\omega\partial_x u \]
\begin{equation}\label{DGBSLaw}\partial_x u = H(\omega),\end{equation} where $H$ is the Hilbert transform defined on $L^2(\mathbb{R})$ by:
\[H(f)(x)=\frac{1}{\pi}P.V. \int \frac{f(y)}{x-y}\rmd y.\] Alternatively, we can write:
\[\mathcal{F}\Big(Hf\Big) (\xi)=-i\text{sgn}(\xi)\mathcal{F}(f)(\xi),\] where $\mathcal{F}$ is the Fourier transform. The choice of $H$ in \eqref{DGBSLaw} is due to the desire to have the relation between $\partial_x u$ and $\omega$ to be translation invariant, zeroth order, and truly a singular integral operator (as opposed to just the identity on the right hand side, for example). Numerical computations of De Gregorio himself \cite{DG} indicated that solutions to his equation may be global, but major theoretical advances in this direction were only made just a few years ago. Since the symbol relating $u$ and $\omega$ in \eqref{DGBSLaw} is even, Conjecture \ref{1dModelConjecture} indicates that solutions to the De Gregorio model with high smoothness should be globally regular while low regularity solutions should become singular. All of the recent advances in the study of the De Gregorio equation have confirmed these predictions. 

In the direction of singularity formation for low regularity data, advances were made in \cite{EJDG} where self-similar blow-up profiles were constructed via perturbative methods. These blow-up profiles were studied further in the works \cite{EGM} and \cite{CHHDG} where asymptotic stability was also established. In the very recent work \cite{ChenDGMain}, J. Chen proved the first bullet point in \ref{1dModelConjecture} for the De Gregorio model using a different type of perturbative method that perturbs a steady state to construct a blow-up profiles with smoothness $C^{1-\epsilon}$.  In the direction of global regularity, the authors of \cite{JSS} observed a very interesting mechanism for asymptotic stability of steady states in the De Gregorio model and gave the first results on global regularity for non-trivial solutions. Thereafter, it was shown that non-negative regular solutions are global \cite{LeiLiuRen}. Most recently, this was extended to sign changing solutions under some symmetry hypothesis \cite{ChenDGMain}. 

Let us close by remarking that there are several other 1d models of fluid problems related to 3d Euler \cite{KReview, CKY, Choi2017, DKX, HK, KT}.  While these models are not models of the vorticity equation \emph{per se}, they tend to exhibit several similar features to the models we described above. One important example that satisfies the assumptions and conclusions of Conjecture \ref{1dModelConjecture} is the equation:
\[\partial_t\omega+u\partial_x\omega=\omega\partial_x u,\qquad \partial_{xx}u=\omega.\]
See \cite{SarriaSaxton} for more details.

\subsubsection{Symmetry Reductions and Special Ansaztes}

A natural first step to understand the dynamics of real solutions to the Euler equation is to consider solutions in symmetry. The solution map for the Euler equation enjoys a number of important symmetry properties. In particular, whenever $u$ is a solution, $\lambda, \tau>0$ are constants, and $\mathcal{O}$ satisfies $\mathcal{O}\mathcal{O}^t={\rm I}$, we have that
\[\frac{1}{\lambda}u(\lambda x, t),\qquad \mathcal{O}^t u(\mathcal{O}x,t),\qquad \tau u(x, \tau t),\] are automatically solutions. It follows that solutions belonging to an existence and uniqueness class must retain symmetries of the above type. Up to now, there are three principle types of symmetries:

\begin{itemize}
\item Rotationally symmetric and reflection symmetric solutions

\item Scale-Invariant solutions

\item Self-Similar solutions.

\end{itemize}

There are also solutions that are a combination of various symmetries. For example, helically symmetric solutions \cite{ET} are gotten through a combination of the translation and rotation symmetries. 

\subsubsection{Rotation and Reflection Symmetries}

Rotation and reflection symmetries are helpful to reduce the complexity of the Euler equation. A particular example is the axi-symmetry assumption. We let \[r=(x_1^2+x_2^2)^{1/2}.\]

\begin{myshade}
  \vspace{-1mm}
\begin{definition}
A velocity field $u$ is said to be axi-symmetric (with respect to $x_3$) if for all rotation matrices $\mathcal{O}$ fixing the $x_3$ axis we have:
\[u(\mathcal{O}x)=\mathcal{O}u(x).\]
\end{definition} 
\end{myshade}

This immediately implies that $u_3$ is a function of $r$ and $x_3$ only. Moreover, expanding $u_h=(u_1,u_2)$ in a Fourier series in $\theta=\arctan\frac{x_2}{x_1}$, we see that 
\[u_h(r,x_3,\theta)=\sum_{k}u_h^k(r,x_3)\exp(i k\theta).\] Thus, we have that 
\[\sum_k u^k_h(r,x_3)\exp(ik (\theta+\alpha)) = \exp(i\alpha)\sum_k u^k_h(r,x_3)\exp(ik \theta)\] for all $\alpha\in\mathbb{R}$. It follows that $u_h^k\not =0$ if and only if $k=1$. We have established the following Lemma.

\begin{myshade}
  \vspace{-1mm}
\begin{lemma}
Any $L^2$ axi-symmetric velocity field can be written as:
\[u=u_r(r,x_3)e_r+u_\theta(r,x_3) e_\theta+u_3(r,x_3) e_3,\] where 
$e_r=\frac{1}{r}(x_1,x_2,0)$, $e_\theta = \frac{1}{r}(-x_2,x_1,0)$, and  $e_3=(0,0,1)$.
\end{lemma}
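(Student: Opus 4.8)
The plan is to work in cylindrical coordinates and exploit the fact that rotations $\mathcal{O}_\alpha$ about the $x_3$-axis act simply on both the base point and the moving frame. First I would fix, for $r>0$, the orthonormal frame
\[
e_r(\theta) = (\cos\theta,\sin\theta,0), \qquad e_\theta(\theta) = (-\sin\theta,\cos\theta,0), \qquad e_3 = (0,0,1),
\]
so that the point $x$ with cylindrical coordinates $(r,\theta,x_3)$ is $x = r\,e_r(\theta) + x_3\,e_3$, and decompose the field pointwise (for a.e. $x$ with $r>0$) as
\[
u(x) = a(r,\theta,x_3)\,e_r(\theta) + b(r,\theta,x_3)\,e_\theta(\theta) + c(r,\theta,x_3)\,e_3,
\]
with $a,b,c$ the measurable, locally $L^2$ (in the measure $r\,\rmd r\,\rmd\theta\,\rmd x_3$) components of $u$ in this frame. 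Note that on $\{r>0\}$ one has $e_r(\theta) = \tfrac1r(x_1,x_2,0)$ and $e_\theta(\theta) = \tfrac1r(-x_2,x_1,0)$, matching the frame in the statement.

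Next I would insert the equivariance relation $u(\mathcal{O}_\alpha x) = \mathcal{O}_\alpha u(x)$. Since $\mathcal{O}_\alpha$ sends the point with coordinates $(r,\theta,x_3)$ to the one with coordinates $(r,\theta+\alpha,x_3)$ and acts on the frame by $\mathcal{O}_\alpha e_r(\theta) = e_r(\theta+\alpha)$, $\mathcal{O}_\alpha e_\theta(\theta) = e_\theta(\theta+\alpha)$, $\mathcal{O}_\alpha e_3 = e_3$, the left-hand side becomes $a(r,\theta+\alpha,x_3)e_r(\theta+\alpha) + b(r,\theta+\alpha,x_3)e_\theta(\theta+\alpha) + c(r,\theta+\alpha,x_3)e_3$, while the right-hand side becomes $a(r,\theta,x_3)e_r(\theta+\alpha) + b(r,\theta,x_3)e_\theta(\theta+\alpha) + c(r,\theta,x_3)e_3$. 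Comparing coefficients in the orthonormal basis $\{e_r(\theta+\alpha),e_\theta(\theta+\alpha),e_3\}$ forces $a(r,\theta+\alpha,x_3) = a(r,\theta,x_3)$ for all $\alpha$ and a.e. $(r,\theta,x_3)$, and likewise for $b$ and $c$; hence $a,b,c$ are independent of $\theta$. Setting $u_r := a$, $u_\theta := b$, $u_3 := c$ yields the claimed representation. (Equivalently, one can run the Fourier-in-$\theta$ computation already sketched just before the statement: writing each scalar component as $\sum_k(\cdot)^k e^{ik\theta}$, the equivariance kills every harmonic except the one compatible with the transformation law of the frame.)

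There is no real obstacle here, only some bookkeeping care: the frame $e_r,e_\theta$ degenerates on the axis $\{r=0\}$, so the decomposition and the matching argument are carried out on $\{r>0\}$, a set of full measure, and the identities hold only a.e. (or everywhere on $\{r>0\}$ if one assumes $u$ continuous, in which case continuity then pins down the value on the axis). Since $u\in L^2$ to begin with, the reconstructed field $u_r e_r + u_\theta e_\theta + u_3 e_3$ automatically extends in $L^2$ across the axis, so nothing is lost. I would also record explicitly that the reduction uses only the rotational part of the symmetry; reflection symmetries, if additionally imposed, produce parity constraints on $u_r,u_\theta,u_3$ but are not needed for this Lemma.
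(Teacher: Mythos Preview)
Your proof is correct and is essentially the same argument as the paper's: both exploit the equivariance $u(\mathcal{O}_\alpha x)=\mathcal{O}_\alpha u(x)$ together with the transformation law of the cylindrical frame under $\mathcal{O}_\alpha$, with you comparing coefficients directly in the moving frame while the paper reaches the same conclusion via the Fourier expansion in $\theta$ (which you also mention as the equivalent alternative). Your remarks on the degeneracy at $r=0$ and the a.e.\ nature of the identities are a welcome piece of bookkeeping that the paper leaves implicit.
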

\end{myshade}

Now via some simple identities\footnote{$e_r\cdot\nabla  = \partial_r, e_\theta\cdot\nabla = \partial_\theta, e_3\cdot\nabla=\partial_3,$ while
$\partial_i e_r =\partial_i e_\theta=\partial_i e_3=0,$ for $i\in \{r,3\},$ and
$\partial_\theta e_r=e_\theta, \partial_\theta e_\theta = -e_r, \partial_\theta e_3=0.$}, we find that axi-symmetric solutions of the 3d Euler equation satisfy a simplified equation.

\begin{myshade}
  \vspace{-1mm}
\begin{corollary}
Axi-symmetric solutions $u(\cdot,t)$ of the Euler equation satisfy
\begin{equation}\label{ASE}\partial_t v+v\cdot\nabla v+ \nabla p =(\frac{(u_\theta)^2}{r},0),\end{equation}
\begin{equation}\nabla\cdot(r v)=0,\end{equation}
\begin{equation}\label{swirl}\partial_t(ru_\theta)+v\cdot\nabla (ru_\theta) = 0,\end{equation}
where $v=(u_r, u_3)$ and $\nabla=(\partial_r,\partial_3)$.
\end{corollary}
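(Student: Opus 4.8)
\textit{Proof proposal.} The plan is to rewrite the Cartesian Euler system \eqref{eeb}--\eqref{incompress} in the moving orthonormal frame $\{e_r,e_\theta,e_3\}$ and simply read off the scalar (respectively two-component) equations. First I would record that the pressure inherits the symmetry of the velocity: since $-\Delta p = \tr\big((\nabla u)^2\big)$ and $u$ is axi-symmetric, the right-hand side depends only on $(r,x_3)$, so (with the normalization or boundary/decay condition that fixes $p$) one gets $p=p(r,x_3)$ and in particular $\partial_\theta p = 0$. This kills any angular component of $-\nabla p$ and is precisely what makes the swirl equation \eqref{swirl} source-free.

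Next I would substitute $u = u_r e_r + u_\theta e_\theta + u_3 e_3$ into $\partial_t u + u\cdot\nabla u = -\nabla p$, using the frame identities quoted above ($e_i\cdot\nabla = \partial_i$ for $i\in\{r,3\}$, $e_\theta\cdot\nabla = \tfrac1r\partial_\theta$, $\partial_\theta e_r = e_\theta$, $\partial_\theta e_\theta = -e_r$, all other derivatives of the frame vectors zero). Since $u_r,u_\theta,u_3,p$ are $\theta$-independent, the material derivative collapses to $\partial_t + v\cdot\nabla$ with $v\cdot\nabla := u_r\partial_r + u_3\partial_3$, \emph{except} that $u\cdot\nabla u$ acquires the frame-rotation terms $\tfrac{u_\theta}{r}\partial_\theta(u_r e_r + u_\theta e_\theta) = \tfrac{u_r u_\theta}{r}e_\theta - \tfrac{u_\theta^2}{r}e_r$. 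Collecting the $e_r$ and $e_3$ components then gives $\partial_t v + v\cdot\nabla v + \nabla p = (u_\theta^2/r,\,0)$ with $\nabla=(\partial_r,\partial_3)$; collecting the $e_\theta$ component gives $\partial_t u_\theta + v\cdot\nabla u_\theta + \tfrac{u_r u_\theta}{r} = 0$, and multiplying by $r$ and using $v\cdot\nabla r = u_r$ (so $v\cdot\nabla(r u_\theta) = r\,v\cdot\nabla u_\theta + u_r u_\theta$) recombines this into the transported form \eqref{swirl}. Finally $0 = \nabla\cdot u = \tfrac1r\partial_r(r u_r) + \partial_3 u_3$, and multiplying by $r$ gives $\partial_r(r u_r) + \partial_3(r u_3) = \nabla\cdot(r v) = 0$.

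The computation is entirely routine; the only point requiring care — and where a careless derivation goes wrong — is the non-constancy of the cylindrical frame, which is simultaneously the source of the centrifugal term $u_\theta^2/r$ in \eqref{ASE} and of the $u_r u_\theta/r$ term that recombines into the clean conservation law for $r u_\theta$. If one prefers to avoid moving-frame calculus, the same identities follow by plugging the axi-symmetric ansatz directly into the Cartesian PDE and equating the projections onto $e_r$, $e_\theta$, $e_3$ componentwise; this is more tedious but fully self-contained, and the ``hard part'' there is again just bookkeeping of the angular derivatives of $(x_1,x_2)/r$ and $(-x_2,x_1)/r$.
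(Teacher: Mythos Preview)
Your proposal is correct and matches the paper's approach exactly: the paper does not give a displayed proof but simply writes ``via some simple identities'' and relegates the frame relations $e_r\cdot\nabla=\partial_r$, $e_\theta\cdot\nabla=\tfrac1r\partial_\theta$, $\partial_\theta e_r=e_\theta$, $\partial_\theta e_\theta=-e_r$ (etc.) to a footnote, leaving the reader to carry out precisely the computation you wrote out. Your added remark that $p=p(r,x_3)$ (hence $\partial_\theta p=0$) is the one point the paper leaves implicit, and it is needed to get the source-free form of \eqref{swirl}; otherwise there is nothing to add.
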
 
\end{myshade}

Even though axi-symmetric solutions are governed by a system of two evolution equations in two spatial variables $(r,x_3),$ the qualitative behavior of solutions remains largely open. As we shall see below, even in the case where $u_\theta\equiv 0$ we are able to make highly non-trivial statements about the dynamics. 

\subsubsection{Scale-Invariant Solutions}\label{SISolutions}

We now move to discuss scale-invariant solutions. We start with a formal definition of scale-invariance, where the ``scale" we choose is obviously motivated by the Euler equation.

\begin{myshade}
  \vspace{-1mm}
\begin{definition}[Scale Invariance]
A velocity field $u$ is said to be scale invariant if for every $\lambda>0$ we have that 
\[u(\lambda x)=\lambda u(x).\]
\end{definition}
\end{myshade}

 Observe that this level of homogeneity is the only one that could possibly be propagated by solutions except perhaps in trivial situations.  Surprisingly, these solutions seem to have only been discovered and analyzed in recent years \cite{EJSymm,E1,EJB,EJE,EIS}. One reason appears to be that such solutions are automatically linearly growing at spatial infinity and are not smooth except in special situations or possibly in domains that exclude the origin. We remark that it is an interesting problem to classify those solutions that satisfy the no-penetration boundary condition on annular domains in $\mathbb{R}^3$, though it is known that there is a large number of non-trivial solutions of this type. Let us remark here that the assumption of scale invariance essentially reduces the complexity of full 3d solutions to essentially a 2d problem. If it were also possible to combine this with another symmetry like axi-symmetry, then we would have reduced the problem to a one-dimensional one that would be more amenable to analysis. 

The key difficulty in analyzing scale-invariant solutions is to define the pressure in the Euler equation:
\[\partial_t u+u\cdot\nabla u+\nabla p=0.\] In particular, to restrict the Euler equation to scale-invariant solutions we would need to show that if $u$ is scale-invariant then $\nabla p$ is scale invariant, where $p$ is determined by:
\begin{equation}\label{pEq}-\Delta p = \div(u\cdot\nabla u). \end{equation} In terms of scaling, it would seem that if $u$ were scale-invariant, then the right hand side of \eqref{pEq} is 0-homogeneous spatially so that the solution $p$ should be $2$-homogeneous. If this "scaling counting" were rigorous, then we would have no trouble to directly analyze scale invariant solutions. Unfortunately, when it comes to the Poisson equation, integer homogeneity counting is in general false and there need to be logarithmic corrections. In particular, there exist $u$ that are scale-invariant for which \emph{any} solution to \eqref{pEq} grows like $|x|^2\log|x|$ when $|x|\rightarrow\infty$. The extra logarithm can be viewed as a "resonance" due to the harmonic polynomials of degree two. In particular, the scaling problem can be solved by imposing further restrictions on $u$ that ensure that this resonance does not occur. To make this precise, let us first make some definitions. First we define \[\mathcal{H}_2=\left\{\frac{P(x)}{|x|^2}: P\,\, \text{is a harmonic polynomial of degree 2}\right\}.\]

\begin{myshade}
  \vspace{-1mm}
\begin{definition}
A function $f:\mathbb{S}^{d-1}\rightarrow\mathbb{R}$ is said to belong to $\mathcal{H}_2^\perp$ if \[\int_{\mathbb{S}^{d-1}} f(\sigma)h(\sigma)d\sigma=0,\] for all $h\in\mathcal{H}_2$. 
\end{definition}
\end{myshade}

The following Lemma was established in \cite{EJSymm} in the case $d=2$, though the proof carries over word by word to general $d\geq 2$.

\begin{myshade}
  \vspace{-1mm}
\begin{lemma}
Let $f\in L^\infty(\mathbb{R}^d)$ and that $f(r,\cdot)\in \mathcal{H}_2^\perp$ for all $r\in [0,\infty)$. Then, there exists a {unique} $\psi\in W^{2,p}_{loc}$ for all $p<\infty$ with \[\Delta\psi=f,\]  $|\psi(x)|\leq C|x|^2$ on $\mathbb{R}^d,$ and with $\psi(r,\cdot)\in \mathcal{H}_2^\perp$ for all $r\in [0,\infty)$. 
\end{lemma}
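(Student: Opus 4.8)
The plan is to construct $\psi$ by a direct Newtonian-potential argument, then upgrade the decay and the $\mathcal{H}_2^\perp$ structure using the orthogonality hypothesis on $f$, and finally establish uniqueness by a Liouville-type argument. First I would reduce to the radial decomposition: expand $f(x) = \sum_{k\geq 0} f_k(r)$ into its components along spherical harmonics of degree $k$, where by hypothesis $f_2 \equiv 0$. For each fixed $k$, solving $\Delta \psi = f$ componentwise amounts to solving a one-dimensional ODE in $r$ (the radial part of the Laplacian acting on degree-$k$ sphericals), namely $\psi_k'' + \tfrac{d-1}{r}\psi_k' - \tfrac{k(k+d-2)}{r^2}\psi_k = f_k$. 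The homogeneous solutions are $r^k$ and $r^{2-d-k}$, and the particular solution built by variation of parameters has an explicit kernel; the point is that the only obstruction to a solution that is $O(|x|^2)$ rather than $O(|x|^2\log|x|)$ comes precisely from the degree-$2$ component, because there the indicial exponent $k=2$ coincides with the natural growth rate forced by a bounded source. Since $f_2\equiv 0$ by assumption, this resonant mode is absent, and every component $\psi_k$ can be chosen with $|\psi_k(x)| \leq C_k |x|^2$, with the constants summable against $\|f\|_{L^\infty}$ after using decay of the spherical-harmonic coefficients of an $L^\infty$ function (or, more robustly, by working with the full Newtonian potential of $f\cdot\chi$ cut off appropriately and handling the far field mode-by-mode).

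A cleaner route, which I would actually carry out, is to set $\psi(x) := \int_{\mathbb{R}^d} \big(G(x-y) - G_N(x,y)\big) f(y)\, \rmd y$ where $G$ is the fundamental solution of the Laplacian and $G_N(x,y)$ is a finite Taylor-type correction (the terms of degree $0$ and $1$ in $x$ of $G(x-y)$ for $|y|$ large, together with a degree-$2$ correction that is legitimate to subtract precisely because $\int f(y) h(y/|y|)\,\rmd y$-type moments against $\mathcal{H}_2$ vanish). The subtraction of the degree-$\leq 1$ terms is the standard device giving $|x|^2\log|x|$ growth for general $L^\infty$ data; the extra gain to pure $|x|^2$ growth is exactly what the $\mathcal{H}_2^\perp$ hypothesis buys, since the $\log$ coefficient is a linear functional of $f$ that factors through its pairing with $\mathcal{H}_2$. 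One checks $\Delta \psi = f$ in $W^{2,p}_{loc}$ by Calderón–Zygmund theory applied to the unmodified Newtonian potential (the correction terms are harmonic polynomials in $x$, hence contribute nothing to $\Delta$), so elliptic regularity gives $\psi \in W^{2,p}_{loc}$ for all $p<\infty$. That $\psi(r,\cdot)\in \mathcal{H}_2^\perp$ for each $r$ follows because the integral operator $f\mapsto \psi$ commutes with rotations and hence preserves each spherical-harmonic isotypic component, and we already know the degree-$2$ component of $f$ vanishes, so the degree-$2$ component of $\psi$ solves a homogeneous ODE with the correct decay and is therefore zero.

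For uniqueness, suppose $\psi_1,\psi_2$ both solve the problem with the stated growth and structure; then $v := \psi_1 - \psi_2$ is harmonic on $\mathbb{R}^d$ (by Weyl's lemma, using $W^{2,p}_{loc}$ regularity), satisfies $|v(x)|\leq C|x|^2$, and has $v(r,\cdot)\in\mathcal{H}_2^\perp$ for every $r$. A harmonic function with at most quadratic growth is a harmonic polynomial of degree $\leq 2$; decomposing it into spherical harmonics, its degree-$2$ part lies in $\mathcal{H}_2$ after dividing by $|x|^2$, but the $\mathcal{H}_2^\perp$ condition forces that part to vanish, leaving an affine function, which must also be its own degree-$0$ and degree-$1$ parts — and here I would note that affine functions are themselves in $\mathcal{H}_2^\perp$, so uniqueness holds only \emph{modulo affine functions} unless an additional normalization is imposed; I expect the intended statement fixes this by a convention absorbed in "$|\psi(x)|\leq C|x|^2$" together with the construction, or the authors genuinely mean uniqueness up to harmonic polynomials of degree $\leq 1$. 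I would flag this and, if needed, pin down $v\equiv 0$ by also demanding, say, $\psi(0)=0$ and $\nabla\psi(0)=0$, or by observing that in the application (defining the pressure) only $\nabla\psi$ or $\nabla^2\psi$ enters, which kills the ambiguity.

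\textbf{Main obstacle.} The crux — and the only genuinely non-routine point — is verifying that the $\mathcal{H}_2^\perp$ hypothesis is \emph{exactly} what removes the logarithmic correction: one must identify the coefficient of the $|x|^2\log|x|$ term in the far-field expansion of the Newtonian potential of a bounded source as a specific bounded linear functional on $f$, show it depends only on the pairing of $f(r,\cdot)$ with degree-$2$ spherical harmonics (integrated in $r$ against an explicit weight), and conclude it vanishes. This is a quantitative multipole-expansion computation; everything else (Calderón–Zygmund, Weyl's lemma, the Liouville step, rotation-equivariance) is standard.
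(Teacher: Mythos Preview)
The paper does not actually prove this lemma; it cites \cite{EJSymm} for the case $d=2$ and asserts the argument carries over verbatim. Your spherical-harmonic ODE approach is precisely the sort of argument used there, and your identification of the $k=2$ mode as the unique resonant one (indicial roots $r^k$ and $r^{2-d-k}$, with the growth rate $r^2$ forced by a bounded source matching the homogeneous solution exactly when $k=2$) is correct. The modified-Newtonian-potential route you sketch is a valid alternative packaging of the same computation.

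Your uniqueness concern, however, is a misreading of the hypothesis. You flag that affine functions lie in $\mathcal{H}_2^\perp$ and satisfy $\Delta v = 0$, so uniqueness should only hold modulo affine functions. But the growth condition is $|\psi(x)|\leq C|x|^2$ on \emph{all} of $\mathbb{R}^d$, not just at infinity. Evaluating at $x=0$ forces $\psi(0)=0$, and for a linear part $a\cdot x$ the bound along the ray $x=ta/|a|$ reads $t|a|\leq Ct^2$ for all small $t>0$, which forces $a=0$. So the difference $v=\psi_1-\psi_2$, being a harmonic polynomial of degree $\leq 2$ with vanishing degree-$2$ part (by $\mathcal{H}_2^\perp$) and vanishing affine part (by the bound at the origin), is identically zero. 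No additional normalization is needed; the statement is correct as written and your Liouville argument closes once you use the bound near the origin rather than only at infinity.
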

\end{myshade}

An important corollary of this Lemma is.

\begin{myshade}
  \vspace{-1mm}
\begin{corollary}\label{PropagationHomogeneity}
Given a $0$-homogeneous function $f\in\mathcal{H}_2^\perp$, there is a unique $2$-homogeneous function $\psi\in\mathcal{H}_2^\perp$ solving
\[\Delta\psi=f.\]
\end{corollary}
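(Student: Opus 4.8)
The plan is to deduce Corollary~\ref{PropagationHomogeneity} directly from the previous Lemma, using a scaling argument that exploits the \emph{uniqueness} clause in that Lemma.

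First I would observe that a $0$-homogeneous function $f$ which is (say) continuous on $\mathbb{S}^{d-1}$ belongs to $L^\infty(\mathbb{R}^d)$, and that the hypothesis $f\in\mathcal{H}_2^\perp$ is exactly the statement that $f(r,\cdot)\in\mathcal{H}_2^\perp$ for every $r\in[0,\infty)$, since $f(r,\cdot)$ is independent of $r$. Hence the previous Lemma applies and produces a function $\psi\in W^{2,p}_{\rm loc}$ for all $p<\infty$ with $\Delta\psi=f$, with $|\psi(x)|\le C|x|^2$ on $\mathbb{R}^d$, and with $\psi(r,\cdot)\in\mathcal{H}_2^\perp$ for all $r$; moreover $\psi$ is the \emph{unique} function with these three properties. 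It then remains only to promote $\psi$ to a $2$-homogeneous function and to check that $2$-homogeneity pins it down uniquely.

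Next I would show $\psi$ is $2$-homogeneous by the standard rescaling trick. Fix $\lambda>0$ and set $\psi_\lambda(x):=\lambda^{-2}\psi(\lambda x)$. A direct computation gives $\Delta\psi_\lambda(x)=(\Delta\psi)(\lambda x)=f(\lambda x)=f(x)$ using $0$-homogeneity of $f$; likewise $|\psi_\lambda(x)|=\lambda^{-2}|\psi(\lambda x)|\le\lambda^{-2}C|\lambda x|^2=C|x|^2$ with the same constant; $\psi_\lambda\in W^{2,p}_{\rm loc}$ since it is an affine rescaling of $\psi$; and $\psi_\lambda(r,\cdot)$ is a scalar multiple of $\psi(\lambda r,\cdot)$, hence lies in $\mathcal{H}_2^\perp$. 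Thus $\psi_\lambda$ satisfies exactly the three characterizing properties of the Lemma, so by uniqueness $\psi_\lambda=\psi$, i.e. $\psi(\lambda x)=\lambda^2\psi(x)$ for all $\lambda>0$ and all $x$, which is the asserted $2$-homogeneity.

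Finally, for uniqueness among $2$-homogeneous solutions in $\mathcal{H}_2^\perp$: if $\tilde\psi$ is any such function with $\Delta\tilde\psi=f$, then interior elliptic regularity away from the origin, combined with $2$-homogeneity, forces its second derivatives to be $0$-homogeneous, hence bounded, so $\tilde\psi\in W^{2,\infty}_{\rm loc}\subset W^{2,p}_{\rm loc}$ and $|\tilde\psi(x)|\le C|x|^2$; and membership in $\mathcal{H}_2^\perp$ is again $\tilde\psi(r,\cdot)\in\mathcal{H}_2^\perp$ for all $r$. So $\tilde\psi$ lies in the uniqueness class of the Lemma, whence $\tilde\psi=\psi$. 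I expect the only point requiring care — the ``main obstacle,'' such as it is — to be this bridging step: verifying that the abstract homogeneity and orthogonality conditions in the corollary already imply the regularity and growth bounds under which the Lemma's existence and uniqueness are phrased, so that one is genuinely comparing like with like. Everything else is routine bookkeeping with the scaling symmetry of the Laplacian.
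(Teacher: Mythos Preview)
Your proposal is correct and is precisely the argument the paper leaves implicit: the corollary is stated without proof, as an immediate consequence of the preceding Lemma, and the natural way to extract it is exactly the scaling-plus-uniqueness argument you give. There is nothing to compare against; your write-up simply fills in the routine details the authors omit.
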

\end{myshade}

From the above, it is natural to conjecture that we can give a sense to scale-invariant solutions as long as the right hand side of \eqref{pEq} belongs to $\mathcal{H}_2^\perp$ automatically. It does not seem that $\mathcal{H}_2^\perp$ itself is an invariant set for the solution map; however, using considering solutions that also satisfy rotational and/or reflection symmetries, we can force the solutions we consider to be under the domain of applicability of Corollary \ref{PropagationHomogeneity}. In two dimensions, there is a very nice analogue of the Yudovich theory that allows for the analysis of scale-invariant solutions.

\begin{myshade}
  \vspace{-1mm}
\begin{theorem}\label{YudovichNoDecay}(\cite{EJSymm})
Given an initial vorticity $\omega_0\in L^\infty$ that is $m$-fold symmetric for some $m\geq 3$, there exists a unique $L^\infty$ and $m$-fold symmetric solution to the 2d Euler equation.  
\end{theorem}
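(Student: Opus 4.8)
The plan is to run Yudovich's scheme, exactly as in Theorem~\ref{yudothm}, but with a Biot--Savart law adapted to the $m$-fold symmetry; the entire point of the hypothesis $m\ge 3$ is to make this law well defined on bounded but non-decaying data. First I would fix the velocity reconstruction. Let $\mathcal{O}$ be the rotation by $2\pi/m$, $K(z)=\tfrac{1}{2\pi}z^\perp/|z|^2$ the usual planar kernel, and for $m$-fold symmetric $\omega\in L^\infty(\mathbb{R}^2)$ set
\[ u(x)=\int_{\mathbb{R}^2}K_m(x,y)\,\omega(y)\,\rmd y,\qquad K_m(x,y):=\frac1m\sum_{j=0}^{m-1}K(x-\mathcal{O}^j y). \]
Since $K(\mathcal{O}z)=\mathcal{O}K(z)$ and $\sum_j\mathcal{O}^j=0$ for $m\ge 2$, the leading $|y|^{-1}$ (monopole/dipole) term of the symmetrized kernel cancels; and because the Jacobian $DK$ is traceless and symmetric away from the source, it is a pure ``degree-$2$'' tensor, so its cyclic average $\frac1m\sum_j\mathcal{O}^j DK(-y)\mathcal{O}^{-j}$ also vanishes once $m\ge 3$ — this is precisely the $\mathcal{H}_2^\perp$ cancellation behind Corollary~\ref{PropagationHomogeneity}, and it is exactly what was unavailable to classical Yudovich theory for non-decaying vorticity. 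Hence $K_m(x,y)=O\!\big(|x|^2/|y|^3\big)$ for bounded $x$ and large $y$, the integral converges absolutely, and one obtains: $u$ is divergence-free with $\nabla^\perp\!\cdot u=\omega$, is equivariant $u(\mathcal{O}x)=\mathcal{O}u(x)$, has at most linear growth $|u(x)|\lesssim\|\omega\|_{L^\infty}(1+|x|)$, and is log-Lipschitz on compact sets, $|u(x)-u(x')|\lesssim\|\omega\|_{L^\infty}|x-x'|\big(1+\log^+\tfrac1{|x-x'|}+\log(2+|x|+|x'|)\big)$. Uniqueness of this $u$ within the symmetric class is automatic, since any competing field would differ by a harmonic equivariant field of subquadratic growth, and all degree-$\le 2$ harmonic gradients fail to be $m$-fold symmetric for $m\ge 3$.

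Next I would record the a priori information. Since 2D Euler transports $\omega$ by the volume-preserving flow of $u$ and is equivariant under rotations, along any solution one has $\|\omega(t)\|_{L^\infty}=\|\omega_0\|_{L^\infty}$ and $m$-fold symmetry is preserved for all time; therefore the velocity estimates above are uniform in $t$, and the log-Lipschitz bound integrates to give that the Lagrangian flow $\Phi_t$ is a homeomorphism of $\mathbb{R}^2$ that is H\"older on compacta with exponent $e^{-Ct\|\omega_0\|_{L^\infty}}$, exactly as in property~(a) of Theorem~\ref{yudothm}. For existence I would approximate: mollify $\omega_0$ into an $m$-fold symmetric $\omega_0^{(\eps)}\in C_c^\infty$, solve the corresponding problem (covered by classical Yudovich theory, since the data now decays), and pass to the limit — the flows $\Phi_t^{(\eps)}$ are equicontinuous on compacta by the uniform estimates, so a subsequence converges locally uniformly to some $\Phi_t$, and then $\omega^{(\eps)}(t)=\omega_0^{(\eps)}\circ(\Phi_t^{(\eps)})^{-1}\rightharpoonup^*\omega_0\circ\Phi_t^{-1}=:\omega(t)$, while the renormalized Biot--Savart law is continuous along this convergence on bounded sets because of the improved far-field decay of $K_m$. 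The limit is an $L^\infty$, $m$-fold symmetric weak solution with conserved $L^\infty$ norm.

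For uniqueness I would compare the two Lagrangian flows rather than imitate Yudovich's $L^2$ energy argument, which is unavailable here since neither $u$ nor $\omega$ lies in $L^2(\mathbb{R}^2)$. Given two such solutions with flows $\Phi^{(1)},\Phi^{(2)}$, set $\rho(t)=\sup_{|a|\le R_0+Lt}|\Phi_t^{(1)}(a)-\Phi_t^{(2)}(a)|$ on an expanding ball (with $L$ the linear-growth constant from Step~1). Writing $\omega_i(t)=\omega_0\circ(\Phi_t^{(i)})^{-1}$, one estimates the velocity difference through a localized symmetric Biot--Savart bound
\[ |u_1(x,t)-u_2(x,t)|\;\lesssim\;\|\omega_0\|_{L^\infty}\,\vartheta\!\big(\|\omega_1(t)-\omega_2(t)\|_{L^1(B_{c|x|})}\big)\;\lesssim\;\|\omega_0\|_{L^\infty}\,\vartheta(\rho(t)), \]
where $\vartheta(s)=s(1+\log^+\tfrac1s)$ is the Osgood modulus; combining this with the log-Lipschitz bound on each $u_i$ and running a Gronwall--Osgood inequality for $\rho$ on each compact time interval forces $\rho\equiv 0$, hence $\omega_1\equiv\omega_2$.

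The hard part will be Step~4, and specifically the localized Biot--Savart difference estimate displayed above: one must show that the symmetry-renormalized kernel produces a velocity difference controlled, in an Osgood-type modulus, by the local $L^1$ (equivalently, flow-map) distance of the two vorticities, with constants depending only polynomially on the radius — so that the Osgood iteration genuinely closes despite the at-most-linear spatial growth of the velocity fields. Handling the far field here again relies on the degree-$2$ cancellation, so this is the place where $m\ge 3$ (rather than merely $m\ge 2$) is indispensable; the near-field part is the standard two-dimensional Yudovich estimate. Everything else — the algebra identities for $K_m$, the compactness in Step~3, and the conservation laws in Step~2 — I expect to be routine once the mapping properties of $K_m$ on weighted/local $L^\infty$ spaces are pinned down.
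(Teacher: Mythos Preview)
The paper does not actually prove this theorem; it is stated with a citation to \cite{EJSymm} and immediately used as a black box (to deduce the corollary about scale-invariant data). So there is no in-paper argument to compare against.

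That said, your outline is essentially the argument of \cite{EJSymm}, and your identification of the mechanism is correct and well-tied to the surrounding text: the $m\ge 3$ hypothesis is exactly the $\mathcal{H}_2^\perp$ cancellation of Corollary~\ref{PropagationHomogeneity}. Concretely, since $DK$ is traceless symmetric it lives in the spin-$2$ representation of $SO(2)$, so the cyclic average $\tfrac1m\sum_j\mathcal{O}^jDK(-y)\mathcal{O}^{-j}$ vanishes precisely when $\sum_{j=0}^{m-1}e^{4\pi ij/m}=0$, i.e.\ when $m\ge 3$; this yields the $|x|^2/|y|^3$ far-field decay of $K_m$ and makes the Biot--Savart integral absolutely convergent on merely bounded symmetric vorticity. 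The linear growth, local log-Lipschitz bound, existence by mollification/compactness, and the a priori conservation laws are then routine.

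One point to tighten in Step~4: the displayed inequality passing through $\|\omega_1-\omega_2\|_{L^1(B_{c|x|})}$ is not the estimate you actually want, since for $\omega_0\in L^\infty$ with no continuity there is no direct control of $\|\omega_0\circ(\Phi^{(1)})^{-1}-\omega_0\circ(\Phi^{(2)})^{-1}\|_{L^1_{\mathrm{loc}}}$ by $\rho(t)$. The clean route is to pull back to Lagrangian labels first,
\[
u_1(x)-u_2(x)=\int\bigl[K_m(x,\Phi^{(1)}_t(a))-K_m(x,\Phi^{(2)}_t(a))\bigr]\,\omega_0(a)\,\rmd a,
\]
and then split this integral into a small-measure neighborhood of the singularities of $K_m(x,\cdot)$ (handled by the $L^\infty$ bound on $\omega_0$) and its complement (handled by the mean-value theorem on $K_m$ in $y$, using the improved $|y|^{-4}$ far-field decay of $\nabla_yK_m$ so the integral closes against the linearly growing domain). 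This produces the Osgood modulus in $\rho(t)$ directly and closes the Gronwall loop; it is, in spirit, what you wrote, but the intermediate $L^1$ bound on $\omega_1-\omega_2$ should be bypassed.
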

\end{myshade}

A corollary of this theorem is that every scale invariant initial data that is $\frac{2\pi}{m}$ periodic in $\theta$ on $\mathbb{R}^2$ gives rise to a unique scale invariant solution. In vorticity form, the equation takes the form of an active scalar equation on $\mathbb{S}^1$:
\begin{equation}\label{1dEuler}
\partial_t g + 2G\partial_\theta g=0,
\end{equation}
\[4G+\partial_{\theta\theta}G=g.\]
This equation is globally well-posed in $L^\infty$. Though some aspects of the long-time behavior of solutions of \eqref{1dEuler}, the long-time dynamics is not yet resolved.

\begin{myshade3}
  \vspace{-1mm}
\begin{problem}\label{inftime3d}
Establish generic infinite-time singularity formation and convergence to compact orbits for smooth solutions to \eqref{1dEuler} in the infinite-time limit. 
\end{problem}
\end{myshade3}

When $d\geq 3$, there does not seem to be any analogue of Theorem \ref{YudovichNoDecay} due to issues of ill-posedness \cite{EM1}. However, if we add a bit of regularity we can still give a sense to scale-invariant solutions. To do this, we need to make two definitions, the first describing admissible symmetry classes and the second regarding regularity.

\begin{myshade}
  \vspace{-1mm}
\begin{definition}
A collection $\mathcal{M}$ of orthogonal matrices is said to be an \emph{admissible symmetry class} if whenever $u\in C^\infty$ is $\mathcal{M}$-invariant, we have that 
\[\div(u\cdot\nabla u)\in\mathcal{H}_2^\perp\] on every sphere centered at the origin.  
\end{definition}
\end{myshade}

\begin{myshade}
  \vspace{-1mm}
\begin{definition}
Given $0\leq \alpha\leq 1$, a function $f\in L^\infty(\mathbb{R}^d)$ is said to belong to $\ring{C}^{0,\alpha}$ if \[|f|_{\ring{C}^{0,\alpha}}:=\sup_{x\not=y}\frac{||x|^\alpha f(x)-|y|^\alpha f(y)|}{|x-y|^\alpha}<\infty.\]
\end{definition}
\end{myshade}

Now the main theorem in this direction is

\begin{myshade}
  \vspace{-1mm}
\begin{theorem}
Fix $0<\alpha<1$ and an admissible symmetry class $\mathcal{M}$. Then, the Euler equation is locally well-posed in the class of velocity fields $u$ that are $\mathcal{M}$-invariant with $\nabla u\in\ring{C}^{0,\alpha}$. 
\end{theorem}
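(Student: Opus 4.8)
The plan is to recast the problem as an evolution equation on the sphere $\mathbb{S}^{d-1}$, to use the admissibility of $\mathcal{M}$ to produce a well-defined pressure obeying a weighted Schauder estimate, and then to run a Picard iteration in the weighted H\"older scale $\ring{C}^{0,\alpha}$ exactly along the lines of the classical scheme behind Theorem \ref{lwpthm}. First I would reduce to the sphere: a $1$-homogeneous vector field $u$ with $\nabla u\in\ring{C}^{0,\alpha}$ is determined by its trace $v:=u|_{\mathbb{S}^{d-1}}$ via $u(r\sigma)=r\,v(\sigma)$, and one checks directly from the definition of $\ring{C}^{0,\alpha}$ that $\nabla u\in\ring{C}^{0,\alpha}$ if and only if $v\in C^{1,\alpha}(\mathbb{S}^{d-1})$, with comparable norms; likewise the vorticity $\omega=\nabla\times u$ is $0$-homogeneous with $\omega\in\ring{C}^{0,\alpha}$ iff $\omega|_{\mathbb{S}^{d-1}}\in C^{\alpha}(\mathbb{S}^{d-1})$. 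Thus the Cauchy problem for $\mathcal{M}$-invariant scale-invariant fields becomes a Cauchy problem on a compact manifold for data in a classical H\"older class.

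Next I would handle the pressure. For scale-invariant $u$ the field $u\cdot\nabla u$ is $1$-homogeneous, so $\div(u\cdot\nabla u)$ is $0$-homogeneous, and by the definition of an admissible symmetry class $\div(u\cdot\nabla u)(r,\cdot)\in\mathcal{H}_2^\perp$ for every $r$. Corollary \ref{PropagationHomogeneity} then furnishes a unique $2$-homogeneous $p\in\mathcal{H}_2^\perp$ solving $-\Delta p=\div(u\cdot\nabla u)$, so $\nabla p$ is $1$-homogeneous and $\partial_t u=-u\cdot\nabla u-\nabla p$ is a closed evolution within the class. The quantitative input I would need is the H\"older-scale upgrade of the $W^{2,p}_{\mathrm{loc}}$ lemma preceding Corollary \ref{PropagationHomogeneity}, namely the bound $\|\nabla^2 p\|_{\ring{C}^{0,\alpha}}\le C_\alpha\|\div(u\cdot\nabla u)\|_{\ring{C}^{0,\alpha}}$, which should hold precisely because projecting onto $\mathcal{H}_2^\perp$ annihilates the degree-two harmonic resonance.

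With this in hand I would close the estimates by verifying the three conditions of a good phase space in this setting: $\ring{C}^{0,\alpha}$ obeys the product estimates needed to bound $u\cdot\nabla u$, handled by writing $u=|x|\,v(x/|x|)$ with $v\in C^{1,\alpha}$ and tracking the homogeneity weights; particle trajectories are well defined, since $|u(x)|\le C|x|$ together with $\nabla u\in L^\infty$ makes $u$ globally Lipschitz with $u(0)=0$, so $\dot\Phi_t=u(\Phi_t,t)$, $\Phi_0=\mathrm{id}$, is uniquely solvable; and the Biot--Savart/pressure solution operators are bounded on $\ring{C}^{0,\alpha}$, which is exactly the previous step. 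Differentiating the vorticity equation $\partial_t\omega+u\cdot\nabla\omega=(\nabla u)\,\omega$ (equivalently working directly with $v$ on $\mathbb{S}^{d-1}$) and inserting these bounds yields an a priori inequality $\tfrac{d}{dt}\|\nabla u(t)\|_{\ring{C}^{0,\alpha}}\le C\|\nabla u(t)\|_{\ring{C}^{0,\alpha}}^2$, hence existence on $[0,T)$ with $T\sim 1/\|\nabla u_0\|_{\ring{C}^{0,\alpha}}$ by Picard iteration; uniqueness and continuity of $S_t$ then follow by estimating the difference of two solutions in the weaker norm $\|\nabla u\|_{L^\infty}$ with log-Lipschitz control of the two flows, as in the Yudovich-type argument behind Theorem \ref{yudothm}.

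The hard part will be the weighted Schauder estimate in the pressure step. The degree-zero singular integral sending $\div(u\cdot\nabla u)$ to $\nabla^2 p$ is \emph{not} bounded on $\ring{C}^{0,\alpha}$ for general data — this is the logarithmic resonance that makes naive homogeneity counting for the Poisson equation fail — and although the admissibility hypothesis is designed exactly so that this resonance is projected away, turning that into a \emph{uniform} weighted Schauder bound will require Calder\'on--Zygmund/Schauder theory adapted to homogeneous distributions together with careful bookkeeping of the $\mathcal{H}_2^\perp$ constraint (in particular checking that the orthogonality is propagated by the flow within the symmetry class). A secondary technical nuisance is that $u$ itself grows linearly at infinity, so the correct homogeneity weights must be propagated through every product and commutator that appears in the energy estimate; this is routine but must be done with some care to keep the constants uniform.
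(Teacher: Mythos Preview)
Your proposal has a genuine gap at the very first step: you assume that $u$ is $1$-homogeneous and reduce everything to an evolution on $\mathbb{S}^{d-1}$, but the theorem is stated for the full class of $\mathcal{M}$-invariant fields with $\nabla u\in\ring{C}^{0,\alpha}$, which is strictly larger than the scale-invariant subclass. A $0$-homogeneous $\nabla u$ certainly lies in $\ring{C}^{0,\alpha}$, but so does, say, any compactly supported $C^\alpha$ function, or a sum of a homogeneous piece and a smooth remainder. The entire architecture of your argument --- the trace identification $u(r\sigma)=r\,v(\sigma)$, the claim that $\div(u\cdot\nabla u)$ is $0$-homogeneous so that Corollary~\ref{PropagationHomogeneity} applies, and the reduction to a PDE on a compact manifold --- collapses once $u$ is not assumed homogeneous.

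This is not a cosmetic issue. The reason the paper wants well-posedness in the \emph{full} weighted class is precisely so that uniqueness forces scale-invariant data to remain scale-invariant; proving existence and uniqueness only within the $1$-homogeneous ansatz does not, by itself, rule out a non-homogeneous solution branching off from the same data. The paper does not give the proof here (it defers to \cite{JThesis,EJSymm,E1,EJB,EJE}), but the approach in those references is to establish the weighted Schauder bound $\|\nabla^2\Delta^{-1}f\|_{\ring{C}^{0,\alpha}}\le C\|f\|_{\ring{C}^{0,\alpha}}$ for \emph{general} $\mathcal{M}$-invariant $f$ --- the admissibility hypothesis guarantees $f(r,\cdot)\in\mathcal{H}_2^\perp$ on every sphere, not just for homogeneous $f$, and it is this shell-by-shell orthogonality that kills the logarithmic resonance and makes the weighted estimate go through. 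Once that estimate is in hand, the rest of your outline (product estimates in $\ring{C}^{0,\alpha}$, a priori bound $\tfrac{d}{dt}\|\nabla u\|_{\ring{C}^{0,\alpha}}\lesssim\|\nabla u\|_{\ring{C}^{0,\alpha}}^2$, Picard iteration) is essentially correct and is indeed how the argument closes.
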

\end{myshade}

\begin{remark}
The interested reader can look at the thesis of I. Jeong \cite{JThesis} and the papers \cite{EJSymm,E1,EJB,EJE}
\end{remark}
As we have already remarked, the utility of this theorem is that scale-invariant solutions will satisfy a lower dimensional equation, which should be easier to analyze. This gives an interesting approach to the blow-up problem that yielded the first examples of blow-up for finite-energy strong solutions to the 3d Euler equation \cite{EJB,EJE,EJEO}. We must remark, however, that there remain numerous interesting problems in this direction. One important one is

\begin{myshade3}
  \vspace{-1mm}
\begin{problem}\label{scaleinvprob}
Does there exist a scale invariant solution with $\nabla u\in \ring{C}^\infty$ belonging to an admissible symmetry class that develops a singularity in finite time? 
\end{problem}
\end{myshade3}
Here, the space $\ring{C}^\infty$  (as defined in \cite{EJB}) is the space of functions $f\in C^\infty(\mathbb{R}^3\setminus \{0\})$ for which $|x|^k D^k f$ is bounded for all $k=0,1,2,...$


\subsubsection{Self-Similar Solutions}
Another type of solution that is more amenable to analysis than full time-dependent 3d solutions are self-similar solutions. In vorticity form, these are solutions of the form:
\[\omega(x,t)=\frac{1}{T_*-t} \Omega\Big(\frac{x}{(T_*-t)^\lambda}\Big),\] where $\lambda\in\mathbb{R}$. We remark that the power of $1$ on the coefficient $\frac{1}{T_*-t}$ is the only self-consistent one, as can be readily checked. The value of $\lambda$ is free and there may be different "branches" of self-similar solutions for different values of $\lambda$. There are a few important ideas to keep in mind when studying self-similar solutions:
\begin{itemize}

\item $\lambda>0$ corresponds to a concentrating profile in which the singularity occurs only at the origin at $t=T_*$. $\lambda<0$ corresponds to an expanding profile in which the singularity occurs everywhere $t=T_*$. The fact that the time-dependent solution looks like it is "expanding" or "contracting" does not violate the divergence-free condition. 

\item It should not be expected that there are self-similar solutions where $\Omega$ decays fast. In fact, the decay of $\Omega$ \emph{should} be precisely $|x|^{-\lambda}$ when $\lambda>0$. Indeed, the profile would satisfy an equation of the form:
\[\Omega+\lambda z\cdot\nabla_z \Omega = B(\Omega,\Omega),\] where $B$ is bilinear. Thus, if a decaying solution exists, it must be such that $\Omega+\lambda z\cdot\nabla_z\Omega$ decays \emph{faster} than $\Omega$ itself. Under some quite soft assumptions, this implies that the profile decays precisely like $|x|^{-\lambda}$. We do not make this precise (and it may not be true in full generality since $\Omega$ may oscillate at spatial infinity), but see \cite{EJDG} for example for some settings where this is made precise.  

\item Even though self-similar solutions will decay slowly and will have infinite-energy in general, if we have a self-similar solution with $\lambda>0$, it is likely that their do exist finite-energy solutions nearby that behave asymptotically self-similar near the blow-up time. This even be the case when the solution is "unstable" since the instability generically would be only finite-dimensional.

\end{itemize}

Until the recent work \cite{E_Classical}, it was not known whether self-similar solutions to the 3d Euler equation exist and a complete theory remains elusive. What is clear is that their analysis is the most promising one available to attack the singularity problem in general. It is important to emphasize that self-similar solutions are not only useful for the blow-up problem, but also for numerous other problems related to non-trivial dynamics of solutions \cite{Elling,V18,DallasEtAl}. 
Note that there are several interesting no blow-up results under various assumptions on self-similar or nearly self-similar blow-ups (see \cite{CC21,CC212,ChaeShv}).

\subsection{Taming the Nonlocal Effects}

We now move to discuss the issue of finite-time singularity for the actual 3d Euler equation, where we recall it in vorticity form:
\begin{equation}\label{3dE2}\partial_t\omega+u\cdot\nabla\omega= \omega\cdot\nabla u,\end{equation}
\[u=\nabla\times(-\Delta)^{-1} \omega.\]
 Though this problem looks significantly more difficult than the corresponding one-dimensional equations we discussed before, we can still build quite a bit of intuition from the one-dimensional problems. For the benefit of the reader, we repeat some of these ideas:
 \begin{itemize}
 \item Growth tends to occur near a stagnation point where $\nabla u$ has a positive eigenvalue. 
 \item The transport term will deplete the growth though this depletion can be overcome when the vorticity is non-smooth in the expanding direction of $u$, in the presence of a physical boundary, \emph{or} for smooth vorticity without a boundary when parity allows. 
 \end{itemize}

In the coming pages we will describe the simplest setting in which the above principles, coupled with a deeper understanding of the Biot-Savart law
\[u=\nabla\times(-\Delta)^{-1}\omega,\]
can be successfully applied to give a singularity for "strong" and "classical" solutions of \eqref{3dE2}. We will also discuss the limitations of the constructions in those settings to achieve a singularity for smooth data without boundary and move on to discuss one avenue to get a smooth blow-up solution. 

\subsubsection{A first look at non-locality}\label{NonlocalEffect}
The nonlocal nature of incompressible fluids is captured in the pressure term in the velocity formulation and in the Biot-Savart law in the vorticity formulation. In both cases, the nonlocal operator involved is the solution to \[\Delta \psi =f.\] Let us make precise the meaning of this non-locality though the trivial observation that the Taylor expansion of $f$ at $0$ does not determine the Taylor expansion for $\psi$ at $0$. To be more precise, the terms in the Taylor expansion of $\psi$ corresponding to Harmonic polynomials are determined by the global behavior of $f$ while the other terms only depend on $f$ locally. We have already alluded to this in the discussion around Corollary \ref{PropagationHomogeneity}. In fact, as can be expected, the presence of these harmonic polynomials in the Taylor expansion of $f$ leads to a type of resonance that yields a logarithmic correction. The above considerations hint at a decomposition of the following type:
\[\Delta^{-1} = \mathcal{S}+\mathcal{R},\] where $\mathcal{S}$ is a singular part coming only from projection to certain the spherical harmonics and a more regular part $\mathcal{R}$. 

We will start with some examples.

\subsubsection{Examples illustrating the behavior on $L^\infty$}

We start with some examples illustrating the behavior of solutions of $\Delta\psi=f$. \\

\noindent \textbf{Radial Case.} Observe that if 
\[\psi(x)=|x|^{\beta},\] then
\[\Delta \psi = (\partial_{rr} + \frac{1}{r}\partial_r) (r^\beta)= \beta^2 |x|^{\beta-2}.\] 
Thus, up to a harmonic part, the solution to 
\[\Delta \psi = |x|^{-\gamma}\] is 
\[\psi=\frac{1}{(2-\gamma)^2}|x|^{2-\gamma},\] and we are interested in the case $\gamma\rightarrow 0$. Thus we see that:
\[|D^2\psi|\leq C|x|^{-\gamma},\] for $C$ independent of $\gamma$ as $\gamma\rightarrow 0$. 
It is in fact possible to show that $\mathcal{K}$ restricted to radial functions is bounded on $L^p$ \emph{independent} of $p$ and, in particular, on $L^\infty$. \\

\noindent \textbf{General homogeneous vorticity.} Now we look at more general negative homogeneous data:
\[\psi(r,\theta)= r^{\beta}m(\theta),\] where we leave $m$ free for now.
Then we see that
\[\Delta \psi(r,\theta)=(\partial_{rr}+\frac{1}{r}\partial_r + \frac{1}{r^2}\partial_{\theta\theta}) \psi =\Big(\beta^2m(\theta) + m''(\theta)\Big)r^{\beta-2}. \]
At the same time, 
\[|D^2\psi|\approx (|m|+|m'|+|m''|)r^{\beta-2}.\]
Thus, singular behavior occurs only when we have
\[|\beta^2 m + m''|\ll |m|+|m'|+|m''|.\]
This only occurs when:
$\beta$ is, or is very close to, some $n\in\mathbb{N}$ \emph{and} when $\beta^2 m(\theta)+m''$ is very small. 
For example, 
\[\psi(r,\theta)=r^\beta \sin(2\theta),\] and $\beta$ close to $2$ gives:
\[\Delta\psi(r,\theta)=(4-\beta^2)\sin(2\theta) r^{2-\beta}.\]
Thus, as $\beta\rightarrow 2$, we see that $|D^2\psi|=O(1)$ but $\Delta\psi=O(2-\beta)$.

\subsubsection{Expansion of the Biot-Savart Law}
We now place the preceding calculations in their proper context. The first is through a Lemma about the Taylor expansion of $\psi$ and can be seen as a pointwise estimate and the second is in an $L^2$ context.

\subsubsection{Pointwise Estimates}

\begin{myshade}
  \vspace{-1mm}
\begin{lemma}\label{PtWiseKeyLemma}
 Let $\mathbb{P}_2$ be the orthogonal projection on $L^2(\mathbb{S}^{d-1})$ to the spherical harmonics of order 2. Assume that $\Delta\psi=f$. Then, 
 \[\Big|\psi(x)-\psi(0)-x\cdot\nabla\psi(0)-|x|^2\int_{|x|}^\infty\frac{\mathbb{P}_2f(\rho,\sigma)}{\rho} \rmd \rho\Big|\leq C|f|_{L^\infty}|x|^2,\] where $C=C(d)$ is a dimensional constant. 
\end{lemma}
\end{myshade}

\begin{remark}
This Lemma may be stated similarly at higher orders of the Taylor expansion of $\psi$. A proof can be found in \cite{EJVP} in the case $d=2$, the general case being similar. Note also that the corresponding expansion for $\nabla \psi$ holds (again with a bound on $|f|_{L^\infty}$ only). This can be seen as a generalization of the key lemma of \cite{KS14}.
\end{remark}
Observe that the term
\[L(f):=\int_{|x|}^\infty \frac{\mathbb{P}_2f(\rho,\sigma)}{\rho} \rmd \rho\]
could be unbounded, even if $f$ is bounded. As we have remarked, a version of this Lemma was used in \cite{KS14} to establish double exponential growth for the gradient of some solutions of 2d Euler in domains with boundary. An important open problem is to study domains without boundary and it is tempting to try to use the above Lemma to give a simplified model to determine whether double exponential growth occurs without boundary. Toward this end, we may consider the model (written in polar coordinates):
\begin{equation}\label{2dFM}\partial_t\omega + \frac{1}{r}(\partial_\theta \psi \partial_r\omega-\partial_r\psi \partial_\theta\omega)=0, \qquad \psi(r,\theta) = r^2 \int_{r}^\infty \frac{\mathbb{P}_2(\omega)(\rho,\theta)}{\rho}\rmd \rho.\end{equation}

\begin{myshade3}
  \vspace{-1mm}
\begin{problem}\label{compsup}
Determine whether a smooth compactly supported solution to \eqref{2dFM} on $\mathbb{R}^2$ can have $\nabla\omega$ grow double exponentially. 
\end{problem}
\end{myshade3}

\begin{myshade}
  \vspace{-1mm}
\begin{lemma}\label{PtWiseKeyLemma}
 Let $\mathbb{P}_2$ be the orthogonal projection on $L^2(\mathbb{S}^{d-1})$ to the spherical harmonics of order 2. Assume that $\Delta\psi=f$. Then, 
 \[\Big|\psi(x)-\psi(0)-x\cdot\nabla\psi(0)-|x|^2\int_{|x|}^\infty\frac{\mathbb{P}_2f(\rho,\sigma)}{\rho} \rmd \rho\Big|\leq C|f|_{L^\infty}|x|^2,\] where $C=C(d)$ is a dimensional constant. 
\end{lemma}
\end{myshade}

\begin{remark}
In the model we have only retained the "singular part" of the stream function $\psi$ near $r=0$. It is natural to believe that the answer to the question above should be the same for 2d Euler, though it is not clear how to pass from one to the other.  
\end{remark}

\subsubsection{$L^2$ based estimates}
 
 We now move to understand the above expansion in an $L^2$ context that also allows us to give an expansion for $D^2\Delta^{-1}$ rather than just $\Delta^{-1}$ or $\nabla\Delta^{-1}$. We consider $\Delta\psi=f,$ where we take $f$ to be of the form
\[f(r,\theta)=\Omega(R,\theta),\] where
\[R=r^\alpha,\] and $\alpha>0$ is {\bf small.} Note that even if $\Omega$ is smooth, $\omega$ will generally only be $C^\alpha$ in space (though it may be $C^\infty$ away from the origin). 
So, we want to study the solution $\psi$ of
$\Delta \psi =f,$ which in polar coordinates reads
\[\partial_{rr}\psi + \frac{1}{r}\partial_r\psi+\frac{1}{r^2}\partial_{\theta\theta}\psi =f.\] 
Now let us make the above changes:
\[\psi(r,\theta)=r^2\Psi(R^\alpha,\theta),\qquad \omega(r,\theta)=\Omega(R,\theta).\]
Thus, 
\begin{equation}\label{BSAlpha}\alpha^2R^2\partial_{RR}\Psi+ (4\alpha+\alpha^2) R\partial_R\Psi+4\Psi +\partial_{\theta\theta}\Psi =\Omega.
\end{equation}

\subsubsection{The Regular Part}

We begin with a discussion of the regular part of $D^2\Delta^{-1}$ as it acts on $\Omega$.

\begin{myshade}
  \vspace{-1mm}
\begin{theorem}
Assume that \[\int_{0}^{2\pi}\Omega(R,\theta)\exp(in\theta)d\theta=0,\] for all $R$ and for $n=0,1,2$. 
Then, given $\Omega\in H^s$, there is a unique solution to \eqref{BSAlpha} satisfying 
\[\alpha^2|R^2\partial_{RR}\Psi|_{H^s}+\alpha|R\partial_R\Psi|_{H^s}+|\partial_{\theta\theta}\Psi|_{H^s}\leq C_s |\Omega|_{H^s},\] with the constant $C_s$ {\bf independent} of $\alpha$. 
\end{theorem}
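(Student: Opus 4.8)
The plan is to diagonalize \eqref{BSAlpha} in the angular variable, reduce it to a one-parameter family of equidimensional (Euler) ODEs in $R$, and exploit the fact that the orthogonality hypothesis excises exactly the resonant modes while the coefficient balance $\alpha^2:4\alpha:1$ forces estimates that are uniform in $\alpha$. First I would expand $\Omega(R,\theta)=\sum_{n\in\mathbb{Z}}\Omega_n(R)e^{in\theta}$ and seek $\Psi(R,\theta)=\sum_n\Psi_n(R)e^{in\theta}$. Since $\Omega$ is real and $\int_0^{2\pi}\Omega(R,\theta)e^{in\theta}\,\rmd\theta=0$ for $n=0,\pm1,\pm2$ at every $R$, only the modes $|n|\ge3$ survive, and \eqref{BSAlpha} decouples into
\[
\alpha^2R^2\Psi_n''+(4\alpha+\alpha^2)R\Psi_n'+(4-n^2)\Psi_n=\Omega_n,\qquad |n|\ge 3 .
\]
Setting $t=\log R$ (so $R\partial_R=\partial_t$ and $R^2\partial_{RR}=\partial_t^2-\partial_t$) turns each into the \emph{constant-coefficient} ODE $\mathcal L_n\Psi_n:=\alpha^2\partial_t^2\Psi_n+4\alpha\,\partial_t\Psi_n+(4-n^2)\Psi_n=\Omega_n$, whose characteristic exponents solve $\alpha^2\mu^2+4\alpha\mu+(4-n^2)=0$, i.e.\ $\mu_\pm=\frac{-2\pm|n|}{\alpha}$. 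For $|n|\ge3$ these are real of opposite sign and both of size $\gtrsim 1/\alpha$, so neither homogeneous solution $R^{\mu_\pm}$ is bounded — which already gives uniqueness of a bounded $\Psi_n$.

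Next I would solve $\mathcal L_n\Psi_n=\Omega_n$ and obtain the $\alpha$-uniform bounds. On $t\in\mathbb R$ (the case $M=\mathbb R^2$) the bounded solution is $\Psi_n=G_n*\Omega_n$, $G_n$ the two-sided Green's function of $\mathcal L_n$, and a direct computation gives $\|G_n\|_{L^1(\mathbb R_t)}\le \frac{C}{n^2-4}\le \frac{C}{5}$ uniformly in $\alpha$, because the Wronskian factor $|\mu_+-\mu_-|=\frac{2|n|}{\alpha}$ compensates the $\alpha^{-2}$ normalization of the leading coefficient. Equivalently, Fourier-transforming in $t$ (dual variable $\xi$),
\[
\bigl|-\alpha^2\xi^2+4i\alpha\xi+(4-n^2)\bigr|^2=(\alpha^2\xi^2+n^2-4)^2+16\alpha^2\xi^2\ \ge\ \max\bigl\{(n^2-4)^2,\ \alpha^4\xi^4,\ 16\alpha^2\xi^2\bigr\},
\]
so the multipliers sending $\Omega_n$ to $(n^2-4)\Psi_n$, to $\alpha^2\partial_t^2\Psi_n$ and $\alpha^2\partial_t\Psi_n$ (hence to $\alpha^2R^2\partial_{RR}\Psi_n$), and to $\alpha\,\partial_t\Psi_n=\alpha R\partial_R\Psi_n$ are all bounded by an absolute constant, uniformly in $\alpha$ and $n\ge3$; the lower-order discrepancies ($4\Psi_n$ versus $n^2\Psi_n$, the extra $\alpha^2R\Psi_n'$ relative to $\partial_t^2-\partial_t$) are absorbed using $n^2-4\ge5$.

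Finally I would reassemble the series: the $H^s$ norm (in $\theta$, or in $(R,\theta)$ jointly — the argument is identical once the mode-wise bounds are $n$-uniform) is a weighted $\ell^2$ sum over $n$ of the $\|\cdot_n\|$, so the termwise estimates $\|n^2\Psi_n\|\le C\|\Omega_n\|$ and $\alpha^2\|R^2\partial_{RR}\Psi_n\|+\alpha\|R\partial_R\Psi_n\|\le C\|\Omega_n\|$ sum, after multiplication by $\langle n\rangle^{2s}$, to the asserted estimate with $C_s$ independent of $\alpha$; uniqueness in this class follows from the absence of bounded homogeneous modes, and undoing $t=\log R$, $\psi=r^2\Psi$ returns the claim for $D^2\Delta^{-1}$. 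The main obstacle is precisely the uniform-in-$\alpha$ Green's-function bound, together with — if one works on a bounded domain, a disk, a sector, or the half-plane of the axisymmetric reduction rather than $\mathbb R^2$ — the need to solve a boundary-value problem for $\mathcal L_n$ on a half-line or interval and verify that its Green's function still has $L^1$ norm $O((n^2-4)^{-1})$ uniformly in $\alpha$. That is where the damping term $4\alpha\,\partial_t\Psi_n$ (coefficient of order $\alpha$, not $\alpha^2$, which is what puts the exponents at scale $1/\alpha$) does all the work; dropping it would reintroduce $\alpha$-dependence. The excision of $|n|\le2$ is exactly what avoids the resonance $\mu=0$ at $n=\pm2$ and secures the uniform gap $|n^2-4|\ge5$ that renders every constant $n$-independent.
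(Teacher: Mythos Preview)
Your argument is correct and takes a genuinely different route from the paper. The paper proceeds by an energy method: it multiplies \eqref{BSAlpha} by $\partial_{\theta\theta}\Psi$ and integrates by parts, and the orthogonality to the modes $n=0,1,2$ enters through the spectral gap $|\partial_\theta\Psi|_{L^2}^2\le\frac19|\partial_{\theta\theta}\Psi|_{L^2}^2$, which is exactly what makes the coercive term $|\partial_{\theta\theta}\Psi|^2-4|\partial_\theta\Psi|^2$ positive. A second multiplication by $R^2\partial_{RR}\Psi$ is left to the reader. You instead diagonalize completely, via Fourier in $\theta$ and the substitution $t=\log R$, and read off the $\alpha$-uniform bounds from the explicit symbol $-\alpha^2\xi^2+4i\alpha\xi+(4-n^2)$.

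The trade-off: your approach is sharper and more transparent about constants and about why the excision of $|n|\le 2$ is exactly what is needed (no zero of the symbol, uniform gap $n^2-4\ge 5$). The paper's energy argument is more robust: it generalizes directly to perturbations of \eqref{BSAlpha} with variable coefficients or lower-order terms, which is the regime one actually needs when $\alpha>0$ and the full nonlinear remainder is present. One small point to clean up: your Fourier transform in $t$ lives naturally in $L^2(\mathbb{R}_t,\rmd t)=L^2((0,\infty),\rmd R/R)$, whereas the paper's integrations by parts are carried out in $L^2(\rmd R\,\rmd\theta)$; reconciling this amounts to conjugating by $e^{t/2}$, which shifts the symbol by $O(\alpha)$ in the lower-order coefficients and leaves your multiplier bounds intact.
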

\end{myshade}

\begin{proof}
We will only prove the $L^2$ estimate. The case $s\geq 1$ is easier after that. First, observe that $\Psi$ also satisfies the orthogonality conditions. 
Indeed, if we define \[\Psi_n(R)=(\Psi(R,\theta),\exp(in\theta))_{L^2_\theta}.\] We see that
\[\alpha^2 R^2\partial_{RR}\Psi_n+(4\alpha +\alpha^2)R\partial_R\Psi_n +(4-n^2)\Psi_n=0.\]
Consequently, 
\[\Psi_n(R)= c_{1,n}R^{\lambda_1}+c_{2,n}R^{\lambda_2},\] where $\lambda_1,\lambda_2$ are the roots of
\[\alpha^2 \lambda(\lambda-1)+(4\alpha+\alpha^2)\lambda+(4-n^2)=0,\] which is reduces to 
$\alpha^2 \lambda^2+4\alpha\lambda+(4-n^2)=0.$
Thus, 
\[\lambda=\frac{-2\pm n}{\alpha}.\] It is then easy to see that since we want $\Psi\in L^2$, we need $\Psi_{n}\equiv 0$ for $n=0,1,2$. 
Now coming back to the $L^2$ estimates, take
\[\alpha^2R^2\partial_{RR}\Psi+ (4\alpha+\alpha^2) R\partial_R\Psi+4\Psi +\partial_{\theta\theta}\Psi =\Omega,\]
multiply by $\partial_{\theta\theta}\Psi$, and integrate. 
Then we see:
\[|\partial_{\theta\theta}\Psi|_{L^2}^2-4|\partial_\theta\Psi|_{L^2}^2+\int \alpha^2 R^2\partial_{RR}\Psi \partial_{\theta\theta}\Psi +(4\alpha+\alpha^2)\int R\partial_R\Psi \partial_{\theta\theta}\Psi\leq |\Omega|_{L^2}|\partial_{\theta\theta}\Psi|_{L^2}.\]
The crucial point is that because of the orthogonality condition, we have that
\[|\partial_\theta\Psi|_{L^2}^2\leq \frac{1}{9}|\partial_{\theta\theta}\Psi|_{L^2}^2.\]
Moreover, 
\[(4\alpha+\alpha^2)\int R\partial_R\Psi \partial_{\theta\theta}\Psi=-(4\alpha+\alpha^2)\int R \partial_{R}\partial_\theta\Psi \partial_\theta\Psi=\frac{4\alpha+\alpha^2}{2}\int (\partial_\theta\Psi)^2.\]
Additionally, 
\begin{align*}
\int \alpha^2 R^2\partial_{RR}\Psi \partial_{\theta\theta}\Psi&=-\alpha^2\int R^2\partial_{RR\theta}\Psi \partial_\theta\Psi
= \alpha^2\int R^2|\partial_{R\theta}\Psi|^2+2\alpha^2\int R\partial_{R\theta}\Psi\partial_\theta\Psi\\
&= \alpha^2\int R^2|\partial_{R\theta}\Psi|^2-\alpha^2\int (\partial_{\theta}\Psi)^2.
\end{align*}
Thus we get:
\[\frac{1}{2}|\partial_{\theta\theta}\Psi|_{L^2}^2+ \alpha^2\int R^2|\partial_{R\theta}\Psi|^2+\frac{4\alpha-\alpha^2}{2}\int (\partial_\theta\Psi)^2 \leq |\Omega|_{L^2}|\partial_{\theta\theta}\Psi|_{L^2}.\]
The prove the rest, multiply by $R^2\partial_{RR}\Psi$ and integrate. This is left as an exercise. 
\end{proof}

\subsubsection{The Singular Part}
Obviously, the singular part must come from functions which are \emph{not} orthogonal to $\exp(in\theta)$ for some $n\in \{0,1,2\}$.
\[\alpha^2R^2\partial_{RR}\Psi+ (4\alpha+\alpha^2) R\partial_R\Psi+4\Psi +\partial_{\theta\theta}\Psi =\Omega.\]

\begin{myshade}
  \vspace{-1mm}
\begin{theorem}
Assume that \[\Omega(R,\theta)=F(R)\exp(2i\theta)\] and that $F\in L^2$. 
Then, the unique solution to \eqref{BSAlpha} with $\Psi\in L^2$ is
\[\Psi(R,\theta)=\frac{1}{4\alpha}L(F)(R)\exp(2i\theta)+\mathcal{R}(F),\] where
\[L(F)(R)=\int_{R}^\infty \frac{F(s)}{s}\rmd s\] and 
$\|\mathcal{R}\|_{H^s\rightarrow H^s}\leq C_s$ independent of $\alpha$. 
\end{theorem}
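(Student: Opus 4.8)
The plan is to diagonalize in the angular variable and reduce the problem to an explicitly solvable ordinary differential equation. Writing $\Psi(R,\theta) = G(R)\exp(2i\theta)$ and recalling that $\Omega(R,\theta) = F(R)\exp(2i\theta)$, the terms $4\Psi + \partial_{\theta\theta}\Psi$ cancel identically, so \eqref{BSAlpha} collapses to the second–order Euler–type equation $\alpha^2 R^2 G'' + (4\alpha + \alpha^2) R G' = F$ with no zeroth–order term. I would substitute $H := R G'$, turning this into the first–order linear equation $\alpha^2 R H' + 4\alpha H = F$, and solve it with the integrating factor $R^{4/\alpha}$: integrating $(R^{4/\alpha}H)' = \alpha^{-2} R^{4/\alpha - 1} F$ from the origin gives $H(R) = \alpha^{-2} R^{-4/\alpha}\int_0^R s^{4/\alpha - 1} F(s)\,\rmd s$, the homogeneous solution $R^{-4/\alpha}$ being excluded as it is not admissible in $L^2$ near $R=0$. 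A further integration, now from $+\infty$ (discarding the constant homogeneous solution, which fails to decay), recovers $G$.

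The singular part is then extracted by one integration by parts in $G(R) = -\alpha^{-2}\int_R^\infty t^{-4/\alpha - 1}\big(\int_0^t s^{4/\alpha - 1}F(s)\,\rmd s\big)\,\rmd t$: differentiating the inner integral and antidifferentiating $t^{-4/\alpha-1}$ produces exactly
\[
G(R) = \frac{1}{4\alpha} L(F)(R) + \frac{1}{4\alpha} R^{-4/\alpha}\int_0^R s^{4/\alpha - 1} F(s)\,\rmd s ,
\]
(up to the sign convention for $L$), with $L(F)(R) = \int_R^\infty F(s) s^{-1}\,\rmd s$; the boundary contributions vanish because $F\in L^2$. This identifies $\mathcal{R}(F)$ as $\tfrac{1}{4\alpha}$ times the Hardy–type operator $T_\alpha F(R) := R^{-4/\alpha}\int_0^R s^{4/\alpha - 1}F(s)\,\rmd s$, and reduces the theorem to two uniform estimates.

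For the singular term, $L$ is bounded on $L^2(\rmd R)$ by the dual form of Hardy's inequality, so $\tfrac{1}{4\alpha}L(F)\exp(2i\theta)\in L^2$. For the remainder, the key observation is the scaling identity $T_\alpha F(R) = \int_0^1 t^{4/\alpha - 1} F(tR)\,\rmd t$ (substitute $s = tR$); Minkowski's integral inequality then gives $\|T_\alpha F\|_{L^2} \le \|F\|_{L^2}\int_0^1 t^{4/\alpha - 3/2}\,\rmd t = (4/\alpha - 1/2)^{-1}\|F\|_{L^2}$, so $\tfrac{1}{4\alpha}\|T_\alpha\|_{L^2\to L^2}$ is bounded uniformly for $\alpha$ small. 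The weighted $H^s$ bounds (in the same scale of norms as the preceding ``regular part'' theorem, built from $\partial_\theta$ and the weighted radial operators $\alpha R\partial_R$, $\alpha^2 R^2\partial_{RR}$) follow by iterating the exact commutation relation $(R\partial_R) T_\alpha = \mathrm{Id} - \tfrac{4}{\alpha} T_\alpha$, obtained by direct differentiation: since $\alpha\cdot\tfrac4\alpha = 4$, every occurrence of the dangerous factor $\tfrac4\alpha$ is absorbed by the weight $\alpha R\partial_R$, and the constants $C_s$ do not degenerate as $\alpha\to 0$. Uniqueness is immediate: in the $\exp(2i\theta)$ mode the homogeneous equation has only the solutions $1$ and $R^{-4/\alpha}$, neither admissible.

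The main obstacle I anticipate is not the algebra but the bookkeeping needed to make the $H^s$ bounds genuinely uniform in $\alpha$: one must fix precisely which weighted Sobolev norm is meant (consistently with the ``regular part'' theorem), verify that all boundary terms in the integrations by parts vanish in that functional setting at both $R = 0$ and $R = +\infty$, and check that the commutation identity, iterated $s$ times, never leaves an unbalanced power of $\alpha^{-1}$. Everything else is an explicit one–dimensional computation.
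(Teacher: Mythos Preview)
Your approach is essentially the same as the paper's: reduce to the Euler ODE for $G$, solve by integrating factor, integrate by parts to extract $\tfrac{1}{4\alpha}L(F)$, and bound the Hardy--type remainder uniformly in $\alpha$. The only cosmetic differences are that the paper works directly with the second--order equation rather than passing through $H=RG'$, and proves the $L^2$ bound on $\mathcal{R}$ by pairing against $F$ and integrating by parts rather than by your Minkowski argument; both routes are standard and your commutation identity $(R\partial_R)T_\alpha=\mathrm{Id}-\tfrac{4}{\alpha}T_\alpha$ is exactly the right tool for the higher $H^s$ bounds (which the paper leaves implicit).
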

\end{myshade}

\begin{proof}
As in the proof above, we can write:
\[\Psi(R,\theta)=G(R)\exp(2i\theta)\] so that $G$ satisfies:
\[\alpha^2 R^2G''(R)+(4\alpha+\alpha^2)RG'(R)= F(R).\] Now we just have to solve this ODE. Observe that 
\[G''+\frac{4+\alpha}{\alpha R}G'(R)=\frac{F}{\alpha^2 R^2}.\]
It follows that, 
$\Big(G'R^{\frac{4+\alpha}{\alpha}}\Big)'=R^{\frac{4+\alpha}{\alpha}}\frac{F}{\alpha^2 R^2}$ and
thus
\[G'(R)=R^{-\frac{4+\alpha}{\alpha}}\int_{0}^Rs^{\frac{4+\alpha}{\alpha}}\frac{F(s)}{\alpha^2 s^2}\rmd s.\]
Integrating, we find
\begin{align*}
G(R)&=\int_{R}^\infty \tau^{-\frac{4+\alpha}{\alpha}}\int_{0}^\tau s^{\frac{4+\alpha}{\alpha}}\frac{F(s)}{\alpha^2 s^2}\rmd s=-\int_{R}^\infty \frac{\alpha}{4}\frac{d}{d\tau}\tau^{-\frac{4}{\alpha}}\int_{0}^\tau s^{\frac{4+\alpha}{\alpha}}\frac{F(s)}{\alpha^2 s^2}\rmd s\\
&=\frac{1}{4\alpha}\int_{R}^\infty \frac{F(s)}{s}\rmd s+\frac{1}{4\alpha}R^{-\frac{4}{\alpha}}\int_{0}^Rs^{\frac{4}{\alpha}} \frac{F(s)}{s}\rmd s\\
&=:\frac{1}{4\alpha}L(F)+\mathcal{R}(F).
\end{align*}
\end{proof}

\begin{myshade}
  \vspace{-1mm}
\begin{lemma} \label{lemmaRf} There is a constant $C$ independent of $0\leq\alpha\leq 1$ so that
\[|\mathcal{R}(F)|_{L^2}\leq C|F|_{L^2}.\] 
\end{lemma}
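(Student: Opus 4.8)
The plan is to recall from the preceding proof that
$$\mathcal{R}(F)(R) = \frac{1}{4\alpha}R^{-\frac{4}{\alpha}}\int_0^R s^{\frac{4}{\alpha}}\frac{F(s)}{s}\,\rmd s,$$
so the claim is a weighted Hardy-type inequality. First I would change variables to put the operator in a scale-invariant form: write $R = e^{-t}$, $s = e^{-\tau}$ (so $R\to 0$ corresponds to $t\to +\infty$), and set $g(\tau) := F(e^{-\tau})e^{-\tau/2}$, so that $\|g\|_{L^2(\rmd\tau)} = \|F\|_{L^2(R\,\rmd R)}$ up to the harmless Jacobian (here I am using that $\|F\|_{L^2}$ in the statement is the $L^2(R\,\rmd R)$ norm inherited from the $2$D Lebesgue measure). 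Under this substitution the condition $s \in (0,R)$ becomes $\tau \in (t,\infty)$, and a direct computation shows
$$\big(\mathcal{R}(F)\big)(e^{-t})\,e^{-t/2} \;=\; \frac{1}{4\alpha}\int_t^\infty e^{-\frac{4}{\alpha}(\tau - t)}\, g(\tau)\,\rmd\tau \;=\; \big(K_\alpha * g\big)(t),$$
where $K_\alpha(r) := \tfrac{1}{4\alpha}e^{-\frac{4}{\alpha}r}\mathbf{1}_{\{r>0\}}$ (convolution on $\mathbb{R}$, restricted appropriately to the half-line coming from the range of $\tau$).

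Then I would simply estimate by Young's convolution inequality: $\|\mathcal{R}(F)\|_{L^2} \lesssim \|K_\alpha\|_{L^1(\rmd r)}\,\|g\|_{L^2(\rmd\tau)}$, and compute
$$\|K_\alpha\|_{L^1} = \frac{1}{4\alpha}\int_0^\infty e^{-\frac{4}{\alpha}r}\,\rmd r = \frac{1}{4\alpha}\cdot\frac{\alpha}{4} = \frac{1}{16},$$
which is a constant independent of $\alpha \in (0,1]$. This gives $\|\mathcal{R}(F)\|_{L^2} \le \tfrac{1}{16}\|F\|_{L^2}$ (up to the fixed Jacobian constant from the change of variables), proving the lemma with $C$ absolute. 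An alternative, avoiding the logarithmic substitution, is to apply the classical Hardy inequality $\big\|R^{-\beta-1}\int_0^R s^{\beta} h(s)\,\rmd s\big\|_{L^2(\rmd R)} \le \tfrac{1}{\beta + 1/2}\|h\|_{L^2(\rmd R)}$ with $h(s) = F(s)/s$ and $\beta = 4/\alpha$, but one must be slightly careful tracking the weights so that the $\alpha$-dependence cancels against the prefactor $1/(4\alpha)$; the exponential substitution makes this cancellation transparent and is the route I would actually write up.

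The only genuine subtlety — and the step I would be most careful about — is bookkeeping of which $L^2$ measure is intended and whether the integral defining $\mathcal{R}(F)$ converges at $R=0$ for general $F\in L^2$; near $R = 0$ the weight $s^{4/\alpha}$ is strongly vanishing, so the inner integral is absolutely convergent and no boundary term is lost, but I would state this explicitly. There is no deep obstacle here: the content is entirely that the Mellin symbol of $\mathcal{R}$, namely $\big(4\alpha \cdot (\text{something} + 4/\alpha)\big)^{-1}$ evaluated along the critical line, is bounded uniformly in $\alpha$, which the convolution computation above makes manifest.
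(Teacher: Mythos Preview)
Your approach is correct and genuinely different from the paper's. The paper does not change variables; instead it computes the pairing $\int_0^\infty \mathcal{R}(F)\,F\,\rmd R$ directly. Writing $R^{-4/\alpha} = R^{-8/\alpha+1}\cdot R^{4/\alpha-1}$, one recognizes the integrand as $\tfrac{1}{8\alpha}R^{-8/\alpha+1}\frac{\rmd}{\rmd R}\big(\int_0^R s^{4/\alpha-1}F(s)\,\rmd s\big)^2$, integrates by parts, and identifies the result as a constant multiple of $|\mathcal{R}(F)|_{L^2}^2$ with constant $\sim (8-\alpha)$. Cauchy--Schwarz then closes the estimate. This is the classical ``Hardy inequality by duality'' trick; your logarithmic substitution plus Young's inequality is the Mellin/convolution version of the same phenomenon, and both yield a bound of order $\tfrac{1}{8-\alpha}$, uniform on $[0,1]$. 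Your route has the advantage of making the cancellation of the $1/\alpha$ prefactor completely transparent (it is absorbed into $\|K_\alpha\|_{L^1}$), while the paper's route avoids any change of coordinates and gives an exact identity rather than an inequality at the intermediate step.

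Two small corrections to your bookkeeping, neither of which affects the conclusion. First, the $L^2$ norm in the paper is the flat $L^2(\rmd R)$ on $[0,\infty)$, not $L^2(R\,\rmd R)$; your substitution $g(\tau)=F(e^{-\tau})e^{-\tau/2}$ is in fact the correct isometry for $L^2(\rmd R)$, so only your parenthetical remark is off. Second, with that measure the convolution kernel is $K_\alpha(r)=\tfrac{1}{4\alpha}e^{-(4/\alpha-1/2)r}\mathbf{1}_{r>0}$, not $e^{-4r/\alpha}$; the shift by $1/2$ comes from the $e^{-t/2}$ weight on $\mathcal{R}(F)(e^{-t})$. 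This gives $\|K_\alpha\|_{L^1}=\tfrac{1}{2(8-\alpha)}$ rather than $\tfrac{1}{16}$, which matches the paper's constant and is still uniformly bounded.
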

\end{myshade}
In Lemma \ref{lemmaRf}, we can take $C={1}/{56}$.

\begin{proof} The proof is similar to the proof of the Hardy Inequality. 
\begin{align*}
\int_{0}^\infty \mathcal{R}(F) F&= \frac{1}{4\alpha}\int_{0}^\infty F(R) R^{-\frac{4}{\alpha}}\int_{0}^Rs^{\frac{4}{\alpha}} \frac{F(s)}{s}\rmd s \\
&=\frac{1}{4\alpha}\int_{0}^\infty R^{-\frac{8}{\alpha}+1} R^{4/\alpha}\frac{F(R)}{R} \int_{0}^{R} s^{4/\alpha}\frac{F(s)}{s}\rmd s\\
&=\frac{1}{8\alpha}\int_{0}^{\infty} R^{-\frac{8}{\alpha}+1} \frac{d}{dR}\Big(\int_{0}^{R} s^{4/\alpha}\frac{F(s)}{s}\rmd s\Big)^2\\
&=\frac{1}{8\alpha}\Big(\frac{8}{\alpha}-1\Big)\int_{0}^\infty R^{-8/\alpha}\Big(\int_{0}^{R} s^{4/\alpha}\frac{F(s)}{s}\rmd s\Big)^2=\frac{64\alpha^2(8-\alpha)}{8\alpha^2}|\mathcal{R}(F)|_{L^2}^2.
\end{align*}
Thus, we obtain
$|\mathcal{R}(F)|_{L^2}\leq {\frac{1}{8(8-\alpha)}}|F|_{L^2}$.
\end{proof}

\subsubsection{Discussion}

We showed above two senses in which we can see that for merely bounded or almost merely bounded $f$ (like $C^\alpha$ for small $\alpha$), the leading order behavior of the solution of $\Delta\psi=f$ is given by:
\[\psi\left(|x|,\frac{x}{|x|}\right) \approx |x|^2\int_{|x|}^\infty\frac{\mathbb{P}_2(f)(\rho,\frac{x}{|x|})}{\rho}\rmd \rho.\] As we have remarked several times, similar expansions can be done at higher order and it would be interesting to see in what sense these higher expansions can lead higher order singular expansions in higher regularity spaces. We now move to discuss stable self-similar blow-up in a general context before applying these ideas to the Euler equation. 
 
\subsection{Self-Similar Singularity}\label{selfsimilar}
In the preceding section, we were able to show that under a certain change of variables (i.e. by looking at the correct type of functions), we can re-write the Euler equation as follows:
\[\partial_t \omega + u\cdot\nabla \omega= \nabla u\omega.\]
\[u=\frac{1}{\alpha}u_\mathcal{S}+u_\mathcal{R},\] for $\alpha$ small. 
In particular, by scaling time by $\alpha$, we see that
\[\partial_t\omega + u_{\mathcal{S}}\cdot\nabla \omega = \nabla u_{\mathcal{S}} \omega + \alpha N(u_{\mathcal{R}},\omega),\]
where $u_{\mathcal{S}}$ has a very simple angular dependence and can be thought of as a function of one variable (the radial variable). Thus, for the problem when $\alpha=0$, there is some hope to actually establish a finite-time singularity. We now want to discuss, on toy examples, what kind of techniques one could use to pass from a blow-up when $\alpha=0$ to a blow-up for $\alpha>0$. 
We will discuss two techniques to approach this problem: the \emph{fixed point method} and \emph{compactness method}.

\subsubsection{Fixed point method}

For an example of the fixed point approach that led to a great deal of work in this direction see  \cite{EJDG}. 
Consider the following problem for $(x,t)\in [0,\infty)\times [0,\infty)$:
\[\partial_t f = f^2 +\epsilon N(f),\] where $N$ could be a functional satisfying the following propeties:
\[N(a f)=a^2 N(f)\qquad N(f(a\,\cdot))=N(f)(a\,\cdot),\] 
for $a\in\mathbb{R}$. Let us also assume that $N$ is a bounded operator, for example, on $H^k$ for some $k\in\mathbb{N}$. 
\\

\noindent\textbf{The case $\epsilon=0$.} When $\epsilon=0$, we have
$\partial_t f = f^2.$
This has many self-similar profiles:
\[f(x,t)=\frac{1}{1-t}F\Big(\frac{x}{1-t}\Big),\]
where the $F$ are solutions to 
\[F+z\partial_z F=F^2.\]
From this, it is easy to see that $F\equiv 1$ is a solution (though this is quite unstable). The stable solution is:
\[F_0(z)=\frac{1}{1+z}.\] 

\noindent\textbf{The case of $\epsilon$ small.} Let us try to find a solution like this when $\epsilon>0$ for
\[\partial_t f = f^2 +\epsilon N(f).\] We seek a solution  of the form:
\[f(x,t)=\frac{1}{1-t}F_\epsilon\Big(\frac{x}{(1-t)^{1+\delta(\epsilon)}}\Big),\] where $\delta(\epsilon)$ is to be determined. 
Then, 
\[F_\epsilon + (1+\delta_\epsilon)z\partial_z F_\epsilon = F_\epsilon^2 +\epsilon N(F_\epsilon).\]
Thus, letting $F_\epsilon=g+F_0$ we see:
\[g+z\partial_z g -\frac{2}{1+z}g=-\delta_\epsilon z\partial_z F_0+g^2+\epsilon N(F_0+g),\]
which we write as:
\[\mathcal{L}(g)=-\delta_\epsilon z\partial_z F_0+g^2+\epsilon N(F_0+g).\]

\begin{myshade}
  \vspace{-1mm}
\begin{lemma}
Let $f\in H^k,$ $k\geq 2$. $\mathcal{L}(g)=f$ is solvable in $C^1$ if and only if $f'(0)+2f(0)=0$. 
Moreover, in this case we can write
\[g(z)=-f(0)+\frac{z}{(z+1)^2}\int_{0}^{z} \frac{(t+1)^2}{t^2}\left(f(t)+f(0)\frac{t-1}{t+1}\right)\rmd t\]
 and
$|g|_{H^k}\leq C_k|f|_{H^k}.$
\end{lemma}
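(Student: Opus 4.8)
Written out, $\mathcal L(g)=f$ is the first order linear ODE $zg'+\tfrac{z-1}{z+1}\,g=f$ on $(0,\infty)$, and the plan is to solve it explicitly. Its integrating factor is $\mu(z)=(z+1)^2/z$, so $(\mu g)'=(z+1)^2f/z^2$, while the homogeneous equation $\mathcal L(g)=0$ is solved by $g_h(z)=z/(z+1)^2$, which is $C^1$ up to $z=0$; consequently $C^1$ solutions, when they exist, are unique only modulo a multiple of $g_h$, and the formula in the statement picks out one distinguished branch. To derive that branch I would substitute $g=-f(0)+\tfrac{z}{(z+1)^2}h$. A one-line computation gives $\mathcal L(c)=c\,\tfrac{z-1}{z+1}$ for constants $c$, and, because $g_h$ solves the homogeneous equation, the general identity $\mathcal L(g_h h)=z\,g_h\,h'=\tfrac{z^2}{(z+1)^2}h'$ holds; hence $\mathcal L(g)=f$ reduces to $h'(z)=\tfrac{(z+1)^2}{z^2}\,\widetilde f(z)$ with $\widetilde f:=f+f(0)\tfrac{z-1}{z+1}$, and integrating from $0$ reproduces exactly the claimed formula for $g$.

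Next I would settle the solvability criterion. Evaluating $\mathcal L(g)=f$ at $z=0$ forces $g(0)=-f(0)$; then $g'(z)=\tfrac1z\big(f-g+\tfrac2{1+z}g\big)$ has numerator vanishing at the origin, so letting $z\to0$ (legitimate since $g\in C^1$) yields $2g(0)=f'(0)$, i.e.\ the necessary compatibility condition $f'(0)+2f(0)=0$. Conversely, under this condition $\widetilde f$ vanishes to second order at $z=0$ --- indeed $\widetilde f(z)=O(z^{3/2})$ by the one-dimensional embedding $H^2\hookrightarrow C^{1,1/2}$ --- so the kernel $\tfrac{(z+1)^2}{z^2}\widetilde f(z)$ is integrable near $0$, the integral $h$ is a well defined $C^1$ function on $[0,\infty)$ with $h(0)=0$, $g$ is $C^1$, and substituting back confirms $\mathcal L(g)=f$.

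Finally, for the bound $|g|_{H^k}\le C_k|f|_{H^k}$ with $C_k$ independent of $f$, I would work from the equation rather than the explicit formula, since in $g=-f(0)+\tfrac z{(z+1)^2}h$ neither summand lies in $L^2([0,\infty))$ --- their slow decays at infinity cancel --- so no term-by-term estimate is available. For the base case one pairs $\mathcal L(g)=f$ against a weight of the form $(1+z)^a z^{-b} g$ and integrates by parts, in the spirit of the Hardy-type computation in Lemma~\ref{lemmaRf}; the boundary contributions at $z=0$ (controlled by the finite value $g(0)=-f(0)$) and at $z=\infty$ (controlled by the $z^{-1}$ decay of $g$ read off from the formula) vanish, and the resulting coercivity together with $|f(0)|\lesssim\|f\|_{H^2}$ gives the $L^2$ and first-order bounds. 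The higher-order norms then follow from plain energy estimates on the differentiated equation: differentiating the degenerate term $zg'$ produces an extra favorable coefficient $j\,\partial_z^j g$, which makes the identity for $\partial_z^j g$ coercive for $j\ge2$, the commutator terms involving lower derivatives of $g$ and of the smooth coefficient $2/(1+z)$ being absorbed inductively against $\|f\|_{H^k}$; one justifies the integrations by parts a priori from the $C^1$ bound and the $z^{-1}$ decay, bootstrapping local $H^k$ regularity along the way. I expect this estimate --- specifically the low-order weighted identities --- to be the main obstacle: the weights must be tuned so the boundary terms at the origin cancel against the singular coefficient $2/(1+z)$, and the compatibility condition $f'(0)+2f(0)=0$ is needed there a second time to keep the weighted integrals near $z=0$ finite, all while producing a constant that does not degenerate.
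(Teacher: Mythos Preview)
Your derivation of the compatibility condition and of the explicit formula is essentially identical to the paper's: both reduce $\mathcal L(g)=f$ via the integrating factor $(z+1)^2/z$, subtract the constant $g(0)=-f(0)$ to produce $F=\widetilde f=f+f(0)\tfrac{z-1}{z+1}$ with $F(0)=F'(0)=0$, and integrate from $0$; your use of $H^2\hookrightarrow C^{1,1/2}$ to justify integrability near the origin is a clean touch the paper leaves implicit.

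For the $H^k$ bound the two sketches diverge slightly. The paper works from the explicit formula: it isolates the contribution of the constant $f(0)$ and computes directly that for $z\ge 1$
\[
-f(0)+\frac{z}{(1+z)^2}\int_1^z \frac{(t+1)(t-1)}{t^2}\,f(0)\,\rmd t = f(0)\,O(z^{-1}),
\]
so the non-$L^2$ pieces cancel explicitly; what remains is $\tfrac{z}{(1+z)^2}\int_0^z \tfrac{(t+1)^2}{t^2}f(t)\,\rmd t$, bounded by a Hardy inequality. You instead propose weighted energy estimates on the equation, using the formula only to read off the $z^{-1}$ decay needed to kill boundary terms. Both routes are valid and both are left as sketches; the paper's has the advantage that the cancellation at infinity is made completely explicit, while yours is closer in spirit to the coercivity arguments used later in the compactness method.
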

\end{myshade}

\begin{remark}
Assuming the truth of this Lemma, the idea is just to use $\delta_\epsilon$ to make sure that the right hand side satisfies the solvability condition. 
\end{remark}
\begin{proof}
Evaluating $\mathcal{L}(g)=f$ and its derivative at $z=0$ gives 
\[-g(0)=f(0),\qquad 2g(0)=f'(0).\]
This gives the condition 
$f'(0)+2f(0)=0.$
When $f$ satisfies this, we write:
\[\frac{z-1}{z+1}g+z\partial_z g=f,\] so that
\[\frac{z-1}{z+1}(g-g(0))+z\partial_z (g-g(0))=f+f(0)\frac{z-1}{z+1}.\]
Let 
\[G=g-g(0)\qquad F=f+f(0)\frac{z-1}{z+1}.\]
Observe that
$F(0)=F'(0)=0$ using the condition.  Thus we have
\[\frac{z-1}{z+1}G+z\partial_z G=F, \qquad \implies \qquad \frac{(z-1)}{(z+1)z} G+ \partial_z G =\frac{F}{z}.\]
Note that
\[\int \frac{z-1}{(z+1)z}= \int \frac{2z-z-1}{(z+1)z}=2\log(z+1)-\log(z). \]
Thus, 
\[\partial_z \Big(\frac{(z+1)^2}{z}G\Big)=\frac{(z+1)^2}{z^2}F, \qquad \implies \qquad G(z)=\frac{z}{(z+1)^2} \int_{0}^{z}\frac{(t+1)^2}{t^2}F(t)\rmd t\] where we chose that $G'(0)=0$ (since we have one compatibility condition, we have one free condition on $G$ and this is how we chose it). 
We obtain
\[g(z)=-f(0)+\frac{z}{(z+1)^2}\int_{0}^{z} \frac{(t+1)^2}{t^2}F(t)\rmd t.\]

The proof of the $H^k$ bound follows from small generalizations of the Hardy inequality and we leave it as an exercise. There may be some concern about the $f(0)$ terms for $L^2$ bounds so we will explain this part. Let us observe that when $z\geq 1$, 
\[-f(0)+\frac{z}{(1+z)^2}\int_{1}^{z} \frac{(t+1)^2}{t^2}F(t)\rmd t=-f(0)+\frac{z}{(1+z)^2}\int_{1}^z \frac{(t+1)^2}{t^2}f(0)\frac{t-1}{t+1}dt\]\[=f(0)\Big(-1+\frac{z}{(1+z)^2}\int_{1}^z\frac{t^2-1}{t^2}dt\Big)=f(0)\text{O}(\frac{1}{z}).\]
\end{proof}

Now, back to finding $g$:
\[\mathcal{L}(g)=-\delta_\epsilon z\partial_z F_0+g^2+\epsilon N(F_0+g),\]
Observe that the compatibility condition is:
\[2g(0)^2+2\epsilon N(F_0+g)(0)-\delta_\epsilon F_0'(0)+2g(0)g'(0)+\epsilon N(f_0+g)'(0)=0.\]
Since $F_0'(0)=1$ we thus set:
\[\delta_\epsilon=2g(0)^2+2\epsilon N(F_0+g)(0)+2g(0)g'(0)+\epsilon N(f_0+g)'(0)\]
Then we can write:
\[g=\mathcal{L}^{-1}\Big(-\delta_\epsilon z\partial_z F_0+g^2+\epsilon N(F_0+g)\Big):=\mathcal{K}(g).\]

\begin{myshade}
  \vspace{-1mm}
\begin{theorem}
Since $N$ is bounded on $H^k$ for some $k\geq 2$, there is a $\delta>0$ (which is just a constant multiple of $\epsilon$) so that 
\[\mathcal{K}:\overline{B_{\delta}(0)}\rightarrow \overline{B_{\delta}(0)} \] is a contraction (where $B_{\delta}(0)$ is the ball of radius $\delta$ around $0$ in $H^k$).  
\end{theorem}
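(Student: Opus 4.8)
The plan is to run a standard contraction-mapping argument for the operator $\mathcal{K}$ on the ball $\overline{B_\delta(0)}\subset H^k$, where the only subtlety is that $\delta_\epsilon$ is itself a function of $g$, so one must treat the coupled pair $(g,\delta_\epsilon)$ carefully. First I would record the elementary estimates we need: by the preceding Lemma, $\mathcal{L}^{-1}$ is bounded on $H^k$ with some constant $C_k$ (on the codomain consisting of $f$ satisfying the solvability condition $f'(0)+2f(0)=0$), and by hypothesis $N$ is bounded on $H^k$, say $\|N(f)\|_{H^k}\le C_N(1+\|f\|_{H^k})$ near $F_0$, with a similar Lipschitz bound $\|N(f_1)-N(f_2)\|_{H^k}\le C_N\|f_1-f_2\|_{H^k}$ for $f_1,f_2$ in a fixed neighborhood of $F_0$. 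Since $H^k$ with $k\ge 2$ embeds in $C^1$ on $[0,\infty)$ (after the usual localization, or because we only evaluate at $z=0$), point evaluations $g\mapsto g(0)$ and $g\mapsto g'(0)$ are bounded linear functionals; this makes the formula $\delta_\epsilon = 2g(0)^2 + 2g(0)g'(0) + \epsilon\big(2N(F_0+g)(0) + N(F_0+g)'(0)\big)$ well-defined and gives the bound $|\delta_\epsilon|\le C\big(\|g\|_{H^k}^2 + \epsilon(1+\|g\|_{H^k})\big)$, with a matching Lipschitz estimate $|\delta_\epsilon^{(1)}-\delta_\epsilon^{(2)}|\le C\big((\|g_1\|_{H^k}+\|g_2\|_{H^k}) + \epsilon\big)\|g_1-g_2\|_{H^k}$.

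Next I would verify the self-map property. For $g\in\overline{B_\delta(0)}$, the right-hand side $R(g):=-\delta_\epsilon z\partial_z F_0 + g^2 + \epsilon N(F_0+g)$ satisfies, in $H^k$, $\|R(g)\|_{H^k}\le |\delta_\epsilon|\,\|z\partial_z F_0\|_{H^k} + \|g^2\|_{H^k} + \epsilon\|N(F_0+g)\|_{H^k} \le C(\delta^2 + \epsilon + \delta^2 + \epsilon) \le C'(\delta^2 + \epsilon)$, using that $z\partial_z F_0$ is a fixed $H^k$ function and that $H^k$ is an algebra for $k\ge 1$ (so $\|g^2\|_{H^k}\le C\|g\|_{H^k}^2$). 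The precise point of the choice of $\delta_\epsilon$ is exactly that it forces $R(g)$ to satisfy the solvability condition $R(g)'(0)+2R(g)(0)=0$, so $\mathcal{K}(g)=\mathcal{L}^{-1}(R(g))$ is defined and $\|\mathcal{K}(g)\|_{H^k}\le C_k\|R(g)\|_{H^k}\le C_k C'(\delta^2+\epsilon)$. Now choose $\delta$ proportional to $\epsilon$: writing $\delta = A\epsilon$ for a constant $A$ to be fixed, we need $C_kC'(A^2\epsilon^2 + \epsilon)\le A\epsilon$, i.e. $C_kC'(A^2\epsilon + 1)\le A$, which holds for all $\epsilon$ small once $A>2C_kC'$ (then for $\epsilon$ small enough $C_kC'A^2\epsilon < A/2$ and $C_kC'\le A/2$). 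This gives $\mathcal{K}:\overline{B_\delta(0)}\to\overline{B_\delta(0)}$.

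For the contraction estimate, take $g_1,g_2\in\overline{B_\delta(0)}$ with corresponding $\delta_\epsilon^{(1)},\delta_\epsilon^{(2)}$. Then $R(g_1)-R(g_2) = -(\delta_\epsilon^{(1)}-\delta_\epsilon^{(2)})z\partial_z F_0 + (g_1^2-g_2^2) + \epsilon\big(N(F_0+g_1)-N(F_0+g_2)\big)$, so $\|R(g_1)-R(g_2)\|_{H^k}\le C\big(|\delta_\epsilon^{(1)}-\delta_\epsilon^{(2)}| + (\|g_1\|_{H^k}+\|g_2\|_{H^k})\|g_1-g_2\|_{H^k} + \epsilon\|g_1-g_2\|_{H^k}\big)\le C\big((\delta + \epsilon)\|g_1-g_2\|_{H^k}\big)$, using the Lipschitz bound on $\delta_\epsilon$, the factorization $g_1^2-g_2^2=(g_1+g_2)(g_1-g_2)$ with the algebra property, and the Lipschitz bound on $N$. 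Applying $\mathcal{L}^{-1}$: $\|\mathcal{K}(g_1)-\mathcal{K}(g_2)\|_{H^k}\le C_k C(\delta+\epsilon)\|g_1-g_2\|_{H^k}$, and since $\delta = A\epsilon$ this coefficient is $\le C_kC(A+1)\epsilon < 1/2$ for $\epsilon$ small. Thus $\mathcal{K}$ is a contraction, and the Banach fixed point theorem yields a unique $g\in\overline{B_\delta(0)}$, hence a self-similar profile $F_\epsilon = F_0 + g$ with the scaling exponent $1+\delta(\epsilon)$.

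The main obstacle I anticipate is not any single estimate but the bookkeeping of the coupling through $\delta_\epsilon$: one has to be careful that the point-evaluation functionals used to define $\delta_\epsilon$ really are controlled by the $H^k$ norm (which needs $k\ge 2$, or at least $k>1/2+1$ in the relevant one-dimensional-in-$z$ setting, together with suitable decay so that the trace at $z=0$ makes sense), and that the solvability condition is exactly matched — i.e. that the definition of $\delta_\epsilon$ is the unique choice making $R(g)$ land in the range of $\mathcal{L}$. A secondary technical point is the behavior at $z\to\infty$: the formula for $g$ in the Lemma already shows the $f(0)$ terms contribute an $O(1/z)$ tail, so one should check that all quantities ($g^2$, $N(F_0+g)$, $z\partial_z F_0$) have enough decay to lie in $H^k$; this is where the specific structure of $F_0(z)=1/(1+z)$ and the homogeneity properties of $N$ are used. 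Once these are in place the argument is the routine contraction scheme sketched above.
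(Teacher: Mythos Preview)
The paper does not actually give a proof of this theorem; it is stated and immediately followed by a remark, with the details left to the reader as a standard contraction argument. Your proposal correctly fills in exactly those details: bounding $\mathcal{L}^{-1}$ via the preceding Lemma, tracking the dependence of $\delta_\epsilon$ on $g$ through the $C^1\hookleftarrow H^k$ embedding, using the algebra property for the $g^2$ term, and choosing $\delta$ proportional to $\epsilon$ to close both the self-map and contraction estimates. This is precisely the intended argument, and your identification of the two technical points (control of point evaluations at $z=0$, and decay at $z\to\infty$ so that everything lies in $H^k$) matches what one must check to make the sketch rigorous.
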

\end{myshade}

\begin{remark}
We don't actually need that $N$ is bounded on $H^k$. All we need is that $\mathcal{L}^{-1}(N)$ is bounded on $H^k$, which is possible in some cases even when $N$ contains derivatives (see, for example, \cite{EJDG}). We now discuss a more flexible approach given by a compactness method. 
\end{remark}

\subsubsection{Compactness method}

The compactness approach follows essentially the same line of reasoning as the fixed point approach but instead of using invertibility of the linearized operator, we use its ellipticity on certain spaces. The point of this is to deal with possible unboundedness of $N$ in 
\be\label{feqn}
\partial_t f=f^2+\epsilon N(f).
\ee

\noindent\textbf{General philosophy.} Suppose we are trying to solve an equation of the form:
\[L(f)=g+\epsilon N(f).\] When $\mathcal{L}^{-1}N$ is a bounded operator, we can do as above and solve it via a clear iteration scheme. In the case that $L$ is a positive operator and $N$ satisfies:
\[|(N(f),f)|\leq C|f|^2,\] while
\[(L(f),f)\geq c|f|^2,\] we are still able to construct a solution. Indeed, in this case we can add a ``fake" time variable $\tau:$
\[\partial_\tau f + L(f)=g+\epsilon N(f).\]
In the above, the reader should interpret $(,)$ as an $H^k$ inner-product or similar. 
Since $g$ is independent of $\tau$, it is usually then possible to show that $\partial_\tau f$ decays exponentially in, say $H^{k-1}$ while $f$ is bounded in $H^k$. The existence of a solution then follows. 
\\

\noindent\textbf{Simple application of the compactness method.} Consider equation \eqref{feqn}.
We now search for a solution of the form 
\[f(x,t)=\frac{1}{1-(1+\mu(\epsilon))t}F_\epsilon\Big(\frac{x}{(1-(1+\mu(\epsilon))t)^{1+\delta(\epsilon)}}\Big).\] Essentially the point we will see is that while for invertibility of $\mathcal{L}$ we needed only one free parameter, we will need two to establish ellipticity. 
Then we get:
\[(1+\mu(\epsilon))F_\epsilon +z(1+\mu(\epsilon))(1+\delta(\epsilon))\partial_z F_\epsilon=F_\epsilon^2+\epsilon N(F_\epsilon).\]
Writing
$F_\epsilon =F_0+g$, we obtain
\[g +z\partial_z g-\frac{2}{1+z}g=-\mu F_0-(\mu+\lambda+\mu\lambda)z\partial_z F_0+g^2-\mu g-(\mu+\lambda+\mu\lambda)z\partial_z g+\epsilon N(F_0+g).\]
Thus, 
\begin{equation}\label{MainEqn}\mathcal{L}(g)=-\mu F_0-(\mu+\lambda+\mu\lambda)z\partial_z F_0+g^2-\mu g-(\mu+\lambda+\mu\lambda)z\partial_z g+\epsilon N(F_0+g).\end{equation}
Our goal is now to find a space $X$ so that
\begin{equation}\label{LB}(\mathcal{L}(g),g)_{X}\geq c|g|_{X}^2\end{equation} while
\begin{equation}\label{UB}(RHS,g)_{X}\leq C\epsilon (|g|_{X}+|g|_{X}^2)+C|g|_{X}^3.\end{equation}
Recall that 
\[\mathcal{L}(g)=g+z\partial_z g-\frac{2}{1+z}g.\]
Consider a $g$ supported in $[0,1]$ which satisfies $g(0)>0$ and then decreases monotonically to $0$. Obviously, for such $g$, $(\mathcal{L}(g),g)_{L^2}<0$, while for $g$ supported at large $z$ we see that $(\mathcal{L}(g),g)>0$. This means that it will not be able to take the space $X$ to be a normal Sobolev space. 

If we wish to show that $\mathcal{L}$ is positive, we should try to penalize mass at $0$. It turns out that if one just defines the weight
\[w(z)=\frac{(1+z)^4}{z^4},\] then
\[(\mathcal{L}(f),f)_{L^2_w}=\frac{1}{2}|f|_{L^2_w}^2.\]
Note that it isn't really necessary to have have an exact weight which will give us equality as above, but the point is to take a strong weight near $z=0$. There are now a number of ways to get around this issue and choosing the weight for a given problem can be quite involved. 
\\

\noindent \textbf{Choice of $\mu$ and $\lambda$.} Since we intend to solve for $g$ in the space $L^2_w$ with weight $w=\frac{(1+z)^4}{z^4}$, we need to ensure that $g$ and the right hand side vanish quadratically at $z=0$. 
Observe that, as in the fixed point approach, we can use $\mu$ and $\lambda$ to satisfy these two conditions since we have on the RHS or \eqref{MainEqn} the term
\[-\mu F_0-(\mu+\lambda+\lambda\mu)z\partial_z F,\] which when evaluated at $z=0$ gives $\mu$ and when differentiated and evaluated at $0$ gives $2\mu+\lambda+\mu\lambda$. This means that we can choose $\mu$ to ensure $RHS(0)=0$ and then choose $\lambda$ to ensure $RHS'(0)=0$.

\begin{myshade}
  \vspace{-1mm}
\begin{lemma}
It is possible to find constants $c_1,c_2>0$ so that the inner product
\[(f,g)_{X}=(f,g)_{L^2_w}+c_1(z\partial_z f, z\partial_z g)_{L^2_w}+c_2((z\partial_z)^2 f, (z\partial_z)^2g)_{L^2_w}\] is such that \eqref{LB} holds. 
\end{lemma}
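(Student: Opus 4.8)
The plan is to reduce the coercivity bound \eqref{LB} to the exact identity $(\mathcal{L}f,f)_{L^2_w}=\tfrac12|f|_{L^2_w}^2$ (already recorded for $w(z)=(1+z)^4/z^4$) by exploiting the fact that $\mathcal{L}$ \emph{almost} commutes with the scaling field $\mathsf{D}:=z\partial_z$. First I would write down the commutator identities. Since $\mathsf{D}$ commutes with $z\partial_z$ and with the identity, and $\mathsf{D}(\phi\,g)=(\mathsf{D}\phi)g+\phi\,\mathsf{D}g$ for a multiplier $\phi$, using $\mathsf{D}\big(\tfrac{2}{1+z}\big)=-\tfrac{2z}{(1+z)^2}$ and $\mathsf{D}\big(\tfrac{2z}{(1+z)^2}\big)=\tfrac{2z(1-z)}{(1+z)^3}$ one gets
\[
\mathsf{D}\,\mathcal{L}g=\mathcal{L}(\mathsf{D}g)+\frac{2z}{(1+z)^2}\,g,
\qquad
\mathsf{D}^2\mathcal{L}g=\mathcal{L}(\mathsf{D}^2 g)+\frac{4z}{(1+z)^2}\,\mathsf{D}g+\frac{2z(1-z)}{(1+z)^3}\,g.
\]
The decisive feature is that the three error coefficients $\tfrac{2z}{(1+z)^2}$, $\tfrac{4z}{(1+z)^2}$, $\tfrac{2z(1-z)}{(1+z)^3}$ are bounded on $[0,\infty)$ (each $\le 1$ in modulus), vanish linearly at $z=0$, and decay at $\infty$, so they interact harmlessly with the weight $w$.

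Next I would assemble $(\mathcal{L}g,g)_X$. Applying $(\mathcal{L}h,h)_{L^2_w}=\tfrac12|h|_{L^2_w}^2$ with $h=g,\mathsf{D}g,\mathsf{D}^2g$, the "diagonal" part of $(\mathcal{L}g,g)_X$ is $\tfrac12\big(|g|_{L^2_w}^2+c_1|\mathsf{D}g|_{L^2_w}^2+c_2|\mathsf{D}^2g|_{L^2_w}^2\big)=\tfrac12|g|_X^2$, while the commutators contribute exactly
\[
c_1\Big(\tfrac{2z}{(1+z)^2}\,g,\ \mathsf{D}g\Big)_{L^2_w}
+c_2\Big(\tfrac{4z}{(1+z)^2}\,\mathsf{D}g,\ \mathsf{D}^2g\Big)_{L^2_w}
+c_2\Big(\tfrac{2z(1-z)}{(1+z)^3}\,g,\ \mathsf{D}^2g\Big)_{L^2_w}.
\]
Writing $a=|g|_{L^2_w}$, $b=|\mathsf{D}g|_{L^2_w}$, $d=|\mathsf{D}^2g|_{L^2_w}$, Cauchy--Schwarz against the bounded coefficients bounds this below by $-C(c_1 ab+c_2 bd+c_2 ad)$ for a universal $C$. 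Then Young's inequality in the forms $Cc_1 ab\le\tfrac18 a^2+2C^2c_1^2 b^2$, $Cc_2 ad\le\tfrac18 a^2+2C^2c_2^2 d^2$, $Cc_2 bd\le\tfrac{c_1}{8}b^2+\tfrac{2C^2c_2^2}{c_1}d^2$ does the job: choosing first $c_1>0$ small enough (in terms of $C$) that $2C^2c_1^2\le\tfrac{c_1}{8}$, and then $c_2>0$ small enough (in terms of $C,c_1$) that $2C^2c_2^2\le\tfrac{c_2}{8}$ and $\tfrac{2C^2c_2^2}{c_1}\le\tfrac{c_2}{8}$, one absorbs all error terms into the diagonal and obtains
\[
(\mathcal{L}g,g)_X\ \ge\ \tfrac14 a^2+\tfrac{c_1}{4}b^2+\tfrac{c_2}{4}d^2\ =\ \tfrac14\,|g|_X^2,
\]
which is \eqref{LB} with $c=\tfrac14$ (any smaller constant is fine too). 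Thus the hierarchy $c_2\ll c_1\ll 1$ is exactly what makes the weighted $H^2$-type inner product work.

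The only genuinely delicate point is endpoint bookkeeping, not anything structural. The identity $(\mathcal{L}f,f)_{L^2_w}=\tfrac12|f|_{L^2_w}^2$ is an integration by parts whose boundary contribution $zf^2w$ must vanish at $z=0$ and $z=\infty$; since $zw\sim z^{-3}$ near the origin, the lemma must be read on the subspace of the relevant weighted Sobolev space of functions vanishing quadratically at $z=0$ --- precisely the class that the free parameters $\mu,\lambda$ in \eqref{MainEqn} are tuned to produce. I would therefore check that $\mathsf{D}$ preserves this quadratic vanishing (indeed $\mathsf{D}(z^2\varphi)=z^2(2\varphi+z\varphi')$), so that $\mathsf{D}g,\mathsf{D}^2g$ remain in the same class and the exact diagonal identity applies to them, and that the linearly-vanishing error coefficients do not spoil this. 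The decay at $z=\infty$ needed for both the boundary terms and the Cauchy--Schwarz steps is built into the functional setting (enough derivatives in the weighted norm). Once these routine items are dispatched, the three displays above close the estimate, and the same scheme --- commute $\mathsf{D}$ through $\mathcal{L}$, apply the exact diagonal identity, absorb bounded-coefficient cross terms via $c_2\ll c_1\ll 1$ --- extends verbatim to higher-order inner products if controlling additional derivatives becomes necessary.
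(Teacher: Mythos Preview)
Your proof is correct, and the paper itself does not supply a proof of this lemma---it only states it and, in the analogous $\mathcal{H}^1$ setting a few lines later, indicates the hierarchy ``$c_1$ can be taken small and then $c_2$ smaller.'' Your commutator-and-absorb argument is exactly the natural way to justify that hierarchy, and your endpoint discussion (quadratic vanishing at $z=0$ preserved by $\mathsf{D}$, needed so the exact identity $(\mathcal{L}h,h)_{L^2_w}=\tfrac12|h|_{L^2_w}^2$ applies to $h=g,\mathsf{D}g,\mathsf{D}^2g$) matches the paper's earlier remark that one must work in the class where $g$ vanishes quadratically at the origin.
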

\end{myshade}

\begin{remark}
Additionally, \eqref{UB} holds with the above choices of $\mu$ and $\lambda$ so long as $N$ satisfies reasonable assumptions (basically that we can do standard energy estimates). 
\end{remark}

\begin{myshade}
  \vspace{-1mm}
\begin{corollary} (A-priori estimate)
There exists a $C_*>0$ and an $\epsilon>0$ sufficiently small so that if a solution to \eqref{MainEqn} $g\in X$ exists with $|g|_{X}\leq C_*\epsilon$, then actually $|g|_{X}\leq C_*\frac{\epsilon}{2}$. 
\end{corollary}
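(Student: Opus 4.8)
The plan is a one-step bootstrap (a priori) estimate: test the fixed-point equation \eqref{MainEqn} against $g$ in the inner product of $X$, and combine the coercivity \eqref{LB} with the right-hand side bound \eqref{UB}. First I would record the elementary consequence of pairing \eqref{MainEqn} with $g$ in $(\cdot,\cdot)_X$: using $(\mathcal{L}(g),g)_X \ge c\,|g|_X^2$ from \eqref{LB} and $(RHS,g)_X \le C\epsilon(|g|_X+|g|_X^2)+C|g|_X^3$ from \eqref{UB} gives
\[
c\,|g|_X^2 \le C\epsilon\,|g|_X + C\epsilon\,|g|_X^2 + C\,|g|_X^3 .
\]
If $g\equiv 0$ there is nothing to prove, so assume $|g|_X>0$ and divide by $|g|_X$ to obtain
\[
c\,|g|_X \le C\epsilon + C\epsilon\,|g|_X + C\,|g|_X^2 .
\]

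Next I would fix the constants. Set $C_* := 4C/c$ (with $c$ and $C$ the fixed constants from \eqref{LB}, \eqref{UB}). Assume, as in the hypothesis, that a solution $g\in X$ exists with $|g|_X \le C_*\epsilon$. Substituting this bound into the two rightmost terms of the last display yields
\[
c\,|g|_X \le C\epsilon\bigl(1 + C_*\epsilon + C_*^2\epsilon\bigr),
\]
so $|g|_X \le \tfrac{C}{c}\,\epsilon\,(1 + C_*\epsilon + C_*^2\epsilon)$. Choosing $\epsilon$ small enough that $(C_* + C_*^2)\,\epsilon \le 1$ we get $|g|_X \le 2\tfrac{C}{c}\epsilon = \tfrac12 C_*\epsilon$, which is the claimed improved bound. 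Thus the corollary holds with $C_*=4C/c$ and any $\epsilon$ below $\epsilon_0 := 1/(C_*+C_*^2)$.

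There are two points requiring care, though neither is a serious obstacle. First, the constants $c$ in \eqref{LB} and $C$ in \eqref{UB} must be genuinely uniform in $\epsilon\in(0,1]$ and in $g$ over the ball $\{\,|g|_X\le C_*\epsilon\,\}$; this is precisely what the preceding Lemma (with the fixed weight $w=(1+z)^4/z^4$) and the accompanying Remark assert, given that $N$ obeys the standard energy estimates. Second, the auxiliary parameters $\mu=\mu(\epsilon)$ and $\lambda=\lambda(\epsilon)$ in \eqref{MainEqn} are not free but are pinned by the two solvability conditions that force $g$ and the right-hand side to vanish quadratically at $z=0$ (so that both lie in $L^2_w$); one has to verify $|\mu|+|\lambda| \lesssim \epsilon + |g|_X^2$, which keeps the $\mu F_0$, $\mu g$ and $(\mu+\lambda+\mu\lambda)z\partial_z g$ contributions inside the bound \eqref{UB}. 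This last verification — a routine continuity/implicit-function computation using $F_0'(0)=1$ — is where most of the actual work sits, but it is already subsumed in the hypotheses of \eqref{UB}. Granting these, the displayed algebra closes the a priori estimate, and a standard continuity argument then upgrades it to the existence of the self-similar profile $F_\epsilon = F_0 + g$.
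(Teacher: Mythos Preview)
Your argument is correct and is precisely the intended one: the paper does not write out a proof of this Corollary, but the whole point of the preceding Lemma (giving \eqref{LB}) and the Remark (giving \eqref{UB}) is to set up exactly the pairing-and-bootstrap computation you carry out. Your explicit choice $C_*=4C/c$ and the smallness condition on $\epsilon$ are the natural quantifications, and your closing remarks on the uniformity of $c,C$ and the size of $\mu,\lambda$ correctly identify where the real content lies.
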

\end{myshade}

 \subsection{Application to axi-symmetric no swirl solutions}
We now discuss a particular situation where the perturbation arguments discussed above as well as the expansion of the non-local operator of Section \ref{NonlocalEffect} can be applied to give a singularity for $C^{1,\alpha}$ solutions of 3d Euler as in \cite{E_Classical}. We emphasize that this is just one application of these ideas. Another application appeared in \cite{CHB}, where the authors establish a finite-time singularity for $C^{1,\alpha}$ solutions to the 3D Euler equation in a cylinder in the precise scenario of the numerical work \cite{HouLuo}. Recall the axi-symmetric Euler equation derived above \eqref{ASE}-\eqref{swirl}: 
\[\partial_t v+v\cdot\nabla v+ \nabla p =(\frac{(u_\theta)^2}{r},0),\]
\[\nabla\cdot(r v)=0,\]
\[\label{swirl}\partial_t(ru_\theta)+v\cdot\nabla (ru_\theta) = 0.\] Recall that $v=(u_r, u_z)$ and all derivatives are $(r,z)$ derivatives. 
We restrict further our class of solutions to the ones where \[u_\theta\equiv 0.\] These are called no-swirl solutions. Geometrically, particles are allowed to move toward/away and from and up/down the $z$-axis but cannot ``swirl" around the axis. Such solutions are effectively 2d though they are three dimensional exactly at the axis of symmetry. While this restriction appears quite restrictive, it turns out that solutions can still exhibit non-trivial behavior and even blow-up. 

Our first task will be to pass to the vorticity equation. We define 
\[\omega=\partial_r u_z-\partial_z u_r.\]
We obtain the system
\begin{equation}\label{ASVorticity}\partial_t \omega + v\cdot\nabla_{r,z}\omega =\frac{u_r}{r} \omega, \end{equation}
\begin{equation}\label{ASBSLaw}\div(r v)=0,\,\, \omega=\partial_r u_z-\partial_z u_r.\end{equation}

\subsubsection{Boundary Conditions and Growth Mechanism}
The physical domain of the solution is:
\[(r,z)\in [0,\infty)\times \mathbb{R}.\]
When $r=0$, the system does not actually require any boundary condition on $\omega$. However, in order than the original $u$ be smooth, $\omega_0$ must vanish at least linearly on $r=0$. Of course, if we only care that $u\in C^{1,\alpha}$ all it means is that $\omega$ must vanish like $r^\alpha$ as $r\rightarrow 0$. The system does impose a boundary condition on $u_r$. We need $u_r$ to vanish at $r=0$ linearly (just for the solution to be $C^1$). 

In this particular setting, it is easy to see that in order for $\omega$ to grow, we need to be working in a scenario where $\frac{u_r}{r}>0$ (at least near the maximum of $\omega$). It is easy to see that if we put an odd symmetry in $z$ and also make $\omega$ positive in $[0,\infty)\times [0,\infty)$, $u_r$ will be positive on $z=0$ and $u_z$ will be negative on $r=0$.  This follows from the Hopf Lemma. We now see that $0$ is a hyperbolic point and that $\omega$ grows along trajectories. Whether $\omega$ actually grows pointwise depends on the degree of vanishing of $\omega$ along $r=0$, just as we saw in our analysis of the vorticity equation in 1d in Section \ref{AdvectionAndVS}.

\subsubsection{Passage to the fundamental model}
Toward understanding the behavior of weakly vanishing solutions near the origin, we introduce the new variables
\[R=\rho^\alpha, \theta=\arctan(\frac{z}{r})\] where $\rho=\sqrt{r^2+z^2}$. Following a similar computation to what we did in Section \ref{NonlocalEffect} (see \cite{E_Classical} for details), we get that \eqref{ASVorticity}-\eqref{ASBSLaw} can be re-written as
\[\partial_t \Omega - \frac{3}{2}L_{12}(\Omega)\sin(2\theta)\partial_\theta \Omega=L_{12}(\Omega)\Omega +\alpha N(\Omega),\]
\[L_{12}(\Omega)(R)=\int_{R}^\infty\int_{0}^{\pi/2}\frac{\Omega(s,\theta)}{s}K(\theta)d\theta ds,\]
where $K(\theta)=3\cos^2(\theta)\sin(\theta)$ and $N$ is a quadratic non-linearity that is amenable to energy estimates independent of $\alpha$. 

{\bf Let us now set $\alpha=0$.} 
Then we see that
\begin{equation}\label{FM1}\partial_t \Omega - \frac{3}{2}L_{12}(\Omega)\sin(2\theta)\partial_\theta \Omega=L_{12}(\Omega)\Omega. \end{equation}
\begin{equation}\label{L12} L_{12}(\Omega)(R)=\int_{R}^\infty\int_{0}^{\pi/2}\frac{\Omega(s,\theta)}{s}K(\theta)d\theta ds. \end{equation}
This is the system \[\partial_t\omega + u_{\mathcal{S}}\cdot\nabla \omega = \nabla u_{\mathcal{S}}\omega,\] which we mentioned before, restricted to axi-symmetric solutions without swirl. 

It is not difficult to check that \eqref{FM1}-\eqref{L12} is, in fact, globally regular for data which is smooth in $\theta$ and $R$ and vanishing on $\theta=0$.

\begin{myshade}
  \vspace{-1mm}
\begin{lemma}
Assume that $\Omega_0\in C^{1/3}_c([0,\frac{\pi}{2}]\times [0,\infty))$ and $\Omega_0(R,0)=0$, then the unique local solution of \eqref{FM1}-\eqref{L12} is global. 
If, however, $\Omega_0(\theta,R)\geq (\pi/2-\theta)^\alpha$ for $\theta$ near $\pi/2$ and for any $\alpha<\frac{1}{3},$ the solution can blow-up in finite time. 
\end{lemma}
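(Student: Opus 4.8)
The plan is to exploit the fact that, for each fixed $R$, the model \eqref{FM1}--\eqref{L12} is a pure transport--amplification equation in $\theta$. Writing $\lambda(R,t):=L_{12}(\Omega)(R,t)$, the characteristics obey $\dot R=0$, $\dot\theta=-\tfrac32\lambda(R,t)\sin 2\theta$ and $\dot\Omega=\lambda(R,t)\,\Omega$. The central observation is that
\[
q:=\Omega\,(\tan\theta)^{1/3}
\]
is \emph{transported}, i.e. $\partial_t q-\tfrac32\lambda(R,t)\sin(2\theta)\,\partial_\theta q=0$. Indeed $\tfrac{d}{dt}\log\Omega=\lambda$, while $\tfrac{d}{dt}\big(\tfrac12\log\tan\theta\big)=\tfrac1{\sin 2\theta}\dot\theta=-\tfrac32\lambda$, so $\log\Omega+\tfrac13\log\tan\theta$ is constant along characteristics; the exponent $\tfrac13$ is precisely the ratio of the amplification coefficient to the transport coefficient, which is why the threshold Hölder exponent is $\tfrac13$. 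Equivalently, rescaling time by $d\tau=\lambda\,dt$ makes the system autonomous, with $\tan\theta(\tau)=\tan\theta_0\,e^{-3\tau}$ and $\Omega(\tau)=\Omega_0 e^{\tau}$. I would also record the identity $\Omega(s,\theta)K(\theta)=3\,q(s,\theta)\,\cos^{7/3}\theta\,\sin^{2/3}\theta$, which rewrites $L_{12}$ purely in terms of $q$.

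For the global statement, the hypotheses ($\Omega_0\in C^{1/3}$, $\Omega_0(R,0)=0$, compact support, so that $\Omega_0$ vanishes like $\phi^{1/3}$ at the axis $\theta=\tfrac\pi2$, $\phi:=\tfrac\pi2-\theta$) ensure exactly that $q_0$ is bounded and Hölder; since $q$ solves a pure transport equation, $q(t)$ stays bounded and Hölder with the same constants, and $\Omega_0(R,0)=0$ propagates, keeping $\Omega=q(\tan\theta)^{-1/3}$ controlled near $\theta=0$. The $\theta$-weight $\cos^{7/3}\theta\sin^{2/3}\theta$ is bounded and integrable on $[0,\tfrac\pi2]$, so $\int_0^{\pi/2}\Omega K\,d\theta\lesssim\|q_0\|_{L^\infty}$; because $\Omega$ is compactly supported in $s$, the remaining $s$-integral in $L_{12}$ is finite for each $R>0$, whence $\Lambda(R,t):=\int_0^t\lambda(R,s)\,ds$ grows at most linearly. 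Thus nothing blows up in finite time and the local solution (whose existence/uniqueness class I take from \cite{E_Classical}) extends globally.

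For the blow-up statement I run the opposite scenario. Take $\Omega_0\ge 0$, $\Omega_0\not\equiv 0$, supported in a thin radial shell $s\in[a,2a]$ and independent of $s$ there, with $\Omega_0(\theta,\cdot)\simeq\phi^\alpha$ near $\theta=\tfrac\pi2$; then $q_0\simeq\phi^{\alpha-1/3}$ is \emph{unbounded} at the axis, and the shell structure makes $\lambda(R,t)=L_{12}(\Omega)(R,t)$ independent of $R$ for $R\le a$ --- call it $\bar\lambda(t)$, $\bar\Lambda(t)=\int_0^t\bar\lambda$ --- which removes the spatial dependence that would otherwise obstruct closing an ODE. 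Positivity (preserved by transport and positive amplification) gives $\bar\lambda>0$, so $\theta=\tfrac\pi2$ is repelling; the characteristic reaching a fixed $\theta$ near $\tfrac\pi2$ at time $t$ emanates from $\phi_0\simeq\phi\,e^{-3\Lambda(s,t)}$, so $q(\phi,s,t)=q_0(\phi_0)\simeq\phi^{\alpha-1/3}e^{(1-3\alpha)\Lambda(s,t)}$. Inserting this into $\bar\lambda(t)=3\int_a^{2a}\tfrac1s\int_0^{\pi/2}q\cos^{7/3}\theta\sin^{2/3}\theta\,d\theta\,ds$ and keeping only the near-axis part (where $\int_0^\delta\phi^{\alpha-1/3}\cdot\phi^{7/3}\,d\phi=\int_0^\delta\phi^{\alpha+2}\,d\phi<\infty$ since $\alpha>-2$), together with a comparison $\Lambda(s,t)\gtrsim\bar\Lambda(t)$ for $s$ in the lower half of $[a,2a]$, yields
\[
\dot{\bar\Lambda}(t)=\bar\lambda(t)\ \ge\ c_1\,e^{(1-3\alpha)\,\bar\Lambda(t)},\qquad 1-3\alpha>0,
\]
which forces $\bar\Lambda(t)\to\infty$ at a finite time $T$. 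Since $\|\Omega(t)\|_{L^\infty}\gtrsim\Omega_0(\cdot)\,e^{\Lambda(s,t)}\to\infty$ on the shell (equivalently $L_{12}(\Omega)(R,t)=\bar\lambda(t)\to\infty$ while $L_{12}$ is bounded on this class), the solution cannot remain regular past $T$.

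The main obstacle is the blow-up half, and specifically the nonlocality of $L_{12}$: a priori the amplification rate $\Lambda(R,t)$ depends on $R$, and the self-reinforcing loop ``growth of $q$ near the axis $\Rightarrow$ larger $L_{12}$ $\Rightarrow$ faster growth'' only closes into a genuine one-dimensional Riccati-type inequality once that dependence is tamed; the radial-shell ansatz on which $\Omega_0$ is constant (making $L_{12}$ $R$-independent in the shell's interior) is the device I would use, and the residual technical points are verifying $\Lambda(s,t)\gtrsim\bar\Lambda(t)$ uniformly across the shell and checking that the continuation theory of \cite{E_Classical} genuinely detects this singularity. By contrast the global half is comparatively routine once the transported quantity $q$ and the identity $\Omega K=3q\cos^{7/3}\theta\sin^{2/3}\theta$ are in hand.
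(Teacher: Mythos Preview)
The paper does not supply a proof of this lemma: it is stated and immediately followed by Remark~\ref{AngularRegularity} explaining its significance, so there is no ``paper's proof'' to compare against. What follows is an assessment of your argument on its own terms.

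Your global-regularity half is clean and essentially correct. The transported invariant $q=\Omega(\tan\theta)^{1/3}$ is exactly the right object, and rewriting $\Omega K=3q\cos^{7/3}\theta\sin^{2/3}\theta$ against the integrable weight gives a uniform-in-time bound on $F(s,t)=\int_0^{\pi/2}\Omega K\,d\theta$ directly from $\|q_0\|_{L^\infty}$; since $\dot R=0$ preserves the $R$-support, $L_{12}$ is then uniformly bounded on $\operatorname{supp}\Omega$, and at most exponential growth of $\|\Omega\|_{L^\infty}$ follows. One caveat: you infer vanishing of $\Omega_0$ at the axis $\theta=\pi/2$ from ``compact support'', but compact support in the closed interval $[0,\pi/2]$ does not force this. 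Your argument genuinely needs $\|q_0\|_{L^\infty}<\infty$, i.e.\ $|\Omega_0|\lesssim(\pi/2-\theta)^{1/3}$, which is what the contrast with the blow-up condition and the content of Remark~\ref{AngularRegularity} suggest is intended---but flag it rather than sliding past.

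For the blow-up half you have the right mechanism and the right exponent $1-3\alpha$, and you are honest that the $s$-dependence of $\Lambda(s,t)$ is the obstacle. But your proposed fix does not work as stated: even with $\Omega_0$ constant on the shell, $\lambda(s,t)=\int_s^{2a}(\cdot)\,ds'$ is \emph{decreasing} in $s$, so $\Lambda(s,t)\le\bar\Lambda(t)$ on $[a,2a]$, the opposite of the comparison you invoke. A clean way to close the loop is to track
\[
J(t):=\int_a^{2a}\frac{1}{s}\,e^{\beta\Lambda(s,t)}\,ds,\qquad \beta=1-3\alpha>0,
\]
and use your lower bound $F(s,t)\gtrsim e^{\beta\Lambda(s,t)}$. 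Then
\[
\dot J(t)=\beta\int_a^{2a}\frac{\lambda(s,t)}{s}e^{\beta\Lambda(s,t)}\,ds
\ \gtrsim\ \beta\int_a^{2a}\frac{e^{\beta\Lambda(s,t)}}{s}\int_s^{2a}\frac{e^{\beta\Lambda(s',t)}}{s'}\,ds'\,ds
\ =\ \frac{\beta}{2}\,J(t)^2,
\]
by symmetrizing the double integral. This Riccati inequality gives finite-time blow-up of $J$, hence of $L_{12}(\Omega)(a,t)$, without any pointwise comparison of $\Lambda(s,t)$ to $\bar\Lambda(t)$.
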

\end{myshade}

\begin{remark}\label{AngularRegularity}
This Lemma implies that even when we formally set $\alpha=0$ smooth (in $\theta$ and $R$) solutions are global. These solutions are merely bounded solutions in the original coordinates; however, because of the regularity in $\theta$ they have the key property that $\omega$ vanishes linearly on the axis of symmetry. In this regime, the transport term is strong enough to prevent blow-up just as we saw in the 1d models. 

In particular, this means that the small $\alpha$ regime in the change of variables $r\rightarrow r^\alpha$ itself does not actually change the regularity properties of the Euler equation. The only use of this change of variables is to magnify certain effects that, in principle, could be magnified in the smooth case as well. To get a singularity, we need to minimize the effect of the transport term. This leads us to introduce non-smoothness in the direction of the flow.  
\end{remark}

\subsubsection{Dropping the Angular transport term for solutions that are almost $\theta$ independent}
When $\Omega$ is independent of $\theta$, \eqref{FM1}-\eqref{L12} becomes:
\[\partial_t\Omega(R,t)=\Omega(R,t)\int_{R}^\infty \frac{\Omega(s,t)}{s}\rmd s.\]
This is like a non-local Ricatti equation. It turns out that very similar arguments to what we described in Section 3 above (the compactness approach), yield that the self-similar blow-up profile:
\[\Omega(R,t)=\frac{1}{1-t} F(\frac{R}{1-t}), \qquad F(z)=\frac{z}{(1+z)^2},\qquad L_{12}(F)=\frac{1}{(1+z)}\] is stable.

\subsection{Analysis of the linearized operator $\mathcal{L}$ and design of angular weights}
When interpreted as a solution of \eqref{FM1}-\eqref{L12}, the linearized operator becomes:
\[\mathcal{L}g=g+z\partial_z g - \frac{2}{1+z}g-\frac{z}{(1+z)^2}L_{12}(g)+\frac{3}{2(1+z)}\sin(2\theta)\partial_\theta g.\]
The way to deal with $\mathcal{L}$, we have to be very careful in how we deal with the angular transport term. Our goal is to design a reasonable inner product space $X$ for which we have
\[(\mathcal{L}g,g)_{X}\geq c|g|_{X}^2.\]
Let us adopt the notation:
\[D_\theta=\sin(2\theta)\partial_\theta, \qquad D_z=z\partial_z.\]
Now, observe that
\[D_\theta\mathcal{L}g= D_\theta g +z\partial_z D_\theta g-\frac{2}{1+z}D_\theta g+\frac{3}{2(1+z)}\sin(2\theta)\partial_\theta (D_\theta g).\]
We already have a weight $w_{z}(z)={(1+z)^4}/{z^4}$ which gives us positivity from the first three terms. For the last term, we can simply add the weight
\[w_{\theta}=\frac{1}{\sin(2\theta)^{1+\delta}},\] and if $\delta$ is sufficiently small, we will see that
\[(D_\theta \mathcal{L} g, D_\theta g)_{L^2_w}\geq (\frac{1}{2}-\delta)|D_\theta g|_{L^2_w}^2,\] where 
$w:=w_z w_\theta.$ This allows us to get control of $D_\theta g$ in terms of itself. We then define an inner product of order $1$ as follows:
\[(f, g)_{\mathcal{H}^1}=(D_\theta f, D_\theta g)_{L^2_w}+c_1(f, g)_{L^2_w}+c_2(D_zf,D_z g)_{L^2_w}\] and $c_1$ can be taken small and then $c_2$ smaller so that we have:
\[(\mathcal{L}g,g)\geq c|g|_{\mathcal{H}^1}^2.\] The choice of taking $\delta>0$ is to ensure that $\mathcal{H}^k$ embeds in $C^\alpha$ for some $\alpha>0$. 

\section{Perspectives and problems on singularity formation}\label{perspect}

We now move on to give precise statements of the existing blow-up results on the 3d Euler equation and discuss various possible extensions. Most of the discussion below will be regarding axi-symmetric solutions. The reason for this is that fully 3d solutions are significantly more complicated and there are very few existing results. An interesting direction of study in the full 3d setting is the high-symmetry setting introduced in \cite{Kida,BP} and studied analytically in \cite{EJEO}. An interesting feature of \emph{all} of the current results (numerical and analytical) on singularity formation in the Euler equation is that they seem to occur near stationary points of the flow where the vorticity is also vanishing. This motivates the following interesting question of D. Sullivan. 

\begin{myshade3}
  \vspace{-1mm}
\begin{question}[D. Sullivan]\label{Sullivan}
Do there exist blow-up solutions with non-vanishing vorticity?
\end{question}
\end{myshade3}

\begin{remark}
Even in the 1d models, the vanishing of vorticity seems to be a necessary condition for blow-up. It is not clear at all how one could approach this question with the current technology; however, it would be interesting to develop arguments in this direction applied to the 1d models.
\end{remark}

\subsection{The Boussinesq system}

Let us now recal the axi-symmetric Euler equation and Boussinesq system. We already derived the axi-symmetric Euler equation above in \eqref{ASE}-\eqref{swirl}: 
\[\partial_t v+v\cdot\nabla v+ \nabla p =(\frac{(u_\theta)^2}{r},0),\]
\[\nabla\cdot(r v)=0,\]
\[\label{swirl}\partial_t(ru_\theta)+v\cdot\nabla (ru_\theta) = 0\] where that $v=(u_r, u_z)$ and all derivatives are $(r,z)$ derivatives. Here, the two-dimensional velocity field $v$ essentially solves the 2d Euler equation except that it is forced on the right-hand side by an effect of the swirl $u^\theta$. A multiple of the swirl $u^\theta$ is then transported. There are two quite different regimes for this equation. Near $r=0$ even the zero-swirl solutions can behave in a complicated way. Away from $r=0$, axi-symmetric Euler solutions can be effectively modeled by solutions to the Boussinesq system:
\begin{equation}\label{B1}\partial_t v + v\cdot\nabla v +\nabla p=(\rho,0),\end{equation}
\begin{equation}\label{B2}\nabla\cdot v=0,\end{equation}
\begin{equation}\label{B3}\partial_t \rho+v\cdot\nabla \rho=0.\end{equation}
Definitive links between the singularity problem for the Boussinesq and axi-symmetric Euler equations were established in \cite{EJB,EJE,CHB}. While it may not always be the case, it is reasonable to guess that a blow-up in the Boussinesq system essentially implies a corresponding blow-up of axi-symmetric Euler solutions.

\subsubsection{Corner domains}

The first results on blow-up for 3d Euler with finite energy were established in domains with corners following a series of papers by I. Jeong and the second author \cite{EJB,EJE,EJEO}. The key idea of these works is to introduce and analyze scenarios in which the Biot-Savart law can be \emph{localized.} The key is to impose geometric conditions, such as spatial symmetries or boundary conditions, that ensure that we can recover the entire velocity gradient at a point $x_*=0$ from just the vorticity vector at this point. In the setting of solutions on the whole space where we impose only symmetry conditions, we have already discussed this in Section \ref{SISolutions}. In the setting of domains with boundary, this phenomenon is understood from an analysis of the Taylor expansion of solutions to the Poisson equation. We explain this in the following example. 

\begin{example}
Assume that $\psi\in C^2$ is any solution to a Dirichlet problem $\Delta\psi =f$ on the sector $\{(x,y): 0\leq x\leq y\}$. Then, the entire matrix $D^2\psi(0)$ can be determined uniquely from $f(0)$. The same can be said in any sector or corner domain of angle strictly less than $\pi/2$. The reason for this is that on this domain we have that $\psi(x,0)=0$ and $\psi(x,x)=0$ so that $\partial_{xx}\psi(0,0)=0$ and $2\partial_{xy} \psi (0)+\partial_{yy}\psi(0)=0$. Since $f(0)=\partial_{xx}\psi(0)+\partial_{yy}\psi(0)$, we now recover $D^2\psi(0)$. 
\end{example}

When considering solutions on domains without a corner or with a corner of size $\pi/2$ or more, further symmetries should be imposed on the data. One idea considered in \cite{EJSymm,EJB} is to impose a reflection symmetry. This is the setting of the following problem.

\begin{myshade3}
  \vspace{-1mm}
\begin{problem}\label{bousprob}
Prove that there exist scale-invariant blow-up solutions to the Boussinesq system \eqref{B1}-\eqref{B3} on the corner domain $\Omega=\{(r,\theta): -\pi/4<\theta<\pi/4\}$ with vanishing vorticity and density gradient on $\partial\Omega$. 
\end{problem}
\end{myshade3}

\begin{remark}
The interested reader should look at \cite{EJB} for previous results and ideas in this direction. Additionally, a scalar model of this system was studied in \cite{EIS} where this type of result was established. 
\end{remark}

\noindent The above problem is a prelude to the more striking possibility that solutions supported entirely inside of such corner domains could become singular in finite time.

\subsection{Axi-symmetric solutions on the cylinder: the Hou-Luo scenario}

We now move to discuss a very promising scenario for blow-up of smooth solutions at the boundary of a smooth domain proposed by Guo Luo and Tom Hou \cite{HL}.  The singularity mechanism is most clearly explained on the Boussinesq system \eqref{B1}-\eqref{B3} on the domain $[0,\infty)\times \mathbb{R}$.  Observe that the two-dimensional vorticity $\omega$ is forced by the vertical derivative of $\rho$:
\[\partial_t\omega + u\cdot\nabla\omega = \partial_y \rho,\] while $\rho$ is transported by $u$:
\[\partial_t \rho+u\cdot\nabla \rho=0.\]
Now let us assume that the initial data for $\rho$ even in $y$ and decreasing as a function of $y$ on $[0,\infty)$ for fixed $x$. Under this condition, positive vorticity is produced for short time in the quadrant $[0,\infty)\times [0,\infty)$ and the vorticity remains odd in $y$ for all time. For short time, the generated vorticity produces a velocity field that pushes particles down in $y$ and "out" in $x$ on $[0,\infty)\times [0,\infty)$. This, in turn leads to the growth of the magnitude of $\partial_y\rho$. The growth in $\partial_y \rho$ leads to more vorticity growth and stronger advection. If this growth "loop" is sustained for all time, a finite time singularity will form. Remarkably, there does not appear to be any clear "enemy" to blow-up in this situation and the only difficulty is to capture the blow-up rigorously. The difficulty is compounded by the fact that \eqref{B1}-\eqref{B3} is a system and is non-local. Despite the difficulties associated to rigorously establishing the blow-up, upon studying the simulations provided by Luo and Hou \cite{HL} and reflecting upon the mathematical mechanism described above, one is naturally led to believe that a singularity does indeed occur in this scenario:

\begin{myshade2}
  \vspace{-1mm}
\begin{conjecture}\label{HLConj}
There exists a smooth solution to the axi-symmetric Euler equation on the cylindrical domain $\{(r,z): 0\leq r\leq 1\}$ that becomes singular in finite time. 
\end{conjecture}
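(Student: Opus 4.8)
The plan is to realize Conjecture \ref{HLConj} as an asymptotically self-similar blow-up, executing at full regularity the strategy developed for $C^{1,\alpha}$ data in \cite{E_Classical,CHB}. The first step is a reduction. Near the conjectured blow-up point $x_* = (r,z) = (1,0)$ on the cylinder wall, the axi-symmetric Euler equations \eqref{ASE}--\eqref{swirl} are, to leading order, the two-dimensional Boussinesq system \eqref{B1}--\eqref{B3} on a half-plane, with the wall playing the role of $\{x=0\}$ (after setting $x = 1-r$) and with the odd-in-$z$ symmetry supplying the hyperbolic structure at $x_*$. Using the comparison estimates of \cite{EJB,EJE,CHB}, a finite-time singularity for Boussinesq in this geometry — with vorticity and density gradient supported away from the axis $r=0$ — should transfer to one for axi-symmetric Euler on $\{0\le r\le 1\}$, so the bulk of the work is to build a blow-up for Boussinesq.

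Next I would set up the self-similar ansatz. Writing $\omega = \tfrac{1}{T_*-t}\Omega(y,t)$ and $\nabla\rho = \tfrac{1}{T_*-t}\,G(y,t)$ with $y = x/(T_*-t)^{\lambda}$ — allowing, if needed, a slowly varying $\lambda = \lambda(t)$ to absorb the logarithmic-in-time corrections visible in the Hou--Luo numerics — the profile equation takes the schematic form $\Omega + \lambda\, y\cdot\nabla_y\Omega = \mathcal{B}(\Omega,G)$ with $\mathcal{B}$ bilinear and nonlocal through the Biot--Savart law. Here, unlike in \cite{E_Classical}, there is no artificial small parameter $\alpha$ to tame the nonlocal operator; instead one exploits the exact boundary geometry — the Dirichlet condition on the flat wall together with the imposed reflection symmetry — to localize $D^2\Delta^{-1}$ via the Taylor-expansion lemmas of \S\ref{NonlocalEffect} (Lemma \ref{PtWiseKeyLemma}), splitting it into a computable ``singular'' part coming from the low spherical harmonics and a controlled remainder $\mathcal{R}$. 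A numerically computed approximate profile $(\bar\Omega,\bar G)$ built from the Hou--Luo simulations \cite{HL} serves as the anchor of the construction.

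The decisive step is the analysis of the linearization $\mathcal{L}$ about $(\bar\Omega,\bar G)$. One wants a weighted Hilbert space $X$ — built, in the spirit of \S\ref{selfsimilar}, from $L^2$ of a weight penalizing mass near the blow-up point, plus a few derivatives in $z$ and in the radial self-similar variable — such that $(\mathcal{L}g,g)_X \ge c\,|g|_X^2$ holds modulo the finite-dimensional subspace spanned by the neutral/unstable modes generated by the scaling and translation symmetries; these are removed by letting the modulation parameters $T_*$, $\lambda$, and the blow-up location vary, exactly as the free parameters $\mu,\lambda$ were used in \S\ref{selfsimilar}. Given such coercivity, the remainder $\mathcal{R}$ and the nonlinear terms are dispatched by standard energy estimates together with a fixed-point or fake-time compactness argument as in \S\ref{selfsimilar}, producing a genuine asymptotically self-similar solution blowing up at $T_*$; smoothness of the profile follows from choosing $X$ to embed in $C^\infty$ (or at least a high $C^{1,\gamma}$) by carrying enough derivatives.

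The main obstacle, and the reason the statement remains open in the smooth category, is precisely this coercivity: the profile is not known in closed form, and $\mathcal{L}$ is nonlocal and non-self-adjoint, so the spectral gap cannot be verified ``by hand.'' The realistic route — the one taken for the cylinder in \cite{CHB} — is a computer-assisted proof: compute $\bar\Omega$ with rigorous interval-arithmetic error bounds, diagonalize the dangerous low-frequency block of $\mathcal{L}$ numerically with verified bounds, and control the high-frequency tail analytically through the weight. A secondary difficulty is confirming that the number of genuinely unstable directions of $\mathcal{L}$ is finite and matches the number of available modulation parameters; and a final one is making the Boussinesq-to-Euler transfer quantitative near $r=0$, so that the corrective $N(\Omega)$-type nonlinearities and the curvature of the cylinder are subcritical for the self-similar scaling.
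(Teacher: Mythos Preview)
The statement you are addressing is a \emph{conjecture}, not a theorem: the paper does not prove it and explicitly presents it as open. There is therefore no ``paper's own proof'' to compare against. What the paper does offer, in the remark immediately following Conjecture~\ref{HLConj}, is a brief indication of one possible route---reduce to Boussinesq, seek a self-similar profile, and note that in the $C^{1,\alpha}$ category this was carried out in \cite{CHB}---and your proposal is a detailed elaboration of exactly that route.

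Your plan is a reasonable research outline and is consonant with the paper's own suggestions: the Boussinesq reduction via \cite{EJB,EJE,CHB}, the self-similar ansatz with modulation parameters, the splitting of the Biot--Savart operator into singular and regular parts as in \S\ref{NonlocalEffect}, and the coercivity-in-a-weighted-space strategy of \S\ref{selfsimilar}. You also correctly identify the genuine obstruction: in the smooth case there is no small parameter $\alpha$, the profile is not explicit, and the required spectral gap for the nonlocal, non-self-adjoint linearization $\mathcal{L}$ has not been verified. That is precisely why the statement remains a conjecture. Your text is an honest description of a program, not a proof, and you say as much; just be aware that nothing here closes the gap, and the paper makes no claim that it does either.
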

\end{myshade2}

\begin{remark}
There may be different approaches to solve this conjecture. If it be through constructing first a self-similar profile, it would suffice to establish the corresponding result on the Boussinesq system. In the category of $C^{1,\alpha}$ solutions, a singularity was constructed in the work \cite{CHB} via this strategy.  It is important to emphasize that singularity formation at the boundary is qualitatively different from singularity formation in free space. This is due to the suppression, via the boundary, of a powerful regularizing mechanism in the vorticity equation. We know that this regularizing mechanism is present in 1d equations and \emph{prevents} singularity formation for smooth data. This regularizing mechanism is completely absent when the vorticity does not vanish on the boundary. 
\end{remark}

The study of conjecture \ref{HLConj} has led to a number of outstanding results in the analysis of the Euler equation and related models. The first such result was given by Kiselev and \v{S}ver\'ak \cite{KS14} where the authors established the qualitatively optimal double-exponential growth rate for the gradient of vorticity for 2d Euler solutions on the disk. The result was established using two essential ingredients: the Key Lemma, which is contained in Lemma \ref{PtWiseKeyLemma}, which gives the leading order term in the expansion of the velocity at a hyperbolic point in the case of uniformly bounded vorticity. This is used to give large scale control on the motion of particles. The second ingredient is a clever argument to control the evolution of a bump of vorticity on the boundary which also uses a type of comparison principle given by the Key Lemma that particles closer to the hyperbolic point tend to have higher relative velocity. An important extension of the methods introduced in \cite{KS14} was given in \cite{KRYZ} where a singularity was established in a related two dimensional model. Finally, following the work of Luo and Hou \cite{HL}, there were a number of model equations that were proposed to get better understand the singularity in the Hou-Luo scenario. Singularity formation in these models is established mostly using contradiction and/or barrier arguments (see, for example, \cite{CKY, Choi2017, TDo, DKX, HK, KRYZ, KT, Z15}). Recently, Chen, Hou, and Huang \cite{CHHHL} established the existence of a self-similar blow-up in a one-dimensional ``boundary layer model" of the Hou-Luo scenario via computer assistance. The authors of \cite{CHHHL} also suggested the existence of approximate self-similar profiles in the in the original Hou-Luo scenario. More recently, Wang, Lai, G\'omez-Serrano, and Buckmaster \cite{WLGB} proposed a new method to find self-similar profiles in the Hou-Luo scenario using neural networks.

\subsection{Axi-symmetric solutions without swirl}

The focus of this subsection is on possible extensions and follow-up problems related to the blow-up result \cite{E_Classical}. We remark that all of the problems we discuss below could also be asked in the context of the singularity result of Chen and Hou \cite{CHB} (though there should be no regularity limitations there).
Let us first recall the main Theorem of \cite{E_Classical}:

\begin{myshade}
  \vspace{-1mm}
\begin{theorem}\label{EClassical}
There is a continuum $(0,\alpha_0)$ of self-similar blow-up axi-symmetric no-swirl Euler solutions. 
\end{theorem}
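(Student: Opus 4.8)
The plan is to construct the self-similar blow-up solutions by the perturbative compactness method developed in \S\ref{selfsimilar}, using the decomposition of the Biot-Savart operator into singular and regular parts from \S\ref{NonlocalEffect}. First I would set up the problem: pass from the axi-symmetric no-swirl Euler equations \eqref{ASVorticity}--\eqref{ASBSLaw} to the ``fundamental model'' in the variables $R = \rho^\alpha$, $\theta = \arctan(z/r)$, yielding the equation
\[
\partial_t \Omega - \tfrac{3}{2} L_{12}(\Omega)\sin(2\theta)\partial_\theta \Omega = L_{12}(\Omega)\Omega + \alpha N(\Omega),
\]
with $L_{12}$ as in \eqref{L12} and $N$ a quadratic nonlinearity on which one can close energy estimates uniformly in $\alpha$. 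At $\alpha=0$ and for $\theta$-independent profiles this reduces to the non-local Riccati equation $\partial_t \Omega = \Omega \int_R^\infty \Omega(s)/s\,\rmd s$, which admits the explicit stable self-similar profile $F(z) = z/(1+z)^2$ with $L_{12}(F) = 1/(1+z)$. The goal is to perturb this profile in $\alpha$ while allowing $\theta$-dependence to enter at order $\alpha$.

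Next I would make the self-similar ansatz $\Omega(R,\theta,t) = \frac{1}{1-(1+\mu(\alpha))t} F_\alpha\!\big(\frac{R}{(1-(1+\mu(\alpha))t)^{1+\delta(\alpha)}},\theta\big)$, write $F_\alpha = F + g$, and derive the profile equation $\mathcal{L}g = (\text{lower order in } g) + \alpha N(F+g)$ where $\mathcal{L}$ is the linearized operator
\[
\mathcal{L} g = g + z\partial_z g - \frac{2}{1+z} g - \frac{z}{(1+z)^2} L_{12}(g) + \frac{3}{2(1+z)}\sin(2\theta)\partial_\theta g,
\]
and $\mu,\delta$ are free parameters tuned so that the right-hand side vanishes quadratically at $z=0$ (so that it lies in the weighted space $L^2_w$ with $w_z = (1+z)^4/z^4$). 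The central analytic step is to establish the coercivity estimate $(\mathcal{L}g, g)_{\mathcal{H}^k} \geq c|g|_{\mathcal{H}^k}^2$ in a carefully designed inner product combining the radial weight $w_z$ with an angular weight $w_\theta = \sin(2\theta)^{-(1+\delta)}$, commuting $\mathcal{L}$ with $D_\theta = \sin(2\theta)\partial_\theta$ and $D_z = z\partial_z$ to control higher derivatives; the small exponent $\delta>0$ guarantees $\mathcal{H}^k \hookrightarrow \ring{C}^{0,\beta}$ for some $\beta>0$, giving the desired regularity of the solution. One then runs the fake-time continuity/compactness argument: introduce $\partial_\tau g + \mathcal{L}g = (\text{RHS})$, show $\partial_\tau g$ decays exponentially while $|g|_{\mathcal{H}^k}$ stays bounded by $C_*\alpha$, and extract a steady profile $g = g_\alpha$ as $\tau\to\infty$. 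This produces, for each $\alpha \in (0,\alpha_0)$ with $\alpha_0$ small, a genuine self-similar blow-up profile; translating back through $R = \rho^\alpha$ gives a $C^{1,\alpha}$-regular axi-symmetric no-swirl Euler solution developing a singularity in finite time, and since the construction works on the whole interval, the continuum is obtained.

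The main obstacle I expect is the treatment of the angular transport term $\frac{3}{2(1+z)}\sin(2\theta)\partial_\theta g$ in the coercivity estimate. This term is a transport term, not a dissipative one, so positivity of $\mathcal{L}$ is not automatic; it must be extracted by the interplay between the degenerate angular weight $w_\theta$ (singular at $\theta = 0,\pi/2$ where $\sin 2\theta$ vanishes) and the commutator structure. Concretely one needs $(D_\theta \mathcal{L} g, D_\theta g)_{L^2_w} \geq (\tfrac12 - \delta)|D_\theta g|_{L^2_w}^2$, which forces $\delta$ small, and then the lower-order $L^2_w$ and $D_z$ contributions must be absorbed with small coupling constants $c_1 \ll 1$, $c_2 \ll c_1$; balancing these weights against the boundary terms generated by integration by parts in $\theta$ near the degeneracy is delicate. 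A secondary difficulty is verifying that $\alpha N(F+g)$ genuinely satisfies the bound $(\alpha N(F+g), g)_{\mathcal{H}^k} \leq C\alpha(|g|_{\mathcal{H}^k} + |g|_{\mathcal{H}^k}^2) + C|g|_{\mathcal{H}^k}^3$ uniformly in $\alpha$, which requires that the regular part $\mathcal{R}$ of the Biot-Savart expansion (Lemma \ref{lemmaRf}) be bounded on all the weighted Sobolev spaces $\mathcal{H}^k$ with constants independent of $\alpha$ — this is where the $\alpha$-uniform estimates of \S\ref{NonlocalEffect} do the essential work.
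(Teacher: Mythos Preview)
Your proposal is correct and follows precisely the approach the paper sketches in its exposition (\S\ref{NonlocalEffect}--\S\ref{selfsimilar} and the subsection on application to axi-symmetric no-swirl solutions); note that the paper itself does not supply a self-contained proof of this theorem but simply recalls it as the main result of \cite{E_Classical}, after having laid out exactly the ingredients you describe---the fundamental model, the explicit profile $F(z)=z/(1+z)^2$, the linearized operator $\mathcal{L}$, the weighted coercivity with $w_z=(1+z)^4/z^4$ and $w_\theta=\sin(2\theta)^{-(1+\delta)}$, and the compactness (fake-time) argument.
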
 
\end{myshade}

The solutions have the following character: they can be charactured as concentrating and colliding oppositely signed vortex rings. See Figure \ref{fig1tarek}

\begin{figure}[h!]\centering
    \includegraphics[width=.32\columnwidth]{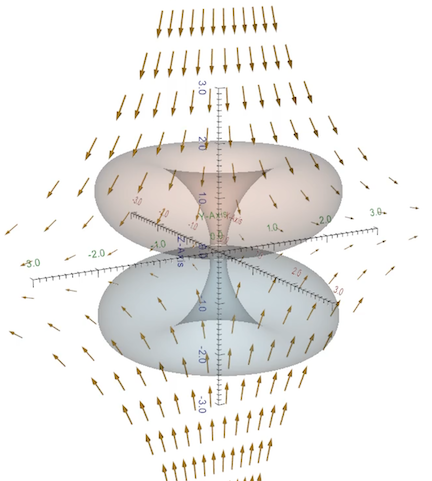} 
    \includegraphics[width=.32\columnwidth]{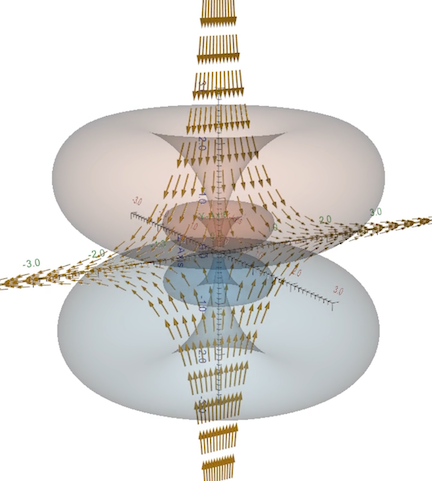} 
    \includegraphics[width=.32\columnwidth]{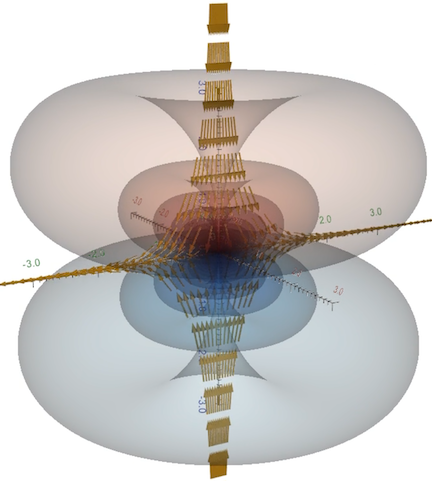} 
  \caption{\small Snapshots of blowup solution of Theorem \ref{EClassical} at times approaching singularity. }
\label{fig1tarek}
\end{figure}

\noindent

A first obvious problem is to give a reasonable estimate on the size of $\alpha_0$. This can be done by checking the constants in \cite{E_Classical}, though many of the estimates done there were suboptimal. It is possible that by sharpening the arguments in \cite{E_Classical}, one can show that $\alpha_0>10^{-4}$. To motivate the next problem, let us observe that the blow-up constructed in \cite{E_Classical} is essentially one-dimensional. Indeed, one of the key ideas is to show that in some regime the Euler equation is well approximated by solutions to the so-called ``fundamental model:"
\[\partial_t F = F L_{12}(F),\qquad L_{12}(F)= \int_{r}^\infty\int_{0}^{2\pi}\frac{F(s,\theta)K(\theta)}{s}d\theta ds.\]  
This model problem has an infinite-dimensional family of self-similar blow-up profiles starting from data of the form:
\[F_0(R,\theta)=\frac{R}{(1+R)^2} \Gamma(\theta),\] for \emph{any} $\Gamma$ satisfying $\int_{0}^{2\pi} K(\theta)\Gamma(\theta)d\theta>0$. Despite this, in \cite{E_Classical} the author was only able to establish singularity formation in the Euler equation near \emph{one} particular profile $\Gamma_*$. Whether there is actually a singularity near other angular profiles is a very interesting problem.

\begin{myshade3}
  \vspace{-1mm}
\begin{problem}\label{axiblowupfund}
Establish blow-up for axisymmetric no-swirl solutions in the vicinity of a large class of blow-up profiles to the fundamental model. 
\end{problem}
\end{myshade3}

\begin{remark}
It is possible that most of the tools to solve this problem are essentially contained in \cite{E_Classical}, though it is possible that singularities near profiles other than $\Gamma_*$ will exhibit non-self-similar behavior. Studying this problem would also shed some light on the universality profiles. Indeed, in a number of other similar models it is observed that corresponding to each blow-up speed there is a unique self-similar profile. 
\end{remark}

In the spirit of the preceding problem, we also mention the following general problem.

\begin{myshade3}
  \vspace{-1mm}
\begin{problem}\label{instabquest}
Study the stability/instability of the self-similar profiles of Theorem \ref{EClassical} with respect to general 3d perturbations. 
\end{problem}
\end{myshade3}

\begin{remark}
In the work \cite{EGM3dE}, stability of those profiles was established with respect to certain types of perturbations including perturbations with swirl. 
\end{remark}

Another problem that may appear to be technical at first is the following.

\begin{myshade3}
  \vspace{-1mm}
\begin{problem}\label{selfsimcoer}
Establish the existence of self-similar profiles relying only on the invertibility of the linear operator rather than its coercivity. 
\end{problem}
\end{myshade3}

\begin{remark}
Another way to say this is to establish existence using the ``fixed point method" rather than the ``compactness method" as discussed in Section \ref{selfsimilar}. It would be interesting to see whether this can be done even in the setting of \cite{E_Classical}. We remark that if it were possible to establish existence of a profile based only on invertibility, it would allow for a much more robust approach to the construction of singularities, both analytically and numerically. In particular, it would give a basis for studying the continuation of curves of self-similar profiles as we are about to discuss. 
\end{remark}

Another important direction of study is to determine the behavior of self-similar solutions as we increase the continuation parameter $\alpha$ used in the paper \cite{E_Classical}. Such a study would shed further light on the nature of singularity formation in this setting. It may be possible to establish analytical results in the spirit of \cite{HMW}, though even a better understanding through numerical simulations of the solution curves would be illuminating.

\begin{myshade3}
  \vspace{-1mm}
\begin{problem}\label{curvequest}
Study the behavior of the curve of self-similar solutions in $\alpha$ constructed in \cite{E_Classical}. Does the curve continue for all $\alpha\leq 2$ or does it stop at some $\alpha_*<2$? Establish continuation criteria for the curve of solutions.
\end{problem}
\end{myshade3}

\begin{remark}
Note that the curve does not necessarily stop until $\alpha=2$ (after which the change of variables involving the parameter $\alpha$ ceases to make sense). The parameter $\alpha$ only controls the radial regularity, which could be $C^\infty$ as we will remark below. 
\end{remark}

\begin{myshade2}
  \vspace{-1mm}
\begin{conjecture}\label{alpha13}
For any $\alpha<\frac{1}{3}$, there exists a $C^\alpha_c$ axi-symmetric no-swirl local Euler solution that becomes singular in finite time. 
\end{conjecture}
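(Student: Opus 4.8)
The plan is to promote the self-similar construction behind Theorem~\ref{EClassical}, which delivers $C^{1,\alpha}$ blowup only for $\alpha$ in a small interval $(0,\alpha_0)$, to the entire critical range $\alpha\in(0,1/3)$, retaining the architecture of \S\ref{selfsimilar}: one works with axi-symmetric no-swirl data in the variables $R=\rho^{\alpha}$, $\theta=\arctan(z/r)$, so that \eqref{ASVorticity}--\eqref{ASBSLaw}, after rescaling time by $\alpha$, read
\[
\partial_t\Omega-\tfrac32 L_{12}(\Omega)\sin(2\theta)\,\partial_\theta\Omega=L_{12}(\Omega)\,\Omega+\alpha\,N(\Omega),
\]
with $L_{12}$ as in \eqref{L12} and $N$ a quadratic, energy-estimable nonlinearity; one then seeks a self-similar profile $\Omega=\frac{1}{1-t}F\!\big(\frac{R}{1-t},\theta\big)$. \emph{Step 1} is to produce, for every $\alpha\in(0,1/3)$, a sufficiently accurate approximate profile $F_\alpha$ for the $\alpha=0$ (``fundamental'') model \eqref{FM1}--\eqref{L12}. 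Taking $F_\alpha$ of the separated form $\frac{R}{(1+R)^2}\,\Gamma_\alpha(\theta)$ near $R=0$ reduces the profile equation, after using that $L_{12}$ of such a profile is a multiple of $(1+z)^{-1}$, to a linear first-order ODE for the angular factor; its solution vanishes at the axis $\theta=\pi/2$ like $(\tfrac\pi2-\theta)^{\gamma}$ with $\gamma$ fixed by the eigenvalue-type constant $\int K\,\Gamma_\alpha$, and imposing $\gamma=\alpha$ (so that $\omega\sim r^{\alpha}$ near the axis, consistent with $\omega\in C^{\alpha}$) is solvable precisely for $\alpha\in(0,1/3)$, degenerating as $\alpha\uparrow 1/3$ to the marginal configuration in which the angular transport (which spreads a $(\tfrac\pi2-\theta)^{\alpha}$ layer at the axis with amplification $e^{3\alpha\int L_{12}}$) exactly cancels the vortex-stretching amplification $e^{\int L_{12}}$ --- this is the dynamical origin of the $1/3$ threshold, and it matches the $C^{1/3}$ global-regularity-versus-blowup dichotomy for the model recorded above. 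The bare separated profile misbehaves near the equator $\theta=0$, so $F_\alpha$ is taken to be a smooth modification there (or one installs an extra reflection symmetry killing the offending second spherical harmonic), with a controllably small defect; for $\alpha$ small this recovers the profile of Theorem~\ref{EClassical}.

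\emph{Step 2} is the passage to genuine $3$D Euler. Writing $F=F_\alpha+g$, the correction solves $\mathcal L_\alpha g=E_\alpha+Q_\alpha(g)+\alpha N(F_\alpha+g)$, where $E_\alpha$ is the profile defect, $Q_\alpha$ is quadratic, and $\mathcal L_\alpha$ is the linearization about $F_\alpha$ of the self-similar fundamental operator --- a version of $\mathcal L g=g+z\partial_z g-\frac{2}{1+z}g-\frac{z}{(1+z)^2}L_{12}(g)+\frac{3}{2(1+z)}\sin(2\theta)\,\partial_\theta g$ with $\alpha$-dependent coefficients. One then runs the compactness method of \S\ref{selfsimilar}: choose rescaling parameters $\mu_\alpha,\lambda_\alpha$ so that the right-hand side vanishes to the order required by a weighted space $X_\alpha$, introduce the fake time $\tau$ in $\partial_\tau g+\mathcal L_\alpha g=(\mathrm{r.h.s.})$, and close the a~priori estimate $\|g\|_{X_\alpha}\le C_\alpha\|E_\alpha\|_{X_\alpha}$ from a coercivity bound $(\mathcal L_\alpha g,g)_{X_\alpha}\ge c_\alpha\|g\|_{X_\alpha}^2$. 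Here $X_\alpha$ is a weighted Sobolev space built on $L^2_w$ with radial weight $(1+R)^4/R^4$ as in \cite{E_Classical} and an angular weight adapted to the $(\tfrac\pi2-\theta)^{\alpha}$ vanishing at the axis (replacing the $\sin(2\theta)^{-1-\delta}$ weight used when $\alpha$ is small), chosen small enough that $X_\alpha\hookrightarrow C^{\alpha'}$ for some $\alpha'>0$; the $C^{\alpha}$ regularity of $\omega$ itself comes, as before, from the $\rho^{\alpha}$ in the change of variables together with smoothness of $F_\alpha+g$ away from the origin. An alternative to the direct construction is continuation: start from Theorem~\ref{EClassical} and follow the branch $\alpha\mapsto F^{(\alpha)}$ by an implicit-function/degree argument, proving that it cannot terminate before $\alpha=1/3$ --- the object of Problem~\ref{curvequest}, again contingent on uniform lower bounds for $\mathcal L_\alpha$.

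The main obstacle, and the reason this is genuinely a theorem rather than a cosmetic extension, is that the construction ceases to be perturbative as $\alpha\uparrow 1/3$ while the ellipticity margin simultaneously closes: the term $\alpha N$ is of size $\sim\!\alpha$, not small, and the angular transport $\frac32 L_{12}\sin(2\theta)\partial_\theta$, integrated against the axis-adapted angular weight, yields a coercivity gain proportional to $(1-3\alpha)$ (exactly the stretching-versus-spreading balance above), so $c_\alpha\to 0$ as $\alpha\to 1/3$. Closing the argument requires showing that $c_\alpha$ decays no faster than a fixed power of $(1/3-\alpha)$ \emph{and} that the full nonlinearity, including $\alpha N(g)$ and the defect $E_\alpha$, can be absorbed into this shrinking margin uniformly --- i.e.\ that the threshold is ``soft''. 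Handling such a marginal problem as the margin vanishes is the crux; secondary difficulties are the $\alpha$-uniform treatment of the equatorial modification of $F_\alpha$ and of the error $E_\alpha$, and the fact that even establishing blowup for the fundamental model \eqref{FM1}--\eqref{L12} at a general vanishing rate $\alpha\in(0,1/3)$ (not only at the distinguished angular profile used in \cite{E_Classical}) is part of Step~1. Should the margin close faster than the perturbation can be absorbed, one would instead need a non-perturbative construction near $\alpha=1/3$ --- for instance a fixed-point scheme relying on invertibility rather than coercivity of $\mathcal L_\alpha$, as in Problem~\ref{selfsimcoer} --- which is itself currently open.
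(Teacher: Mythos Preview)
The statement you are attempting to prove is a \emph{conjecture} in the paper, not a theorem; the paper contains no proof of it. It is listed precisely because it is open, with the accompanying remark that resolving it would close the regularity gap with the known $C^{1/3+}$ global regularity results. So there is no ``paper's own proof'' to compare against.

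That said, your proposal is not a proof but a coherent research program, and you seem largely aware of this---you yourself flag that the coercivity margin $c_\alpha$ collapses like $(1-3\alpha)$ while the perturbation $\alpha N$ is order one, and that closing this gap (or replacing coercivity by invertibility as in Problem~\ref{selfsimcoer}) is ``currently open.'' These are exactly the obstructions the paper identifies as the reason the conjecture remains unproven: the construction in \cite{E_Classical} is genuinely perturbative in $\alpha$, and the paper explicitly poses as separate open problems both the continuation of the solution curve (Problem~\ref{curvequest}) and the extension to other angular profiles of the fundamental model (Problem~\ref{axiblowupfund}), each of which your Steps~1--2 presuppose. Your phrase ``the reason this is genuinely a theorem rather than a cosmetic extension'' is misleading---it is not a theorem, and the difficulties you correctly isolate are precisely why. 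As a strategic outline your writeup is sound and well-informed, but it should be framed as a plan of attack on an open conjecture, not as a proof.
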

\end{myshade2}

\begin{remark}
Note that $C^{1/3+}_c$ solutions are globally regular and so solving this conjecture would close the regularity gap (see \cite{UY, Serfati3D, ShirotaY, SaintRaymond, Danchin, AHK}). It is likely that there is global regularity in the borderline case as well. 
\end{remark}

A sharper version of the preceding conjecture is the following one where the real non-smoothness required is in the flow direction as we remarked above in Remark \ref{AngularRegularity}.

\begin{myshade2}
  \vspace{-1mm}
\begin{conjecture}[Smooth in $R$, limited regularity in $\theta$] \label{radsmooth}
In the preceding conjecture, $\omega_0$ can be $C^\infty$ in $\rho=\sqrt{x_1^2+x_2^2+x_3^2}$. 
\end{conjecture}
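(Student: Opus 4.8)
The plan is to revisit the construction in \cite{E_Classical} and track the roles played by the two independent variables after the change of coordinates $R = \rho^\alpha$, $\theta = \arctan(z/r)$. Recall that in that work the vorticity profile is sought in the form $\Omega(R,\theta)$ solving the perturbed fundamental model
\[
\partial_t \Omega - \tfrac{3}{2} L_{12}(\Omega)\sin(2\theta)\partial_\theta \Omega = L_{12}(\Omega)\Omega + \alpha N(\Omega),
\]
and the nonsmoothness that is genuinely \emph{needed} to beat the angular transport term is a H\"older-type singularity of $\Omega_0(R,\cdot)$ near $\theta = \pi/2$ (equivalently, near the axis $r=0$ in the original coordinates). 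The first step is therefore to separate cleanly, in the functional setting, the radial regularity from the angular regularity: in the energy space $\mathcal{H}^k$ built from the weights $w_z(z) = (1+z)^4/z^4$ and $w_\theta = \sin(2\theta)^{-(1+\delta)}$, one measures $D_\theta = \sin(2\theta)\partial_\theta$ derivatives with the angular weight (which encodes the $C^\alpha$-in-$\theta$ ceiling) and $D_z = z\partial_z$ derivatives \emph{without} any loss — indeed one can allow arbitrarily many $D_z$ derivatives. The claim to establish is that the a priori estimate $(\mathcal{L}g, g)_{\mathcal{H}^k} \geq c|g|_{\mathcal{H}^k}^2$ of the construction survives when $\mathcal{H}^k$ is replaced by a space in which $D_z^m g$ is controlled for all $m$, i.e. by an $R$-smooth analogue $\ring C^\infty_R \cap C^\alpha_\theta$.

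The key steps, in order, are: (i) verify that the linearized operator $\mathcal{L}$ commutes with $D_z = z\partial_z$ up to lower-order terms that are harmless in the weighted estimate — this is already implicit in \cite{E_Classical} since the $z\partial_z$ structure of $\mathcal{L}$ is scale-covariant, and the nonlocal term $L_{12}$ maps $R$-smooth functions to $R$-smooth functions without any $\alpha$-dependent constant (this follows from Lemma \ref{lemmaRf} and the accompanying $H^s$ estimate applied with $D_z^m$); (ii) re-run the coercivity lemma for the inner product
\[
(f,g)_{X} = \sum_{m=0}^{k}\sum_{j=0}^{\ell} c_{m,j}\,(D_\theta^j D_z^m f,\, D_\theta^j D_z^m g)_{L^2_w},
\]
with the $c_{m,j}$ chosen hierarchically small, so that $(\mathcal{L}g,g)_X \geq c|g|_X^2$ uniformly in $\alpha$, for \emph{every} fixed $k$ and with $\ell$ fixed (dictated by $\alpha$ and $\delta$); (iii) iterate in $k$: since the only obstruction to taking more $D_z$ derivatives is the boundedness of $N$ and $\mathcal{L}^{-1}N$ on $X$, and $N$ is a quadratic nonlinearity amenable to standard weighted energy estimates that close at every order (there is no $\alpha$-dependent loss because the $R$-direction was never rescaled destructively), one obtains a solution $\Omega$ with $D_z^m\Omega$ bounded for all $m$; (iv) translate back: $D_z^m\Omega$ bounded for all $m$ is exactly the statement that $\omega_0$ is $C^\infty$ in $\rho$ near the origin (away from $\rho=0$ everything was already smooth), while the angular variable retains only the $C^\alpha$-ceiling inherited from $w_\theta$, which via the change of variables $r\mapsto r^\alpha$ is what makes $\omega_0 \in C^\alpha_c$ overall but not better.

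The main obstacle I expect is step (iii), propagating the energy estimate to all orders $m$ in $D_z$ \emph{uniformly in the iteration} while keeping the constant independent of $\alpha$: the commutators $[\mathcal{L}, D_z^m]$ and $[N, D_z^m]$ generate terms that must be absorbed, and one has to check that the weight $w_z = (1+z)^4/z^4$ — which was tuned to make exactly the first three coefficients vanish at $z=0$ (forcing $g$ and the right-hand side to vanish quadratically there) — is still compatible with controlling $D_z^m g$ without a new compatibility condition appearing at each order. I would handle this by observing that $D_z$ preserves the quadratic vanishing at $z=0$ (since $D_z(z^2 h) = z^2(2h + D_z h)$), so no new boundary condition is needed, and the commutator terms are all of the schematic form (bounded coefficient)$\times$(lower-order $D_z$-derivative), hence absorbable by the hierarchy of small constants $c_{m,j}$. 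A secondary, more bookkeeping-heavy obstacle is keeping track of the fact that the profile $F(z) = z/(1+z)^2$ and $L_{12}(F) = 1/(1+z)$ are themselves $C^\infty$ in $z$ with all $D_z$-derivatives bounded, so that the linearization around them stays in the right class — this is straightforward but must be stated. Once the all-orders a priori estimate is in hand, the compactness (fake-time) argument of \S\ref{selfsimilar} applies verbatim to produce the self-similar profile, and Conjecture \ref{alpha13}'s conclusion is upgraded to $\omega_0 \in C^\infty_\rho \cap C^\alpha_c$ as asserted.
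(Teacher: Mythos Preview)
This statement is a \emph{conjecture} in the paper, not a theorem; the paper gives no proof and lists it among the open problems following Theorem \ref{EClassical}. So there is no argument of the paper's to compare yours against. Your proposal is therefore an attempt to resolve an open problem, and it contains a genuine gap at step (iv), the translation back to the original variables.

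You assert that ``$D_z^m\Omega$ bounded for all $m$ is exactly the statement that $\omega_0$ is $C^\infty$ in $\rho$ near the origin.'' This is false. Under the change of variables $R=\rho^\alpha$ one has $D_z=R\partial_R=\alpha^{-1}\rho\partial_\rho$, so control of all $D_z^m\Omega$ yields only $(\rho\partial_\rho)^m\omega_0\in L^\infty$ for every $m$, i.e.\ membership in the scale-invariant class $\ring C^\infty$ in $\rho$, which is strictly weaker than genuine $C^\infty$. Concretely, the base profile $F(z)=z/(1+z)^2$ pulls back to $\omega_0(\rho,\theta)\approx \rho^\alpha\,\Gamma(\theta)$ near $\rho=0$, and $\rho^\alpha$ has all $(\rho\partial_\rho)^m$ derivatives bounded yet is only $C^\alpha$. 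Moreover, the weight $w_z=(1+z)^4/z^4$ forces the perturbation $g$ to vanish quadratically at $R=0$, so no admissible $g$ can cancel the leading $\rho^\alpha$ term of the profile. In other words, the radial non-smoothness in \cite{E_Classical} is not an artefact of the function space but is built into the base profile via the change of variables $R=\rho^\alpha$ with $\alpha$ small; propagating more $D_z$ derivatives does nothing to remove it. A proof of the conjecture would seem to require either a profile with $F(R)\sim R^{k/\alpha}$ near $0$ for every $k$ (infinite-order vanishing, far from the perturbative regime around $F(z)=z/(1+z)^2$) or a construction that does not rely on small $\alpha$ at all---which is precisely the content of Problem \ref{curvequest} and presumably why the paper records this as a conjecture.
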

\end{myshade2}

Finally, we remark that the key to the blow-up seems to be the degree of vanishing of the vorticity on the axis of symmetry and not non-smoothness \emph{per se}. The following conjecture makes this precise.

\begin{myshade2}
  \vspace{-1mm}
\begin{conjecture}[Artificial Boundary at the axis of symmetry]\label{artbound}
There exist smooth solutions to the axi-symmetric Euler equation non-vanishing on the axis of symmetry that become singular in finite time. 
\end{conjecture}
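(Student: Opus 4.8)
The plan is to realize the conjecture as the endpoint $\alpha=2$ of the self-similar construction behind Theorem \ref{EClassical}, exploiting the principle emphasized in Remark \ref{AngularRegularity} that what controls whether stretching defeats advective depletion is the \emph{degree of vanishing} of $\omega$ on the axis, not the smoothness of the three-dimensional field. I would therefore not attempt to build a smooth $3$D vorticity---which is incompatible with $\omega|_{r=0}\neq0$, since the vector $\omega\,e_\theta$ is then singular on the axis---but rather a solution of the axisymmetric no-swirl system \eqref{ASVorticity}--\eqref{ASBSLaw} on the half-plane $\{r\ge0\}$ that is $C^\infty$ in $(r,z)$ and does not vanish on the artificial boundary $\{r=0\}$. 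This is exactly the ``artificial boundary'' of the title: the solution is a bona fide $2$D axisymmetric flow with no-penetration at $\{r=0\}$, but it does not lift to a smooth $3$D Euler solution.

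The base mechanism is the $\theta$-independent reduction of the fundamental model. Taking $\Omega$ independent of $\theta$ annihilates the angular transport term $-\tfrac32 L_{12}(\Omega)\sin(2\theta)\partial_\theta\Omega$ and leaves the non-local Riccati equation $\partial_t\Omega=\Omega\,L_{12}(\Omega)$, whose explicit \emph{stable} self-similar blow-up is $F(z)=z/(1+z)^2$, $L_{12}(F)=1/(1+z)$. The decisive point is that this profile is non-vanishing on the axis: the axis is $\{\theta=\pi/2\}$ and $\Omega(R,\tfrac\pi2,t)=\Omega(R,t)=R/((T_*-t)+R)^2\neq0$ for every $R>0$, in sharp contrast to the $(\pi/2-\theta)^\alpha$ profiles which vanish on the entire axis and drive the $C^{1,\alpha}$ theory. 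Moreover, with $R=\rho^\alpha$ at the endpoint $\alpha=2$ one has $R=r^2+z^2$ smooth in $(r,z)$, so a $\theta$-independent profile $\Omega\in C^\infty$ with $\Omega(R)\sim R$ near $R=0$ gives $\omega=\Omega(r^2+z^2)$ that is $C^\infty$ in $(r,z)$, vanishes only quadratically at the single point $\rho=0$, and is non-zero everywhere else on the axis. Thus at $\alpha=2$ smoothness and non-vanishing on the axis are simultaneously realized at the level of the model.

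To pass from the model back to true Euler I would run the compactness method of Section \ref{selfsimilar}: write $F_\epsilon=F_0+g$, introduce the two scaling parameters $\mu,\lambda$ to enforce the quadratic vanishing at $z=0$ demanded by the weight $w_z=(1+z)^4/z^4$, and establish the coercivity $(\mathcal L g,g)_{\mathcal H^1}\ge c\,|g|_{\mathcal H^1}^2$ together with the matching upper bound on the regular remainder $\mathcal R$ and on the nonlinearity $N$. Because the base is $\theta$-independent, the angular weight $w_\theta=\sin(2\theta)^{-(1+\delta)}$ is needed only to absorb the $\theta$-dependence \emph{generated} by $N$ and $\mathcal R$, and the transport coefficient $\sin(2\theta)$ degenerates precisely at the axis $\theta=\pi/2$, which is favorable.

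The hard part is that this program is genuinely \emph{non-perturbative}: to obtain $C^\infty$ smoothness one is forced to $\alpha=2$---the borderline value at which the substitution $R=\rho^\alpha$ itself ceases to make sense---where the regular part $\alpha N(\Omega)$ is an $O(1)$ correction to the singular part and the small-$\alpha$ coercivity of \cite{E_Classical} degrades. Two issues must be resolved. First, one must show the continuation curve of self-similar profiles does not terminate before $\alpha=2$---a continuation criterion, which is the content of Problem \ref{curvequest} and would be most naturally obtained by replacing coercivity of $\mathcal L$ with Fredholm invertibility in the spirit of Problem \ref{selfsimcoer}. Second, once $\theta$-independence is broken by $N$, the full-strength angular transport can advect vorticity toward $\theta=0$, where $\sin(2\theta)$ does \emph{not} degenerate; controlling this requires either a reflection/rotation symmetry class that keeps $N$ within the radial ($\theta$-independent) sector---so the transport term remains identically zero---or a weighted estimate slaving the $\theta$-dependent modes to the dominant radial mode uniformly for $\alpha\le2$. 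Designing an angular weight that is simultaneously compatible with the degeneracy of $\sin(2\theta)$ at the axis and with the non-degenerate transport near $\theta=0$ is, I expect, the technical heart of the argument.
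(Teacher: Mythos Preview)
This statement is an \emph{open conjecture} in the paper; there is no proof given, only the one-line motivation preceding it (``the key to the blow-up seems to be the degree of vanishing of the vorticity on the axis of symmetry and not non-smoothness \emph{per se}'') and a remark clarifying that ``smooth'' means smooth in the axisymmetric coordinates $(r,z)$. There is therefore nothing in the paper to compare your attempt against.

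What you have written is a research program rather than a proof, and you are candid about that. Your route---push the self-similar curve of Theorem~\ref{EClassical} to the endpoint $\alpha=2$ so that $R=\rho^2=r^2+z^2$ becomes a smooth coordinate in $(r,z)$, anchored on the $\theta$-independent fundamental-model profile which is manifestly nonzero on the axis $\theta=\pi/2$---is a coherent reading of the paper's own hints (Remark~\ref{AngularRegularity} and the remark following Problem~\ref{curvequest}), and the two hard obstacles you isolate are precisely the open Problems~\ref{curvequest} and~\ref{selfsimcoer}. Two cautions. First, your claim that smoothness \emph{forces} $\alpha=2$ is specific to your $\theta$-independent ansatz; the paper's remark after Problem~\ref{curvequest} says $\alpha$ governs only the radial regularity, and Conjecture~\ref{radsmooth} posits that $C^\infty$ in $\rho$ may already be available for smaller $\alpha$, so there may be approaches that avoid the degenerate endpoint of the change of variables. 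Second, your proposed cure for the angular transport---a symmetry class that keeps $N$ in the radial sector so that the transport term vanishes identically---is unlikely to work: the factor of $r$ in $\nabla\cdot(rv)=0$ breaks rotational symmetry in the $(r,z)$-half-plane, so the axisymmetric Biot--Savart law does not preserve $\theta$-independence, and no discrete symmetry will restore it. In short, your program reduces to resolving several of the paper's own open problems, which is exactly why the statement remains a conjecture.
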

\end{myshade2}

\begin{remark}
Of course, "smooth" in the preceding conjecture means smooth in the axi-symmetric coordinates; however, once they are lifted to 3d solutions they are not smooth on the axis since the vorticity does not vanish on the symmetry axis. 
\end{remark}

\subsubsection{Continuation After Singularity?}

An important problem is to study extensions of classical solutions beyond the time of singularity. Since solutions leave the classical uniqueness classes at the singularity time, it seems that "anything" could occur after the singularity. Despite this, we have the following

\begin{myshade}
  \vspace{-1mm}
\begin{theorem}\label{ContinuationTheorem}
Let $\omega_0$ be a compactly supported initial axi-symmetric no-swirl vorticity that gives a local classical solution that becomes singular in finite time. Also assume that $\frac{1}{r}\omega_0\in L^{2+\delta}$ for some $\delta>0$.  
Consider a family $\omega^\epsilon$ of axi-symmetric no-swirl solutions from regularized data $\omega_0^\epsilon$. Now let $\omega_*$ be any subsequential limit of $\omega^\epsilon$ as $\epsilon\rightarrow 0$. Then, $\omega_*$ is a weak solution to the 3d Euler equation that is necessarily smooth outside of $B_{C\cdot(t-T_*)}(0)$ for some constant $C=C(\omega_0)>0$, where $T_*=T_*(\omega_0)$ is the blow-up time of the Euler solution emanating from $\omega_0$.   
\end{theorem}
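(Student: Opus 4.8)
\textbf{Proof plan for Theorem \ref{ContinuationTheorem}.} The strategy is to combine a uniform (in $\epsilon$) finite-speed-of-propagation estimate for axi-symmetric no-swirl Euler with a standard compactness/weak-stability argument, and then to bootstrap regularity away from the origin using the fact that outside a shrinking ball the vorticity stays bounded. First I would set up the approximation: choose $\omega_0^\epsilon$ to be a smooth, compactly supported, axi-symmetric no-swirl mollification of $\omega_0$ with $\frac1r\omega_0^\epsilon \to \frac1r\omega_0$ in $L^{2+\delta}$ and $\mathrm{supp}\,\omega_0^\epsilon \subset \mathrm{supp}\,\omega_0 + B_\epsilon$. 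For each $\epsilon>0$ the no-swirl equations \eqref{ASVorticity}--\eqref{ASBSLaw} are locally well-posed, and the crucial conserved/controlled quantity is $\frac{\omega}{r}$, which is \emph{transported}: from \eqref{ASVorticity}, $\partial_t\left(\tfrac{\omega}{r}\right) + v\cdot\nabla\left(\tfrac{\omega}{r}\right)=0$. Hence all $L^p$ norms of $\frac{\omega^\epsilon}{r}$ are conserved, in particular $\|\tfrac{1}{r}\omega^\epsilon(t)\|_{L^{2+\delta}}$ is uniformly bounded, which is exactly the hypothesis that is needed to pin down the velocity field: via the axi-symmetric Biot--Savart law, $\|u^\epsilon\|_{L^\infty}$ and a modulus of continuity of $u^\epsilon$ are controlled by $\|\tfrac{1}{r}\omega^\epsilon\|_{L^{2+\delta}}$ (this is the standard axi-symmetric elliptic estimate; cf. the discussion of the Biot--Savart law in \S\ref{eds}).

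Next I would prove the uniform finite-speed estimate. Since $\|u^\epsilon\|_{L^\infty([0,T]\times \mathbb R^3)} \le C_0 := C_0(\|\tfrac{1}{r}\omega_0\|_{L^{2+\delta}})$ uniformly in $\epsilon$ and $t$, and $\omega^\epsilon$ is transported along the (uniformly Lipschitz-in-no sense but uniformly bounded) flow, the support of $\omega^\epsilon(t)$ stays inside $\mathrm{supp}\,\omega_0 + B_{\epsilon + C_0 t}$. More to the point, for radii $R$ outside a neighborhood of the singular point $0$, particle trajectories starting at $|x|\ge R$ cannot reach $|x| < R - C_0 t$ in time $t$. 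The key geometric input is that the (putative) singularity of the limiting solution is localized at the origin, so I would argue: pick any point $x_*$ with $|x_*| > C(t-T_*)$ for a constant $C$ we are free to enlarge (the hypothesis gives $T_* = T_*(\omega_0)$, the blow-up time of the exact Euler solution from $\omega_0$; for $t \le T_*$ there is nothing to prove on the outside since the exact solution is still classical and the $\omega^\epsilon$ converge to it there). For $t > T_*$, on the annular-in-$|x|$ region $|x| \ge \tfrac12 C_0(t-T_*)$, the $\omega^\epsilon$ are obtained by transporting, backward along the flow, data that at times $\le T_*$ lives in the region where everything is still smooth and uniformly bounded; combined with the a priori $L^{2+\delta}$ control on $\tfrac{1}{r}\omega^\epsilon$ and elliptic regularity, one propagates higher derivative bounds (à la Beale--Kato--Majda / Theorem \ref{lwpthm}, applied on the exterior region with appropriate localization cutoffs supported away from $0$). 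This gives $\omega^\epsilon$ uniformly bounded in $C^k_{\mathrm{loc}}(\{|x| > C(t-T_*)\})$ for every $k$.

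Then the compactness step: by Aubin--Lions type arguments, $\{u^\epsilon\}$ is precompact in $C_{\mathrm{loc}}$ on space-time (using uniform $L^\infty$ bounds and uniform equicontinuity from the $L^{2+\delta}$ bound on $\tfrac{1}{r}\omega^\epsilon$ via Biot--Savart), and $\{\omega^\epsilon\}$ converges weakly-$*$ in $L^\infty_t(L^{2+\delta}_x)$ (measuring $\tfrac{1}{r}\omega^\epsilon$) along a subsequence. Passing to the limit in the weak formulation of \eqref{3dE2} — the nonlinear term $u^\epsilon \otimes u^\epsilon$ converges strongly because $u^\epsilon$ converges in $C_{\mathrm{loc}}$ — shows that any subsequential limit $\omega_*$ is a weak solution of 3D Euler. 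Finally, the uniform $C^k_{\mathrm{loc}}(\{|x| > C(t-T_*)\})$ bounds pass to the limit, so $\omega_*$ is smooth there.

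\textbf{Main obstacle.} The delicate point is the uniform-in-$\epsilon$ propagation of smoothness on the exterior region for $t > T_*$. A naive argument only controls the exterior up to time $T_*$ (where one has the exact classical solution); to push past $T_*$ one must show the regularized solutions do not develop \emph{spurious} singularities away from the origin as $\epsilon \to 0$. This requires showing that the only place the uniform higher-order bounds can fail is inside the shrinking ball $B_{C(t-T_*)}(0)$ — i.e. that the singularity (which does form for the exact solution at $T_*$) cannot "leak outward" faster than the transport speed $C_0$. The cleanest way I see is: (i) use finite speed of propagation of the \emph{support of the singular behavior}, i.e. track, backward along the $\epsilon$-flow, the region of $\{|x|>C(t-T_*)\}$ and show it lands, at time $T_*-\eta$, inside a region where $\omega_0^\epsilon$-transported data is uniformly smooth independent of $\epsilon$; (ii) run a localized energy/BKM estimate with cutoffs whose supports are kept, for the whole time interval $[T_*, t]$, in $\{|x| > \tfrac14 C(t-T_*)\}$, so that the nonlocal Biot--Savart terms hitting the cutoff are handled by the global $L^\infty$ and $L^{2+\delta}$ bounds rather than by local regularity. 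Getting the geometry of these nested cutoffs right, and verifying the localized BKM estimate closes with constants independent of $\epsilon$, is where the real work lies; everything else is by-now-standard weak compactness for Euler.
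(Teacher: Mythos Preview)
Your approach is correct and matches the paper's: uniform $L^{2+\delta}$ control on $\omega^\epsilon/r$ (conserved since $\omega/r$ is transported) gives a uniform $L^\infty$ bound on $u^\epsilon$, hence finite propagation speed, hence local-in-space $L^\infty$ control on $\omega^\epsilon$ away from the origin, from which local regularity and weak compactness follow. The only remark is that what you flag as the ``main obstacle'' is in fact the easy part: because it is $\omega/r$ (not $\omega$) that is exactly transported, the local $L^\infty$ bound on $\omega^\epsilon$ in the exterior region is immediate from pulling back along characteristics, and this is precisely the input a localized BKM needs---so no delicate bootstrapping past $T_*$ is required, just off-diagonal smoothing of Biot--Savart combined with the global $L^2$ bound.
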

\end{myshade}

The theorem says that the singular set after the blow-up time remains inside a growing ball around the origin.  The structure of the flow-lines seems to indicate that the singular set is actually much smaller. This is the content of the following conjecture.

\begin{myshade2}
  \vspace{-1mm}
\begin{conjecture}\label{propring}
The singular set of the solutions of \cite{E_Classical} propagates within the natural symmetry class as an expanding ring $\mathcal{S}(t)=\{(r,x_3)=(\delta(t),0)\}$, where $\delta(T_*)=0$ while $\delta$ is increasing in $t$ for all $t\geq T_*$. 
\end{conjecture}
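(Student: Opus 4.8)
The plan is to combine the sharp self-similar description of the blowing-up solution as $t\to T_*^-$ coming from the construction behind Theorem \ref{EClassical}, a matched-asymptotic continuation of this profile past the singular time, and the exterior regularity furnished by Theorem \ref{ContinuationTheorem}. Throughout one remains in the axi-symmetric no-swirl class and works in meridional coordinates $(r,x_3)$, in which $v=(u_r,u_3)$ is divergence free with respect to $r\,dr\,dx_3$ and $\zeta:=\omega/r$ is materially transported, $\partial_t\zeta+v\cdot\nabla\zeta=0$. The hypothesis $\tfrac{1}{r}\omega_0\in L^{2+\delta}$ already used in Theorem \ref{ContinuationTheorem} ensures that the support of $\zeta$ is carried by the flow of $v$, and that on any region where the vorticity is not concentrating the velocity is log-Lipschitz (by the expansions of \S\ref{NonlocalEffect}, e.g. Lemma \ref{PtWiseKeyLemma}) so that this flow is bi-Lipschitz there and cannot manufacture new singularities.

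First I would extract the quantitative form of Theorem \ref{EClassical}: for $t$ near $T_*$, in the variables $R=\rho^\alpha$, $\theta=\arctan(x_3/r)$, one has
\[
\omega(\cdot,t)=\frac{1}{T_*-t}\,F\!\left(\frac{R}{T_*-t}\right)\Gamma_*(\theta)+(\text{lower order in }\mathcal{H}^k),\qquad F(z)=\frac{z}{(1+z)^2}.
\]
Because $F$ vanishes linearly at $z=0$, at each $t<T_*$ the vorticity is concentrated on the shell $\{R\sim T_*-t\}$, i.e. $\{\rho\sim(T_*-t)^{1/\alpha}\}$, a torus about the origin circle $\{r=x_3=0\}$ whose radius tends to $0$; this will give $\delta(T_*)=0$. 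The odd-in-$x_3$ symmetry is preserved, so $u_3\equiv 0$ on $\{x_3=0\}$ and any particle in that plane stays there; combined with the dipole structure of the two oppositely signed vortex rings, this pins the singular locus to $\{x_3=0\}$ for all later times. Next I would construct the post-singular continuation by seeking an expanding (approximately) self-similar solution for $t>T_*$, $\omega(\cdot,t)=\tfrac{1}{t-T_*}\Phi\!\big(\tfrac{R}{t-T_*},\theta\big)+\cdots$; substituting into the rescaled system $\partial_t\omega+u_{\mathcal S}\cdot\nabla\omega=\nabla u_{\mathcal S}\,\omega+\alpha N(\omega)$ of \S\ref{selfsimilar} produces a profile equation whose $\alpha=0$ reduction is the fundamental model $\partial_t F=FL_{12}(F)$ continued through $t=T_*$; the concentration radius re-emerges at $z\sim1$, i.e. $R\sim t-T_*$, i.e. $r\sim(t-T_*)^{1/\alpha}$. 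One then upgrades to the genuine solution via the coercivity/compactness scheme of \S\ref{selfsimilar}, re-establishing $(\mathcal L\Phi,\Phi)\gtrsim|\Phi|^2$ with weights now centred at the moving off-axis circle and controlling $\alpha N$ uniformly as $t\downarrow T_*$. This identifies $\omega_*$ on $\{\rho\lesssim C(t-T_*)^{1/\alpha}\}$ for $t\in(T_*,T_*+\tau)$ as a smooth profile away from the ring $\{r=\delta(t),\,x_3=0\}$ with $\delta(t)\asymp(t-T_*)^{1/\alpha}$.

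The exterior region $\{\rho\gtrsim C(t-T_*)\}$ is smooth by Theorem \ref{ContinuationTheorem}, so it remains to treat the intermediate annulus: there the vorticity is not concentrating, hence $v$ is log-Lipschitz and its flow bi-Lipschitz, so the smoothness of $\zeta=\omega/r$ off the ring propagates, uniformly in the regularization since $\zeta$ is merely transported; therefore every subsequential limit $\omega_*$ inherits the ``expanding ring, smooth elsewhere'' structure. For the monotonicity $\dot\delta(t)>0$, the rebounding configuration is a pair of oppositely signed vortex rings symmetric across $\{x_3=0\}$; the Biot--Savart positivity encoded in the key lemma (Lemma \ref{PtWiseKeyLemma}) then yields $u_r\ge c\,r>0$ in a neighbourhood of the ring on $\{x_3=0\}$, so $\dot\delta(t)=u_r(\delta(t),0,t)>0$. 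A continuation/openness argument finally propagates the picture from $(T_*,T_*+\tau)$ to all $t\ge T_*$: the induction hypothesis is that on each compact subinterval $\delta$ stays bounded below, so the log-Lipschitz transport mechanism off the ring stays active and no new concentration can form.

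The main obstacle is the second step: producing the expanding continuation and, above all, showing that the true (regularized, then limiting) Euler solution \emph{shadows} it across $t=T_*$. This is a stability-through-blowup statement, and the delicate point is that the change of variables $R=\rho^\alpha$ is singular precisely at the locus one now cares about, so the coercivity estimate of \S\ref{selfsimilar} must be redone with weights penalising mass near the moving value $R=\delta(t)/(t-T_*)$ rather than near $R=0$, with the nonlocal lower-order terms $\alpha N$ controlled uniformly down to the singular time; it is conceivable that the continuation is not exactly self-similar and a genuinely time-dependent matched-asymptotic construction is required. A secondary, softer difficulty is upgrading ``singular set inside a thin expanding annulus'' to ``singular set equals the single circle $\{r=\delta(t),x_3=0\}$'', i.e. ruling out secondary concentrations — this should follow from the material transport of $\zeta$ and the $L^{2+\delta}$ bound, but must be made quantitative.
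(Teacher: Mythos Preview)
The statement you are attempting to prove is a \emph{conjecture} in the paper, not a theorem. The paper provides no proof whatsoever: Conjecture \ref{propring} is stated as an open problem, followed only by a one-line remark that it ``may be related to corresponding results and conjectures in two dimensions \cite{Elling, BM}.'' There is therefore nothing to compare your proposal against.

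What you have written is a research program, not a proof, and you are candid about this yourself when you identify the ``main obstacle'' as the stability-through-blowup step. That step is genuinely open: constructing an expanding post-singular continuation, proving that the regularized limits shadow it across $t=T_*$, and redoing the coercivity estimates with weights centred at the moving ring are all substantial unsolved problems, not routine verifications. The fundamental-model analysis you invoke does not by itself produce a continuation past blowup, and the matched-asymptotic patching you describe has not been carried out for this system. Your secondary difficulty (ruling out secondary concentrations so that the singular set is exactly the circle rather than merely contained in a thin annulus) is also genuinely open. In short, your outline is a plausible roadmap for attacking the conjecture, but it should be understood as such and not as a proof.
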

\end{myshade2}

\begin{remark}
This may be related to corresponding results and conjectures in two dimensions \cite{Elling, BM}. 
\end{remark}

\begin{proof}[Sketch of the Proof of Theorem \ref{ContinuationTheorem}]

By the assumptions and properties of axi-symmetric no-swirl solutions, we have a time and $\epsilon$ independent bound on \[\left\|\omega^\epsilon/r\right\|_{L^{2+\delta}}.\] Standard elliptic estimates give a uniform bound on $u$:
\[\|u^\epsilon\|_{L^\infty}\leq C(\omega_0)\] for all $\epsilon,t> 0$. At the same time, we have that  ${\omega}/{r}$ is transported. This implies 
\[\|{\omega^\epsilon}/{r}\|_{L^\infty(A)}(t_1)\leq \|{\omega^\epsilon}/{r}\|_{L^\infty(B_{\delta(t)}(A))}, \] whenever $t_2\geq t_1$, for any open set $A$ where $B_{\delta(t_1,t_2)}(A)$ is the set of points within distance $\delta(t)$ of $A$ and $\delta(t_1,t_2)=C\cdot (t_2-t_1)$. This implies local $L^\infty$ bounds on $\omega^\epsilon$ independent of $\epsilon$ away from the axis of symmetry. The global $L^2$ bound on $\omega$ along with the local $L^\infty$ bounds away from the axis of symmetry gives local propagation of regularity away from the axis of symmetry. This is because the Biot-Savart law is smoothing off the diagonal. 
That $\omega_*$ is a weak solution follows from a standard compactness argument using the a-priori bound. 
\end{proof}

\begin{remark}
This result has a 2d analogue \cite{DEL22}: if we consider data  $\omega_0\in L^p(\mathbb{R}^2)\cap C^\infty(\mathbb{R}^2\setminus\{0\})$ for $p>2$, then any limit of regularized solutions will be smooth outside of a growing ball. In particular, possible non-uniqueness can only come from inside that ball. 
\end{remark}

\section{$\infty$D fluids: finite time singularity from smooth data}\label{infdsec}

Here we consider \emph{pressureless solutions}, i.e. those with $\nabla p=0$ for all time on the domain $M= \mathbb{T}^d$ or $\mathbb{R}^d$.  These solve the multidimensional Burgers' equation
  \begin{alignat}{2}\label{eeb1}
\partial_t u + u \cdot \nabla u &= 0,  \\
u|_{t=0} &=u_0 . \label{eef1}
\end{alignat}
To be consistent with incompressibility, the initial data $u_0$ must  be very special as we shall see.  From the (infinite dimensional) geometric viewpoint of \S \ref{lap}, such Euler solutions correspond to flowmaps $\Phi_t$ which are \emph{asymptotic geodesics} \cite{KM}.  Namely, those curves which are geodesic both in the group $\mathscr{D}_\mu(M)$ of smooth volume-preserving  diffeomorphisms of $M$ with the $L^2$ pseudo-Riemannian metric  \eqref{action} as well as in the full diffeomorphism group  $\mathscr{D}(M)$ for either the non-invariant or right invariant $L^2$ metrics (the distinction of non-invariant and right invariant is immaterial for $\mathscr{D}_\mu(M)$ since the Jacobian determinant is unity).  We remark also that, in view of Theorem \ref{theoremgeo}, they are \emph{minimizers} of the energy for all time.

 The first statement characterizes for which initial data the Burgers solution can maintain incompressibility.
This result can be found in  Chapter III the thesis of  Serfati  \cite{s92} and the paper of Yudovich \cite{Y00}.  That pressureless solutions are globally regular is a consequence of the continuation criterion given by Chae and Constantin \cite{CC21}.

\begin{myshade}
  \vspace{-1mm}
\begin{theorem}\label{pressthm}
A velocity field $u: M\times \mathbb{R}\to \mathbb{R}^d$ is an incompressible solution of \eqref{eeb1} with $u|_{t=0}=u_0: M\to \mathbb{R}^d$ for all time $t\in \mathbb{R}$ if and only if $\nabla u_0:M\to \mathcal{M}^{d\times d}(\mathbb{R})$ is a nilpotent matrix for each $x\in M$.
If $\nabla u_0$ is a nilpotent matrix, then the solution exists for all time and the gradient satisfies:
\be\label{sum}
\nabla u (\Phi_t(x),t) =\sum_{m=0}^{d-2} (-t)^m (\nabla u_0(x))^{m+1}
\ee
where $\Phi_t$ is the Lagrangian flowmap.
\end{theorem}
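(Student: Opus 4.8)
The plan is to characterize the pressureless condition via the structure of $\nabla u_0$ and then solve the Burgers equation explicitly along characteristics. First I would observe that for a solution of \eqref{eeb1}, differentiating the equation in space gives the Riccati-type equation for the velocity gradient along trajectories. Precisely, writing $A(t,x) := \nabla u(\Phi_t(x),t)$ where $\dot\Phi_t(x) = u(\Phi_t(x),t)$, one differentiates $\partial_t u + u\cdot\nabla u = 0$ to obtain $\partial_t(\nabla u) + u\cdot\nabla(\nabla u) + (\nabla u)^2 = 0$, i.e.
\be
\frac{\rmd}{\rmd t} A(t,x) = -A(t,x)^2, \qquad A(0,x) = \nabla u_0(x).
\ee
This matrix Riccati ODE with constant-in-$t$ structure can be solved explicitly: the ansatz $A(t) = A_0(I + tA_0)^{-1}$ satisfies it, as one checks by differentiating. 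Hence $\nabla u(\Phi_t(x),t) = \nabla u_0(x)(I + t\nabla u_0(x))^{-1}$, valid as long as $I + t\nabla u_0(x)$ is invertible.

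The second step is to connect this to incompressibility. The key identity is that incompressibility $\nabla\cdot u = 0$ is equivalent to $\det(\nabla\Phi_t(x)) \equiv 1$, and the Jacobian $J(t,x) := \nabla\Phi_t(x)$ solves $\dot J = A(t,x) J$ with $J(0)=I$; Liouville's formula gives $\frac{\rmd}{\rmd t}\log\det J = \tr A(t,x)$. So incompressibility for all time forces $\tr A(t,x) = 0$ for all $t$, which by the explicit formula means $\tr\big(\nabla u_0(x)(I + t\nabla u_0(x))^{-1}\big) = 0$ for all $t$ in a neighborhood of $0$. Expanding in a power series in $t$, $\nabla u_0(x)(I+t\nabla u_0(x))^{-1} = \sum_{m\geq 0}(-t)^m (\nabla u_0(x))^{m+1}$, the vanishing of the trace for all $t$ is equivalent to $\tr\big((\nabla u_0(x))^{k}\big) = 0$ for all $k \geq 1$. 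By Newton's identities (or the fact that the power sums determine the characteristic polynomial), this is equivalent to $\nabla u_0(x)$ having all eigenvalues zero, i.e. being nilpotent. For the converse, if $\nabla u_0(x)$ is nilpotent then $(I + t\nabla u_0(x))$ is invertible for all $t\in\mathbb{R}$ (its determinant is identically $1$), so the explicit formula for $A(t,x)$ holds globally, the trace vanishes identically, and incompressibility is preserved; moreover $A(t,x)$ stays bounded (indeed polynomial in $t$), so $\|\nabla u\|_{L^\infty}$ is locally integrable in time and the Chae--Constantin continuation criterion (cited as \cite{CC21}) yields global regularity. Since $\nabla u_0(x)$ is $d\times d$ nilpotent, $(\nabla u_0(x))^{d}=0$, so $(I+t\nabla u_0(x))^{-1} = \sum_{m=0}^{d-1}(-t)^m(\nabla u_0(x))^m$; multiplying by $\nabla u_0(x)$ and noting that the $m = d-1$ term contributes $(\nabla u_0(x))^{d}=0$ gives exactly \eqref{sum} with the sum running to $d-2$.

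There is a subtlety I would need to handle carefully: the above derivation of the Riccati ODE presupposes that a smooth solution $u$ exists on some time interval, so for the ``only if'' direction I would argue on the maximal interval of existence guaranteed by local well-posedness and note that the characterization of $\nabla u_0$ is forced already by requiring incompressibility for short time (the power series argument only needs $t$ near $0$). For the ``if'' direction, rather than assuming existence I would instead \emph{construct} the solution: define $\Phi_t$ by solving $\dot\Phi_t(x) = u(\Phi_t(x),t)$ simultaneously with the claimed formula, or more cleanly, define $u(y,t)$ by declaring $\Phi_t(x) = x + t u_0(x)$ (the characteristics of Burgers are straight lines) and checking this is a global diffeomorphism. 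Indeed $\nabla\Phi_t(x) = I + t\nabla u_0(x)$ has determinant $1$ by nilpotency, hence $\Phi_t$ is a local diffeomorphism, and properness plus the $\mathbb{T}^d$ or $\mathbb{R}^d$ topology upgrades this to a global one; then $u(y,t) := u_0(\Phi_t^{-1}(y))$ is the desired global smooth incompressible Burgers solution, and $\nabla u(\Phi_t(x),t) = \nabla u_0(x)\,(\nabla\Phi_t(x))^{-1} = \nabla u_0(x)(I+t\nabla u_0(x))^{-1}$, recovering \eqref{sum}. The main obstacle is purely bookkeeping: making sure the equivalence ``$\tr(A^k)=0$ for all $k$'' $\iff$ ``nilpotent'' is invoked correctly (it is standard over $\mathbb{R}$ via passing to the algebraic closure and Newton's identities) and that the global-diffeomorphism claim for $\Phi_t = \mathrm{id} + t u_0$ is justified on the relevant domain; neither is deep, but both must be stated rather than waved through.
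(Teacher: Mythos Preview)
Your argument is correct and lands on the same Riccati ODE and explicit inverse formula as the paper, but the route to the nilpotency characterization differs. The paper observes immediately that Burgers characteristics are straight lines, $\Phi_t(x)=x+t u_0(x)$, so $\nabla\Phi_t(x)=I+t\nabla u_0(x)$ and incompressibility is simply the polynomial identity $\det(I+t\nabla u_0(x))\equiv 1$ in $t$; since this is a degree-$d$ polynomial, it equals $1$ for all $t$ iff it equals $1$ at $d+1$ distinct values, which the paper quotes as equivalent to nilpotency (the coefficients are the elementary symmetric functions of the eigenvalues). You instead pass through Liouville's formula to get $\tr\big(\nabla u_0(I+t\nabla u_0)^{-1}\big)\equiv 0$, expand as a power series, and invoke Newton's identities to convert vanishing power sums into nilpotency. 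Both are valid; the paper's determinant argument is shorter and avoids the analytic step of justifying the Neumann series (you only need it for small $t$, which you note), while your approach has the minor advantage of making the connection to the divergence-free condition $\tr(\nabla u)=0$ explicit along the flow. Your discussion of constructing the global solution via $\Phi_t=\mathrm{id}+t u_0$ and checking it is a volume-preserving diffeomorphism is essentially the paper's starting point, so in the end the two proofs converge.
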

\end{myshade}

Theorem \ref{pressthm} gives examples of solutions on $\mathbb{T}^d$ and $\mathbb{R}^d$ for any $d>2$ which exist for all time and grow with a specific polynomial rate $\| u(t)\|_{C^1} \sim t^{d-2}$.   This point is related to the fact that these flows are unstable in Sobolev spaces in their (Lagrangian) configuration space \cite{mis1}.

\begin{proof}
Let $\Phi_t$ be the flowmap corresponding to the solution of \eqref{eeb1}--\eqref{eef1}.
Eqn. \ref{eeb1} says that $u$ is transported by its own flow, thus trajectories are straight lines
\be
\Phi_t(x) = x + t u_0(x).
\ee
The velocity field $u$ is incompressible if and only if the flow is volume preserving, i.e. $\det(\nabla \Phi_t(x))=1$.  
Thus, the incompressibility condition is equivalent to 
\be \label{nilpcond}
\det\left( I + t \nabla u_0(x) \right) = 1 \quad \text{for all} \quad x\in M,\  t\in \mathbb{R}.
\ee
Recall that if a matrix $N$ is nilpotent, then $\det(I+ t N)=1$ for any $t\in \mathbb{R}$.  Conversely, if $\det(I+ tN)=1$ for $d+1$ distinct values of $t$, then $N$ is nilpotent. It follows that the condition of \eqref{nilpcond} holding for all $t\in \mathbb{R}$ is equivalent to the matrix $\nabla u_0(x)$ being nilpotent for every $x\in M$.

The equation for the gradient of the Burgers solution is
\be
(\partial_t  + u \cdot \nabla)\nabla u = - (\nabla u )^2.
\ee
Thus $A(t) =\nabla u (\Phi_t(x),t) $ satisfies the following matrix ODE\footnote{See Sullivan \cite{S11ode} for a discussion of conditions for long time existence and genericity of finite time blowup in quadratic ODEs of this type.} for each $x\in M$:
\begin{align}
\label{matrixeqn}
\dot{A} &= - A^2,\\ \label{matrixeqn2}
 A|_{t=0}&= A_0:= \nabla u_0.
\end{align}
 The solution of \eqref{matrixeqn}--\eqref{matrixeqn2} is readily seen to be
\be
A(t)= A_0 (I + t A_0)^{-1}.
\ee
If $N\in \mathcal{M}^{d\times d}$ is nilpotent then
$
(I + N)^{-1}= \sum_{m=0}^{d-1} (-N)^m$.
The claim follows.
\end{proof}

\begin{example}[example of Nilpotent data]\label{exampledata} Let $u_0: M\to \mathbb{R}^d$ be defined by
\begin{align*}\nonumber
u_0(x_1, x_2, \dots, x_d) &= \begin{pmatrix}u_1 (x_2, \dots, x_d) \\
u_2 (x_3, \dots, x_d)\\
\vdots\\
u_{d-2} (x_{d-1}, x_d)\\ 
u_{d-1} (x_d)\\ 
0
\end{pmatrix}, \\ 
\nabla u_0(x_1, x_2, \dots, x_d)  &= \begin{pmatrix}0 & \partial_2 u_1 & \partial_3 u_1  &\dots  &\dots & \partial_d u_1 \\
0 & 0& \partial_3 u_2  &\dots  &\dots  &\vdots\\
\vdots & \dots & \ddots & \ddots &\dots  &\vdots  \\
\vdots& \dots & \dots & \ddots & \partial_{d-1} u_{d-2}&  \partial_{d} u_{d-2} \\ 
\vdots &\dots & \dots & \dots& \ddots & \partial_{d} u_{d-1} \\
0 & \dots & \dots & \dots & \dots & 0
\end{pmatrix}.
\end{align*}
It is easy to check that $(\nabla u_0)^{d-1} $ has only one non-trivial component $\prod_{i=1}^{d-1} \partial_{i+1} u_i$ which is located on the first row and final column. Thus
whenever $ \prod_{i=1}^{d-1} \partial_{i+1} u_i \neq 0$, then $(\nabla u_0)^d=0$ but $(\nabla u_0)^n\neq 0$ for any $n<d$. 
\end{example}
\vspace{2mm}

We now discuss the possibility of singularity formation in ``infinite" spatial dimensions.  
Although there are no examples, it is expected that the higher the dimension, the easier it is for a perfect fluid to form a singularity in finite time.  In this direction, Khesin and Yang  constructed a finite-time blow-up for the binormal equation in 5D,
which is a localized induction approximation (LIA) of the 5D Euler \cite{KY}.  It is also hoped  \cite{ffo,ffr} that more complicated phenomena, such as turbulence, simplify in the large dimension limit due to the decreased influence of pressure (the reason being the incompressibility becomes one constraint amongst infinitely many components, and thus becomes trivially satisfied -- at least in some generic sense -- without need of pressure).  Here we show, in some sense, that infinite dimensional perfect fluids form a singularity in finite times from smooth initial conditions.

\begin{myshade}
\begin{theorem}\label{infdimbu}
The following two statements hold:
\begin{itemize}
\item 
For any smooth vector field $u_0^{(d)}:M\to \mathbb{R}^d$,    we have
\be\label{bound}
\|\nabla u^{(d)} (\cdot ,t)\|_{L^\infty} \leq \frac{ \|\nabla u_0^{(d)}\|_{L^\infty} }{1- t \|\nabla u_0^{(d)}\|_{L^\infty} },
\ee
where $\|A\|_{L^\infty} = \sup_{x\in M} \|A(x)\|$ and $\|A\|:= \max_{1\leq i \leq d} \sum_{j=1}^d  |a_{ij}|$ is the  induced $L^\infty$ norm (maximum absolute row sum).
\item There exists a family $\{ u_0^{(d)}\}_{d\in \mathbb{N}}$ of smooth vector fields $u_0^{(d)}:M\to \mathbb{R}^d$ whose gradients $A_0^{(d)} = \nabla u_0^{(d)}: M \to  \mathcal{M}^{d\times d}$ are nilpotent for every $x\in M$ such that  $\|\nabla u_0^{(d)} \|_{L^\infty} = 1$ and for all $t\in[0,1]$ we have
\be \label{infinitedimension}
\lim_{d\to\infty} \|\nabla u^{(d)} (\cdot ,t)\|_{L^\infty} =  \frac{1 }{1- t  }.
\ee
\end{itemize}
\end{theorem}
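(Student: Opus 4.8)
\textbf{Proof proposal for Theorem \ref{infdimbu}.}

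The plan is to handle the two statements separately, noting that the first is essentially a Grönwall-type bound and the second requires an explicit construction saturating it in the limit $d\to\infty$. For the first bullet, I would start from the matrix ODE \eqref{matrixeqn}--\eqref{matrixeqn2} derived in the proof of Theorem \ref{pressthm}, namely $\dot A = -A^2$ with $A(t) = \nabla u^{(d)}(\Phi_t(x),t)$. Taking the induced $L^\infty$ matrix norm $\|\cdot\|$ (which is submultiplicative) and using $\tfrac{\rmd}{\rmd t}\|A\| \le \|\dot A\| = \|A^2\| \le \|A\|^2$, one obtains the scalar differential inequality $\tfrac{\rmd}{\rmd t}\|A(t)\| \le \|A(t)\|^2$, whose integration yields $\|A(t)\| \le \|A(0)\|/(1 - t\|A(0)\|)$. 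Taking the supremum over $x\in M$ on both sides — and noting that the flow $\Phi_t$ is a bijection of $M$, so $\sup_x \|A(t)\|$ at Lagrangian label $x$ equals $\|\nabla u^{(d)}(\cdot,t)\|_{L^\infty}$ — gives exactly \eqref{bound}. (I should be slightly careful that the bound on the derivative of $\|A\|$ holds in the almost-everywhere sense since $\|\cdot\|$ is only Lipschitz, which is standard.)

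For the second bullet, the idea is to exhibit a family $u_0^{(d)}$ of nilpotent-gradient fields for which the upper bound \eqref{bound} is asymptotically sharp. I would use the structure of Example \ref{exampledata}: take $u_0^{(d)}$ of the triangular (strictly upper-shift) form there, but choose the entries so that at some distinguished point $x_*$ the matrix $A_0^{(d)} := \nabla u_0^{(d)}(x_*)$ is (a scalar multiple of) the standard nilpotent Jordan block $N_d$ with ones on the superdiagonal, normalized so that $\|A_0^{(d)}\| = 1$ — that is, $A_0^{(d)} = N_d$ itself, whose maximum absolute row sum is $1$ (achieved on every row but the last). By the explicit formula \eqref{sum} from Theorem \ref{pressthm}, evaluated at the Lagrangian label $x_*$,
\be
\nabla u^{(d)}(\Phi_t(x_*),t) = \sum_{m=0}^{d-2} (-t)^m N_d^{m+1},
\ee
and since $N_d^{m+1}$ is the shift matrix with ones on the $(m+1)$-th superdiagonal, the first row of this sum has entries $(-t)^0, (-t)^1, \dots, (-t)^{d-2}$ in columns $2,3,\dots,d$ (all other entries of the first row vanish), so its absolute row sum is $\sum_{m=0}^{d-2} t^m = (1-t^{d-1})/(1-t)$ for $t\in[0,1)$. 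Hence $\|\nabla u^{(d)}(\cdot,t)\|_{L^\infty} \ge (1-t^{d-1})/(1-t)$, while the upper bound \eqref{bound} with $\|\nabla u_0^{(d)}\|_{L^\infty}=1$ gives $\le 1/(1-t)$; letting $d\to\infty$ squeezes both sides to $1/(1-t)$, which is \eqref{infinitedimension}. The endpoint $t=1$ is then covered by monotonicity / a direct check that the row sum is $d-1\to\infty$ there, consistent with the limit being $+\infty$.

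The one genuine point requiring care — and what I expect to be the main obstacle — is making $u_0^{(d)}$ a \emph{globally} smooth vector field on all of $M = \mathbb{T}^d$ (or $\mathbb{R}^d$) with $\|\nabla u_0^{(d)}\|_{L^\infty}$ \emph{exactly} equal to $1$ rather than just $\le 1$, while its gradient is nilpotent at every point, not merely at $x_*$. The triangular ansatz of Example \ref{exampledata} already guarantees pointwise nilpotency everywhere regardless of the choice of profile functions $u_i(x_{i+1},\dots,x_d)$, so that is automatic; the remaining task is to pick those profiles (e.g. suitable trigonometric polynomials on the torus) so that the superdiagonal entries $\partial_{i+1}u_i$ all attain absolute value $1$ at a common point $x_*$ and have sup-norm $1$ overall — for instance $u_i(x_{i+1},\dots,x_d) = c_i\sin(x_{i+1})$ with $|c_i|=1$, whose gradient at $x_*=0$ is precisely $N_d$ up to signs that can be absorbed by a diagonal conjugation not affecting the row-sum norm. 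I would then verify directly that with this choice $\sup_{x}\|\nabla u_0^{(d)}(x)\| = 1$ (the maximum over rows of $\sum_j |\partial_j u_i|$ is $1$ since each row has a single nonzero entry of modulus at most $1$, attaining $1$ at $x_*$), completing the construction. Everything else is bookkeeping with the shift matrices and the geometric series, for which the formula \eqref{sum} does the heavy lifting.
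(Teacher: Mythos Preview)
Your proposal is correct and follows essentially the same approach as the paper: the upper bound via the submultiplicative differential inequality for $\|A(t)\|$, and the lower bound via the bidiagonal (single-superdiagonal) specialization of Example~\ref{exampledata} with $|\partial_{i+1}u_i|\le 1$ and equality at a common point, yielding the geometric row sum $(1-t^{d-1})/(1-t)$. Your explicit choice $u_i = c_i\sin(x_{i+1})$ is exactly the kind of profile the paper has in mind (it simply says ``choose data with $\partial_{i+1}u_i(0)=-1$ and $|\partial_{i+1}u_i|\le 1$''), and your remarks about the flow being a bijection and the $t=1$ endpoint are slightly more careful than the paper's own write-up; the diagonal-conjugation comment is harmless but unnecessary since the absolute row sum ignores signs.
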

\end{myshade}

\begin{proof}
The bound \eqref{bound} follows from \eqref{matrixeqn}--\eqref{matrixeqn2} noting that by sub-multiplicativity of the $L^\infty$ norm, one has the differential inequality for $ A(t)= \nabla u^{(d)}( \Phi_t(x),t)$
\be
\frac{\rmd}{\rmd t} \| A\|_{L^\infty} \leq  \| A\|_{L^\infty}^2.
\ee
The lower bound \eqref{infinitedimension} follows from  bounding the sum \eqref{sum} for data of the form
\be
u_0(x_1, x_2, \dots, x_d) = \begin{pmatrix}u_1 (x_2) \\
u_2 (x_3)\\
\vdots\\
u_{d-2} (x_{d-1})\\ 
u_{d-1} (x_d)\\ 
0
\end{pmatrix},
\ee
with $u_1, \dots, u_{d-1}$ to be chosen.
This is a special case of the data of Remark \ref{exampledata}.
For such initial data, the matrix $(\nabla u_0)^n$ has non-zero entries only on the $n$th superdiagonal.  In particular, there are no cancellations when the powers are summed up to construct
the Lagrangian velocity gradient $A(t):=\nabla u (\Phi_t(x),t) $ via the formula \eqref{sum}.  Then $A(t)$ takes the explicit upper triangular form
\be\nonumber
\begin{pmatrix}0 & \partial_2 u_1 &-t \prod_{i=1}^2 \partial_{i+1} u_i   &t^2 \prod_{i=1}^3 \partial_{i+1} u_i   &\dots &(-t)^{d-2} \prod_{i=1}^{d-1} \partial_{i+1} u_i  \\
0 & 0& \partial_3 u_2  &\ddots  &\ddots  &(-t)^{d-3} \prod_{i=2}^{d-1} \partial_{i+1} u_i \\
\vdots & \dots & \ddots &\ddots & \ddots  &\vdots  \\
\vdots& \dots & \dots & \ddots & \partial_{d-1} u_{d-2}& -t \prod_{i=d-2}^{d-1} \partial_{i+1} u_i \\ 
\vdots &\dots & \dots & \dots& \ddots & \partial_{d} u_{d-1} \\
0 & \dots & \dots & \dots & \dots & 0
\end{pmatrix}.
\ee
We now choose initial data such that the absolute maxima of all the partials is unity and this is achieved at the point $x=0$, i.e. $\partial_{i+1} u_i(0) = -1$ while $|\partial_{i+1}u_i(x)|\leq 1$ for all $x\in M$ and all $1\leq i\leq d-1$ . It follows that $\|\nabla u_0\|_{L^\infty}=1$ and thus $  \|\nabla u (\cdot ,t)\|_{L^\infty} \leq  \frac{1}{1- t}$.  On the other hand, we note 
\be\nonumber
\|\nabla u (0 ,t)\|_{L^\infty} = \sum_{i=0}^{d-2} t ^i =  \frac{1- t^{d-1}}{1- t}.
\ee
This concludes the proof.
\end{proof}

We remark that this blowup is generic within the class of nilpotent initial conditions in the sense that perturbations also exhibit finite time singularities at slightly modified times. One might hope that in  large but finite spatial dimensions, $1/d$ could serve as a small parameter with which to perform a controlled expansion.

  \begin{myshade3}
  \vspace{-1mm}
\begin{question}\label{questinf}
Is it possible to construct a blowup for smooth solutions in  large but finite dimension $d$ perturbatively from a suitable infinite dimensional blowup?
\end{question}
\end{myshade3}

We note also that, for axisymmetric no swirl solutions in dimensions larger than three, it remains unclear whether global existence and uniqueness holds for smooth solutions.

  \begin{myshade3}
  \vspace{-1mm}
\begin{question}\label{questinf2}
Can singularities form from smooth data for the axisymmetric no swirl Euler equations on $\mathbb{R}^d$ when $d\geq 4$?
\end{question}
\end{myshade3}

\section{Acknowledgements}

 First and foremost, we would like to thank D. Sullivan for generously sharing with us his unique vision of fluid dynamics and Mathematics at large. {This review is dedicated to Dennis with admiration and affection.}  We would like to thank A. Shnirelman for many inspiring remarks and insights. We are grateful to M.  Dolce and K.Modin for illuminating discussions and help creating some of the figures.  We thank C. Lalescu for providing the visualization in Figure \ref{fig1tarek}.
 We thank V.Vicol  for a careful reading of the manuscript and insightful remarks which greatly improved the paper. We thank also D. Ginsberg, B. Khesin, G. Misiolek, K. Modin, R. Shvydkoy V. Šverák,  F. Torres de Lizaur, and Y. Yao for very  useful comments and discussions on the manuscript. The research of T.D. was partially supported by NSF-DMS grant 2106233 and the Charles Simonyi Endowment at the Institute for Advanced Study.
 The research of T.E. was partially supported by the NSF grants DMS-2043024 and DMS-2124748  as well as the Alfred P. Sloan Foundation.

\section{Collection of problems, questions and conjectures}
We collect, for the reader's convenience, the problems, questions and conjectures stated throughout the paper (with perceived difficulty in that order).
We remark that a good number of the problems and conjectures below are either explicit, or are simplifications of, statements by other authors.
For the others, we have followed Arnold's principle: \emph{``There is a general principle that a stupid man can ask such questions to which one hundred wise men would not be able to answer. In accordance with this principle I shall formulate some problems."}

\vspace{4mm}

\noindent \textbf{Section I.}\phantom{Adsf}
\vspace{2mm}

\noindent \textbf{Question \ref{quest1}.} (Shnirelman (1985), \cite{shnir1}) Let $M\subset \mathbb{R}^2$ be a domain with smooth boundary.   Does there exist a perfect fluid flow connecting any two isotopic states $\gamma_1,\gamma_2\in \mathscr{D}_\mu(M)$?
\vspace{4mm}

\noindent \textbf{Section II.}\phantom{Adsf}
\vspace{2mm}

\noindent \textbf{Problem \ref{conj1}.} 
Show that there is no steady 2D Euler solution $\omega\in C^{1}$ that is isolated in $C^{1}$ from other stationary solutions (modulo symmetries).
\vspace{2mm}

\noindent \textbf{Question \ref{yudoquest}.} (Yudovich (2003), \cite{Y03}) 
Does there exist a continuous $L^2$ stable steady state $\omega_*$  that is unstable in $L^\infty$?
\vspace{2mm}

\noindent \textbf{Question \ref{queststab}.} Let $(M,g)$ be a compact two-dimensional Riemannian manifold without boundary (e.g. $\mathbb{S}^2$ with the round metric or $\mathbb{T}^2$ with the flat metric).  Do there exist any non-trivial stable steady Euler solutions? \vspace{2mm}

\noindent \textbf{Problem \ref{wanderq}.} 
Show that there exist wandering neighborhoods in the phase space $L^\infty$ of vorticity on compact domains without boundary (e.g. $M=\mathbb{T}^2$ or $\mathbb{S}^2$).
\vspace{2mm}

\noindent \textbf{Problem \ref{kochquest}.} 
Let $X$ be any space compactly embedded in $L^2$. Show that  any smooth stationary solution $\omega_*$ of the two dimensional
Euler equation whose Lagrangian flow is not periodic in time is nonlinearly unstable in $X$.   
\vspace{2mm}

\noindent \textbf{Question \ref{isochronalconju}.} 
For each simply connected domain $M\subset \mathbb{R}^2$, is there at most one (modulo scaling) smooth isochronal flow?
\vspace{2mm}

\noindent \textbf{Conjecture \ref{instconj}.} (Yudovich (1974), \cite{Y74})
No steady solutions of the Euler equations are stable in $C^{1,\alpha}$.  Moreover, for any steady state $\omega_*\in C^{\alpha}$  and any $\ve>0$, there is a $\|\omega_0-\omega_*\|_{C^{\alpha}} \leq \ve$ such that  $\|\omega(t)\|_{C^{\alpha}} \to \infty$ as $t\to \infty$.
\vspace{2mm}

\noindent \textbf{Conjecture \ref{conjgrow2}.} 
For each $\omega_*\in C^{\alpha}$  and any $\ve>0$, there is a $\|\omega_0-\omega_*\|_{C^{\alpha}} \leq \ve$ such that  $\|\omega(t)\|_{C^{\alpha}} \to \infty$ as $t\to \infty$.
\vspace{2mm}

\noindent \textbf{Conjecture \ref{svconj}.} (Šverák (2011), \cite{sv11})
Generic initial data $\omega_0\in L^\infty( M)$ gives rise to inviscid incompressible motions whose vorticity orbits $\{ \omega(t)\}_{t\in \mathbb{R}}$ are not precompact in $L^2( M)$.
\vspace{2mm}

\noindent \textbf{Conjecture \ref{shnconj}.} (Shnirelman (2013), \cite{shn13})
For any initial data  $\omega_0\in  L^\infty( M)$, the collection of $L^2( M)$ weak limits of the orbit $\{ \omega(t)\}_{t\in \mathbb{R}}$ consists of  vorticities which generate $L^2( M)$ precompact orbits under 2D Euler evolution.
\vspace{2mm}

\noindent \textbf{Problem \ref{omlimprob}.} 
Show that $\Omega_+(X_*)\neq X_*$.
\vspace{4mm}

\noindent \textbf{Section III.}\phantom{Adsf}
\vspace{2mm}

\noindent \textbf{Conjecture \ref{bkmconj}.} 
There exists a universal $p_*<\infty$ so that if a smooth solution to the Euler equation $\omega\in C^\infty_c(\mathbb{R}^3\times [0,T_*))$ becomes singular as $t\rightarrow T_*$, then 
\[\sup_{t\in[0, T_*)} \|\omega\|_{L^{p_{*}}}=+\infty.\]

\noindent \textbf{Conjecture \ref{1dModelConjecture}.} 
Consider the 1d vorticity equation on $\mathbb{S}^1$:
\[\partial_t\omega+u\partial_x\omega=\omega\partial_x u,\qquad u=\mathcal{K}(\omega),\] where $\mathcal{K}$ is a non-trivial Fourier integral operator with bounded symbol $m$ satisfying $|m(k)|\leq \frac{C}{1+|k|}$. 
\begin{itemize}
\item In general, $H^s$ solutions should develop singularities when $s<3/2$. 
\item If $m$ is odd, there exist analytic solutions that become singular in finite time. 
\item If $m$ is even, $H^s$ solutions must be global whenever $s>\frac{3}{2}$. 
\end{itemize}
\vspace{2mm}

\noindent \textbf{Problem \ref{inftime3d}.} 
Establish generic infinite-time singularity formation and convergence to compact orbits for smooth solutions to \eqref{1dEuler} in the infinite-time limit. 
\vspace{2mm}

\noindent \textbf{Problem \ref{scaleinvprob}.} 
Does there exist a scale invariant solution with $\nabla u\in \ring{C}^\infty$ belonging to an admissible symmetry class that develops a singularity in finite time? 
\vspace{2mm}

\noindent \textbf{Problem \ref{compsup}.} 
Determine whether a smooth compactly supported solution to \eqref{2dFM} on $\mathbb{R}^2$ can have $\nabla\omega$ grow double exponentially. 
\vspace{4mm}

\noindent \textbf{Section IV.}\phantom{Adsf}
\vspace{2mm}

\noindent \textbf{Question \ref{Sullivan}.} (D. Sullivan)
Do there exist blow-up solutions with non-vanishing vorticity?
\vspace{2mm}

\noindent \textbf{Problem \ref{bousprob}.} 
Prove that there exist scale-invariant blow-up solutions to the Boussinesq system \eqref{B1}-\eqref{B3} on the corner domain $\Omega=\{(r,\theta): -\pi/4<\theta<\pi/4\}$ with vanishing vorticity and density gradient on $\partial\Omega$. 
\vspace{2mm}

\noindent \textbf{Conjecture \ref{HLConj}.} 
There exists a smooth solution to the axi-symmetric Euler equation on the cylindrical domain $\{(r,z): 0\leq r\leq 1\}$ that becomes singular in finite time. 
\vspace{2mm}

\noindent \textbf{Problem \ref{axiblowupfund}.} 
Establish blow-up for axisymmetric no-swirl solutions in the vicinity of a large class of blow-up profiles to the fundamental model. 
\vspace{2mm}

\noindent \textbf{Problem \ref{instabquest}.} 
Study the stability/instability of the self-similar profiles of Theorem \ref{EClassical} with respect to general 3d perturbations. 
\vspace{2mm}

\noindent \textbf{Problem \ref{selfsimcoer}.} 
Establish the existence of self-similar profiles relying only on the invertibility of the linear operator rather than its coercivity. 
\vspace{2mm}

\noindent \textbf{Problem \ref{curvequest}.} 
Study the behavior of the curve of self-similar solutions in $\alpha$ constructed in \cite{E_Classical}. Does the curve continue for all $\alpha\leq 2$ or does it stop at some $\alpha_*<2$? Establish continuation criteria for the curve of solutions.
\vspace{2mm}

\noindent \textbf{Conjecture \ref{alpha13}.} 
For any $\alpha<\frac{1}{3}$, there exists a $C^\alpha_c$ axi-symmetric no-swirl local Euler solution that becomes singular in finite time. 
\vspace{2mm}

\noindent \textbf{Conjecture \ref{radsmooth}.} 
In the preceding conjecture, $\omega_0$ can be $C^\infty$ in $\rho=\sqrt{x_1^2+x_2^2+x_3^2}$. 
\vspace{-1mm}

\noindent \textbf{Conjecture \ref{artbound}.} 
There exist smooth solutions to the axi-symmetric Euler equation non-vanishing on the axis of symmetry that become singular in finite time. 
\vspace{-1mm}

\noindent \textbf{Conjecture \ref{propring}.} 
The singular set of the solutions of \cite{E_Classical} propagates within the natural symmetry class as an expanding ring $\mathcal{S}(t)=\{(r,x_3)=(\delta(t),0)\}$, where $\delta(T_*)=0$ while $\delta$ is increasing in $t$ for all $t\geq T_*$. 
\vspace{4mm}

\noindent \textbf{Section V.}\phantom{Adsf}
\vspace{2mm}

\noindent \textbf{Question \ref{questinf}.} 
Is it possible to construct a blowup for smooth solutions in  large but finite dimension $d$ perturbatively from a suitable infinite dimensional blowup?
\vspace{2mm}

\noindent \textbf{Question \ref{questinf2}.} 
Can singularities form from smooth data for the axisymmetric no swirl Euler equations on $\mathbb{R}^d$ when $d\geq 4$?


\frenchspacing

\end{document}